\newtheorem{theorem}{Theorem}[section]
\newtheorem{proposition}{Proposition}[section]
\newtheorem{lemma}[theorem]{Lemma}
\newtheorem{corollary}[theorem]{Corollary}
\newtheorem{remark}[theorem]{Remark}
\newtheorem{definition}{Definition}[section]
\newtheorem{notation}{Notation}[section]
\newtheorem{obs}{Observation}[section]
\newtheorem{claim}{Claim}[section]
\newcommand{\E}{\ensuremath{\mathbb E}}
\newcommand{\R}{\ensuremath{\mathbb R}}
\newcommand{\F}{\ensuremath{\mathcal F}}
\newcommand{\FF}{\ensuremath{\mathcal F}}
\newcommand{\reach}{\mathrm{reach}}
\newcommand{\lab}{\label}  \newcommand{\ra}{\ensuremath{\rightarrow}}  \def\a{{\mathbf{\alpha}}} \def\de{{\mathbf{\delta}}} \def\De{{{\Delta}}}  
  \def\beq{\begin{eqnarray}} \def\eeq{\end{eqnarray}} \def\ben{\begin{enumerate}}
\def\een{\end{enumerate}}
 \def\bit{\begin{itemize}}
\def\eit{\end{itemize}}
 \def\beqs{\begin{eqnarray*}} \def\eeqs{\end{eqnarray*}} \def\bel{\begin{lemma}} \def\eel{\end{lemma}}
\newcommand{\N}{\mathbb{N}} \newcommand{\Z}{\mathbb{Z}}  \newcommand{\C}{\mathcal{C}} \newcommand{\CC}{\mathcal{C}}
 \newcommand{\I}{\mathbb{I}}   \newcommand{\p}{\mathbb{P}}
\newcommand{\PP}{\mathcal P}  \newcommand{\HH}{\mathcal H}  \newcommand{\one}{\mathrm{1}}
\newcommand{\LL}{\mathcal L} \newcommand{\MM}{\mathcal M}\newcommand{\NN}{\mathcal N} \newcommand{\la}{\lambda}  
 \def\a{{\mathbf{\alpha}}}  \def\eps{{\epsilon}}  \def\ie{i.\,e.\,}
\renewcommand{\a}{\alpha}
\newcommand{\ta}{\tilde{\alpha}}
\newcommand{\tf}{\hat{F}}
\newcommand{\tmu}{\tilde{\mu}}
\newcommand{\tS}{\tilde{S}}
\newcommand{\RR}{\mathbb{R}}
\newcommand{\dist}{dist}
\newcommand{\dhaus}{\mathbf{d}_{\mathtt{haus}}}
\newcommand{\G}{\mathcal{G}}
\renewcommand{\H}{\mathbb{H}}
\newcommand{\D}{\mathcal{\bar{D}}}
\newcommand{\oc}{\overline{c}}
\newcommand{\beqn}{\begin{equation}}
\newcommand{\eeqn}{\end{equation}}
\newcommand{\f}{f^{n-D}}
\newcommand{\ey}{e^{n}_y}
\newcommand{\tbeta}{\theta}
\title{Fitting a manifold of large reach to noisy data}
\author{
Charles Fefferman\footnote{Princeton University, Mathematics Department,
Fine Hall, Washington Road,
Princeton NJ, 08544-1000, USA.} \and
 Sergei Ivanov\footnote{
St.~Petersburg Department of Steklov Institute of Mathematics, Russian Academy of Sciences, 
27 Fontanka, 191023 St.~Petersburg, Russia.} \and 
Matti Lassas\footnote{
University of Helsinki,
Department of Mathematics and Statistics, P.O. Box 68,
00014, Helsinki, Finland.} \and
Hariharan Narayanan\footnote{
School of Technology and Computer Science, 
Tata Institute for Fundamental Research, 
Mumbai 400005, India.}}
\begin{document}

\maketitle

{\centering To the memory of Yaroslav Kurylev.\par}

\begin{abstract}

Let $\MM\subset \R^n$ be
a $C^2$-smooth compact  submanifold of dimension $d$.
Assume that the
 volume of $\MM$ is at most $V$ and the reach (i.e.\ the normal injectivity radius) of $\MM$ is greater than $\tau$.
Moreover, let $\mu$ be a probability measure on $\MM$ whose  density on
$\MM$ is a strictly positive Lipschitz-smooth function. Let $x_j\in \MM$, $j=1,2,\dots,N$ be $N$ independent 
random samples from distribution $\mu$. Also, let
$\xi_j$, $j=1,2,\dots, N$ be independent random samples from a Gaussian random variable in $\R^n$ having covariance $\sigma^2I$, where  $\sigma$ is less than a certain specified function of $d, V$ and $\tau$.
We assume that we are given the data  points $y_j=x_j+\xi_j,$ $j=1,2,\dots,N$, modelling random points of $\MM$
with measurement noise. We develop an algorithm which  produces from these data, with high probability,  a $d$ dimensional submanifold $\MM_o\subset \R^n$ whose Hausdorff distance to $\MM$ is less than $\Delta$ for $\Delta > Cd\sigma^2/\tau$ and whose reach is greater than $c{\tau}/d^6$ with  universal constants $C,c > 0$.  The number $N$ of random samples required depends almost linearly on $n$, polynomially on $\Delta^{-1}$ and exponentially on $d$.

\end{abstract}

\tableofcontents

\section{Introduction}\label{sec:intro}
One of the main challenges in high dimensional data analysis is dealing with the exponential growth of the computational and sample complexity of generic inference tasks as a function of dimension, a phenomenon termed ``the curse of dimensionality". One intuition that has been put forward to diminish the impact of this curse is that high dimensional data tend to lie near a low dimensional submanifold of the ambient space. Algorithms and analyses that are based on this hypotheses constitute the  subfield of learning theory known as manifold learning.
In the present work,  we give a solution to the following  question from manifold learning. 
Suppose data is drawn independently, identically distributed (i.i.d) from a measure supported on a low dimensional twice  differentiable ($C^2$) manifold $\MM$ whose reach is $\geq \tau$, and corrupted by a small amount of (i.i.d) Gaussian noise.
How can can we produce a manifold $\MM_o$ whose Hausdorff distance to $\MM$ is small and whose reach is not much smaller than $\tau$? 

This question is an instantiation of the problem of understanding the geometry of data. To give a specific real-world example, the issue of denoising noisy Cryo-electron microscopy (Cryo-EM) images falls into this general category.
Cryo-EM images are X-ray images of three-dimensional macromolecules, e.g. viruses, possessing an arbitrary orientation. The space of orientations is in correspondence with the Lie group $SO_3(\R)$, which is only three dimensional. However, the ambient space of greyscale images on $[0, 1]^2$ can be identified with an infinite dimensional subspace of $\mathcal{L}^2([0, 1]^2)$, which gets projected down to a finite $n-$dimensional subspace indexed by $n = k \times k$ pixels, where $k$ is large. through the process of dividing $[0, 1]^2$ into pixels. Thus noisy Cryo-EM X-ray images lie approximately on an embedding of a compact $3-$dimensional manifold in a very high dimensional space. If the errors are modelled as being Gaussian, then fitting a manifold to the data can subsequently allow us to project the data onto this output manifold. Due to the large codimension and small dimension of the true  manifold,  the noise vectors are almost perpendicular to the true manifold and the projection would effectively denoise the data. The immediate rationale behind having a good lower bound on the reach is that this implies good generalization error bounds with respect to squared loss (See Theorem 1 in \cite{FMN}). Another reason why this is desirable is that the projection map onto such a manifold is Lipschitz within a tube of the manifold of radius equal to $c$ times the reach for any $c$ less than $1$. 

LiDAR (Light Detection and Ranging) also produces point cloud data for which the methods of this paper could be applied.

\subsection{A note on  constants}
In the following sections, we will denote positive absolute constants by $c, C, C_1, C_2, \oc_1$ etc. These constants are universal and positive, but their precise value may differ from occurrence to occurrence.
Also, for a natural number $n$, we will use $[n]$ to denote the set $\{1, \dots, n\}.$

\subsection{Model}
Let $\MM$ be a $d$ dimensional $\C^2$ submanifold of  $\R^n$. We assume $\MM$ has volume ($d-$dimensional Hausdorff measure) less or equal to $V$, reach (see (\ref{eq:reach}), also known as normal injectivity radius) greater or equal to $\tau$, and that $\MM$ has no boundary. Let $x_1, \dots, x_N$ be a sequence of points chosen i.i.d at random from a measure $\mu$ absolutely continuous with respect to the $d$-dimensional Hausdorff measure $\mathcal{H}^d_\MM = \la_\MM$ on $\MM$. More precisely, the Radon-Nikodym derivative $d\mu/d\la_\MM$ is bounded above and below by $\rho_{max}/V$ and $\rho_{min}/V$ respectively, where $\rho_{max}$ and $\rho_{min}$ lie in $[c, C]$ and $\ln\left(d\mu/d\la_\MM\right)$ is ${C}/{\tau}-$Lipschitz (as specified in (\ref{eq:form-2})). Thus, we assume that  \beq\label{eq:density 1} c <  \rho_{min} < V d\mu/d\la_\MM <  \rho_{max} < C,\eeq and also for all $ x, y \in \MM,$  \beq\label{eq:form-2}  \frac{\big|\ln \left(d\mu/d\la_\MM(x)\right)  - \ln\left(d\mu/d\la_\MM(y)\right)\big|}{|x - y|} \leq \frac{C}{\tau}.\eeq 

  Let $G_\sigma^{(n)}$ denote the Gaussian distribution supported on $\R^n$ whose density (Radon-Nikodym derivative with respect to the Lebesgue measure) at $x$ is 
\beq\label{def:gaussian} \pi_{G_\sigma^{(n)}}(x) = \left(\frac{1}{2 \pi \sigma^2}\right)^{\frac{n}{2}}  \exp \left(-\frac{\|x\|^2}{2 \sigma^2} \right).\eeq

Let $\zeta_1, \dots, \zeta_N$ be a sequence of i.i.d random variables independent of $x_1, \dots, x_N$ having the distribution $G_\sigma^{(n)}$. 
We observe 
$$
{ y_i = x_i + \zeta_i,\quad \hbox{ for }i = 1, 2, \dots,N,}
$$ 
and wish to construct a manifold $\MM_o$ close to  $\MM$  in Hausdorff  distance but at the same time having a reach not much less than $\tau$. Note that the distribution of $y_i$ (for each $i$), is the convolution of $\mu$ and $G_\sigma^{(n)}$.  This is denoted by $\mu*G_\sigma^{(n)}$. Let $\omega_d$ be the volume of a $d$ dimensional unit Euclidean ball.
{ Suppose that 
\beq\label{eq:sigma-May}
   \sigma< r_{c} D^{-1/2},\quad \hbox{where }
 r_{c} := cd^{-C}\tau, \  D = \min\left(n, \frac{V}{c^d\omega_d\beta^d}\right),\ \beta =  \tau\sqrt{\frac{c^d \omega_d \tau^d}{V}}.\hspace{-15mm}
\eeq 
and $\Delta \geq \frac{Cd\sigma^2}{\tau}.$
The quantities $r_c$, $D$, $\beta$, and $N_0$ have intuitive interpretations. $r_c$ is the radius of discs fit to the data in the initial stage. $D$ is the dimension of the Principal component Analysis (PCA) subspace, $\beta$ is roughly the maximum distance of a point on the manifold to the PCA subspace. $N_0$ is the number of random discs of radius $r_c$ that need to be chosen in order to cover the manifold.
We observe $y_1, y_2, \dots, y_N$ and for $k \geq 3$, will  produce a description of a $\C^k-$manifold $\MM_o$ such that the Hausdorff distance between $\MM_o$ and $\MM$ is at most $ \Delta$ and $\MM_o$ has reach that is bounded below by $\frac{c\tau}{d^6}$ with probability at least $1 - \eta.$  Note that the required upper bound on $\sigma$ does not degrade to $0$ with as the ambient dimension $n \ra \infty$, but can be controlled by the intrinsic parameters $d$, $V$ and $\tau$.  
We shall be assuming that $\sigma$ is known exactly, however, all the arguments that we use go through if $\sigma^2$ is merely an upper bound on the true variance. This assumption, and how it can be made practical will be discussed at the end of Section~\ref{sec:4}.
The following is our main theorem.

\begin{theorem}\label{thm main}
Let $\MM$ be a $d$ dimensional $\C^2$ submanifold of  $\R^n$. We assume $\MM$ has volume ($d-$dimensional Hausdorff measure) less or equal to $V$, reach (i.e. normal injectivity radius) greater or equal to $\tau$, and that $\MM$ has no boundary. 

{ Let $\mu$ be a probability measure on $\MM$ which density with respect to Hausdorff measure of $\MM$  satisfies (\ref{eq:density 1}) and (\ref{eq:form-2}) with bounds $\rho_{max},\rho_{min}\in [c, C]$.
Let $k \in [3, C]$ be a fixed integer.
Let $x_j\in \MM$, $j=1,2,\dots,N$ be $N$ independent 
random samples from distribution $\mu$. Also, let
$\xi_j$, $j=1,2,\dots, N$ be independent random samples from a Gaussian random variable in $\R^n$ having covariance $\sigma^2I$, see (\ref{def:gaussian}). 
Suppose that $
      \sigma< r_{c} D^{-1/2},
$
where $r_c$ and $D$  are given in (\ref{eq:sigma-May}) and $\Delta \geq \frac{Cd\sigma^2}{\tau}.$
 Let $0 < \eta < 1.$
 \beqs & &\tilde{N}_0 = \frac{CV}{\omega_d r_{c}^d}\ln\left(\frac{CV}{\omega_d r_{c}^d}\right),\\
 & & N =  \left(\frac{n\tau \Delta}{d} + \tau^2\right) \left(\frac{d}{\Delta}\right)^2 \left(\frac{ r_{c}\sqrt{d}}{\sqrt{\tau \Delta}}\right)^d\tilde{N}_0 \left(\log \tilde{N}_0\right)^3\log(\eta^{-1}).\eeqs

Suppose we observe the data points 
$$
y_i = x_i + \xi_i\quad\hbox{for }i = 1, 2, \dots, N.
$$
 Then with probability at least $ 1 - \eta$, using these data 
we can construct a $\C^k-$manifold $\MM_o\subset \R^n$ for any fixed $k\geq 3$ such that  the Hausdorff distance between $\MM_o$ and $\MM$ is at most $\Delta$ and $\MM_o$ has reach that is bounded below by ${c\tau}/{d^6}$.}
\end{theorem}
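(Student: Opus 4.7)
The plan is to build $\MM_o$ locally as a graph over estimated tangent planes and then glue the pieces together via a smooth partition of unity. First I would cover a tubular neighborhood of $\MM$ at scale $r_c \sim d^{-C}\tau$ by $\sim N_0 = V/(\omega_d r_c^d)$ overlapping balls. Since we do not know $\MM$, I would produce the centers by running a greedy thinning on the observations $y_i$ (or equivalently on local centers of mass), which with the given sample size and the estimate $\sigma < r_c D^{-1/2}$ yields a set of points each within $O(\sigma\sqrt{D})$ of $\MM$. The key point of the scaling $r_c D^{-1/2}$ is that the effective dimension is not the ambient $n$ but $D$, the number of $\beta$-balls needed to cover $\MM$; this is what will let the sample complexity depend only almost linearly on $n$.

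Next I would do local tangent-plane estimation by empirical PCA. In each ball of radius $\sim r_c$ around a center $p$, the observations $y_i$ within that ball have covariance which, after subtracting the mean, decomposes as a tangential part of size $\sim r_c^2/d$ in $d$ directions, a normal part of size $\sim r_c^4/\tau^2$ coming from curvature, plus $\sigma^2 I_n$ from the Gaussian noise. The hypothesis on $\sigma$ guarantees a spectral gap between the top $d$ eigenvalues and the rest. Standard Davis--Kahan-type bounds, combined with Bernstein for operator-norm concentration of the empirical covariance in effective dimension $D$, let me estimate the tangent plane $T_p\MM$ up to angle $O(\sigma\sqrt{D}/r_c)$. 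The sample count per ball must therefore be polynomial in $d,n$ and subsumes the factor $(n\tau\Delta/d + \tau^2)(d/\Delta)^2$ in $N$, while the factor $(r_c\sqrt{d}/\sqrt{\tau\Delta})^d$ accounts for the finer scale $\sqrt{\tau\Delta}$ at which we need to resolve the manifold as a graph.

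Having the tangent plane, on each ball I would represent $\MM$ as a graph $\Phi_p \colon T_p\MM \to (T_p\MM)^\perp$ over the estimated tangent plane, and fit $\Phi_p$ by degree-$k$ local polynomial regression on the projected observations. After accounting for noise and the angular error of the estimated tangent, standard regression bounds give $\|\Phi_p - \Phi_p^{\text{true}}\|_{C^2} \lesssim \sigma/(\tau r_c)$, hence pointwise error $O(d\sigma^2/\tau) \leq \Delta$, which is exactly the Hausdorff bound. Finally I would glue the local graphs $\Phi_p$ using a smooth partition of unity subordinate to the cover to produce a single $C^k$ submanifold $\MM_o$.

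The main obstacle is controlling the \emph{reach} of $\MM_o$ after gluing. Because reach is governed by the $C^2$-norm of the embedding, and because a partition-of-unity sum over $\sim d$ overlapping charts can inflate second derivatives by factors of $d$ at each stage, a naive stitching loses powers of $d$ relative to $\tau$, which is the origin of the $d^6$ in the stated bound $c\tau/d^6$. To get the $d^6$ bound I would (i) align the charts by expressing each $\Phi_p$ in a common neighboring tangent frame before interpolating, so that on overlaps the pieces agree up to second order and the partition-of-unity correction contributes only at third order, (ii) use a tight upper bound on the number of charts meeting at any point (coming from the $\tau$-reach and volume doubling), and (iii) use the quantitative relation between $\|II\|_\infty$ and reach to convert the $C^2$ estimate on the glued graphs into the reach lower bound. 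Hausdorff distance $\leq \Delta$ and the probability $\geq 1-\eta$ then follow by a union bound over the $N_0$ charts, absorbing the $\log N_0$ factors and the $\log(1/\eta)$ factor into $N$.
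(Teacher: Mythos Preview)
Your outline is plausible at the level of ``cover, estimate tangents, fit locally, glue,'' but it diverges from the paper's construction in ways that matter, and the part you flag as ``the main obstacle'' --- the reach bound --- contains a real gap.

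\textbf{The reach bound does not follow from your plan.} You propose to control the reach via ``a tight upper bound on the number of charts meeting at any point (coming from the $\tau$-reach and volume doubling).'' That upper bound is not polynomial in $d$: with centers $\sim r/d$-separated on a $d$-manifold and balls of radius $r$, the multiplicity is $(Cd)^d$, not $O(d)$. A partition of unity over that many charts has derivative norms that pick up this $(Cd)^d$ factor, and your aligning-to-second-order idea does not remove it --- the partition functions themselves carry the blowup. So as written your argument would give $\mathrm{reach}(\MM_o)\gtrsim \tau/(Cd)^d$, not $\tau/d^6$. The paper avoids this with two ingredients you are missing. First (Section~6), it designs the \emph{weights} $c_i$ multiplying the bump functions so that the raw sum $\tilde\alpha(x)=\sum_i c_i\theta((x-p_i)/r)$ stays in $[c,c^{-1}]$ uniformly near $\MM$; the $c_i$ are obtained by comparing a counting measure on the net to the $d$-dimensional Hausdorff measure on $\MM$ convolved with the bump. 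Second (Section~7), instead of gluing graphs, $\MM_o$ is defined \emph{implicitly} as the zero set of $\Pi_xF(x)$, where $F(x)=\sum_i\alpha_i(x)\Pi^i(x-p_i)$ and $\Pi_x=\Pi_{\mathrm{hi}}(\sum_i\alpha_i(x)\Pi^i)$ via a Cauchy contour integral. The derivative estimates then go through H\"older's inequality in the form $\sum_i|\partial_v^s\alpha_i|\,\delta\le\|(\partial_v^s\alpha_i)_i\|_{(d+k)/(d+k-s)}\|(\delta)_i\|_{(d+k)/s}$, which is where the polynomial-in-$d$ factors (and ultimately $d^6$) come from. None of this is visible in your scheme.

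\textbf{The dimension reduction is also not in your plan.} You correctly note that the effective dimension should be $D$ rather than $n$, but this does not happen automatically: the paper first performs a \emph{global} PCA (Proposition~3.1) projecting all data onto a $D$-dimensional affine subspace $S$ and shows $\Pi_S\MM$ retains reach $\ge(1-4\alpha^2)\tau$. Only after this reduction are the disc-fitting and refinement steps carried out; otherwise local covariance concentration would force the per-ball sample size to scale with $n$ rather than with $D$, and the linear-in-$n$ sample complexity in the theorem would fail. Your ``greedy thinning on local centers of mass'' does not substitute for this.

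\textbf{On the Hausdorff bound}, your local polynomial regression could in principle reach $O(d\sigma^2/\tau)$, but the paper's route is different and worth knowing: after the global PCA and a first round of disc-fitting at scale $r_c$, it partitions each disc into $10\sigma$-cubes and \emph{averages} the raw data over the fiber above each cube (Section~5). The symmetry of the cube cancels the first-order term, and the second-order (curvature) contribution is $O(d\sigma^2/\tau)$; a second disc-fit at scale $\sqrt{d}\sigma$ on this refined net then gives the discs used to define $\MM_o$. This averaging trick sidesteps the errors-in-variables issue that local regression with tangentially noisy inputs would otherwise have to confront.
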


\begin{remark} Theorem~\ref{thm main} can be formulated using the following requirements for the parameters: Assume that $n, d, V$ and $\tau$ are given and define $r_c, D$ and $\beta$ as in (\ref{eq:sigma-May}). The claim of Theoreem~\ref{thm main} holds if $0 < \eta < 1$ and $\Delta > 0$ are arbitrary and $\sigma$ satisfies $$\sigma < \min(r_c D^{-1/2}, c(\De \tau/d)^{1/2}).$$ The above Theorem has implications even if there is zero noise. In such a scenario, if we desire a Hausdorff distance of $\De$, we can simply set $\sigma^2$ to $\frac{\tau \De}{Cd}$, and synthetically add independent Gaussian noise of this variance to the $N$ samples. The resulting data set can be processed by the algorithm to produce a manifold within a Hausdorff distance of $\De$ with high probability.
\end{remark}

\begin{remark} \lab{rem:1.3}When $\De < \frac{c\tau}{d^6},$ the output manifold $\MM_o$ is $C^1-$diffeomorphic to $\MM$, as proved in Proposition~\ref{prop:final-diff}.
\end{remark}

The above theorem may seem counterintuitive, in that the smaller $\sigma$ is, the larger $N$ is. However note that the Hausdorff distance that we are achieving is $O(\sigma^2)$, which itself decreases quadratically as $\sigma$ tends to zero.  This is the reason for the anomaly. We believe that going below a Hausdorff distance of $O(\sigma^2)$ in the case of $\CC^2$ manfolds would take different techniques and a significantly larger number of samples. Indeed that this is the case for sufficiently small Hausdorff distances was shown in \cite{Wasserman}.
In Proposition~\ref{thm:C3}, we obtain an explicit bound on the magnitude of the third derivatives of $\MM_0$.
{ We emphasize that in Theorem \ref{thm main} the Hausdorff distance of the constructed manifold
$\MM_o$ and the original manifold
$\MM$ as well as the reach of $\MM_o$ do not depend on the dimension $n$ of the ambient space.}
To prove Theorem~\ref{thm main} we develop an algorithm,  using a number of analytic tools, which ensures that the degradation of the reach is polynomial and not exponential in the dimension of the manifold, $d$. We believe that this is the first time this has been achieved. Secondly the number of samples required depends almost linearly on the ambient dimension $n$. This is the second novel feature of our algorithm. A detailed comparison to earlier results is given in Subsection~\ref{ssec:survey}.

\subsection{A survey of related work}\label{ssec:survey}

Let $f : K \rightarrow  \R$ be a function defined on a given (arbitrary) set $K \subset  \R^n$, and let $m \geq 1$ be
a given integer. The classical Whitney problem is the question  
 whether $f$ extends to a function $F \in C^m(\R^n)$ and  if such an
$F$ exists, what is the optimal $C^m$ norm of the extension. Furthermore, one is interested in
the questions
if the derivatives
of $F$, up to order $m$, at a given point can be estimated, or if 
one can construct extension $F$ so that it depends linearly on $f$.

{\color{black}
These questions go back to the work of H. Whitney \cite{W1,W11,W12} in 1934.
In the decades 
since
Whitney's seminal work, fundamental progress was made by G. Glaeser \cite{G}, Y. Brudnyi and
P. Shvartsman \cite{Br1,Br2,Br3,Br4,BS1,BS2} and \cite{Shv1,Shv2,Shv3}, and E. Bierstone-P. Milman-W. Pawluski \cite{BMP}. (See also N.
Zobin \cite{ZO1,ZO2} for the solution of a closely related problem.)

The above questions have been answered in the last few years, thanks to work of E. Bierstone,
Y. Brudnyi, C. Fefferman, P. Milman, W. Pawluski, P. Shvartsman and others, (see
\cite{BMP, Brom, Br1,Br3,Br4,BS2,F1,F2,F3,F5,F6}.) 
  Along the way,
the analogous problems with $C^m(\R^n)$ replaced by $C^{m,\omega }(\R^n)$, the space of functions whose
$m^{th}$ derivatives have a given modulus of continuity $\omega $, (see \cite{F5,F6}), were also solved.

The solution of Whitney's problems has led to a new algorithm for interpolation
of data, due to C. Fefferman and B. Klartag \cite{FK1,FK2}, where the authors show how to compute efficiently an interpolant $F(x),$ whose $C^m$ norm
lies within a factor $C$ of least possible,  where $C$ is a constant depending only on $m$ and $n.$ }

In traditional manifold learning, for instance, by using the ISOMAP algorithm introduced in the seminal paper  \cite{TSL}, one often aims to map points $X_j$ to points $Y_j=F(X_j)$ in an Euclidean space $\R^m$, where $m\geq n$ is as small as possible
so that the Euclidean distances $\|Y_j-Y_k\|_{\R^m}$ are close to the intrinsic  distances $d_M(X_j,X_k)$ and find a submanifold $\tilde M\subset \R^m$ that is close
to the points $Y_j$. { This method has turned out to be very useful, in particular in finding the topological manifold structure of the manifold $(M,g)$. 
It has been shown that when the original manifold  $(M,g)$  has a vanishing  Riemann curvature and satisfies certain 
convexity conditions, the manifold reconstructed by the ISOMAP approaches the original manifold  as the number of
the sample points tends to infinity (see the results in \cite{Bernstien,Donoho1,Donoho} for ISOMAP and \cite{Zha} for the continuum version of ISOMAP). We note that for a general Riemannian manifold, the construction  of}
a map $F:M\to \R^m$, for which { the intrinsic metric of} the embedded manifold $F(M)=\tilde M\subset \R^m$ is isometric to $(M,g)$  is  a very difficult 
task numerically as it means finding a map, the existence  of which is proved by the  Nash embedding theorem
(see \cite{Nash1,Nash2} and \cite{Verma1} on numerical techniques based on the Nash embedding theorem). 
{ We emphasize that the construction of an isometric embedding $f:M\to \R^n$ is outside of the context of the paper.}

%
%
One can  overcome the difficulties related {to the construction of the Nash embedding} by formulating the problem in a coordinate invariant way:
Given the geodesic distances of points sampled from a Riemannian manifold $(M,g)$, construct a manifold $M^*$ with an intrinsic metric tensor $g^*$ so that the Lipschitz distance of $(M^*,g^*)$  to the original manifold $(M,g)$ is small. 
{ The construction of abstract manifolds from the distances of sampled data points  has also been considered by Coifman and Lafon \cite{CoifmanLafon} and Coifman et al.\ \cite{diffusion1,diffusion2}
 using ``Diffusion Maps'',  and by Belkin and Niyogi \cite{BN} using ``EigenMaps'', where 
the data points are mapped to the values of the approximate eigenfunctions or diffusion kernels at the sample points. 
These methods construct 
a non-isometric embedding of the manifold $M$ into $\R^m$ with a sufficiently large $m$.
This construction  is continued in \cite{Meila} by computing an approximation the metric tensor $g$  by
using finite differences to find the Laplacian of  the products of the local coordinate functions. 
In \cite{FIKLN2}, we extend the results of \cite{FIKLN} that deals with the question how a smooth manifold, that approximates a manifold $(M,g)$,  can be constructed, when one is given the
 distances of the points of in a discrete subset $X$ of $M$ with small deterministic errors. In this paper we extend
 these results to two directions. First, the discrete set is randomly sampled and the distances have (possibly large) random errors. Second, we consider the case when some distance information is missing. }

The question of fitting a manifold to data is of interest to data analysts and statisticians \cite{aamari2019,  AizenbudSober, chen2015, Hein, Wasserman, Genovese:2012:MME:2188385.2343687,  kim2015, Sober,  zhigang}.  We will focus our attention on results that provide an algorithm for describing a manifold to fit the data  together with upper bounds on the sample complexity. 

A  work in this direction  \cite{ridge}, building over \cite{Ozertem11} provides an upper bound on the Hausdorff distance between the output manifold and the true manifold equal to $O((\frac{\log N}{N})^{\frac{2}{n+8}}) + {O}(\sigma^2\log (\sigma^{-1}))$. Note that in order to obtain a Hausdorff distance of $c\eps$, one needs more than $\eps^{-n/2}$ samples, where $n$ is the ambient dimension. 
This bound is exponential in $n$ and thus differs significantly from our results.

The  results of the present work  guarantee (for $\sigma$ satisfying (\ref{eq:sigma-May})) that the Hausdorff distance between the output manifold and the true manifold $\MM$ is less than $$\frac{C\sigma^2 d}{\tau} = O(\sigma^2)$$ with probability at least $ 1- \xi$ (with less than  $N$ samples).  Thus our bound on the Hausdorff distance is $O(\sigma^2)$ which is an improvement over  $O(\sigma^2\log(\sigma^{-1}))$, and also, the number of samples needed to get there depends exponentially on the intrinsic dimension $d$, but linearly on $n$. 
The upper bound on the number of samples depends polynomially on $\sigma^{-1}$, the exponent being $d + 4$. Moreover, if the ambient dimension $n$ increases while $\sigma$ decreases, in such a manner as to have \beq\lab{eq:point-out}n\sigma^2 > c \tau^2,\eeq we have the exponent of $\sigma^{-1}$ to be $d+2$.\\
{\bf Comparison with the results of \cite{putative}.}
The  results of \cite{putative}  guarantee (for sufficiently small $\sigma$) a Hausdorff distance of $$Cd^{7} (\sigma \sqrt{n})$$ with less than $$\frac{CV}{\omega_d( \sigma \sqrt{n})^{d}} = O(\sigma^{-d})$$ samples, where $d$ is the dimension of the submanifold, $V$ is in upper bound in the $d$ dimensional volume, and $\sigma$ is the standard deviation of the noise projected in one dimension. The present work improves the results of \cite{putative} in the following two ways. Firstly, the upper bound on the standard deviation $\sigma$ of the permissible noise is independent of the ambient dimension, while in \cite{putative} this upper bound depended inversely on the square root of the ambient dimension.  Secondly, the bound on the Hausdorff distance between the output manifold and the true manifold is less than $\frac{Cd\sigma^2}{\tau}$ rather than $Cd^{7} \sigma \sqrt{n}$ as was the case in \cite{putative}, which for permissible values of $\sigma$ is significantly smaller.
In terms of new methodology, firstly, the present work has a preliminary dimension reduction on to a Principle Component Analysis (PCA) Subspace, which is why,  in the present work, we can bound from above, the  Hausdorff distance between the output and true manifolds by a quantity independent of $n$.
Secondly, there is a two stage process of disc fitting, first with discs of size $\sqrt{\sigma\sqrt{D}\tau}$, (where $D$ is the PCA dimension)  and then with discs of size $\sqrt{d}\sigma.$ In \cite{putative} there was only one stage. Finally, in the present work, we provide an analysis of the third derivative of the output manifold, which is absent from \cite{putative}.

As shown in \cite{Wasserman} the question of manifold estimation with additive noise, in certain cases can be viewed as a question of regression with errors in variables \cite{Fan-Truong}. The asymptotic rates that can be achieved in the latter question are extremely slow. The results of \cite{Wasserman} imply among other things the following. Suppose that in a manifold that is the graph of $y =  \sin(x + \phi)$  we wish to identify with constant probability, the phase $\phi$ to within an additive  error of at most $\eps$, from samples of the form $(x + \eta_1, y + \eta_2)$ where $x$, $\eta_1$ and $\eta_2$ are standard Gaussians. Then the number of samples needed is at least $\exp(C/\eps).$ 

{\bf A heuristic lower bound.} To see why the bound on the number of samples $N = \tilde{O}\left(\left(\frac{\tau}{\sigma}\right)^{d+2} \frac{V}{\tau^d}\right)$ is likely to be a lower bound,\footnote{By $\tilde{O}(f(n)),$ we  mean $O(f(n)\ln^C(f(n))),$ for a universal constant $C.$}  set $\tau = 1$ after scaling.  Consider $S^d \subset \R^{d+1} \subset \R^{d+2},$ where $S^d$ is the unit $d-$dimensional sphere. Suppose $\sigma << 1$, and we look at Gaussian noise that is only in the $e_{n+2}$ direction. This can be seen to be an easier problem, since the original problem is obtained by convolving this distribution with another. Now consider a continuous family of manifolds $ \FF$, characterised by their being identical to $S^d$, except for having a bump $W_\MM \subseteq \MM \in \FF$ in the $e_{d+2}$ direction around $(1, 0, \dots, 0)$ of height $\eta\sigma^2,$ and radial width $O(\sigma)$ where $\eta$ is an unknown parameter in $(0, 1).$ Only samples from the bump carry information about the height of the bump. The probability of getting a sample from the bump is proportional to $\frac{\sigma^d}{V}.$ Even to estimate a real valued parameter $\eta$, that is the mean of a Gaussian of standard deviation $\sigma$,  to within $\sigma^2$ with a constant probability, takes at least $c \sigma^{-2}$ samples from the Gaussian.  It is natural to expect this lower bound to continue to hold in the more complicated setting that we are in, leading to a lower bound of $\tilde{O}\left(\left(\frac{\tau}{\sigma}\right)^{d+2} \frac{V}{\tau^d}\right)$ on the number of samples needed. 
In general we have an upper bound with a worse dependence on $\sigma^{-1}$, namely $N = \tilde{O}\left(\left(\frac{\tau}{\sigma}\right)^{d+4} \frac{V}{\tau^d}\right)$.
It is desirable to prove an optimal lower bound rigorously, but we leave this to future work.

\begin{definition}\label{dhaus}
Given two subsets $X$ and $Y$ of a metric space $(M, d_M)$, we denote by $dist(X, Y)$, the one-sided distance from $X$ to $Y$ which equals $\sup_{x\in X} \inf_{y\in Y} d_M(x, y).$ We denote the Hausdorff distance between $X$ and $Y$, which equals\\ $\max(dist(X, Y), dist(Y, X))$ by $\dhaus(X, Y).$ When $X$ is a singleton $\{x\}$, we abbreviate $dist(\{x\}, Y)$ to $dist(x, Y)$.
\end{definition}
It follows that it is not possible to provide  sample complexity bounds with a inverse polynomial dependence on $\dhaus(\MM_o, \MM)$, where the Hausdorff distance is arbitrarily small.

\begin{definition}[Reach]
The reach of a closed set $A \subseteq \R^m$, denoted $reach(A)$, is the supremum of all $r$ satisfying the following property. If $dist(p, A) \leq r$, then there exists a unique $q \in A$ such that $|p-q| = dist(p, A)$.
\end{definition}
For a smooth submanifold, the reach is the size of the largest neighborhood where the tubular coordinates near the submanifold are defined.

Finally, we mention that there is an interesting body of literature \cite{Boissonnat, Cheng} in computational geometry that deals with fitting piecewise linear manifolds (as opposed to $C^2-$smooth manifolds) to data. The paper \cite{Cheng} presented the first  algorithm for arbitrary $d$, that takes samples from a smooth $d-$dimensional
manifold $\MM$ embedded in an Euclidean space and outputs a simplicial manifold that is
homeomorphic and close in Hausdorff distance to $\MM.$ 
\subsection{Overview of sections of this paper.}
\ben
\item In Section~\ref{sec:GeomPrelim} we discuss preliminaries needed for working with $\C^2$ submanifolds of positive reach, as well as record some analytic and probabilistic facts needed subsequently. 
\item In Section~\ref{sec:4}, we start with projecting the raw data points on to a $D$ dimensional linear subspace $S$ which is such that it minimizes the sum of the squares of the distances to the points. The span of the eigenvectors corresponding to the top $D$ eigenvalues of the covariance matrix of the points, is such a linear subspace (the probability of ties in the eigenvalues is $0$). Projection on to this subspace reduces the variance of the noise by a multiplicative factor of $\frac{D}{n}$, while the reach of the projected manifold is almost as large as the reach of the original manifold (see Lemma~\ref{lem:alpha}).

\begin{figure}
\centering
\includegraphics[scale=0.40]{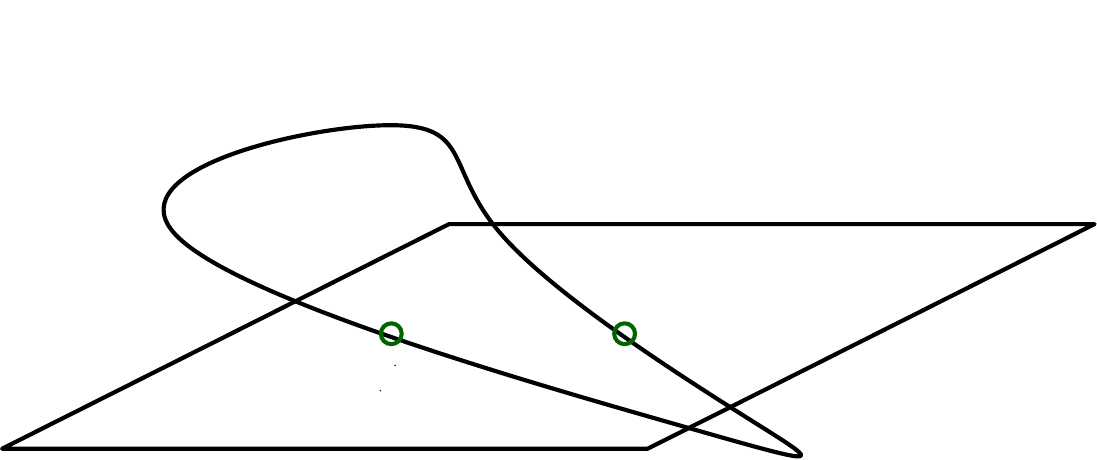}
\caption{We project the manifold on to a PCA subspace (see 2. below). In this picture, the two dimensional  PCA plane cuts the one dimensional manifold embedded in $\R^3$ into two pieces, the points of intersection being marked by green circles.}
\label{fig:PCA}
\end{figure}

\item In Section~\ref{sec:follows}, we find a family of $d$ dimensional putative discs $D_i$  of radius roughly $\sqrt{\sigma \sqrt{D} \tau}$ that approximates the set of projected points to within a Hausdorff distance of order roughly $\sigma \sqrt{D}$.
 We then view the projected data as points in a fiber bundle over a base space. The base space is the disjoint union of the discs in the family mentioned in (2). Each fiber is a disc of dimension $D-d$ and radius $\sigma\sqrt{D}$ centered at its basepoint (see Figure~\ref{fig:firstscale}). 
 
\begin{figure}
\centering
\includegraphics[scale=0.70]{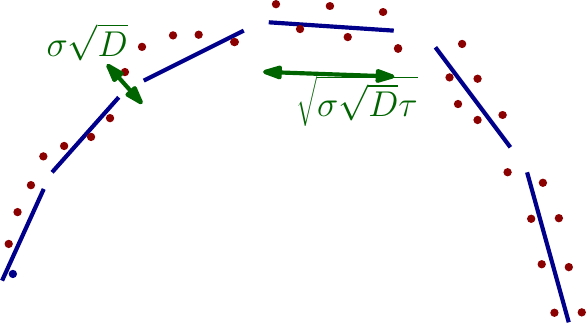}
\caption{After projecting  the data on to a PCA subspace, the projected points are mostly concentrated to a neighborhood of the finite number of $d-$dimensional discs $D_i$ that have radius $\sqrt{\sigma \sqrt{D}\tau}$. This neighborhood can be considered to be a bundle where fibers have diameter $\sigma \sqrt D$, over $d-$dimensional discs of radius  $\sqrt{\sigma \sqrt{D} \tau}$ (See step 3. and Figure~\ref{fig:PCA}).}
\label{fig:firstscale}
\end{figure}

\begin{figure}
\centering
\includegraphics[scale=0.70]{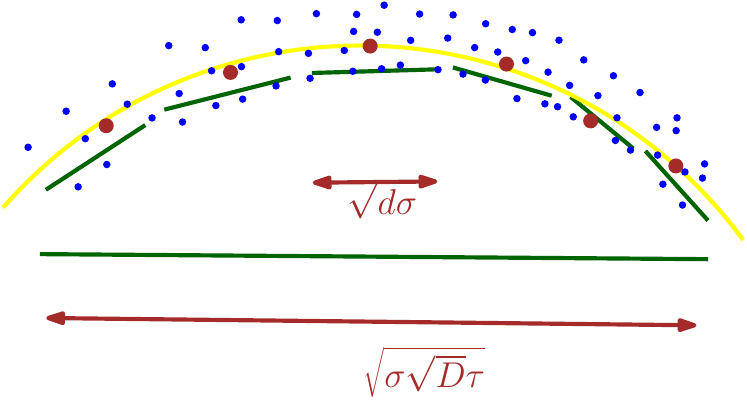}
\caption{After viewing the data as points in a bundle over discs of radius $\sqrt{\sigma \sqrt{D} \tau}$ in step 3., it is locally averaged to obtain the brown points. We then fit discs at a smaller scale of $\sqrt{d} \sigma$ to the brown points (See step 4.).}
\label{fig:secondscale}
\end{figure}

\begin{figure}

\begin{picture}(320,200)
 \put(80,00){\includegraphics[width=8.0cm]{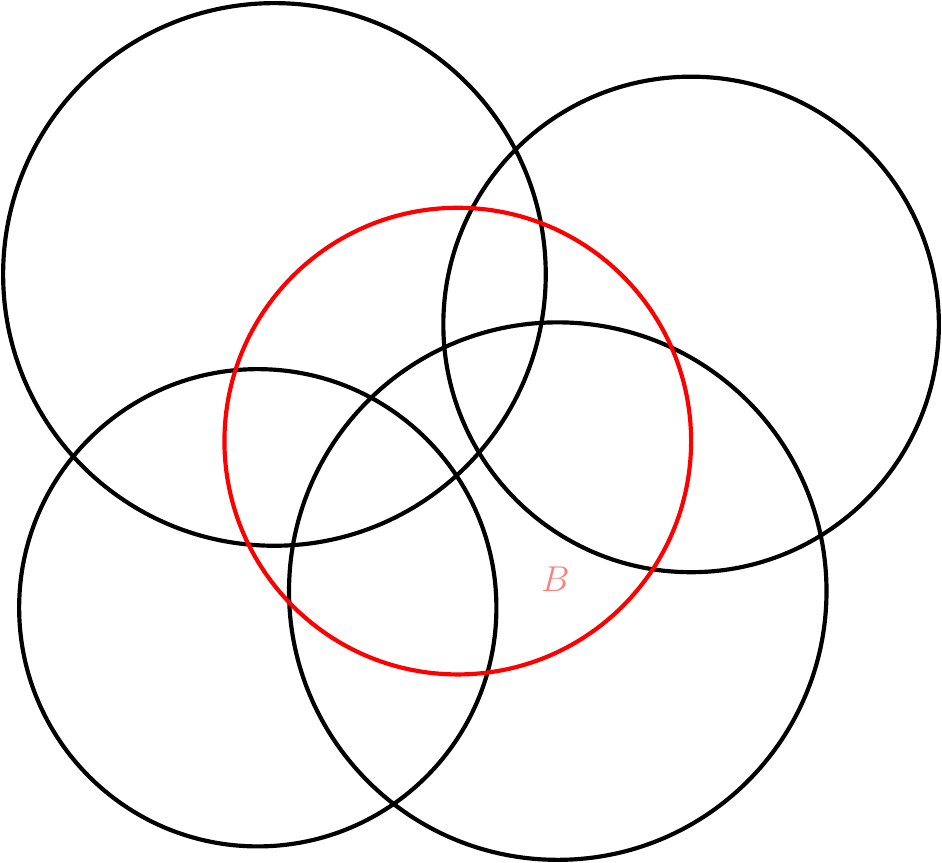}}
\end{picture}

\caption{Exponentially many (in intrinsic dimension $d$) unit balls whose centers are outside a given unit ball $B$ (in red) are needed to cover $B$ (See step 5.).}

 \lab{fig:circles}
\end{figure}
\begin{figure}

\begin{picture}(320,200)
 \put(80,00){\includegraphics[width=7.0cm]{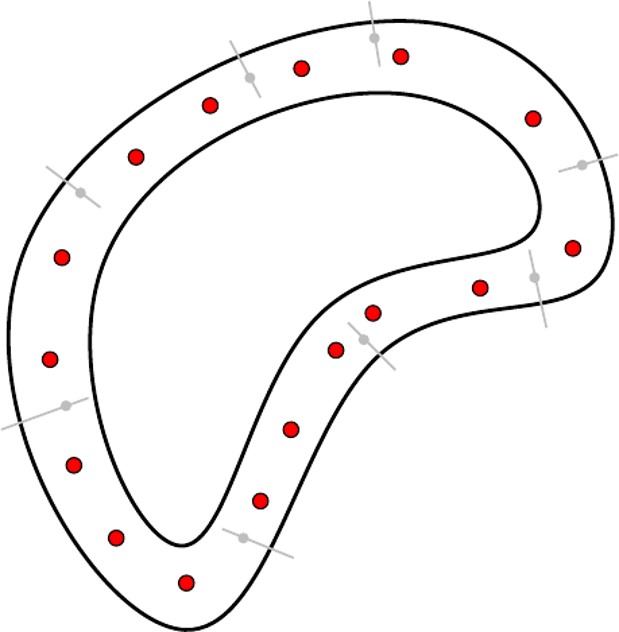}}

\end{picture}

\caption{Using data, we construct an output manifold (See step 6.). The grey lines represent fibers of a certain perturbation of the normal bundle of $\MM$, that is used to construct the output manifold. The manifold itself is defined by an equation of the form $\Pi_x F(x) = 0$, where $\Pi_x$ is an orthogonal projection on to the fiber at $x$, and $F$ is a certain vector valued function.}
\end{figure}

\item In Section~\ref{sec:ref-net},  in each disc $D_i$, we consider a set of lattice points and for each lattice point $x \in D_i$ we consider the Voronoi region in $D_i$ of points closer to $x$ than to the other lattice points in $D_i$ (see Figure~\ref{fig:H0}). This region is a $d-$dimensional cube unless $x$ is close to the boundary of $D_i$. We compute the average of all points whose orthogonal projection to $D_i$ is a point in the Voronoi region just mentioned.
The distance of individual points in the set $\mathrm{Rnet}$  of averages so obtained is within roughly $\frac{d\sigma^2}\tau$ of the original manifold, but the points are now contained in $\R^n$ and not necessarily $S$.

 Using this ``refined net" $\mathrm{Rnet}$ as a data set that replaces the original sample points, we now, for a second time, find a family of discs $\{D'_i\}_{i \in [N_3]}$ that approximate the new data in Hausdorff distance. However this time, the discs have radius roughly $\sqrt{d}\sigma$.
and the Hausdorff distance between the new data set and the union of the discs is of the order of 
$\frac{d\sigma^2}\tau$. Further we prove that the Hausdorff distance between the union of the discs and the manifold $\MM$ is also of the order of $\frac{d\sigma^2}\tau$ (see Lemma~\ref{lem:18} and Figure~\ref{fig:circles}).
\item In Section~\ref{sec:weights} (in the appendix), we design a set of weights $\{\a_i\}_{i \in [N_3]}$ associated with the set of discs $\{D'_i\}_{i \in [N_3]}$ to be used in the next step in the definition of a partition of unity. These weights play a crucial role in obtained a lower bound on the reach of the output manifold $\MM_o$ that differs from the reach of $\MM$ by a factor that is polynomially bounded in $d$. 
Since a point on the submanifold $\MM$ can be contained inside exponential in $d$ many Euclidean balls of dimension $n$ and radius less than $c\tau$ in any cover of $\MM$ with such balls, if one does not take special care their interaction leads to the reach being potentially smaller by  a multiplicative factor that is exponential in $d$. We  will make repeated use of H\"older's inequality for $\ell_p$ norms and $\ell_q$ norms inequalities to circumvent this difficulty.

\item
 In Section~\ref{sec:calc}, we  consider $n-$dimensional balls $U_i \subseteq S$ containing the respective discs $D'_i$ and having the same radius.
We use the union of these balls to construct the output manifold as follows.
We construct a vector bundle in which the base space is $\bigcup_i U_i$ (which is an $n$ dimensional neighborhood of $\MM$), and the fiber at a point $x$ is a $n-d$ dimensional affine subspace that is roughly orthogonal to the affine span of the disc $D'_i$ closest to $x$. This step uses partitions of unity for defining a subspace as a kind of weighted average of subspaces (See Definition~\ref{def:6.9}), with specially designed weights discussed in Section~\ref{sec:weights}.
 For each disc $D'_i$, we consider a bump function supported on $U_i$ corresponding to a partition of unity for $\bigcup_{i \in [N_3]} U_i$, which we use to generate a vector valued function $F$  that  approximates the gradient of the squared distance to the manifold from the individual squared distances to the discs. Finally, the output manifold is defined to be the set of all points $x$ at which $F$ is orthogonal to the fiber at $x$ (see Definition~\ref{def:7}). 
 This manifold is $\C^k$ and a lower bound on the reach of this manifold is obtained that has a polynomial rather than exponential dependence on $d$. A concrete upper bound on the third derivatives of the manifold viewed as the graph of a function is obtained in Proposition~\ref{thm:C3}.
\een

\section{Geometric Preliminaries}\label{sec:GeomPrelim}



We need the following form of the Gaussian concentration inequality, which may be found in Proposition 1.5.7 of \cite{Tao2011AnEO}. Recall from (\ref{def:gaussian}) that $G_\sigma^{(m)}$, is the centered Gaussian distribution with variance $\sigma^2$ supported on $\R^m$
whose density at $x$ is given by 
$ \rho_{G_\sigma}(x) := \left(\frac{1}{2 \pi \sigma^2}\right)^{\frac{m}{2}}  \exp \left(-\frac{\|x \|^2}{2\sigma^2} \right).$ 

\begin{lemma}[Gaussian concentration]\label{lem:Gaussian}

Let $g:\R^m \ra \R$ be a $1-$Lipschitz function and $a = \E g(X)$ where $X$ is a random variable having distribution respect to $G_\sigma^{(m)}$. Then, for $t > 0$,
\beq  G_\sigma^{(m)}\{x:|g(x) - a| \geq t\sigma\} \leq C\exp\left({-ct^2}\right).\eeq
 for some absolute constants $c, C$.
\end{lemma}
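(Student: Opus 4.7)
The plan is to prove this as the standard Gaussian concentration inequality via Herbst's method, which derives a subgaussian moment generating function bound from the Gross logarithmic Sobolev inequality for the Gaussian measure.

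First I would reduce to the case $\sigma = 1$. Writing $X = \sigma Z$ with $Z \sim G_1^{(m)}$ and setting $h(z) := g(\sigma z)/\sigma$, the function $h$ is $1$-Lipschitz on $\R^m$ and the desired bound becomes $\Prob(|h(Z) - \E h(Z)| \geq t) \leq C \exp(-ct^2)$. A routine mollification (convolve $h$ with a smooth bump and pass to the limit in the mollifier) lets us assume $h \in C^\infty(\R^m)$ with $|\nabla h| \leq 1$ pointwise, since both $\E h$ and the tail probabilities are stable under this approximation.

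Next I would invoke the Gaussian logarithmic Sobolev inequality: for every smooth $F: \R^m \to (0,\infty)$,
\beqn
\int F^2 \log F^2 \, d\gamma - \Big(\int F^2\, d\gamma\Big)\log \Big(\int F^2\, d\gamma\Big) \leq 2 \int |\nabla F|^2 \, d\gamma,
\eeqn
where $\gamma = G_1^{(m)}$. Applying this with $F = \exp(\lambda h/2)$ and setting $\psi(\lambda) := \log \E \exp(\lambda h(Z))$, a direct computation yields $|\nabla F|^2 = (\lambda^2/4) e^{\lambda h}|\nabla h|^2 \leq (\lambda^2/4) e^{\lambda h}$, while the left-hand side of the log-Sobolev inequality equals $e^{\psi(\lambda)}\bigl(\lambda \psi'(\lambda) - \psi(\lambda)\bigr)$. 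Thus the log-Sobolev inequality reduces to the differential inequality $\lambda \psi'(\lambda) - \psi(\lambda) \leq \lambda^2/2$, which is exactly $(d/d\lambda)\bigl(\psi(\lambda)/\lambda\bigr) \leq 1/2$. Integrating from $0$ (with limit $\psi'(0) = \E h$) gives the subgaussian bound $\psi(\lambda) \leq \lambda \E h + \lambda^2/2$ for every $\lambda \in \R$.

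Finally, Markov's inequality yields $\Prob(h(Z) - \E h(Z) \geq t) \leq \exp(-\lambda t + \lambda^2/2)$; optimizing at $\lambda = t$ gives $\exp(-t^2/2)$. Applying the same argument to $-h$ (also $1$-Lipschitz) and adding, $\Prob(|h(Z) - \E h(Z)| \geq t) \leq 2 \exp(-t^2/2)$, which after undoing the rescaling is the claimed bound with $C = 2$, $c = 1/2$. The only genuinely nontrivial input is the log-Sobolev inequality itself; this is classical, proved either by Gross via tensorization from the two-point Bernoulli case, or by a direct semigroup interpolation argument using the Ornstein--Uhlenbeck flow on $\R^m$. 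Since the authors cite Tao's book for the statement, one could alternatively just quote it; the argument above is the standard textbook derivation.
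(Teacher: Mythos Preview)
Your argument is correct: this is the standard Herbst derivation of Gaussian concentration from the Gross log-Sobolev inequality, and every step is valid. The paper, however, does not prove this lemma at all; it simply cites Proposition~1.5.7 of Tao's book \cite{Tao2011AnEO} and moves on. So your proposal is strictly more detailed than what the paper does, and your closing remark that one could just quote the result is in fact exactly what the authors chose to do.
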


\begin{definition}
For a closed $A \subseteq \R^m$, and $a \in A$, let the tangent space (in the sense of Federer \cite{federer_paper}) $Tan^0(a, A)$ denote the set of all vectors $v \in \R^D$ such that for all $\eps > 0$, there exists $b \in A$ such that $0 < |a - b| < \eps$ and $$\bigg|v/{|v|} - \frac{b-a}{|b-a|}\bigg| < \eps.$$
 Let the normal space $Nor^0(a, A)$ denote the set of all $v$ such that for all $w \in Tan^0(a, A)$, we have $\langle v, w\rangle = 0$.
 Let $Tan(a, A)$ (or $Tan(a)$ when $A$ is clear from context) denote the set of all $x$ such that $ x -a \in Tan^0(a, A)$. For a set $X\subseteq \R^m$ and a point $p\in \R^m$,  let $dist(p, X)$ denote the Euclidean distance of the nearest point in $X$ to $p$. Let $Nor(a, A)$  (or $Nor(a)$ when $A$ is clear from context) denote the set of all $x$ such that $ x -a \in Nor^0(a, A)$.
\end{definition}

The following result of Federer (Theorem 4.18, \cite{federer_paper}), gives an alternate characterization of the reach.
 \begin{proposition}[Federer's reach criterion]\label{thm:federer} Let $A$ be a closed subset of $\R^m$. Then,
 \beq\lab{eq:reach} \reach(A)^{-1} = \sup\left\{2|b-a|^{-2}dist(b, Tan(a))\big| \, a, b \in A, a \neq b\right\}.\eeq \end{proposition}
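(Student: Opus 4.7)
I plan to deduce the equality from a single tubular-neighborhood lemma, call it $(\star)$: \emph{for every $a \in A$, every unit vector $\nu \in Nor^0(a, A)$, and every $t \in [0, \reach(A))$, the metric projection of $a + t\nu$ onto $A$ is exactly $\{a\}$}. Granted $(\star)$, both inequalities of the claim reduce to elementary expansion of squared distances.

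For the direction $\reach(A)^{-1} \ge \sup\{\cdots\}$, I fix distinct $a, b \in A$ and $R < \reach(A)$. For any unit $\nu \in Nor^0(a, A)$, $(\star)$ applied at $t = R$ gives $|b - (a + R\nu)|^2 \ge R^2$, which expands to $\langle \nu, b - a\rangle \le |b-a|^2/(2R)$. In the $C^2$ submanifold setting of the paper, $Tan^0(a, A)$ is the linear tangent space, so $\dist(b, Tan(a))$ equals the norm of the orthogonal projection of $b - a$ onto $Nor^0(a, A)$, which in turn equals $\sup_{|\nu|=1,\, \nu \in Nor^0(a,A)}\langle \nu, b-a\rangle$. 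Taking this supremum in the displayed bound and letting $R \nearrow \reach(A)$ yields $2|b-a|^{-2}\dist(b, Tan(a)) \le \reach(A)^{-1}$.

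For the converse direction $\reach(A)^{-1} \le \sup\{\cdots\}$, I pick any $R > \reach(A)$; by definition of reach there exist $p \in \R^n$ together with two distinct points $a, c \in A$ both realizing the distance $r < R$ from $p$. Set $\nu := (p-a)/r$. A short perturbation argument shows $\nu \in Nor^0(a, A)$: otherwise some sequence $b_n \to a$ in $A$ has secant direction $(b_n - a)/|b_n - a|$ approaching a unit $w \in Tan^0(a, A)$ with $\langle \nu, w\rangle \neq 0$, and then $|p - b_n|^2 = r^2 - 2r\langle \nu, b_n - a\rangle + |b_n-a|^2 < r^2$ for large $n$, contradicting $a$ being a nearest point to $p$. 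Expanding $|p - c|^2 = r^2$ now gives $\langle \nu, c - a\rangle = |c-a|^2/(2r)$, so $\dist(c, Tan(a)) \ge |c-a|^2/(2r)$ and hence $2|c-a|^{-2}\dist(c, Tan(a)) \ge 1/r > 1/R$; letting $R \searrow \reach(A)$ finishes this direction.

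The main obstacle is establishing $(\star)$ itself. For infinitesimal $t$ it is immediate from the definition of $Nor^0$: any sequence $b_n \to a$ in $A$ satisfies $\langle \nu, b_n - a\rangle = o(|b_n - a|)$ because secant directions converge to elements of $Tan^0$ (which are orthogonal to $\nu$), whence $|b_n - (a + t\nu)|^2 \ge t^2$ for sufficiently small $t$ and $|b_n - a|$. To extend to all $t < \reach(A)$, I would invoke the standard tubular-neighborhood property that the metric projection $\pi_A$ is well-defined and continuous on $U = \{x : \dist(x, A) < \reach(A)\}$; for a $C^2$ submanifold in particular, the normal-bundle exponential $(a, v) \mapsto a + v$ is a diffeomorphism from vectors of length $< \reach(A)$ onto $U$, so $\pi_A(a + t\nu) = a$ for all $t \in [0, \reach(A))$ and all unit $\nu \in Nor^0(a, A)$. (The fully general closed-set case requires Federer's additional result that the tangent cone of a set of positive reach is convex, which is what secures the polar-duality identification of $\dist(b, Tan(a))$ with the $Nor^0$-projection norm used in the first inequality above.)
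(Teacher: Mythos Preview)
The paper does not prove this proposition; it simply quotes it as Theorem~4.18 of Federer~\cite{federer_paper} and uses it as a black box thereafter. So there is no in-paper argument to compare against.

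Your outline is correct for the $C^2$-submanifold setting the paper actually uses, and it is essentially the standard reduction of Federer's 4.18 to his earlier 4.8 (your lemma~$(\star)$). One small wrinkle: in the converse step, your perturbation argument that $\nu\in Nor^0(a,A)$ only directly yields $\langle\nu,w\rangle\le 0$ for tangent directions $w$ realized as limits of secants; concluding $\langle\nu,w\rangle=0$ tacitly uses that $Tan^0(a,A)$ is a linear subspace (so that $-w$ is also tangent), which holds for $C^2$ submanifolds but not for arbitrary closed sets. A second wrinkle: invoking ``the normal exponential is a diffeomorphism onto the full $\reach(A)$-tube'' to obtain $(\star)$ is a little delicate, since the textbook tubular-neighborhood theorem only furnishes \emph{some} positive radius, and pushing it all the way to $\reach(A)$ is exactly the content of $(\star)$. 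A noncircular route is to observe that $\pi_A$ is continuous on the open tube (an elementary compactness argument), so the set $\{t\in[0,\reach(A)):\pi_A(a+t\nu)=a\}$ is closed, and a local-chart computation at $a$ shows it is also relatively open, hence equals all of $[0,\reach(A))$.

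You are right that the fully general closed-set statement requires more of Federer's structure theory (convexity of the tangent cone for positive-reach sets), and that the paper's linear-orthogonal-complement definition of $Nor^0$ coincides with Federer's normal cone only when $Tan^0$ is already linear. Since every application in the paper is to a $C^2$ submanifold, your restricted argument covers what is needed.
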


\begin{corollary}\label{cor:1}
Suppose $a, b \in A$ and  $|a-b| < reach(A) := \tau.$ Let $\Pi_a b$ denote the unique nearest point to $b$ in $Tan(a)$. Then,
\beq \frac{dist(b, Tan(a))}{\tau} \leq  \left(\frac{|a - \Pi_{a} b|}{\tau}\right)^2. \eeq
\end{corollary}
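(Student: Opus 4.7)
The plan is to bootstrap Federer's reach criterion (Proposition~\ref{thm:federer}) from a bound in terms of $|b-a|$ to a sharper bound in terms of the tangential component $|a - \Pi_a b|$, using the Pythagorean decomposition of $b-a$ into its tangential and normal parts with respect to $Tan(a)$.

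First, I would apply Federer's criterion directly to get the basic estimate
\[
  dist(b, Tan(a)) \leq \frac{|b-a|^2}{2\tau}.
\]
Then, since $\Pi_a b$ is the orthogonal projection of $b$ onto the affine tangent space $Tan(a)$, I would use the Pythagorean identity
\[
  |b-a|^2 = |a - \Pi_a b|^2 + dist(b, Tan(a))^2,
\]
and set $h := dist(b, Tan(a))$ and $\ell := |a - \Pi_a b|$ for brevity. Substituting gives the quadratic inequality $2\tau h \leq \ell^2 + h^2$, i.e.\ $h^2 - 2\tau h + \ell^2 \geq 0$.

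The quadratic constraint is satisfied only on $h \leq \tau - \sqrt{\tau^2 - \ell^2}$ or $h \geq \tau + \sqrt{\tau^2 - \ell^2}$. To rule out the upper branch, I note that $h \leq |b-a| < \tau$ (the first inequality uses that $a \in Tan(a)$, the second is the hypothesis), while the upper branch requires $h \geq \tau$. Hence $h \leq \tau - \sqrt{\tau^2 - \ell^2}$.

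Finally, the elementary identity
\[
  \tau - \sqrt{\tau^2 - \ell^2} \;=\; \frac{\ell^2}{\tau + \sqrt{\tau^2 - \ell^2}} \;\leq\; \frac{\ell^2}{\tau}
\]
closes the argument, giving $h/\tau \leq (\ell/\tau)^2$ as desired. I do not expect any genuine obstacle here; the only subtle point is justifying the selection of the lower branch of the quadratic, which follows cleanly from the hypothesis $|a-b| < \tau$ together with the trivial bound $dist(b, Tan(a)) \leq |a-b|$.
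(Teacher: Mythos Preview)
Your proof is correct and follows essentially the same approach as the paper: apply Federer's reach criterion, use the Pythagorean decomposition $|b-a|^2 = |a-\Pi_a b|^2 + dist(b,Tan(a))^2$, solve the resulting quadratic in $dist(b,Tan(a))$, and bound $\tau - \sqrt{\tau^2-\ell^2} \le \ell^2/\tau$. If anything, your version is slightly more careful in explicitly justifying the choice of the lower branch via $h \le |b-a| < \tau$, which the paper leaves implicit.
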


\begin{proof} We have
\beq \frac{dist(b, Tan(a))}{\tau} \leq  \frac{|a - b|^2}{2\tau^2} = \frac{|a - \Pi_a b|^2 + dist(b, Tan(a))^2}{2\tau^2}. \eeq
Solving the quadratic inequality in $dist(b, Tan(a))$, we get
\beq  \frac{dist(b, Tan(a))}{\tau} & \leq  & 1 - \sqrt{1 - \left(\frac{|a - \Pi_{a} b|}{\tau}\right)^2} \leq  \left(\frac{|a - \Pi_{a} b|}{\tau}\right)^2.\eeq

\end{proof}

\begin{definition}\label{def:2new}
We say that $\MM$ is a compact $d$ dimensional $C^2$ submanifold of $\R^m$ if $\MM$ is compact and the following is true.
Firstly, for any $p \in \MM,$ the tangent space at $p$ is a $d$ dimensional affine subspace of $\R^m$, and therefore, by an orthogonal transformation, one can choose Euclidean coordinates in $\R^m$ so that the tangent space at $p$ has
the form $Tan(p) = \{(z_1, z_2) \in \R^d\oplus \R^{m-d}| A z_1 + b= z_2\}$ for some matrix $A = A(p)$ and vector $b = b(p)$. Secondly,  there  exists a neighborhood $ V \subseteq \R^m$ of $p$, an open set $U \subset \R^d$ and a $C^2$-smooth function $F: U \ra \R^{m-d}$  such that in the above coordinates 
\beq\label{eq:ast2} \MM \cap V = \{(u, F(u))\big| u \in U \cap \R^d\}. \eeq
\end{definition}
Let $\G(d, m, V, \tau)$ be the set of all $d$ dimensional,  compact $\C^2$ manifolds embedded in $\R^m$ and having reach at least $\tau$ and $d$ dimensional Hausdorff measure less or equal to $V$. Let $\MM \in \G(d, m, V, \tau)$. In the remainder of this section, for $x \in \MM$ denote  the orthogonal projection from $\R^n$
 to the affine subspace tangent to $\MM$ at $x$,  $Tan(x)$ by $\Pi_{x}$. 

\section{Projecting the manifold on to a $D-$dimensional subspace}\label{sec:4}
We shall be assuming that $\sigma$ is known exactly, however, all the arguments that we use go through if $\sigma^2$ is merely an upper bound on the true variance. This assumption, and how it can be made practical will be discussed at the end of this section. Further, for the purposes of the proof, we may assume that $\Delta = \frac{Cd\sigma^2}{\tau},$ because if $\Delta$ is larger by more than $C$ than this, we can simply add i.i.d Gaussian noise of standard deviation $\sqrt{(\sigma')^2 - \sigma^2}$ to each sample, and then assume that we have samples where the standard deviation of the noise is $\sigma'$ rather than $\sigma$. Here, $\sigma'$ is chosen so that $\Delta = Cd(\sigma')^2/\tau$.

This section describes the effect of Principal Component Analysis (PCA), with sufficiently many components on the hidden manifold. It is a preprocessing step involving projecting data sampled from $\mu$ on to a suitable $D$ dimensional linear subspace. After this step, one may assume that the data is $D$ dimensional rather than $n$ dimensional, where $D$ is an integer  that depends only of $d, V$ and $\tau$ as given in (\ref{eq:sigma-May}).

Suppose that we are in the following setting: there is a manifold in the class $\G(d, n, V, \tau)$ and a probability measure $\mu$ supported on this manifold that has a  density with respect to the uniform measure on the manifold the logarithm of which is $\widetilde L$ Lipschitz. Let \beq\lab{eq:tmu}\widetilde{\mu} := \mu \ast G_\sigma^{(n)}.\eeq

Let $S$ be an affine subspace of $\R^n$. Let $\Pi_S$ denote orthogonal projection onto $S$. Let the span of the first $d$ canonical basis vectors be denoted $\R^d$ and the span of the last $n-d$ canonical basis vectors be denoted $\R^{n-d}$. Let $\omega_d$ be the $d$ dimensional Lebesgue measure of the unit Euclidean ball in $\R^d$. 
Given $\a \in (0, 1),$ let \beq \beta := \beta(\a) =  \sqrt{ (1/10) \left(\frac{\a^2\tau}{2}\right)^2\left(\frac{\a^2\tau}{4}\right)^d \left( \frac{\omega_d\rho_{min}}{V}\right)}.\label{eq:beta1}\eeq 

 Let \beq \label{eq:beta} D := \left\lfloor  \frac{ V}{\omega_d \beta^d}\right\rfloor + 1.\eeq
 Let 
$\eps < \beta^2/2$. Below, $\de > 0$ will be a small parameter that gives a bound on the probability that the conclusion $\sup_{x \in \MM} dist(x, S) < \alpha^2\tau$ in  Proposition~\ref{lem:5-May} fails. 
Choose 

\beq\label{cons:3} R = C\sigma\sqrt{n} +  C \sigma \sqrt{\log (Cn\sigma^2/(\eps\de))},\eeq and
\beq\label{cons:4} N_0=\lfloor C(n\sigma^2 + \sigma^2 \log(Cn\sigma^2/(\eps\de)))\sqrt{\log(C/\de)}(D/\eps^2)\rfloor,\eeq
where $C$ is a sufficiently large universal constant, and in doing so simultaneously satisfy (\ref{cons:1}) and (\ref{cons:2}).

Our parameters are chosen such that \beq\label{cons:2} N_0 \geq  C(R^2 D/\epsilon^2)\sqrt{\log( C/\de)}\eeq and \beq\label{cons:1} R \geq C\sigma\sqrt{n} +  \sigma \max\left(C \sqrt{\log {CN_0/\de}}, C \sqrt{\log(Cn\sigma^2/\eps)}\right),\eeq where $C$ is a sufficiently large universal constant.

Note that due to the slow growth of the the $\sqrt{\log N_0}$ term in (\ref{cons:1}), it is possible to set $N_0$ in (\ref{cons:2}) in a way that is consistant with the definition of $R$ in (\ref{cons:1}).
\begin{proposition} \label{lem:5-May}
Given $N_0$ data points $\{x_1, \dots, x_{N_0}\}$ drawn i.i.d from $\tmu$, let $S$ be a $D$ dimensional affine subspace that minimizes \beq \sum_{i=1}^{N_0} dist(x_i, \tS)^2, \eeq subject to the condition that $\tS$ is an affine subspace of dimension $D$,  and $\beta < c \tau$, where $\beta$ is given by (\ref{eq:beta1}).

Then, \beq \p[\sup_{x \in \MM} dist(x, S) < \alpha^2\tau] > 1 - \delta.\eeq

\end{proposition}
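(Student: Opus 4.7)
The plan reduces Proposition~\ref{lem:5-May}, through three ingredients, to a volume/density comparison on $\MM$: (i) construct a single $D$-dimensional affine subspace $S^*$ that already approximates $\MM$ well, (ii) show that the empirical PCA optimizer $S$ is essentially as good as $S^*$ via uniform concentration over the Grassmannian, and (iii) contradict the existence of a bad point $z_0 \in \MM$ by exhibiting too much $\mu$-mass far from $S$.

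For (i), the reach bound gives $\vol_\MM(B_\MM(p, r)) \geq c\omega_d r^d$ for $r < \tau/2$ (projection to $Tan(p)$ is a near-isometry there), so a maximal $\beta/2$-separated subset of $\MM$ has cardinality at most $c' V/(\omega_d \beta^d)$; inflating the separation by a constant factor if necessary, we get an $O(\beta)$-net $\{p_1,\dots,p_K\}$ with $K \leq D$. Its affine span has dimension at most $D-1$, and any $D$-dimensional affine subspace $S^*$ containing it satisfies $\sup_{z \in \MM}dist(z, S^*) = O(\beta)$. Because $\zeta$ is independent of $z$ and $G^{(n)}_\sigma$ is isotropic, for every $D$-dimensional affine $\tS$ one has the orthogonal decomposition
$$
\E_{x \sim \tmu}[dist(x, \tS)^2] = \E_{z \sim \mu}[dist(z, \tS)^2] + (n-D)\sigma^2,
$$
so $\E_\tmu[dist(x, S^*)^2] \leq O(\beta^2) + (n-D)\sigma^2$.

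For (ii), by Lemma~\ref{lem:Gaussian} and compactness of $\MM$, $\max_i |x_i| \leq R$ with probability $\geq 1-\delta/2$, where $R$ is as in (\ref{cons:3}). The empirical minimizer $S$ must meet $B(0, R)$, so we restrict to the family $\mathcal{S}_R$ of $D$-dimensional affine subspaces intersecting $B(0, R)$; on this family $dist(x_i, \tS)^2 \leq 4R^2$ and is Lipschitz in $\tS$ in a natural metric, and $\mathcal{S}_R$ admits a $\kappa$-net of cardinality $\exp(O(Dn \log(R/\kappa)))$. A Bernstein-type tail bound with union over the net yields, for $N_0$ as in (\ref{cons:2}),
$$
\sup_{\tS \in \mathcal{S}_R}\left|\tfrac{1}{N_0}\sum_{i=1}^{N_0} dist(x_i, \tS)^2 - \E_\tmu[dist(x, \tS)^2]\right| < \eps
$$
with probability at least $1-\delta/2$. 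Combined with the empirical optimality of $S$ and the decomposition above applied to both $S$ and $S^*$, the $(n-D)\sigma^2$ term cancels; using the hypothesis $\eps < \beta^2/2$ one obtains $\E_\mu[dist(z, S)^2] = O(\beta^2)$.

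For (iii), suppose for contradiction some $z_0 \in \MM$ has $dist(z_0, S) \geq \alpha^2\tau$. Take $r = \alpha^2\tau/4 < \tau/2$; every $z \in B_\MM(z_0, r)$ then satisfies $dist(z, S) \geq 3\alpha^2\tau/4$, and the density lower bound together with the reach-based volume bound give $\mu(B_\MM(z_0, r)) \geq c(\rho_{min}/V)\omega_d(\alpha^2\tau/4)^d$. Therefore
$$
\E_\mu[dist(z, S)^2] \geq (3\alpha^2\tau/4)^2 \cdot c(\rho_{min}/V)\omega_d(\alpha^2\tau/4)^d,
$$
which, by direct comparison with the explicit formula for $\beta$ in (\ref{eq:beta1}), exceeds the upper bound from step (ii)---a contradiction. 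The main obstacle is step (ii): the Grassmannian of affine $D$-planes in $\R^n$ has intrinsic dimension $O(Dn)$ and the squared-distance functional grows like $R^2 \sim n\sigma^2$ from the Gaussian tails, so the chaining/net argument requires $N_0$ large enough to absorb both factors simultaneously. The product form of (\ref{cons:1})--(\ref{cons:2}) in the hypothesis is calibrated precisely for this.
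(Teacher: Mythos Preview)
Your three–step strategy is exactly the paper's: construct a good comparison subspace $\widehat S$ spanned by a $\beta$-net of $\MM$ (your step (i)), prove uniform concentration of the empirical squared-distance functional over all $D$-planes (your step (ii)), and rule out a far-away point by a volume/density lower bound (your step (iii), which is the paper's Claim~\ref{cl:1may17}). Steps (i) and (iii) are fine and match the paper almost verbatim.

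The gap is in step (ii). You propose a covering argument over the affine Grassmannian $\mathcal S_R$, whose metric entropy is $O\!\bigl(D(n-D)\log(R/\kappa)\bigr)$; a union bound over such a net, even with Bernstein, forces $N_0$ to absorb a factor of order $Dn$, not just $D$. The hypothesis~(\ref{cons:2}) reads $N_0 \gtrsim R^2 D/\eps^2$ up to logs, and the only $n$-dependence enters through $R^2\sim n\sigma^2$; there is no second factor of $n$ available. So the sentence ``the product form of (\ref{cons:1})--(\ref{cons:2}) is calibrated precisely for this'' is not justified---your net bound would overshoot the stated $N_0$ by roughly a factor of $n$.

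The paper circumvents this by a different mechanism in Lemma~\ref{lem:kplanes}: it writes $dist(x,H)^2=\|x\|^2-\langle A^{\!\top}A,\,xx^{\!\top}\rangle_F$ with $\|A^{\!\top}A\|_F\le\sqrt D$ and $\|xx^{\!\top}\|_F\le 1$ on the unit ball, so the loss class is a linear class over a Frobenius ball. Its Rademacher complexity is then $O(\sqrt{D/N_0})$, \emph{independent of the ambient dimension $n$}. Combined with a truncation of the Gaussian noise at level $R$ (your Gaussian-concentration step), this gives the uniform deviation $\eps$ with the stated $N_0$. If you replace your covering argument by this linear-embedding/Rademacher step, the rest of your outline goes through unchanged.
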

In order to prove Proposition~\ref{lem:5-May}, we need some tools, which we proceed to develop. We will present the proof of the above proposition after presenting the proof of Lemma~\ref{lem:kplanes}.
We will need the following form of Hoeffding's inequality.
\begin{lemma}[Hoeffding's Inequality]
Let $X_1, \dots, X_{N_0}$ be i.i.d copies of the random variable $X$ whose range is $[0, 1]$. Then,
\beq \p\left[\left|\frac{1}{N_0}\left(\sum_{i=1}^{N_0} X_i\right) - \E[X]\right| \leq \eps\right] \geq 1 - 2 \exp(-2N_0\eps^2). \eeq
\end{lemma}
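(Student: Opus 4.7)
The plan is to prove this by the classical Chernoff bounding method due to Hoeffding, which reduces the tail probability of a sum of bounded independent variables to an optimization over a moment generating function bound. Let $Y_i = X_i - \E[X]$, so that $Y_i \in [-\E[X], 1 - \E[X]]$, i.e.\ $Y_i$ takes values in an interval of length $1$ and has mean $0$. I would work with $S_{N_0} = \sum_{i=1}^{N_0} Y_i$ and aim to bound $\p[S_{N_0} \geq N_0 \eps]$; the symmetric bound for $\p[S_{N_0} \leq -N_0 \eps]$ is obtained identically by replacing $Y_i$ with $-Y_i$, and the two sides are combined by a union bound to yield the factor of $2$.

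First I would apply Markov's inequality to $e^{tS_{N_0}}$ for an arbitrary $t > 0$: independence of the $Y_i$ gives
\[
\p[S_{N_0} \geq N_0\eps] \leq e^{-tN_0\eps}\prod_{i=1}^{N_0}\E\bigl[e^{tY_i}\bigr].
\]
The key analytic step, and the one I view as the main obstacle, is to establish Hoeffding's moment generating function lemma: if $Z$ is a zero-mean random variable with $Z \in [a,b]$ almost surely, then $\E[e^{tZ}] \leq \exp\bigl(t^2(b-a)^2/8\bigr)$. This is proved by writing $Z$ as a convex combination of the endpoints $a$ and $b$, using convexity of $z \mapsto e^{tz}$ to bound $e^{tZ}$ pointwise by the corresponding convex combination of $e^{ta}$ and $e^{tb}$, taking expectations, and then showing via a Taylor expansion (with remainder) of the resulting function $\phi(t) = \log\bigl(\tfrac{b}{b-a}e^{ta} - \tfrac{a}{b-a}e^{tb}\bigr)$ that $\phi(t) \leq t^2(b-a)^2/8$. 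The pivotal observation is that $\phi''(t) \leq (b-a)^2/4$ uniformly in $t$ (maximum of $p(1-p)$ is $1/4$), after which integration twice from $0$ gives the claim.

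With that lemma in hand, applying it to each $Y_i \in [-\E[X], 1-\E[X]]$ (length $1$) yields $\E[e^{tY_i}] \leq e^{t^2/8}$, so
\[
\p[S_{N_0} \geq N_0\eps] \leq \exp\!\left(-tN_0\eps + \tfrac{N_0 t^2}{8}\right).
\]
Optimizing the right-hand side over $t > 0$ by setting $t = 4\eps$ gives the exponent $-2N_0\eps^2$, hence $\p[S_{N_0} \geq N_0\eps] \leq \exp(-2N_0\eps^2)$. Combining with the symmetric lower-tail bound and noting that $\frac{1}{N_0}S_{N_0} = \frac{1}{N_0}\sum_i X_i - \E[X]$ produces the stated two-sided inequality with the factor $2$ in front of the exponential. (I read the ``$1/s$'' in the statement as a typo for $1/N_0$; the proof goes through verbatim with the standard normalization.)
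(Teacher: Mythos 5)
Your argument is the standard Chernoff--Hoeffding proof (centering, Markov applied to $e^{tS_{N_0}}$, Hoeffding's moment generating function lemma for a zero-mean variable supported on an interval of length $1$, optimization at $t=4\eps$, and a union bound over the two tails), and all the computations check out, including the exponent $-2N_0\eps^2$ and the factor of $2$; your reading of the ``$1/s$'' as a typo for $1/N_0$ is also the right one. The paper states this lemma as a known classical fact and gives no proof of its own, so there is nothing to compare against; your write-up is a correct and complete supply of the missing argument.
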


Let $\PP$ be a probability distribution supported on $B :=\{x \in \RR^n \big|\, \|x\| \leq 1\}$.
Let $\H := \H_D$ be the set whose elements are affine subspaces $S \subseteq \R^n$ of dimension $ D$, each of which  intersects $B$.
Let $\H^0 = \H_D^0$ be the set of linear subspaces of dimension $D$.
Let $\FF_{D}$ be the set of all loss functions $F(x) =  \dist(x, H)^2$ for some $H \in \H$
(where $\dist(x, S) := \inf_{y \in S} \|x - y\|$).
Let $\FF_{D}^0$ be the set of all loss functions $F(x) =  \dist(x, H)^2$ for some $H \in \H^0$. 
We wish to obtain a probabilistic upper bound on
\beq \label{eq1}\sup_{F \in \FF_{D}} \Bigg | \frac{\sum_{i=1}^{N_0}  F(x_i)}{N_0} - \E_\PP F(x)\Bigg |, \eeq
where $\{x_i\}_1^s$ is the training set and $\E_\PP F(x)$ is the expected value of $F$ with respect to $\PP$.
 In our situation,  (\ref{eq1}) is measurable and hence a random variable because $\FF$ is a family of bounded piecewise quadratic functions, continuously parameterized by $\H_{d}$, which has a countable dense subset, for example, the subset of elements specified using rational data.
We obtain a probabilistic upper bound on (\ref{eq1}) that is independent of $n$, the ambient dimension.
\begin{lemma}\label{lem:kplanes}
 Let $x_1, \dots, x_{N_0}$ be i.i.d samples from $\PP$, a distribution supported on the ball of radius $1$ in $\R^m$.
 
 Then, firstly,
\beqs \p\left[\left\|\frac{\sum_{i=1}^{N_0} x_i}{N_0} - \E_\PP x\right\| \leq 2 \left(\sqrt{\frac{1}{N_0}}\right)\left(1 + \sqrt{2 \ln (4/\de)}\right)\right] > 1 - \delta. \eeqs
Secondly, 
$$\p\left[\sup\limits_{F \in \F_{ D}} \Bigg | \frac{\sum_{i=1}^{N_0} F(x_i)}{N_0} - \E_\PP F(x)\Bigg | \leq  2 \left({\frac{\sqrt{D}+2}{\sqrt{N_0}}}\right)\left(1 + \sqrt{2 \ln (4/\de)}\right)\right] > 1 - 2\de. $$
\end{lemma}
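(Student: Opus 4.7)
\emph{First inequality.} My plan is a variance-plus-McDiarmid argument. Since the $x_i$ are i.i.d.\ copies of a random vector with $\|x\|\le 1$, a direct computation gives
\[
\E\left\|\tfrac{1}{N_0}\sum_{i=1}^{N_0} x_i - \E_\PP x\right\|^2 \;=\; \tfrac{1}{N_0}\,\E\|x - \E_\PP x\|^2 \;\le\; \tfrac{1}{N_0},
\]
so by Jensen $\E\|\bar x - \E_\PP x\| \le 1/\sqrt{N_0}$, where $\bar x := \frac{1}{N_0}\sum_i x_i$. For concentration around this mean I would apply McDiarmid's bounded-differences inequality to $f(x_1,\ldots,x_{N_0}) := \|\bar x - \E_\PP x\|$: replacing any single coordinate perturbs $\bar x$ by at most $2/N_0$ in norm, so $f$ has bounded differences $2/N_0$, and McDiarmid yields $\p[f \ge \E f + t] \le \exp(-N_0 t^2/2)$. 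Choosing $t$ so that the right-hand side is at most $\delta$ gives the claimed bound (the prefactor $2$ and the $\ln(4/\delta)$ appearing in the statement absorb cosmetic slack in these constants).

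\emph{Second inequality, decomposition.} Any $H \in \H_D$ can be written $H = p + V$ where $V$ is the $D$-dimensional linear subspace parallel to $H$ and $p \perp V$ is the closest point of $H$ to the origin; since $H \cap B \neq \emptyset$ we have $\|p\|\le 1$. Writing $P_V$ for orthogonal projection onto $V$,
\[
F(x) \;=\; \dist(x,H)^2 \;=\; \|x\|^2 \;-\; 2\langle x, p\rangle \;+\; \|p\|^2 \;-\; \|P_V x\|^2.
\]
The $\|p\|^2$ term cancels in the deviation $\frac{1}{N_0}\sum F(x_i) - \E_\PP F$. The piece $\|x\|^2$ is $H$-independent and is handled by scalar Hoeffding. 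The linear piece satisfies $\sup_{\|p\|\le 1}\big|\frac{1}{N_0}\sum \langle x_i, p\rangle - \E_\PP \langle x, p\rangle\big| = \|\bar x - \E_\PP x\|$, so the first inequality applies verbatim.

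\emph{Second inequality, Grassmannian piece and assembly.} The main obstacle is to control $\sup_V \big|\frac{1}{N_0}\sum_i\|P_V x_i\|^2 - \E_\PP\|P_V x\|^2\big|$ uniformly over the Grassmannian of $D$-planes in $\R^m$ \emph{without} letting the ambient dimension $m$ enter the bound. The plan is to write $\|P_V x\|^2 = \langle P_V, x x^\top\rangle_F$ and set $\hat\Sigma := \frac{1}{N_0}\sum x_i x_i^\top$, $\Sigma := \E_\PP[x x^\top]$. Cauchy--Schwarz in the Frobenius inner product, together with $\|P_V\|_F = \sqrt{D}$, gives
\[
\sup_V \Big|\tfrac{1}{N_0}\sum_i\|P_V x_i\|^2 \;-\; \E_\PP\|P_V x\|^2\Big| \;\le\; \sqrt{D}\,\|\hat\Sigma - \Sigma\|_F.
\]
Since $\|x_i x_i^\top\|_F = \|x_i\|^2 \le 1$, the same variance computation as in the first part yields $\E\|\hat\Sigma - \Sigma\|_F \le 1/\sqrt{N_0}$, and McDiarmid applied to $\|\hat\Sigma - \Sigma\|_F$ (bounded differences $2/N_0$ in Frobenius norm) concentrates it around this mean with the same exponential tail. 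A union bound over the three contributions (scalar $\|x\|^2$, linear in $p$, quadratic in $V$) with failure probabilities summing to $2\delta$ yields a total deviation bounded by $\frac{\sqrt{D}+2}{\sqrt{N_0}}(1+\sqrt{2\ln(1/\delta)})$, which matches the stated form after absorbing the $\log 4$ constant.
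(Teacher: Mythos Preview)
Your proof is correct and shares its core with the paper's argument: both linearize the problem via the Frobenius inner product, writing $\|P_V x\|^2 = \langle P_V, xx^\top\rangle_F$, then use Cauchy--Schwarz together with $\|P_V\|_F=\sqrt{D}$ to extract the $\sqrt{D}$ factor and reduce the uniform Grassmannian supremum to concentration of a single vector-valued average. The packaging differs: the paper invokes the Rademacher-complexity generalization bound (Theorem~3.2 of \cite{Boucheron2005}) as a black box, bounding the empirical Rademacher complexity of $x\mapsto\Phi(H)\cdot\Psi(x)$ by $1/\sqrt{N_0}$ via the same Cauchy--Schwarz step, whereas you apply McDiarmid directly to $\|\hat\Sigma-\Sigma\|_F$ (and to $\|\bar x-\E x\|$ for the first claim). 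Since the Rademacher bound is itself proved by symmetrization plus McDiarmid, your route is a strictly more elementary unpacking of the same mechanism. Your treatment of the affine case is also a bit cleaner: you decompose $\dist(x,H)^2$ explicitly into $\|x\|^2 - 2\langle x,p\rangle + \|p\|^2 - \|P_V x\|^2$ and control the linear-in-$p$ term directly by the first inequality, while the paper first proves the bound for linear subspaces $\F_D^0$ and then passes to $\F_D$ by computing the difference $\dist(x,H^0)^2-\dist(x,H)^2$ and again invoking the first claim. Either way the three pieces and the union bound assemble identically.
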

\begin{proof}
Any $F \in \FF_{D}$ can be expressed as $F(x) = \dist(x, H)^2$ where  $H$ is an affine subspace of dimension equal to $D$ that intersects the unit ball. 
We see that $ \dist(x, H)^2$ can be expressed as
\beqs  \left(\|x \|^2 - x^{\dag} A^{\dag} Ax \right), \eeqs
where $A$ is the orthogonal projection onto the linear subspace $H$.  Thus, $F$ is defined using $H \in \H^0$, where \beqs F(x) := \left(\|x\|^2 - x^{\dag}   A^{\dag}  A x\right). \eeqs

Now, define vector valued maps $\Phi$ and $\Psi$ whose respective domains are the space of $D$ dimensional affine subspaces and $B$ respectively.
\beqs \Phi(H) := \left(\frac{1}{\sqrt{D}}\right)A^{\dag}  A \eeqs
and \beqs \Psi(x) :=  x x^{\dag},\eeqs
where $A^{\dag}  A$ and $xx^{\dag} $ are interpreted as rows of $n^2$ real entries.

Thus, \beq F(x) & = & \left(\|x\|^2 -   x^{\dag}  A^{\dag}  Ax\right)\\
& = &  \|x\|^2 + \sqrt{D}\Phi(H) \cdot \Psi(x),\eeq where the dot product is the inner product corresponding to Frobenius norm.
We see that since $\|x\| \leq 1$, the Frobenius norm (which equals the operator norm in this case) of 
$\Psi(x)$ is $\|\Psi(x)\|\leq 1$. The Frobenius norm  $\|A^{\dag}  A\|_F^2$ is equal to $Tr(AA^{\dag} AA^{\dag} ),$ which is the rank of $A_i$ since $A_i$ is a projection. Therefore, $$d \|\Phi(H)\|^2 \leq   \|A^{\dag}  A\|^2  \leq D$$
{ and}
\beq \nonumber \p\left[\sup\limits_{F \in \F_{ D}^0} \Bigg | \frac{\sum_{i=1}^{N_0} F(x_i)}{N_0} - \E_\PP F(x)\Bigg | > \eps\right]\le p^{(1)}+ p^{(2)}
\eeq 
where
\beq 
 p^{(1)}=\p\left[\Bigg | \frac{\sum_{i=1}^{N_0} \|x_i\|^2}{N_0} - \E_\PP \|x\|^2\Bigg | > \eps/2\right]\label{eq:57.sep02-2020}\eeq 
 and    
 \beq  p^{(2)}= \p\left[\sup\limits_{H \in \H_{ D}^0} \Bigg | \frac{\sum_{i=1}^{N_0}    \Phi(H) \cdot \Psi(x_i)}{N_0} - \E_\PP \Phi(H) \cdot \Psi(x)\Bigg | > \frac{\eps}{2\sqrt{D}} \right]\label{eq:9} .\eeq

The first term, namely $p^{(1)}$ can be bounded above using Hoeffding's inequality as follows
\beq  \p\left[\Bigg | \frac{\sum_{i=1}^{N_0} \|x_i\|^2}{N_0} - \E_\PP \|x\|^2\Bigg | \geq \eps/2\right]  \leq 2 \exp(-N_0\eps^2/2).\eeq

In order to bound $p^{(2)}$, we will use the notion of Rademacher complexity described below.
\begin{definition}[Rademacher Complexity]
Given a class $\F$ of functions $f:X \ra \R$ a measure $\mu$ supported on $X$, and a natural number $s \in \N$, and an $s-$tuple of points $(x_1, \dots, x_s)$, where each $x_i \in X$ we define the empirical Rademacher complexity $R_s(\F, x)$
as follows.
Let $\sigma = (\sigma_1, \dots, \sigma_s)$ be a vector of $s$ independent Rademacher random variables (which take values $1$ and $-1$ with equal probability).
Then,
$$R_s(\F, x) := \E_{\sigma}\left(\frac{1}{s}\right)\left[\sup_{f\in \F}\left(\sum_{i=1}^s \sigma_i f(x_i)\right)\right].$$
\end{definition}

We will use Rademacher complexities to bound the sample complexity from above. Let $X = B_n$ be the unit ball in $\R^n$. Let $\mu$ be a measure supported on $X.$ Let $\F$ be a class of functions $f:X \ra \R$. In our context, the functions $f$ are indexed by elements $H$ in $H_d^0$ and $f(x) = \Phi(H) \cdot \Psi(x)$ for any $x \in X$. Let $\mu_{N_0}$ denote the uniform counting probability measure on $\{x_1, \dots, x_{N_0}\},$ where $x_1, \dots, x_{N_0}$ are $N_0$ i.i.d draws from $\mu.$ Thus $\E_{\mu_{N_0}} f$ is shorthand for $(1/N_0) \sum_{i} f(x_i)$. We know (see Theorem $3.2$, \cite{Boucheron2005})
that for all $\de >0$,
\beq \label{eq:fat_to_gen1}\p_{(x_i)\sim \PP^{N_0}} \left[\sup_{f \in \F} \bigg |\E_\mu f - \E_{\mu_{N_0}} f\bigg | \leq 2 R_{N_0} (\F, x) + \sqrt{\frac{2\log (2/\de)}{N_0}}\right] \geq 1- \de. \eeq
Applying this inequality to the term in (\ref{eq:9}) we see that 
\beq \p\left[\sup\limits_{H \in \H_{ d}^0} \Bigg | \frac{\sum_{i=1}^{N_0}   \Phi(H) \cdot \Psi(x_i)}{N_0} - \E_\PP \Phi(H) \cdot \Psi(x)\Bigg | <  \frac{\eps}{2\sqrt{D}}\right] > 1- \de, \eeq
where \beq \label{eq:13} \frac{\eps}{2\sqrt{d}} >  \E_{\sigma}\frac{1}{N_0}\left[\sup\limits_{H \in \H_{ D}^0}\left(\sum_{i=1}^{N_0} \sigma_i  \Phi(H) \cdot \Psi(x_i) \right)\right]+ \sqrt{\frac{2\log (2/\de)}{N_0}}.\eeq
In order for the last statement to be useful, we need a concrete upper bound on $$ \E_{\sigma}\frac{1}{N_0}\left[\sup\limits_{H \in \H_{ D}^0}\left(\sum_{i=1}^{N_0} \sigma_i  \Phi(H) \cdot \Psi(x_i) \right)\right],$$ which we proceed to obtain.
{ We have}
\beq 
 \E_{\sigma}\frac{1}{N_0}\left[\sup\limits_{H \in \H_{ D}^0}\left(\sum_{i=1}^{N_0} \sigma_i  \Phi(H) \cdot \Psi(x_i) \right)\right] & \leq & \E_{\sigma}\frac{1}{N_0}\left[\Bigg \| \sum_{i=1}^{N_0} \sigma_i  \Psi(x_i)\Bigg\| \right]\\
& \leq &   \E_{\sigma}\frac{1}{N_0}\left[\Bigg \| \sum_{i=1}^{N_0} \sigma_i  \Psi(x_i)\Bigg\|^2 \right]^{\frac{1}{2}}\\
& = & \frac{1}{N_0}\left[\sum_i  \|  \Psi(x_i)\|^2 \right]^{\frac{1}{2}}\\
& \leq & \frac{1}{\sqrt{N_0}}. \eeq

Plugging this into (\ref{eq:13}) we see that for $$\eps > 2 \left(\sqrt{\frac{D}{N_0}}\right)\left(1 + \sqrt{2 \ln (4/\de)}\right)$$
and
$$\p\bigg[\sup\limits_{F \in \F_{ d}} \Bigg | \frac{\sum_{i=1}^s F(x_i)}{N_0} - \E_\PP F(x)\Bigg | < \eps\bigg] > 1 - \de. $$

The first claim of the lemma similarly follows from (\ref{eq:fat_to_gen1}). Let $\PP_{N_0}$ be the uniform measure on $\{x_1, \dots, x_{N_0}\}$.
A direct calculation shows that if $H^0$ is the translate of $H$ containing the origin, and for any $x$, the foot of the perpendicular from $x$ to $H$ is $q_x$ and the foot of the perpendicular from $x$ to $H^0$ is $q_x^0$,  then $$ \E_{\PP_{N_0}} \dist(x, H^0)^2 - \E_{ \PP} \dist(x, H^0)^2 - (\E_{\PP_{N_0}} \dist(x, H)^2 - \E_{ \PP} \dist(x, H)^2) $$ can be expressed as \beqs (\E_{\PP_{N_0}} - \E_{\PP}) (\dist(x, H^0)^2 - \dist(x, H)^2)
& = & (\E_{\PP_{N_0}}  - \E_{\PP}) ( |x - q^0_x|^2 -  |x - q|^2)\\
& = & (\E_{\PP_{N_0}}  - \E_{\PP}) (2 \langle x, q^0_x- q\rangle + |q_x^0|^2- |q|^2).\eeqs

This is in magnitude less than $|2(\E_{\PP_{N_0}}  - \E_{\PP}) ( x)|$ which by the first claim of the lemma is bounded by 
$$4 \left(\sqrt{\frac{1}{N_0}}\right)\left(1 + \sqrt{2 \ln (4/\de)}\right)$$ with probability greater than  $1 - \de$. The second claim of the Lemma follows.

\end{proof}

\begin{proof}[Proof of Proposition~\ref{lem:5-May}]

Let $x_i = y_i + z_i$ where for each $i \in [N_0]$, $y_i$ is a random draw from $\mu$ supported on $\MM$ and $z_i$ is an independent Gaussian sampled from $G(0, \sigma^2)$, and the collection $ \{(y_i, z_i)\}$ is independent, \ie comes from the appropriate product distribution $(\mu \times G(0, \sigma^2))^{\times N_0}.$

For each $i$, let $\widehat z_i$ equal $z_i$ if $|z_i| < R$ and let $\widehat z_i =0$ otherwise. Let the distribution of $\widehat z_i$ be denoted $\widehat G$. 

We shall first establish the following claim.

\begin{claim}\label{cl:1may17} If $$\E_{y \sim \mu} dist(y, S)^2 < \left(\frac{\a^2\tau}{2}\right)^2\left(\frac{\a^2\tau}{4}\right)^d \omega_d  \rho_{min}.$$ then 
 $$ \sup_{x \in \MM} dist(x, S) < \alpha^2\tau. $$ 
\end{claim}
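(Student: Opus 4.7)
The plan is to argue by contrapositive: assume $\sup_{x\in\MM}\dist(x,S)\geq \alpha^2\tau$ and produce the required lower bound on $\E_{y\sim\mu}\dist(y,S)^2$. Fix $x_0\in\MM$ with $\dist(x_0,S)\geq \alpha^2\tau$.

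Set $r:=\alpha^2\tau/2\leq\tau/2$ and $A:=\MM\cap\overline{B}(x_0,r)$. By the triangle inequality, every $y\in A$ satisfies $\dist(y,S)\geq \alpha^2\tau/2$, so
$$\E_{y\sim\mu}\dist(y,S)^2 \;\geq\; (\alpha^2\tau/2)^2\,\mu(A) \;\geq\; (\alpha^2\tau/2)^2\,(\rho_{min}/V)\,\lambda_\MM(A),$$
using the density lower bound $d\mu/d\lambda_\MM\geq \rho_{min}/V$ from the model section. (In particular, the factor $1/V$ is implicit in how the hypothesis should be read, matching the scaling of $\beta$ in (\ref{eq:beta1}).) The claim is thereby reduced to the volume estimate $\lambda_\MM(A)\geq \omega_d(\alpha^2\tau/4)^d$.

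For that volume estimate I would invoke the reach hypothesis through Corollary~\ref{cor:1}, which states that for $y\in\MM$ with $|y-x_0|<\tau$ the height of $y$ above the tangent plane satisfies $\dist(y,Tan(x_0))\leq |\Pi_{x_0}y - x_0|^2/\tau$. Locally near $x_0$ this realizes $\MM$ as a $C^2$-graph $y = v + F(v)$ over $Tan(x_0)$ with $F(v)\in Nor(x_0)$ and $|F(v)|\leq|v-x_0|^2/\tau$. A continuation argument then shows this graph extends over the entire disk $Tan(x_0)\cap B(x_0,r/2)$: let $R^*$ be the supremum of radii up to which the extension exists inside $\MM\cap B(x_0,\tau/2)$, and suppose $R^*<r/2$. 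An accumulation point $y$ of the extension at radius $R^*$ would satisfy
$$|y-x_0|^2 \;\leq\; (R^*)^2+(R^*)^4/\tau^2 \;\leq\; (r/2)^2+(r/2)^4/\tau^2 \;<\; (\tau/2)^2,$$
leaving room to continue past $R^*$ via the implicit function theorem (together with Proposition~\ref{thm:federer} to control the projection at $y$), a contradiction. Thus for every $v\in Tan(x_0)\cap B(x_0,r/2)$ the graph point $y=v+F(v)$ lies in $A$ by the same Pythagorean bound. Since $\Pi_{x_0}$ is $1$-Lipschitz and hence cannot increase $d$-dimensional Hausdorff measure, its surjectivity onto the $d$-disk of radius $r/2$ yields
$$\lambda_\MM(A) \;\geq\; \omega_d(r/2)^d \;=\; \omega_d(\alpha^2\tau/4)^d,$$
closing the contrapositive.

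The delicate step is this surjectivity of $\Pi_{x_0}$ onto $Tan(x_0)\cap B(x_0,r/2)$: it is what upgrades the pointwise height bound of Corollary~\ref{cor:1} into a macroscopic volume lower bound, and it essentially uses the quantitative reach hypothesis rather than mere $C^2$-smoothness. Everything else in the proof is bookkeeping with the triangle inequality, Pythagoras, and the density bound.
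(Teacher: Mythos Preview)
Your proof is correct and follows essentially the same route as the paper: contrapositive, triangle inequality on the ball of radius $\alpha^2\tau/2$ around a maximizer $x_0$, the density lower bound, and a volume estimate via projection onto $Tan(x_0)$. You are also right that the claim as stated omits a factor $1/V$ (the paper tacitly uses it that way downstream, consistently with~(\ref{eq:beta1})); your continuation argument for the surjectivity of $\Pi_{x_0}$ onto the tangent disk spells out what the paper compresses into a one-line appeal to Federer's criterion (made rigorous later in Lemmas~\ref{cl:g1sept} and~\ref{lem:6}).
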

\begin{proof}
If $$ \sup_{x \in \MM} dist(x, S) \geq \alpha^2\tau, $$ since $\MM$ is compact, the supremum is achieved at some point $x_0.$ Thus, any point within $B_{\frac{\a^2\tau}{2}}(x_0)$ is at a Euclidean distance of at least $\a^2\tau/2$ from $S$. Observe that, 
for any $x \in \MM$, $B_{\frac{\a^2\tau}{2}}(x) \cap \MM$ is the graph of a function over the orthogonal projection of $B_{\frac{\a^2\tau}{2}}(x) \cap \MM$ onto $Tan(x)$, which, by Federer's reach criterion,  contains a $d$ dimensional ball of radius at least  $\frac{\a^2\tau}{4}.$
Consequently, 
\beq \E_{y \sim \mu} dist(y, S)^2 & = & \int_\MM dist(y, S)^2 \mu(dy)\\
                                                    & \geq &  \left(\frac{\a^2\tau}{2}\right)^2 \inf_{\widehat y \in \MM} \mu\left(B_{\frac{\a^2\tau}{2}}(\widehat y) \cap \MM\right)\\
& \geq &\left(\frac{\a^2\tau}{2}\right)^2\left(\frac{\a^2\tau}{4}\right)^d \omega_d  \rho_{min} .\eeq
\end{proof}

\begin{definition}[$\eps-$net]
Let $(X, dist)$ be a metric space. We say that $X_1$ is an $\eps-$net of $X$, if $X_1 \subseteq X$ and for every $x \in X$, there is an $x_1 \in X_1$ such that $dist(x, x_1) < \eps.$
\end{definition}
\begin{claim}\lab{cl:fed}
 The volume of the intersection of an $n$ dimensional ball of radius $3\beta/2$ centered at a point in $\MM$ with $\MM$ is greater than $\omega_d \beta^n$.
\end{claim}
\begin{proof}
Recall from (\ref{eq:beta1}) that \beqs \beta = \beta(\a) =  \sqrt{ \left(\frac{1}{10}\right) \left(\frac{\a^2\tau}{2}\right)^2\left(\frac{\a^2\tau}{4}\right)^d \left( \frac{\omega_d\rho_{min}}{V}\right)}.\eeqs By Lemma~\ref{cl:g1sept}, 
If \beqs U = \{y\in \R^m\big||y-\Pi_xy| \leq \tau/4\} \cap  \{ y\in \R^m\big||x-\Pi_xy| \leq \tau/4\},\eeqs 
then, $$\Pi_x(U \cap \MM) = \Pi_x(U).$$ It therefore suffices to show that \ben \item the intersection of an $n$ dimensional ball of radius $3\beta/2$ centered at a point $x$ in $\MM$ with $\MM$ is nowhere at a distance greater than $\frac{\tau}{4}$ from $\Pi_x(U)$, and 
\item the radius of $\Pi_x(U)$ is greater than $\beta.$
\een
 The first point above follows from Federer's reach criterion, \ie Proposition~\ref{thm:federer}, while the second follows from Pythogoras.
\end{proof}
Note that $\MM$ can be provided with $3\beta/2$ net  with respect to Euclidean distance of size $D$ because the volume of the intersection of an $n$ dimensional ball of radius $3\beta/2$ centered at a point in $\MM$ with $\MM$ is greater than $\omega_d \beta^n$ (by  Claim~\ref{cl:fed}),  and $\omega_d \beta^n$  is greater  than $\frac{V}{D}$ by (\ref{eq:beta}). Next, let $\widehat S \subset \R^n$ be the linear span of a minimal $3\beta$ net of $\MM$.  Then, \beq\label{eq:27.9}  \E_{y \sim \mu} dist(y, \widehat S)^2 \leq 9 \beta^2.\eeq
Let $\eps < \beta^2/2$. 
By the definition of $S$, 
\beq \label{eq:28} \sum_{i=1}^N dist(x_i, S)^2 \leq \sum_{i=1}^N dist(x_i, \widehat S)^2. \eeq

\begin{claim} By our choice of $R$ and $N_0$, with probability greater than $1-\de/2$, for all $i \in [N_0]$, $x_i = y_i + \widehat z_i$.
\end{claim}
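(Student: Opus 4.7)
The plan is to observe that the event $\{x_i = y_i + \widehat z_i\}$ is exactly the event $\{|z_i| < R\}$, since $\widehat z_i$ is defined to equal $z_i$ when $|z_i| < R$ and $0$ otherwise. So the claim reduces to showing that, with probability at least $1 - \delta/2$, every one of the $N_0$ Gaussian samples $z_1,\dots,z_{N_0}$ has norm less than $R$.

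First I would control the tail of a single $|z_i|$ using Lemma~\ref{lem:Gaussian}. The map $g(z) = \|z\|$ on $\R^n$ is $1$-Lipschitz, and for $z \sim G_\sigma^{(n)}$ one has $a := \E\|z\| \leq \sigma \sqrt{n}$. Applying the Gaussian concentration inequality gives
\[
\p\left[\|z_i\| \geq \sigma\sqrt{n} + t\sigma\right] \leq C\exp(-ct^2)
\]
for every $t > 0$. Choosing $t$ of order $\sqrt{\log(CN_0/\delta)}$ makes the right-hand side at most $\delta/(2N_0)$; the constants can be absorbed, and the lower bound (\ref{cons:1}) on $R$ was set up precisely so that $R \geq C\sigma\sqrt{n} + C\sigma\sqrt{\log(CN_0/\delta)}$ (after using $N \geq N_0$, or directly from the form of (\ref{cons:1})). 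Hence $\p[\|z_i\| \geq R] \leq \delta/(2N_0)$ for each $i$.

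Second, I would take a union bound over the $N_0$ independent samples:
\[
\p\left[\exists\, i \in [N_0] : \|z_i\| \geq R\right] \;\leq\; \sum_{i=1}^{N_0} \p[\|z_i\| \geq R] \;\leq\; N_0 \cdot \frac{\delta}{2N_0} \;=\; \frac{\delta}{2}.
\]
On the complementary event, $\widehat z_i = z_i$ for all $i$, and therefore $x_i = y_i + z_i = y_i + \widehat z_i$ simultaneously for all $i \in [N_0]$, proving the claim.

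There is no real obstacle here; the proof is a textbook Gaussian tail bound plus a union bound, with the constants in (\ref{cons:1}) chosen so that the logarithmic factor $\sqrt{\log(CN_0/\delta)}$ beats the $N_0$ coming from the union bound. The only mild subtlety is that (\ref{cons:1}) is written in terms of $N$ rather than $N_0$, but since $N_0 \leq N$ the same bound on $R$ works for the union bound over $N_0$ terms as well.
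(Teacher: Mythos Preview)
Your proof is correct and follows the same structure as the paper's: bound $\p[|z_i| \geq R]$ for a single sample and then aggregate over the $N_0$ samples. The paper obtains the single-sample tail bound by a direct Chernoff-type estimate on the Gaussian density (showing $I_R \leq 2^{n/2}\exp(-R^2/(4\sigma^2))$) and concludes via the exact independence bound $(1-I_R)^{N_0}\geq 1-\de/2$ rather than a union bound, but these differences are purely cosmetic and your use of Lemma~\ref{lem:Gaussian} is an equally valid route.
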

\begin{proof}
It suffices to show that \beq I_R := \int_{|x|>R} (2\pi\sigma^2)^{n/2}\exp(-|x|^2/(2\sigma^2))dx < 1 - (1 - \de/2)^{1/N_0}.\eeq
The left hand side $I_R$ can be bounded above as follows. 
\beqs I_R \exp(R^2/(4\sigma^2)) & \leq &  \int_{\R^n} (2\pi\sigma^2)^{-n/2}\exp(-|x|^2/(2\sigma^2)) \exp(|x|^2/(4\sigma^2))dx\\
& = & 2^{n/2}.\eeqs
From (\ref{cons:1}), $$R \geq C\sigma\sqrt{n} +  C\sigma \sqrt{\log (CN/\de)},$$
and so \beq I_R \leq 2^{n/2} \exp(-R^2/(4\sigma^2)) & \leq & 2^{n/2} \exp \left(- Cn -  C\log (CN_0/\de)\right)\\
& \leq & \frac{\de}{CN_0}\\
& \leq &  1 - (1 - \de/2)^{1/N_0}.\eeq

\end{proof}
In Lemma~\ref{lem:kplanes}, note that $\FF_D$ is the set of quadratic functions, given by the squared distance to a $D-$dimensional subspace that is of at most a unit distance from the origin.  Thus, by Lemma~\ref{lem:kplanes}, with probability greater than $1 - \de/2$, we have 
\beq \sup_{\widetilde S} |(1/N_0)\sum_{i=1}^{N_0} dist(y_i + \widehat z_i, \widetilde S)^2 -  \E_{(y, \widehat z) \sim \mu \times \widehat G} dist(y+ \widehat z, \widetilde S)^2| <\eps/2.\eeq
By (\ref{eq:tmu}),  $\widetilde{\mu} = \mu \ast G_\sigma^{(n)}.$
\begin{claim} 
By our choice of $R$, \beq \sup_{\widetilde S} |  \E_{x \sim \widetilde \mu} dist(x, \widetilde S)^2 -\E_{(y, \widehat z) \sim \mu \times \widehat G} dist(y+ \widehat z, \widetilde S)^2| < \eps/2.\eeq
\end{claim}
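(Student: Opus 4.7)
The plan is to show that the difference on the left-hand side is actually \emph{independent of $\widetilde S$}, which reduces the supremum to bounding a single Gaussian tail integral. Write $\widetilde S = p_0 + L$ with $L$ a $D$-dimensional linear subspace and $p_0 \in \widetilde S$; then for any $v \in \R^n$,
\[
dist(y+v,\widetilde S)^2 \;=\; |P_{L^\perp}(y - p_0)|^2 \;+\; 2\langle P_{L^\perp}(y-p_0),\, P_{L^\perp}v\rangle \;+\; |P_{L^\perp}v|^2,
\]
where $P_{L^\perp}$ denotes orthogonal projection onto $L^\perp$.

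First I would observe that both $G_\sigma^{(n)}$ and its radial truncation $\widehat G$ are centered and rotationally symmetric, since the event $\{|z|<R\}$ depends only on $|z|$. Taking expectations in $v=z$ or $v=\widehat z$ therefore kills the cross term, and rotational symmetry forces $\E|P_{L^\perp}\widehat z|^2 = \tfrac{n-D}{n}\E|\widehat z|^2$, while $\E|P_{L^\perp}z|^2 = (n-D)\sigma^2$. Averaging also over $y\sim\mu$, I would obtain
\[
\E_{\widetilde\mu}\, dist(x,\widetilde S)^2 \;-\; \E_{\mu\times\widehat G}\, dist(y+\widehat z,\widetilde S)^2 \;=\; (n-D)\sigma^2 \;-\; \tfrac{n-D}{n}\E|\widehat z|^2 \;=\; \tfrac{n-D}{n}\int_{|z|\geq R}|z|^2\, dG_\sigma^{(n)}(z),
\]
which is the same number for every affine subspace $\widetilde S$ of dimension $D$; the supremum over $\widetilde S$ is vacuous.

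Finally I would estimate this tail integral. Splitting $e^{-|z|^2/(2\sigma^2)} = e^{-|z|^2/(4\sigma^2)}\cdot e^{-|z|^2/(4\sigma^2)}$ and bounding the second factor by $e^{-R^2/(4\sigma^2)}$ on $\{|z|\geq R\}$, the remaining integral reduces after a trivial rescaling to a second Gaussian moment, giving
\[
\int_{|z|\geq R}|z|^2\, dG_\sigma^{(n)}(z) \;\leq\; 2^{n/2+1}\, n\sigma^2\, e^{-R^2/(4\sigma^2)}.
\]
By the choice of $R$ in (\ref{cons:3}), this upper bound is smaller than $\eps/2$ for a sufficiently large universal constant in (\ref{cons:3}). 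The only non-routine step is the cancellation that makes the difference $\widetilde S$-independent; the rest is elementary Gaussian calculation, so I do not anticipate any substantive obstacle.
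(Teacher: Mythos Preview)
Your argument is correct and follows essentially the same route as the paper: expand $dist(y+v,\widetilde S)^2$, use that both $G_\sigma^{(n)}$ and its radial truncation $\widehat G$ are centered to kill the cross term, and then bound the resulting Gaussian tail integral $J_R=\int_{|z|\ge R}|z|^2\,dG_\sigma^{(n)}(z)$ by the same $e^{-R^2/(4\sigma^2)}$ splitting trick. The one cosmetic difference is that you invoke rotational symmetry of $\widehat G$ to show the difference equals $\tfrac{n-D}{n}J_R$ and is therefore \emph{literally independent of} $\widetilde S$, whereas the paper's displayed identity still carries $\widetilde S$ (and is in fact miswritten: the stated equality $-\E_{\widetilde\mu}dist(x,\widetilde S)^2+\E_{\mu\times\widehat G}dist(y+\widehat z,\widetilde S)^2=\E_{\widehat G}dist(\widehat z,\widetilde S)^2$ cannot hold for affine $\widetilde S$) before passing to the same upper bound $J_R\le 2n\sigma^2\,2^{n/2}e^{-R^2/(4\sigma^2)}$. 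So your write-up is a cleaner execution of the same idea and avoids the paper's typographical slips; nothing further is needed.
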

\begin{proof}
For any fixed $\widetilde{S}$, 
\beq \nonumber  -  \E_{x \sim \widetilde \mu} dist(x, \widetilde S)^2 + \E_{(x, \widehat z) \sim \mu \times \widehat G} dist(x + \widehat z, \widetilde S)^2 = \E_{\widehat z \sim \widehat G} dist( \widehat z, \widetilde S)^2,\eeq because for the vector valued random variable $p = x - \Pi_{\widetilde S} x$, $|p| = dist(x, \widetilde S)$ and we have $$\E|p|^2 = \E|p- \E p|^2 + |\E p|^2.$$ Therefore,
\beqs \nonumber  & &\sup_{\widetilde S} |  \E_{x \sim \widetilde \mu} dist(x, \widetilde S)^2 -\E_{(y, \widehat z) \sim \mu \times \widehat G} dist(y + \widehat z, \widetilde S)^2| \\ & =  &  \sup_{\widetilde S} \E_{\widehat z \sim \widehat G} dist( \widehat z, \widetilde S)^2\\
& =& \int_{|(x_{D+1}, \dots, x_n)|>R} |x|^2(2\pi\sigma^2)^{n/2}\exp(-|x|^2/(2\sigma^2))dx\nonumber\\
& \leq & \int_{|x|>R} |x|^2(2\pi\sigma^2)^{n/2}\exp(-|x|^2/(2\sigma^2))dx\nonumber\\
& =: & J_R
.\eeqs
Proceeding as with the preceeding claim,
\beqs J_R \exp(R^2/(4\sigma^2))&  \leq & \int_{\R^n} |x|^2 (2\pi\sigma^2)^{-n/2}\exp(-|x|^2/(2\sigma^2)) \exp(|x|^2/(4\sigma^2))dx\\
& = & 2n\sigma^2 2^{n/2}.\eeqs Since by (\ref{cons:1}), $R \geq C\sigma\sqrt{n} +  C\sigma \sqrt{\log(Cn\sigma^2/\eps)}$, we have $J_R \leq \eps/2$ and the claim follows.
\end{proof}
Thus, with probability greater than $1 - \de$,
\beq \sup_{\widetilde S} |(1/N_0)\sum_{i=1}^{N_0} dist(x_i, \widetilde S)^2 -  \E_{x \sim \widetilde \mu} dist(x_i, \widetilde S)^2| < \eps.\eeq
Therefore, by (\ref{eq:28}) and the above, with probability greater than $1 - \de$,
\beq  \E_{x \sim \widetilde \mu} dist(x,  S)^2 \leq  \E_{x \sim \widetilde \mu} dist(x, \widehat S)^2 + 2\eps.\eeq
Therefore, expanding $x = y + z$, we have 

 \beq \E_{x \sim \widetilde \mu} dist(y,  S)^2 &\leq&  \E_{x \sim \widetilde \mu} dist(y, \widehat S)^2 + 2\eps\\
& \leq & 9\beta^2 + 2\eps \,\,\,\,\,\,\,\,\,\,\,\,\,\,\,\,\,\,\,\,\,\,\,\,\,\,\,\,\,\,\,\,\,(by\,\, (\ref{eq:27.9}))\\
& < & \left(\frac{\a^2\tau}{2}\right)^2\left(\frac{\a^2\tau}{4}\right)^d \left(\frac{\omega_d  \rho_{min}}{V}\right).\eeq

Therefore, by Claim~\ref{cl:1may17},  with probability greater than $1 - \de$, $$ \sup_{x \in \MM} dist(x, S) < \alpha^2\tau. $$ 
This proves  Proposition~\ref{lem:5-May}.
\end{proof}

\begin{lemma}\label{lem:alpha}
Suppose $\MM$ is a $C^2$ submanifold of $\RR^n$ having reach $\tau$ and $S$ is a ${D}$ dimensional linear subspace such that $\sup_{x \in \MM} dist(x, S) < \alpha^2 \tau$ where $\alpha < \frac{1}{4}$.  Then $\Pi_S(\MM)$ is a submanifold of $\R^n$ having reach at least  $(1- 4 \a^2)\tau$.
\end{lemma}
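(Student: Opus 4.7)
The plan is to verify Federer's reach criterion (Proposition~\ref{thm:federer}) directly on $\bar\MM:=\Pi_S(\MM)$. Writing $x^\perp := x - \Pi_S x$ for the $S^\perp$-component, the hypothesis reads $|x^\perp|<\alpha^2\tau$ for every $x\in\MM$. The only nontrivial step will be to upgrade this pointwise smallness into a uniform bound on how far tangent directions of $\MM$ tilt away from $S$; given that angle bound, both the fact that $\bar\MM$ is a smooth $d$-dimensional submanifold and the reach estimate will follow from a short case analysis on $|a-b|$.

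The key intermediate claim is that for every $a\in\MM$ and every unit $v\in Tan^0(a,\MM)$, $|v^\perp|\le \sqrt{2}\,\alpha$. I will prove this with a two-sided argument. Let $\gamma$ be the unit-speed geodesic of $\MM$ with $\gamma(0)=a$, $\gamma'(0)=v$, which exists for all time since $\MM$ is compact without boundary. The ambient acceleration $\gamma''$ lies in the normal bundle of $\MM$ and satisfies $|\gamma''|\le 1/\tau$ (a standard consequence of Proposition~\ref{thm:federer}, obtained by expanding $\dist(\gamma(t),Tan(a,\MM))\le |\gamma(t)-a|^2/(2\tau)$ to second order), so $\gamma(\pm t)=a\pm tv + r_\pm(t)$ with $|r_\pm(t)|\le t^2/(2\tau)$. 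Projecting to $S^\perp$ and subtracting yields $2t\,v^\perp = \gamma(t)^\perp - \gamma(-t)^\perp - r_+(t)^\perp + r_-(t)^\perp$, and using $|\gamma(\pm t)^\perp|<\alpha^2\tau$ gives $2t|v^\perp|\le 2\alpha^2\tau + t^2/\tau$. Choosing $t=\alpha\tau\sqrt{2}<\tau$ yields $|v^\perp|\le \sqrt{2}\,\alpha$. The two-sidedness is essential here: a one-sided bound would only give $|v^\perp|\le 2\alpha$, which does not suffice for the numeric inequality $\sqrt{2}+\tfrac12<2$ used below, and with it one would obtain only a reach bound of the form $(1-C\alpha^2)\tau$ for some $C>4$. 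This is the main obstacle to worry about.

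Now fix distinct $\bar a,\bar b\in\bar\MM$ with preimages $a,b\in\MM$, set $d_1=|a-b|$, and let $\Pi_a b$ be the nearest point of $Tan(a,\MM)$ to $b$. I first show $|(b-a)^\perp|\le 2\alpha\,d_1$ by cases. If $d_1\ge\alpha\tau$ then $|(b-a)^\perp|\le|a^\perp|+|b^\perp|\le 2\alpha^2\tau\le 2\alpha d_1$. If $d_1<\alpha\tau$, decompose $b-a=w+e$ with $w=\Pi_a b - a\in Tan^0(a,\MM)$ (so $|w|\le d_1$) and $e=b-\Pi_a b$ orthogonal to $Tan^0(a,\MM)$; Proposition~\ref{thm:federer} gives $|e|\le d_1^2/(2\tau)$, and the angle bound gives
\[
|(b-a)^\perp|\le|w^\perp|+|e|\le \sqrt{2}\,\alpha\, d_1 + \tfrac{d_1^2}{2\tau}<\bigl(\sqrt{2}+\tfrac12\bigr)\alpha\,d_1<2\alpha\,d_1.
\]
Since $\alpha<1/2$, this also forces $\Pi_S|_\MM$ to be globally injective; combined with the angle bound (which makes it an immersion everywhere) and compactness, $\Pi_S|_\MM$ is a $C^2$ embedding, so $\bar\MM$ is a $C^2$ submanifold with $Tan(\bar a,\bar\MM)=\bar a+\Pi_S(Tan^0(a,\MM))$. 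Therefore $\Pi_S(\Pi_a b)\in Tan(\bar a,\bar\MM)$, and Proposition~\ref{thm:federer} applied to $\MM$ gives $\dist(\bar b,Tan(\bar a,\bar\MM))\le|\Pi_S(b-\Pi_a b)|\le|b-\Pi_a b|\le d_1^2/(2\tau)$. Combining with $|\bar a-\bar b|^2=d_1^2-|(b-a)^\perp|^2\ge(1-4\alpha^2)d_1^2$ yields $\dist(\bar b,Tan(\bar a,\bar\MM))\le|\bar a-\bar b|^2/(2(1-4\alpha^2)\tau)$, which by Proposition~\ref{thm:federer} gives $\reach(\bar\MM)\ge(1-4\alpha^2)\tau$.
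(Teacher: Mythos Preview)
Your proof is correct and shares the same architecture as the paper's: apply Federer's criterion to $\bar\MM=\Pi_S(\MM)$ using that $\Pi_S$ carries $Tan(a,\MM)$ onto $Tan(\bar a,\bar\MM)$ (so that $dist(\bar b,Tan(\bar a,\bar\MM))\le dist(b,Tan(a,\MM))$) together with the contraction estimate $|\bar a-\bar b|^2\ge(1-4\alpha^2)|a-b|^2$, and both arguments split the latter into a far case $|a-b|\ge\alpha\tau$ and a near case.

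The only substantive difference is how the near case is handled. The paper invokes the local graph description of $\MM$ (Lemma~\ref{lem:6}) to conclude that a disc of radius $\tau/4$ in $Tan(a)$ lies within $2\alpha^2\tau$ of $S$, and then rescales the segment from $a$ to $b$ along its own direction to length $2\alpha$. Your route is more self-contained: the two-sided geodesic Taylor expansion yields the tilt bound $|v^\perp|\le\sqrt2\,\alpha$ for every unit tangent vector, using only the second-fundamental-form estimate $|\gamma''|\le1/\tau$ (itself a second-order consequence of Proposition~\ref{thm:federer}) and completeness of $\MM$. This avoids any appeal to a graph lemma, at the price of a weaker tilt bound (order $\alpha$ rather than order $\alpha^2$), which is why you must verify the numerical coincidence $\sqrt2+\tfrac12<2$; it holds, though with little room to spare. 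Either way the same final Federer computation goes through.
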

\begin{proof}
Without loss of generality, set $\tau = 1.$ Since $\MM$ is compact, and $\Pi_S$ is continuous, $\Pi_S(\MM)$ is compact. We will first show that the reach of $\Pi_S(\MM)$ is greater than $1 - 4\a^2.$ 
Note that for any $x', y' \in \MM$ such that $|x' - y'| > \a$, \beq \frac{|\Pi_S (x' - y')|}{|x'-y'|} & =  & \sqrt{1 - \frac{|\Pi_{S^\perp} (x'-y')|^2}{|x'-y'|^2}}
\label{eq:58alpha}  \geq  \sqrt{1 - 4 \alpha^2}.\eeq 
Let $|x'-y'| \leq \a$.
Let \beqs \widehat U:= \{y\in \R^n\big||y-\Pi_xy| \leq 1/4\} \cap  \{y\in \R^n\big||x-\Pi_xy| \leq 1/4\}.\eeqs 
As $\MM$ is a $C^2$ submanifold of $\R^n$, by Lemma~\ref{lem:6}, there exists a $C^{2}$ function $F_{x, \widehat U}$ from $\Pi_x( \widehat U)$ to $\Pi_x^{-1}(\Pi_x(0))$ such that
\beqs  \{ y + F_{x, \widehat U}(y) \in \R^n \big | y \in \Pi_x(\widehat U)\} = \MM \cap \widehat U.\eeqs 
By Corollary~\ref{cor:1}, 
\beq |F_{x, \widehat U}(y)|&  \leq & {|x - \Pi_x y|^2}
\label{eq:feb17-81} \leq  \a^2.\eeq
Therefore, the Hausdorff distance between $\MM \cap \widehat U$ and the disc $Tan(x) \cap \widehat U$ is less or equal to $\a^2$.
Consequently, \beq \label{eq:61} \sup_{\bar x \in Tan(x) \cap \widehat U} dist(\bar x, S) < \a^2 +  \alpha^2 = 2 \a^2.\eeq
In particular, this implies that the dimension of $S$ is at least $d$. We observe that 
\beq  \frac{|\Pi_S (x' - y')|}{|x'-y'|} & = & \frac{|\Pi_S((x' + \frac{\a(y'-x')}{|y'-x'|}) - (x' + \frac{\a(x'-y')}{|x'-y'|}))|}{|(x' + \frac{\a(y'-x')}{|y'-x'|}) - (x' + \frac{\a(x'-y')}{|x'-y'|})|}\\
& = & \sqrt{1 - \left(\frac{|\Pi_{S^\perp}((x' + \frac{\a(y'-x')}{|y'-x'|}) - (x' + \frac{\a(x'-y')}{|x'-y'|}))|}{|(x' + \frac{\a(y'-x')}{|y'-x'|}) - (x' + \frac{\a(x'-y')}{|x'-y'|})|}\right)^2}\\ & \geq & \sqrt{1 - \left(\frac{4\a^2}{2\a}\right)^2}
\label{eq:64}  =  \sqrt{ 1 - 4 \a^2}.\eeq
For  $\eps>0$ let \beq U_\eps := \{y \in \R^n \big||y-\Pi_xy| \leq 1/4\} \cap  \{y\in \R^n\big||x-\Pi_xy| \leq \eps\}.\eeq
Then, \beqs U_\eps \cap \{x\in \R^n|dist(x, S) < \a^2\} = \{y\in \R^n\big||x-\Pi_xy| \leq \eps\} \cap  \{x\in \R^n|dist(x, S) < \a^2\}. \eeqs
Therefore, by (\ref{eq:61}) and the Lipschitz nature of the gradient (by (\ref{eq:la})) of $F_{x, \widehat U}$ if we choose a frame where the origin is $\Pi_S(x)$ and $\R^d$ is $\Pi_S(Tan(x))$,  we see that for sufficiently small $\eps$, $\Pi_S(U_\eps \cap \MM)$ is the graph of a $C^2$ function. 
 Since $x \in \MM$ was arbitrary, this proves that $\Pi_S\MM$ is a submanifold of $\R^n$. Finally, we note that the reach of $\Pi_S\MM$ is given by 
\beq reach(\Pi_S \MM) & = &\inf_{\MM \ni x' \neq y' \in \MM}\frac{ |\Pi_S(x'-y')|^2}{2 dist(\Pi_S x', \Pi_S Tan(y'))}\\ & \geq & \inf_{\MM \ni x' \neq y' \in \MM}\frac{ (1 - 4 \a^2)|x'-y'|^2}{2 dist(\Pi_S x', \Pi_S Tan(y'))}\\
& \geq & (1 - 4\a^2) \inf_{\MM \ni x' \neq y' \in \MM}\frac{ |x'-y'|^2}{2 dist( x',  Tan(y'))}\\
& = & (1 - 4\a^2) reach(\MM). \eeq

\end{proof}

\subsection{Estimating $\sigma$.}

As we mentioned in the beginning of this section, we have assumed knowledge of $\sigma$.
However in an interesting regime, namely the regime where the values of $n, D, \tau$ and $\sigma$ satisfy  $$n  - D\geq \left(\frac{\tau^2D}{\sigma^2}\right),$$ we shall now show how an upper bound on $\sigma$ can be obtained which is good enough for our purposes. 
Let $S$ be the $D$ dimensional subspace defined in the statement of Proposition~\ref{lem:5-May}. Let 
$$\hat{\sigma} := \sqrt{\E_{z \sim \tilde{\mu}} \left(\frac{dist(z, S)^2}{n-D}\right)}.$$ It then follows from Proposition~\ref{lem:5-May} that with probability at least $1 - \de$, $$(n-D) \sigma^2 \leq (n- D)\hat{\sigma}^2 \leq \a^4 \tau^2 + (n - D) \sigma^2.$$
This implies that 
$$ \sigma^2 \leq \hat{\sigma}^2 \leq (\a^4 + 1)  \sigma^2.$$

The Monte-Carlo method can be used to estimate $\hat{\sigma}^2$ to within a prescribed error with high probability.
\section{Learning discs that approximate the data}\label{sec:follows}



\subsection{Properties of $\Pi_D X_0$.}\label{ssec:prop_proj}

In what follows, we shall, without loss of generality, identify $S$ constructed in Proposition~\ref{lem:5-May} with $\R^D \subseteq \R^n$. We will denote the orthogonal projection $\Pi_{\R^D}$ of $\R^n$ on to $\R^D$ by $\Pi_D$.

Let \beq r_p \in \left[\sqrt{\sigma \tau} D^{1/4}, \frac{\tau}{C{d^C}}\right].\eeq Here $r_p$ is a preliminary radius that will be subsequently be replaced by a smaller radius $r_c$.
Let $N_0$ be chosen to be an integer such that \beq N_0/\ln(N_0) > \frac{CV}{\rho_{min}\omega_d(r_p^2/\tau)^d},\eeq where $\omega_d$ is the volume of a Euclidean unit ball in $\R^d$.  We will assume that $D$ is large enough to that  we can choose $N_0$ such that \beq N_0 \leq e^D.\eeq

\begin{lemma}\label{lem:haus-May}
Let $\widetilde{X}_0$ be a set of $N_0$ i.i.d random samples from the distribution $\mu$. Let $X_0$ be a set of $N_0$ i.i.d samples from $\mu\ast G_\sigma^{(n)}$, obtained by adding i.i.d noise sampled from $G_\sigma^{(n)}$ to the points in $\widetilde{X}_0$.
With probability $1 - N_0^{-C}$, $\Pi_D X_0$ will be $C r_p^2/\tau-$close to $\Pi_D \MM$ in Hausdorff distance. 
\end{lemma}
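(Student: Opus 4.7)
The plan is to bound the Hausdorff distance in its two one-sided directions separately.

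\textbf{Step 1 (easy direction): every projected sample is close to $\Pi_D\MM$.} For each $i$, write $\Pi_D y_i = \Pi_D x_i + \Pi_D \xi_i$, so that $\Pi_D x_i \in \Pi_D \MM$ and hence $\mathrm{dist}(\Pi_D y_i, \Pi_D \MM) \le |\Pi_D \xi_i|$. The vector $\Pi_D \xi_i$ is Gaussian on $\R^D$ with covariance $\sigma^2 I_D$, so $\E|\Pi_D\xi_i| \le \sigma\sqrt{D}$. By Gaussian concentration (Lemma~\ref{lem:Gaussian}) applied to the $1$-Lipschitz map $v\mapsto |v|$, $\Pr[|\Pi_D\xi_i| \ge C\sigma\sqrt{D+\log N_0}] \le N_0^{-C'}$; a union bound over $i\le N_0$ (using $N_0\le e^D$) gives $\max_i |\Pi_D\xi_i| \le C\sigma\sqrt{D}$ with probability $1-N_0^{-C''}$. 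Since $r_p \ge \sqrt{\sigma\tau}\,D^{1/4}$ we have $\sigma\sqrt{D} \le r_p^2/\tau$, finishing this direction.

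\textbf{Step 2 (hard direction): every point of $\Pi_D\MM$ is close to some $\Pi_D y_i$.} Set $r := r_p^2/\tau$, which by hypothesis is much smaller than $\tau$. Let $P = \{p_1,\dots,p_M\}$ be a maximal $r$-separated subset of $\MM$; by maximality it is also an $r$-net for $\MM$. Federer's reach criterion (Corollary~\ref{cor:1}) implies that for $r\ll\tau$ the intrinsic ball $B(p_j,r)\cap\MM$ contains the graph over $\mathrm{Tan}(p_j)$ of a map defined on a flat $d$-disc of radius $\ge r/2$, so its $d$-dimensional Hausdorff measure is $\ge \omega_d(r/2)^d$. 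Consequently $\mu(B(p_j,r)\cap \MM) \ge \rho_{\min}\omega_d(r/2)^d/V$, and the same volume argument bounds $M \le CV/(\rho_{\min}\omega_d r^d)$.

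\textbf{Step 3 (net argument + combining).} The probability that none of the $N_0$ i.i.d.\ samples $x_i$ lands in $B(p_j,r)\cap\MM$ is at most
\[
\left(1 - \frac{\rho_{\min}\omega_d(r/2)^d}{V}\right)^{N_0} \le \exp\!\left(-\frac{N_0 \rho_{\min}\omega_d (r/2)^d}{V}\right).
\]
The assumption $N_0/\ln N_0 > CV/(\rho_{\min}\omega_d r^d)$ makes this $\le N_0^{-C_1}$ for large $C$. A union bound over $j\le M \le CV/(\rho_{\min}\omega_d r^d) \le N_0$ gives: with probability $\ge 1 - N_0^{-C_1+1}$, every $B(p_j,r)$ contains at least one $x_i$. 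Now for any $z\in\Pi_D\MM$, choose $p\in\MM$ with $\Pi_D p = z$, then $p_j\in P$ with $|p-p_j|\le r$, then $x_i\in B(p_j,r)$; this gives $|x_i - p|\le 2r$ and, combined with Step~1,
\[
|\Pi_D y_i - z| \le |\Pi_D(x_i - p)| + |\Pi_D\xi_i| \le 2r + C\sigma\sqrt{D} \le C'r,
\]
since projections are $1$-Lipschitz. Intersecting the two good events completes the proof.

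\textbf{Main obstacle.} The only nontrivial point is the lower bound $\mu(B(p_j,r)\cap\MM)\gtrsim \rho_{\min}\omega_d r^d/V$, which requires the reach hypothesis to guarantee that a small metric ball on $\MM$ is nearly a flat disc; everything else is Gaussian tail and volume-covering bookkeeping. Care is needed to ensure the constants in the calibration $N_0/\ln N_0 > CV/(\rho_{\min}\omega_d r^d)$ absorb both the $M$-factor in the union bound and the required polynomial probability gain $N_0^{-C}$.
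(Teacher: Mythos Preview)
Your proof is correct and follows essentially the same approach as the paper's: a net/coupon-collector argument on $\MM$ (the paper phrases it via Voronoi cells of a separated net, you via a direct union bound over balls, which is equivalent) to control $\dhaus(\Pi_D\widetilde{X}_0,\Pi_D\MM)$, combined with Gaussian concentration to bound $\max_i|\Pi_D\xi_i|$, and then the triangle inequality. Your write-up is somewhat more explicit than the paper's in justifying the volume lower bound $\mu(B(p_j,r)\cap\MM)\gtrsim \rho_{\min}\omega_d r^d/V$ via the reach criterion, but the skeleton is the same.
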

\begin{proof}
 By the coupon collector problem applied to the Voronoi cells corresponding to a  $6r_p^2/\tau$ net of $\MM$ that is also $r_p^2/{2\tau}$ separated (such a net always exists, and can be constructed by a greedy procedure), we see that if we examine the set $\Pi_D \widetilde{X_0}$ of $N_0$ i.i.d random samples  from the push forward of $\mu$ under $\Pi_D$, with probability at least $1 - N_0^{-C}$, every  Voronoi cell has at least one random sample. Therefore the Hausdorff distance of $\Pi_D \widetilde{X_0}$ to $\Pi_D \MM$ is less than $12r_p^2/\tau$. Due to Gaussian concentration (see Lemma~\ref{lem:Gaussian}), the maximum distance of a point $\Pi_D y_i$ of $\Pi_D X_0$ to the corresponding $\Pi_D x_i$ in $\Pi_D \widetilde{X_0}$ is bounded above  by $$\sigma\left(\sqrt{D} + \sqrt{\ln(N_0^C)}\right) < C r_p^2/\tau,$$ with probability  at least $1 - N_0^{-C}$.  This is an upper bound on the Hausdorff distance between $\Pi_D X_0$ and $\Pi_D \widetilde{X_0}$.
Therefore, we have proved the lemma. 
\end{proof}

\subsection{Putative discs}
\label{subsec:algorithm-finddisc}

Let $X$ be a finite set of points in $E= \mathbb{R}^D$ and $X \cap B_1(x) := \{x, \widetilde{x}_1, \dots, \widetilde{x}_s\}$ be a set of points within a Hausdorff distance $\delta$ of some (unknown) unit $d$-dimensional disc $D_1(x)$ centered at $x$.  Here $B_1(x)$ is the set of points in ${\mathbb R}^D$ whose distance from $x$ is less or equal to $1$. We give below a simple algorithm that finds a unit $d$-disc (i.e. a ball in a $d-$ dimensional isometrically embedded Euclidean space) centered at $x$ within a Hausdorff distance $C{d}\delta$ of $X_0:= X \cap B_1(x)$, where $C$ is an absolute constant.

The basic idea is to choose a near orthonormal basis from $X_0$ where $x$ is taken to be the origin and let the span of this basis intersected with $B_1(x)$ be the desired disc.
This algorithm appeared previously in \cite{FIKLN} but has been included in the interest of readability.

\underline{Algorithm FindDisc:}
\begin{enumerate}
\item Let $x_1$ be a point that minimizes $ |1 - |x- x'||$ over all $x' \in X_0$.
\item Given $x_1, \dots x_m$ for $m \leq d-1$, choose $x_{m+1}$ such that $$\max(|1-|x- x'||, |\langle x_1/|x_1|, x'\rangle|, \dots, |\langle x_m/|x_m|, x'\rangle|)$$ is minimized among all $x' \in X_0$ for $x'= x_{m+1}$.
    \end{enumerate}
Let $\widetilde{A}_x$  be the affine $d$-dimensional subspace containing $x, x_1, \dots, x_n$, and the unit $d$-disc $\widetilde{D}_1(x)$ be $\widetilde{A}_x \cap B_1(x)$. Recall that for two subsets $A, B$ of $\R^D$, $d_H(A, B)$ represents the Hausdorff distance between the sets. We  will denote large absolute constants by $C$ and small absolute constants by $c$.
\begin{lemma}\lab{lem:find-disc}
Suppose there exists a $d$-dimensional affine subspace $A_x$ containing $x$ such that $D_1(x) = A_x \cap B_1(x)$ satisfies $d_H(X_0, D_1(x)) \leq \delta$.
Suppose $0 < \delta < \frac{1}{2d}$. Then $d_H(X_0, \widetilde{D}_1(x)) \leq  C{d}\delta$, where $C$ is an absolute constant.
\end{lemma}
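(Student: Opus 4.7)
The plan is as follows. Without loss of generality we translate so that $x=0$, and let $A = A_x$ denote the $d$-dimensional linear subspace for which $D_1(x) = A \cap B_1(0)$. We write $u_i = x_i/|x_i|$ for the normalized selected points, and let $\widetilde{A}$ denote $\mathrm{span}(x_1,\dots,x_d) = \mathrm{span}(u_1,\dots,u_d)$.

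First I would prove by induction on $m \leq d$ that the greedy choice produces a near-orthonormal system: $\bigl||x_i| - 1\bigr| \leq \delta$ and $|\langle u_i,u_j\rangle| \leq 2\delta$ for $i\neq j$. The base case uses that any unit vector in $A$ lies in $D_1(x)$, so by the Hausdorff hypothesis there is a point of $X_0$ within $\delta$ of it, bounding the minimum of $\bigl|1 - |x'|\bigr|$ by $\delta$. For the inductive step, I would exhibit a witness minimizer: since $m < d$, one can pick a unit vector $v \in A$ orthogonal to the projections $\Pi_A(u_1),\dots,\Pi_A(u_m)$, which forces $\langle u_i, v\rangle = 0$; then a point $y \in X_0$ with $|y-v|\leq \delta$ achieves $\bigl|1-|y|\bigr|\leq \delta$ and $|\langle u_i/|x_i|, y\rangle| \leq \delta$, so the greedy minimizer $x_{m+1}$ does too. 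Dividing by $|x_{m+1}| \geq 1-\delta$ turns the last bound into $|\langle u_i, u_{m+1}\rangle| \leq 2\delta$.

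Next I would show $\widetilde{A}$ and $A$ are close as subspaces, with principal-angle error $O(d\delta)$. The Gram matrix $\widetilde{G}_{ij} = \langle u_i,u_j\rangle$ equals $I + E$ with diagonal zero and $|E_{ij}|\leq 2\delta$ off-diagonal, so by Gershgorin its eigenvalues lie in $[1-2(d-1)\delta,\,1+2(d-1)\delta] \subseteq [1/2, 3/2]$ under the hypothesis $\delta < 1/(2d)$. The same analysis applies to the Gram matrix of $w_i := \Pi_A(x_i)/|\Pi_A(x_i)|$ in $A$, since $|\langle \Pi_A(x_i),\Pi_A(x_j)\rangle - \langle x_i,x_j\rangle| \leq \delta^2$. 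An elementary computation gives $|u_i - w_i| = O(\delta)$. Hence if $v \in A$ is a unit vector written as $v = \sum_i \beta_i w_i$, the Gram bound yields $\|\beta\|_2 = O(1)$ and $\|\beta\|_1 \leq \sqrt{d}\|\beta\|_2 = O(\sqrt{d})$, so $\tilde v := \sum_i \beta_i u_i \in \widetilde{A}$ satisfies $|v - \tilde v| \leq \|\beta\|_1 \cdot O(\delta) = O(\sqrt{d}\,\delta)$. The symmetric statement for $v \in \widetilde{A}$ follows the same way.

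Finally I would deduce both Hausdorff bounds. For $y \in X_0$, the Hausdorff hypothesis gives $|y - \Pi_A(y)| \leq \delta$ and $|\Pi_A(y)| \leq 1+\delta$; applying the previous paragraph to $\Pi_A(y)$ produces some $\tilde v \in \widetilde{A}$ with $|\Pi_A(y) - \tilde v| = O(\sqrt{d}\,\delta \cdot |\Pi_A(y)|) = O(\sqrt{d}\,\delta)$. The orthogonal projection $\Pi_{\widetilde{A}}(y)$ then lies in $\widetilde{A}$ with $|\Pi_{\widetilde{A}}(y)| \leq |y| \leq 1$, hence in $\widetilde{D}_1(x)$, and $\mathrm{dist}(y,\widetilde{D}_1(x)) \leq |y-\tilde v| \leq \delta + O(\sqrt{d}\,\delta) = O(\sqrt{d}\,\delta)$. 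Conversely, for $z \in \widetilde{D}_1(x)$, write $z = \sum_i \gamma_i u_i$, form $\tilde z = \sum_i \gamma_i w_i \in A$ with $|z-\tilde z| = O(\sqrt{d}\,\delta)$ and $|\tilde z| \leq 1 + O(\sqrt{d}\,\delta)$; rescaling $\tilde z$ to $\tilde z' \in D_1(x)$ costs at most $O(\sqrt{d}\,\delta)$ more, and the Hausdorff hypothesis supplies $y' \in X_0$ with $|\tilde z' - y'| \leq \delta$. Both bounds are dominated by $Cd\delta$.

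The main obstacle is tracking the dependence on $d$ carefully: the factor $d$ in the stated conclusion arises from the Gershgorin bound on the Gram matrix (a factor of $d-1$) combined with $\|\beta\|_1 \leq \sqrt{d}\|\beta\|_2$; these force the invertibility condition $\delta < 1/(2d)$ and ultimately determine the $Cd\delta$ rate. A subtlety to verify is that the same argument really applies symmetrically to both the $\Pi_A(x_i)$ and the $x_i$, which requires the small estimates $|u_i - w_i| = O(\delta)$ and $\bigl||x_i| - |\Pi_A(x_i)|\bigr| = O(\delta^2)$ that I would derive from $|x_i - \Pi_A(x_i)| \leq \delta$ and $|x_i| \geq 1-\delta$.
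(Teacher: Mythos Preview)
Your proposal is correct and follows essentially the same route as the paper: both arguments show the greedy choices satisfy $\bigl|1-|x_i|\bigr|\le\delta$ and $|\langle x_i/|x_i|,x_j\rangle|\le\delta$ via a witness in $D_1(x)$, deduce that the Gram matrix (equivalently, the singular values of the matrix with columns $x_i$) deviates from $I$ by $O(d\delta)$, and then compare $\widetilde D_1(x)$ to $D_1(x)$ by the change of basis $\sum\beta_i u_i\leftrightarrow\sum\beta_i w_i$ together with a rescaling into the unit ball. The only cosmetic differences are that you normalize the vectors and track the slightly sharper factor $\sqrt d$ via $\|\beta\|_1\le\sqrt d\,\|\beta\|_2$, and you bound $d_H(X_0,\widetilde D_1(x))$ directly rather than bounding $d_H(D_1(x),\widetilde D_1(x))$ and invoking the triangle inequality as the paper does.
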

The proof is in Section~\ref{sec:subsidiary} of the Appendix.
\subsection{Fine-tuning the discs}\label{ssec:fine}
In this subsection, we will assume that the discs are centered at the origin and have a radius $1$. 
Recall that $B_1$ is a ball centered at the origin with radius $1.$
We also assume that
we have constructed a disc $\widetilde{D}_1$ such that $d_H(\widetilde{D}_1, B_1 \cap X)  < c$ and we know that there exists some disc $D_1$ (which we have not constructed) such that  
\ben 
\item $d_H(D_1, B_1 \cap X) = \de_1 < c$ is the Hausdorff distance between a unit disc $D_1$ and $B_1 \cap X$. 
\item $\de_2 := \sup\limits_{z \in B_1 \cap X}\dist(z, D_1).$
\een

We will describe an algorithm that produces a disc $\widehat{D}$ such that 
\ben 
\item $\sup\limits_{z \in B_1 \cap X}\dist(z, \widehat{D}) \leq 2 \de_2.$
\item $d_H(\widehat{D}, D_1) \leq 12 \de_2.$
\een  

This algorithm will be applied to $X = \Pi_D X_0$, from Lemma~\ref{lem:haus-May}. Note that in this context, $\de_2 < \frac{Cr^2}{\tau}$, while we only need $\de_1 < cr.$
Consider the family $\D$ of all unit $d-$dimensional discs in  $E$ centered at the origin defined by $$\D := \{D|(\sup_{z \in B_1 \cap X} dist(z, D) \leq 2 \de_2)\text{ and }(d_H(B_1\cap X, D) \leq 2 d_H(\widetilde{D}_1, B_1 \cap X))  \}.$$ We will obtain an estimate $\overline \de$ of the diameter of this family in the Hausdorff metric.
 By the triangle inequality applied to the Hausdorff metric, there exist $D_2 \in \D$ and $x\in \partial D_1$ such that $$dist(x, D_2)= d_H(D_1, D_2) \in [\overline\de/2, \overline\de].$$
Let $\phi: Tan(0, D_1) \ra Nor(0, D_1)$ be a linear map from the tangent space of $D_1$ at the origin to the normal space at $D_1$ at the origin such that
$D_2 \subseteq \{(z, \phi(z))|z \in D_1\}$. Then, $\phi$ is $\overline \de/\sqrt{1 - \overline \de^2}-$Lipschitz   (measured with respect to the Euclidean metric on the range and domain of $\phi$). 

Let $y\in X\cap B_1$ be such that $|y-x| \leq 2 d_H(\widetilde{D}_1, B_1 \cap X)$. Since $2 d_H(\widetilde{D}_1, B_1 \cap X) < c$, we have $$ |x - \Pi_{D_1} y|< 2 d_H(\widetilde{D}_1, B_1 \cap X) < c,$$ implying that
$$dist(\Pi_{D_1} y, D_2) \geq \overline \de/4.$$
Also $|y - \Pi_{D_1} y| \leq \de_2.$
Therefore, $$2\de_2 \geq \dist( y, D_2) \geq \overline \de/4 -  \de_2 .$$ Therefore, $\overline\de/4 \leq 3 \de_2.$ Thus,
\beq \label{bar_de}\overline \de \leq 12 \de_2 < c.\eeq

Therefore, by the preceding bound on the Hausdorff diameter and  $D_1 \in \D$, 
it suffices to find any one element of $\D$ to obtain an additive approximation $\widehat{D}$ to $D_1$ to within $12 \de_2$ in the Hausdorff metric which also satisfies   $\sup\limits_{z \in B_1 \cap X}\dist(z, \widehat{D}) \leq 2 \de_2.$    .
This is done as follows. 
Let the linear span of $\widetilde D_1$ be identified with $\R^d$ with the canonical Euclidean metric. Then, we consider the set $\widetilde \D$ of discs in $E$ of the form
\beq \label{DfromA}\widetilde{D}_A = B_1 \cap \{(z'_1, z'_2)| Az'_1 - I z'_2 = 0\},\eeq where $A$ is a $d \times (D-d)$ matrix, such that the operator norm of $A$ is bounded above as follows:
$$\|A\| \leq 2 d_H(\widetilde{D}_1, B_1 \cap X).$$
By (\ref{bar_de}), $\widetilde \D \supseteq \D$. It suffices to find one element of $\D$ in $\widetilde\D$. We do this by solving the following convex program using Vaidya's algorthm \cite{Vaidya}:\\
Find $A$ such that 
\ben
\item $\|A\|_2 \leq 2d_H(\widetilde{D}_1, B_1 \cap X),$ and
\item  for all $x = (x_1, x_2) \in B_1 \cap X$, where $x_1 \in \R^d$ and $x_2 \in \R^{D-d}$, $$\|Ax_1 - I x_2\|_2 \leq 2 \de_2.$$
\een

Once such an $A$ is found, the corresponding $\widehat D$ given by (\ref{DfromA}) satisfies the following. For any $(x_1, x_2) = x \in B_1 \cap X$, 
\beq dist(x, \widehat D) & = &  \bigg\|(I + A A^T)^{-1/2}(Ax_1 - I x_2)\bigg\|_2\leq  2 \de_2.\eeq


\section{Obtaining a refined net of $\MM$}\label{sec:ref-net}

We recall the setup. $\MM$ is a submanifold of $\R^n$. $\R^D \subseteq \R^n$ is a coordinate subspace such that the orthogonal projection $\Pi_D$ of $\R^n$ onto $\R^D$ satisfies 
$$\sup_{x \in \MM}|x - \Pi_D x| \leq \alpha^2 \tau,$$ where we choose $\a^2$ to be  less than  $\frac{1}{Cd}$.
Let the set of all points within a distance of $2\sqrt{D}\sigma$ of $\Pi_D \MM$, be denoted $(\Pi_D \MM)_{2 \sqrt{D}\sigma}$. Let $x \in (\Pi_D \MM)_{2 \sqrt{D}\sigma}$.  Recall that $X_0$ is a set of $N_0$ independent, randomly sampled points from $\mu \ast G_\sigma^{(n)}$. Let $r_c$ satisfy \beq\frac{r_c^2}{\tau} = \frac{\tau}{Cd^C} > 4\sigma  D^{\frac{1}{2}}.\eeq 
Let $B_D(x, r_c)$ denote the Euclidean $D-$dimensional ball of radius $r_c$,  centered at $x$, contained in $\R^D$. 
 Let $x$ be an arbitrary point in $(\Pi_D \MM)_{2 \sqrt{D}\sigma}$. Let $D_0$ be a $d-$dimensional disc centered at $x$, having radius $r_c$ and having a Hausdorff distance to $\Pi_D(X_0)\cap B_D(x, r_c)$ that is less than $\de = \frac{Cr_c^2}{\tau},$ that is
\beq\label{eq:haus2}\dhaus(D_0,  \Pi_D(X_0)\cap B_D(x , r_c)) < \de = \frac{Cr_c^2}{\tau}.\eeq
Disc $D_0$  can be obtained using the algorithm in the Section~\ref{sec:follows}.
\begin{lemma}\label{lem:11-May}  The disc $D_0 \subseteq E$ satisfies
$$\p\left[\dhaus(D_0, B_D(x, r_c) \cap \Pi_D(\MM)) \leq 2\de \right] \geq 1 - cN_0^{-C}.$$
\end{lemma}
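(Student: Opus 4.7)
The plan is a triangle-inequality argument combining the hypothesis $\dhaus(D_0,\ \Pi_D(X_0)\cap B_D(x,r_c))<\de$ with the global sampling bound of Lemma~\ref{lem:haus-May}. Invoking Lemma~\ref{lem:haus-May} with $r_p$ chosen at the small end of its allowed range, for instance $r_p=\sqrt{\sigma\tau}\,D^{1/4}$, and using $r_c^2/\tau>4\sigma\sqrt{D}$, we get with probability at least $1-cN_0^{-C}$ a much tighter \emph{global} bound
\[
\dhaus(\Pi_D X_0,\ \Pi_D\MM)\ \le\ C\sigma\sqrt{D}\ \le\ \de/4.
\]
Once we establish that $\dhaus\!\bigl(\Pi_D X_0\cap B_D(x,r_c),\ \Pi_D\MM\cap B_D(x,r_c)\bigr)\le \de$, one more application of the triangle inequality against $D_0$ finishes the proof.

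The non-routine ingredient is handling the boundary effect when intersecting with $B_D(x,r_c)$: a point of $\Pi_D X_0$ near the sphere $\partial B_D(x,r_c)$ may be matched to a nearest point of $\Pi_D\MM$ that sits just outside $B_D(x,r_c)$, and symmetrically in the reverse direction. Here I would invoke Lemma~\ref{lem:alpha}, which guarantees that $\Pi_D\MM$ has reach at least $(1-4\a^2)\tau\ge \tau/2$ since $\a^2\le 1/(Cd)$. Because $r_c\ll \tau$, inside any ball of radius much smaller than the reach, $\Pi_D\MM$ is a near-flat $C^2$ graph over its tangent space at any basepoint, with the orthogonal deviation from that tangent space controlled by $O(r_c^2/\tau)=O(\de)$ (Federer's reach criterion, Proposition~\ref{thm:federer}, together with Corollary~\ref{cor:1}). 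In particular, moving along $\Pi_D\MM$ by a small distance $\eps$ changes the distance to $x$ by $\eps$ up to an $O(\eps^2/\tau)$ correction that is negligible at scale $\de$.

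Using this, the two boundary corrections are: given $q\in\Pi_D\MM$ with $r_c<|q-x|\le r_c+\de/4$, move $q$ along $\Pi_D\MM$ toward $x$ by at most $\de/4$ to obtain $q'\in\Pi_D\MM\cap B_D(x,r_c)$ with $|q-q'|\le \de/2$; and given $z\in\Pi_D\MM\cap B_D(x,r_c)$ whose nearest sample in $\Pi_D X_0$ could fall just outside the ball, first move $z$ inward along $\Pi_D\MM$ by $\de/2$ to $z'\in\Pi_D\MM\cap B_D(x,r_c-\de/2)$, after which the nearest sample of $z'$ in $\Pi_D X_0$ is automatically inside $B_D(x,r_c)$. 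In either direction the net error is at most $\de$, and adding the hypothesis $\dhaus(D_0,\ \Pi_D X_0\cap B_D(x,r_c))<\de$ yields the claimed $2\de$ bound.

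The main obstacle is the boundary correction just described. The cleanest way to carry it out is to parameterize $\Pi_D\MM\cap B_D(x,2r_c)$ as a $C^2$ graph over the tangent space $T_{x^*}(\Pi_D\MM)$ at the nearest point $x^*\in\Pi_D\MM$ to $x$ (which exists since $|x-x^*|\le 2\sqrt{D}\sigma\ll r_c\ll\tau/2$), to verify that the orthogonal projection onto this tangent space is bi-Lipschitz with constants very close to $1$, and then to execute the inward/outward shifts in the flat tangent picture where they are elementary; the failure of isometry contributes only the harmless $O(\de^2/\tau)$ term already accounted for.
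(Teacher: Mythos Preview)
Your proof is correct and follows the same route as the paper's: invoke Lemma~\ref{lem:haus-May} for a global bound $\dhaus(\Pi_D X_0,\Pi_D\MM)\le\de/2$, localize to $B_D(x,r_c)$, and apply the triangle inequality together with hypothesis~(\ref{eq:haus2}). The only differences are cosmetic: the paper applies Lemma~\ref{lem:haus-May} with $r_p$ set to $r_c$ rather than to the small end of its range (which sidesteps having to re-check the sample-size hypothesis on $N_0$ for a smaller $r_p$), and it dispatches the localization to $B_D(x,r_c)$ in a single ``Therefore, in particular'' line without spelling out the boundary correction that you handle carefully via the reach bound of Lemma~\ref{lem:alpha} and short radial slides along $\Pi_D\MM$---extra rigor, not a different idea.
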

\begin{proof}

Since $ \sigma (\sqrt{D} + \sqrt{\ln (N_0^C)})$ is less than $\de = Cr_c^2/\tau < \frac{\tau}{Cd^C}$, by Gaussian concentration (see Lemma~\ref{lem:Gaussian}), with probability at least $ 1 - \frac{N_0^{-C}}{C}$, every point $y_i = x_i + \zeta_i$ satisfies 
$$|y_i - x_i| = |\zeta_i| <  \sigma (\sqrt{D} + \sqrt{\ln (N_0^C)}) < Cr_c^2/\tau.$$ 
By replacing $r_p$ by $r_c$ in Lemma~\ref{lem:haus-May}, we see that 
$$\p\left[\dhaus(\Pi_DX_0,  \Pi_D(\MM)) \leq \de/2 \right] \geq 1 - N_0^{-C}.$$ Therefore, in particular,
$$\p\left[\dhaus(\Pi_DX_0 \cap B_D(x, r_c),  \Pi_D(\MM)\cap B_D(x, r_c)) \leq \de \right] \geq 1 - N_0^{-C}.$$
By the triangle inequality with respect to $\dhaus,$
\beqs \dhaus(D_0, B_D(x, r_c) \cap \Pi_D(\MM)) & \leq &  \dhaus(D_0,  \Pi_D(X_0)\cap B_D(x , r_c))\\ & + & \dhaus(\Pi_D(X_0)\cap B_D(x , r_c), B_D(x, r_c) \cap \Pi_D(\MM)).\eeqs The Lemma now follows from the last two inequalities and (\ref{eq:haus2}).
\end{proof}

\subsection{Using discs to approximate $\MM$ at a coarse scale}
Let $X_2 = \{q_i\}$ be a minimal $cr_c/d-$net of $\Pi_D X_0$.  Such a net can be chosen greedily, ensuring at every step that no element included in the net thus far is within $\frac{cr_c}{2d}$ of the point currently chosen. The process continues while progress is possible. Let the size of $X_2$ be denoted $N_2$.

We introduce a family of $D$ dimensional balls of radius $r ,$  $\{U_i\}_{i \in [N_2]} \subseteq \R^D$ where the center of $U_i$ is $q_i$ and a family of $d-$dimensional embedded discs of radius $r_c$, denoted $\{D_i\}_{i \in [N_2]}$, $D_i \subseteq U_i$ where $D_i$ is centered at $q_i$. 
The $D_i$ are chosen by fitting a disc that approximately minimizes among all discs of radius $r_c$ centered at $q_i$  the Hausdorff distance to $U_i \cap X_0$ by a procedure described in Subsection~\ref{subsec:algorithm-finddisc}. 

We note that the following properties of  $(D_i, q_i)$, hold with probability at least $1 - N_0^{-C}$. Property (C1) is an immediate consequence of Lemma~\ref{lem:11-May}.
\ben
\item[(C1)] The Hausdorff distance between $\cup_i D_i$ and $ \Pi_D \MM$ is less than $\frac{Cr_c^2}{ \tau} = \de$.
\item[(C2)] For any $i \neq j$, $|q_i - q_j| > \frac{cr_c}{d}$.
\item[(C3)] For every $z \in  \MM$, there exists a point $q_i$ such that $|z - q_i| <  3 \inf_{i \neq j}, |q_i - q_j|.$ 
\een

Now fix one disc that we shall relabel to call $D_0$. \\We use a new coordinate system in which the center of $D_0$ is the origin. We denote by $\R^d$, the affine span of $D_0$.
Let $\Pi_d:  \R^n \ra \R^d$
 denote the map that projects a point  in $\R^n$ orthogonally on to $\R^d$. Let $S_0$ denote the cylindrical set given by \beq\label{eq:S0} S_0 := B_d(0, \frac{r_c}{2})\times B_{D-d}(0, \frac{r_c}{2}) \subset \R^D,\eeq where $B_d(0, \frac{r_c}{2})$ is the ball of radius $\frac{r_c}{2}$ contained in $\R^d$ centered at the origin, and $B_{D-d}(0, \frac{r_c}{2})$ is the ball of radius $\frac{r_c}{2}$ contained in $\R^{D-d}$ centered at the origin. Here $\R^{D-d}$ is the orthogonal compliment of $\R^d$ inside $\R^D$. Note that \beqs \Pi_D^{-1} S_0 \subseteq \R^n. \eeqs We will obtain a refined net of $\MM \cap \Pi_D^{-1} S_0$.
\begin{figure}
\centering
\includegraphics[scale=0.60]{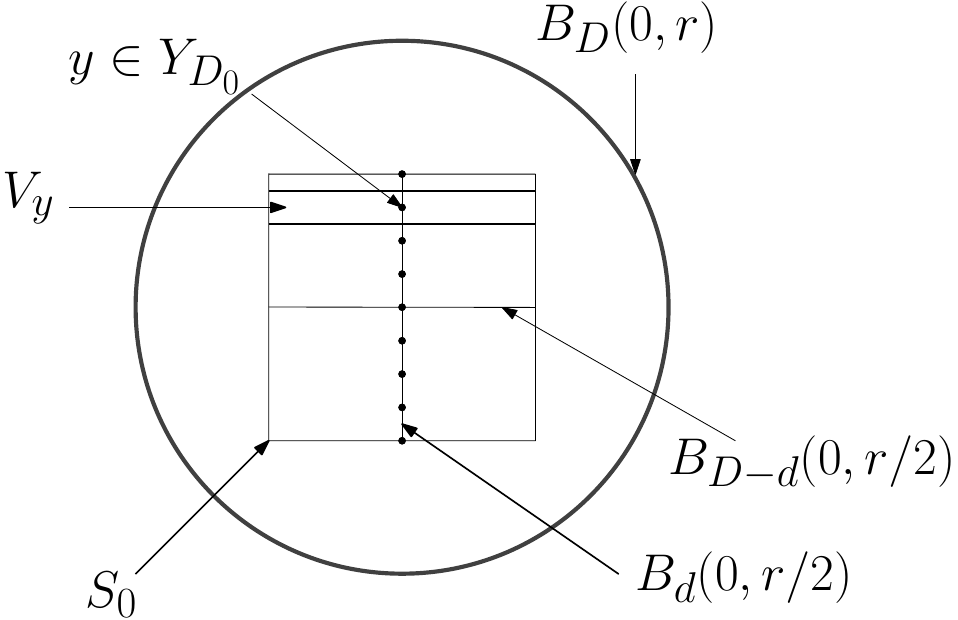}
\caption{The $r$ in the figure is set to $r_c$ in this section.}
\label{fig:H0}
\end{figure}
Let $Y_{D_0} = B_d(0, \frac{r_c}{2}) \cap 10 \sigma \Z^d$. 
 Due to volumetric considerations, \beq\label{eq:103-May}  |Y_{D_i}| \leq \left(\frac{r_c}{\sigma}\right)^d.\eeq

\begin{definition}
 For $y \in Y_{D_0}$, let $V_y$ denote the ``truncated Voronoi cell" defined by 
\beq V_y = \{y'\in S_0|\forall y'' \in Y_{D_0}, |y - y'| \leq |y' - y''|.\} \eeq 
\end{definition}
Thus, $V_y$ is the set of all points in $S_0$ 
that are at least as close to $y$ as they are to any other member of $Y_{D_0}.$ 


The points $q_i = p_i + \gamma_i$ of $X_0$ are i.i.d copies of the random variable $q = p + \gamma$, which has the form  $q_i = p_i + \gamma_i$, where $p_i$ has the same distribution as $p$, which is a sample from $\mu$ supported on $\MM$ and the $\gamma_i$ has the same distribution as $\gamma$ which is an i.i.d sample from the measure whose density is $G_\sigma^{(n)}$. 
Define the net $$Z_{S_0} := \{z_y|y \in Y_{D_0}\},$$ where \beq \lab{eq:formula_B} z_y = \E[q| q \in \Pi_D^{-1} V_y].\eeq
Let us first focus on $\Pi_D z_y$, and its distance to $\Pi_D \MM$. The push-forward of $G_\sigma^{(n)}$ under $\Pi_D$ is a Gaussian density $G_\sigma^{(D)}$ restricted to $\R^D$, the measure corresponding to which is 
$N_D(0, \sigma^2)$. Further $\Pi_D \MM$ has already been shown to be of reach at least $ (1- 4\a^2)\tau$ in Lemma~\ref{lem:alpha}. Secondly, by (\ref{eq:feb17-81}),  $\Pi_D$ applied to a unit tangent vector $v$ at a point on $\MM$ satisfies $$|\|\Pi_D v\| -1| \leq  \|(\Pi_D v) - v \| \leq 1 - \sqrt{1 - 4 \a^2} \leq 4\a^2 \leq \frac{1}{4d}.$$ Due to the consequences of this on the Jacobian of the restriction of $\Pi_D$ on the tangent space at a point on $\MM$, the push forward of $\mu$ under $\Pi_D$ has a Radon-Nikodym derivative with respect to the Hausdorff measure on $\Pi_D\MM$ that takes values in $[\frac{\rho_{min}}{2}, 2\rho_{max}].$

 {\it In this section, we will use the fact that that the logarithm of the Radon-Nikodym derivative of $\mu$ with respect to the Hausdorff measure on $\MM$ is $\frac{C}{\tau}-$Lipschitz.}
\begin{remark}\lab{rem:5.2}
As a consequence of Lemma~\ref{lem:6}, $\Pi_D(\MM) \cap \frac{\tau  S_0}{10 r_c}$ is the graph of a $C^2-$function $f$ from $B_d(0, \frac{\tau}{20})$ to the orthogonal compliment of $\R^d$ in $\R^D$, (which we henceforth denote by $\R^{D - d}$). This function $f$, when restricted to $B_d(0, \frac{r_c}{2})$ has a $C^0-$norm of at most $C\de$ with probability $ 1 - N_0^{-C}$ by Lemma~\ref{lem:11-May}. 
\end{remark}
Thus, we henceforth assume
\beq \label{eq:de0} \forall x \in B_d(0, r), | f(x)| \leq C\de. \eeq

The bound on the reach together with Lemma~\ref{lem:6} implies the following 
{ for all } $x \in B_d(0, \frac{\tau}{20}).$
Firstly, 
$$\forall_{v \in \R^d} \forall_{w \in \R^{n-d}} \langle \partial_v^2 f(x), w\rangle \leq 
\frac{C|v|^2|w|}{\tau}.$$ Secondly,
\beq\label{eq:230-Mar6}   \forall v \in \R^d,  |\partial_v f(x)| \leq  \frac{C\de|v|}{r_c} + \frac{C |v||x|}{\tau}. \eeq
This implies that
 $\forall x,y \in B_d(0, \frac{\tau}{20}),$ and $\forall w \in B_{n-d}(0, 1)$, denoting $x-y$ by $v,$
\beq\label{eq:231-Mar6}    | \langle f(x) - f(y) - \partial_v f(y), w\rangle | \leq   \frac{C |v|^2}{\tau}. \eeq
\begin{notation}\lab{not:X02} Let $X_0^{(2)}$ be a set of \beqs N_0^{(2)} & := & C^d n \left(\frac{\sigma}{\sigma^2/\tau}\right)^2N_0\max_i |Y_{D_i}| \log  N_0 \log\frac{N_0 r_c^d}{\sigma^d}\\ & \leq & C^d n  \left(\frac{\sigma}{\sigma^2/\tau}\right)^2 N_0\left(\frac{r_c}{\sigma}\right)^d  \log N_0 \log\frac{N_0 r_c^d}{\sigma^d} \eeqs independent random samples from $\mu$ (which are hence independent to $X_0$). 
\end{notation}
Let the $\#_y$  be defined by  \beq \lab{eq:form-A} \#_y := |V_y \cap \Pi_D X_0^{(2)}|.\eeq We will be interested in the case where $$\#_y > 100 \left(\frac{n\tau^2}{\sigma^2}\right) \log N_0,$$ since the complimentary event will be absorbed in the error probability.

\begin{lemma}\label{lem:14-May28} Let the number $\#_y$ of points, (see (\ref{eq:form-A})) satisfy
\beq \p\left[\left(\min_{D_i}\min_{y \in Y_{D_i}} \#_y\right) \leq 100 \left(\frac{n\tau^2}{\sigma^2}\right) \log N_0\right] <  N_0^{-100}. \eeq
\end{lemma}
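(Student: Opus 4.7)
The plan is to apply a multiplicative Chernoff bound to each cell count $\#_y$ and then union bound over all $(i,y)$ pairs. The first and main task is to lower bound $P_y := \p[\Pi_D q_j \in V_y]$ for a single sample $q_j = p_j + \gamma_j$, with $p_j\sim\mu$ and $\gamma_j\sim G_\sigma^{(n)}$.

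In coordinates where the disc $D_0$ spans $\R^d$, the cell factors as $V_y = U_y \times B_{D-d}(0,r_c/2)$, where $U_y\subset B_d(0,r_c/2)$ is the $d$-dimensional truncated Voronoi cell of $y$. Because $Y_{D_0}\subset 10\sigma\Z^d$ is $10\sigma$-separated, $U_y\supset B_d(y,5\sigma)\cap B_d(0,r_c/2)$, whose volume is at least $\omega_d(5\sigma)^d/2$ for any $y\in B_d(0,r_c/2)$ (this is tight when $y\in\partial B_d(0,r_c/2)$). To lower bound $P_y$, I would decompose
\[
\Pi_D q_j=\bigl(\Pi_d\Pi_D p_j+\xi_d,\ f(\Pi_d\Pi_D p_j)+\xi_{D-d}\bigr),
\]
using the graph representation of $\Pi_D\MM$ over $B_d(0,\tau/20)$, where $\xi_d,\xi_{D-d}$ are the independent $d$- and $(D{-}d)$-dimensional parts of $\Pi_D\gamma_j$. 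The marginal density of $\Pi_d\Pi_D q_j$ is $\tilde\nu:=\nu\ast G_\sigma^{(d)}$ with $\nu:=(\Pi_d\Pi_D)_\ast\mu$. By the Jacobian bound coming from \eqref{eq:230-Mar6} together with the graph description, $\nu\geq c\rho_{min}/V$ on $B_d(0,r_c/2)$, and since $\sigma\sqrt d\ll r_c$ the convolution loses at most a factor of $1/2$ even at $\partial B_d(0,r_c/2)$, giving $\tilde\nu(u)\geq c\rho_{min}/(2V)$ throughout $B_d(0,r_c/2)$. For the last $D{-}d$ coordinates, $|f|\leq C\delta\ll r_c$ on the graph domain and $|\xi_{D-d}|\leq C\sigma\sqrt D\ll r_c$ by Gaussian concentration, so the event $\Pi_{D-d}\Pi_D q_j\in B_{D-d}(0,r_c/2)$ has probability $1-\exp(-cr_c^2/\sigma^2)$, which is negligible. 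Combining,
\[
P_y\ \geq\ \tfrac{1}{2}\int_{U_y}\tilde\nu(u)\,du\ \geq\ c_1\,\omega_d\sigma^d\rho_{min}/V.
\]

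The rest is bookkeeping. A multiplicative Chernoff bound yields $\p[\#_y<N_2P_y/2]\leq\exp(-N_2P_y/8)$. Substituting $N_2=C^d n(\tau/\sigma)^2 N_0(r_c/\sigma)^d\log N_0\log(N_0r_c^d/\sigma^d)$ together with the lower bound for $P_y$, and using the definition of $N_0$ in the form $N_0\omega_d\rho_{min}(r_c^2/\tau)^d\geq CV\log N_0$, one obtains $N_0\omega_d r_c^d/V\geq (C/\rho_{min})(\tau/r_c)^d\log N_0=(C/\rho_{min})(Cd^C)^{d/2}\log N_0$, and hence
\[
N_2P_y\ \gtrsim\ (Cc_1)^d(Cd^C)^{d/2}\cdot(n\tau^2/\sigma^2)(\log N_0)^3.
\]
For $C$ a sufficiently large absolute constant, this both exceeds $200(n\tau^2/\sigma^2)\log N_0$ (so that the threshold $100(n\tau^2/\sigma^2)\log N_0$ lies below $N_2P_y/2$) and dominates the union-bound cost $8(101\log N_0 + d\log(r_c/\sigma))$ coming from at most $N_0(r_c/\sigma)^d$ cells $V_y$ across all discs $D_i$. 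This yields a total failure probability at most $N_0^{-100}$, as claimed.

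The main technical obstacle is the uniformity of the density lower bound $\tilde\nu\geq c\rho_{min}/V$ near $\partial B_d(0,r_c/2)$ and the matching uniform lower bound on $\mathrm{Vol}(U_y)$ for boundary lattice cells; both are handled by the scale separation $\sigma\sqrt d\ll r_c$, which makes the boundary look locally flat at the scale of either the Gaussian or the lattice, costing only a harmless factor of $1/2$ in each place. Elsewhere, the various exponential-in-$d$ losses (from $\omega_d$, from $d$-dimensional Gaussian tails, and from cubes of side $O(\sigma)$) are absorbed into the $C^d(\tau/r_c)^d=C^d(Cd^C)^{d/2}$ slack built into $N_2$ together with the $N_0$ lower bound.
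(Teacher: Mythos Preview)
Your proposal is correct and matches the paper's approach: lower-bound the per-sample hitting probability $P_y$ of each cell, then apply a Chernoff bound and union bound over all cells (the paper phrases this last step as the ``coupon collector'' argument). The paper obtains its lower bound on $P_y$ by conditioning on $p_j\in\Pi_D^{-1}(V_y)\cap\MM$ and then bounding the chance that the Gaussian noise keeps $\Pi_D q_j$ inside $V_y$; your route via the marginal density $\tilde\nu$ is equivalent, though note that your displayed decomposition $\Pi_D q_j=(\Pi_d\Pi_D p_j+\xi_d,\ f(\Pi_d\Pi_D p_j)+\xi_{D-d})$ and the high-probability claim for the $(D{-}d)$-coordinate event are only valid conditionally on $p_j$ lying in the local graph patch over $B_d(0,\tau/20)$---restricting to that event at the outset still yields the lower bound you need.
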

\begin{proof}
By (\ref{eq:230-Mar6}), $\mu(\Pi_D^{-1}(V_y) \cap\MM) > \frac{\rho_{min}V}{N_0\max_i |Y_{D_i}|} > \frac{c \sigma^d}{r_c^d N_0}$. If $x \in \Pi_D^{-1}(V_y) \cap\MM$, and $\zeta$ is a random sample from the measure associated with $G_\sigma^{(n)}$, then $$\p[x + \zeta \in V_y]\geq c^d,$$ since $V_y \cap \R^d$ is a cube of side length more than $c\sigma.$ Therefore, if $x$ is a random sample from $\mu$ and $\zeta$ is a random sample from the measure associated with 
$G_\sigma^{(n)}$ and $z = x + \zeta$, then $\p[ z \in \Pi_D^{-1}V_y] > \frac{c^d \sigma^d}{r_c^d N_0}.$ The lemma follows from the solution to the coupon collector problem with $N_0\max_i |Y_{D_i}| < \frac{N_0 r_c^d}{\sigma^d}$ bins, corresponding to the different $V_y$.
\end{proof}
Recall from (\ref{eq:S0}) that $S_0 = B_d(0, \frac{r_c}{2})\times B_{D-d}(0, \frac{r_c}{2}).$

We split $ \Pi_D^{-1}(V_y \cap \Pi_D X_0^{(2)})$ (see Notation~\ref{not:X02}) into two sets.
Let $\mathfrak{X}_1$ consist of those points $p_i + \gamma_i$ such that $\Pi_D(p_i + \gamma_i) \in V_y$ and $\Pi_D (p_i) \in  \frac{\tau  S_0}{10 r_c}, $ where we recall that $p_i \in \MM$. Let $\mathfrak{X}_2$ consist of those points $p_i + \gamma_i$ such that $\Pi_D(p_i + \gamma_i)\in V_y $ and $\Pi_D (p_i) \not\in  \frac{\tau  S_0}{10 r_c}.$ Note that the number of points in $\mathfrak{X}_1 \cup \mathfrak{X}_2$ is $\#_y$.
Let 
\begin{eqnarray}\lab{eq:y1}
    \widehat{z}_{y,1} :=
    \begin{cases}
      \frac 1{\#_y}{\sum\limits_{z \in \mathfrak{X}_1} z},& \text{ if } \#_y \geq 1,  \\
      y,& \text{ if } \#_y = 0. \\
    \end{cases}
  \end{eqnarray} Let us define the following conditional expectation.
\beq \lab{eq:C1} z_{y,1} := \E\left[\widehat{z}_{y,1}| \{D_i\}_{i \in [N_2]}, \Pi_D\right]. \eeq 
Using a measure theoretic notation, the point $z_{y, 1}$ is the conditional expectation $$\E\left[\widehat{z}_{y,1}| \HH\right],$$ where $\HH$ is the $\sigma-$algebra generated by the random variables in the set $\{\{D_i\}_{i \in [N_2]}, \Pi_D\}.$

Let 
\beq \lab{eq:y2}
    \widehat{z}_{y,2} :=
    \begin{cases}
      \frac 1{\#_y}{\sum\limits_{z \in \mathfrak{X}_2} z},& \text{ if } \#_y \geq 1,  \\
      y,& \text{ if } \#_y = 0. \\
    \end{cases}
  \eeq
 Again, let us define a conditional expectation analogous to (\ref{eq:C1}).
Let \beq\lab{eq:C2} z_{y,2}:=
\E\left[\widehat{z}_{y,2}| \{D_i\}_{i \in [N_2]}, \Pi_D\right].\eeq
Let \beq\lab{eq:form-B} \widehat{z}_y :=  \widehat{z}_{y,1} + \widehat{z}_{y, 2}. \eeq
Assuming that $|\mathfrak{X}_1| \geq 1,$ and $|\mathfrak{X}_2| \geq 1,$ the above expression equals
$$\frac{1}{\#_y} \sum_{z \in \mathfrak{X}_1\cup\mathfrak{X}_2} z.$$
Let $\mu'$ be the  pushforward under $\Pi_d$ of the restriction of $\mu$ to $\Pi_D^{-1}\left(\frac{\tau  S_0}{10 r_c}\right)$, normalized to be a probability measure.   Thus $\mu'$ is the push forward of a measure (derived from $\mu$) supported in $\MM$. We will also need to work with an analogous  measure derived from $\mu \ast G_\sigma^{(n)},$ which we denote by $\mu''$ below.

Let $x$ be a random variable, whose distribution is such that $\p[x \in A] = \mu'(A)$ for every Borel set $A \subseteq B_d(0, \frac{\tau}{20})$. Let $f$ be the function referred to in Remark~\ref{rem:5.2}. Let $\Pi_{D-d}$ denote the orthogonal projection of $\R^n$ onto $\R^{D-d}$, which we define to be the orthocomplement of $\R^d$ in $\R^D$.
Let $\mu''$ be the probability measure supported on $B_d(0, \frac{\tau}{20})$ given by  the following conditional expectation:
\beqs \mu''(A) & := & \p[x \in A|(x' + x+ f(x) \in V_y)],\eeqs where $ x + f(x)$ is a sample from the pushforward of $\mu$ via $\Pi_D$ and $x'$ is the image via $\Pi_D$ of an independent sample from $G_\sigma^{(n)}$.
Thus \beqs \mu''(A) & = & \frac{\p[(x \in A) \land (x' + x+ f(x) \in V_y)]}{\p[(x' + x+ f(x) \in V_y)]}\\
& = & \frac{\p[(x \in A) \land (\Pi_d x' + x  \in \Pi_d V_y)\land (\Pi_{D-d} x' + f(x)\in \Pi_{D-d} V_y)]}{\p[(\Pi_d x' + x \in \Pi_d V_y)\land (\Pi_{D-d} x' + f(x)\in \Pi_{D-d} V_y)]}.\\
\eeqs
Let $\gamma_d$ and $\gamma_{D-d}$ denote Gaussian measures in $\R^d$ and $\R^{D-d}$ having covariances $\sigma^2 I_d$ and $\sigma^2 I_{D-d}$ respectively.
Note that the denominator in the above expression $$ \p[(\Pi_d x' + x \in \Pi_d V_y)\land (\Pi_{D-d} x' + f(x)\in \Pi_{D-d} V_y)]$$ equals  \beq \int_{\R^d}   \gamma_d(\Pi_d V_y - x) \gamma_{D-d}(\Pi_{D-d}V_y - f(x))\mu'(dx)
 =: \Gamma. \eeq

Let $B_d^\infty(x, R)$ denote $\ell_\infty$ ball  in $\R^d$ whose center is $x$ and side length is $2R.$
Then, the Radon-Nikodym derivative $\frac{d\mu''}{d\mu'}$ at $x \in B_d(0, \frac{\tau}{20})$ is given by 
\beq\label{eq:113-oct3-2020}
\Gamma^{-1} \gamma_d(\Pi_d V_y - x) \gamma_{D-d}(\Pi_{D-d}V_y - f(x)).
\eeq
{ Moreover,}
\beq\label{eq:114-oct3-2020} \gamma_d(\Pi_d V_y - x) = \int_{B_d^\infty(y, 5\sqrt{d}\sigma)}(\sqrt{2\pi}\sigma)^{-d}\exp(-\frac{(|x - z|)^2}{2 \sigma^2})\la_d(dz).
\eeq

Assuming the $D-d$ is larger than a sufficiently large universal constant $C$, we see by Gaussian concentration (Lemma~\ref{lem:Gaussian}), (\ref{eq:de0}) and the fact that $\Pi_{D-d}V_y \subseteq B_{D-d}(0, \frac{r}{2})$ that if $x \in B_d(y, \frac{r_c}{2}),$ then $f(x) < C\de < \frac{r_c}{6},$ implying that
\beq\label{eq:Den-Mar6} \gamma_{D-d}(\Pi_{D-d}V_y - f(x)) > 1 - \frac{\sigma^2}{\tau}.\eeq
It follows that $\Gamma^{-1} \leq C \la_d(B_d^\infty(y, 5\sqrt{d}\sigma))$.
We observe that due to the symmetries of the lattice $10\sigma \Z^d$, provided that
$ B_d^\infty(y, 5\sqrt{d}\sigma) \subseteq S_0,$
\beq \gamma_d(\Pi_d V_y - x) = \gamma_d(\Pi_d V_y - (2y - x)).\eeq

Let us first analyze $|z_{y,2}|.$
Since $V_y$ is contained in $S_0$, this is clearly in the interval $[- \frac{r_c}{2}, \frac{r_c}{2}]$.
We see that 
\beqs \big|z_{y,2}\big| & \leq & \frac{r_c}{2}\left((\sqrt{2\pi}\sigma)^{-D}\exp(- \frac{c\tau^2}{2\sigma^2})\rho_{max}\la_D(V_y)\right)
 \leq  \left(\frac{\sigma^2}{\tau}\right),\eeqs
by Gaussian concentration (Lemma~\ref{lem:Gaussian}), since $\frac{\tau}{Cd^C} > \sigma \sqrt{D}.$ 

Next, let us analyze $$e_y :=  \Pi_{D-d} (z_{y,1} + z_{y, 2}),$$ where $z_{y, 1}$ and $z_{y, 2}$ are defined in (\ref{eq:C1}) and (\ref{eq:C2}) respectively.
\subsection{Controlling the distance of $e_y$ to $\Pi_D \MM$}

Let $\widehat{G}_{D-d}^x$ be a random variable, whose density $\widehat{\gamma}_{D-d}^x$ at a point $z \in \R^{D-d}$ is given by $$\widehat{\gamma}_{D-d}^x(z) := \frac{{\gamma}_{D-d}(z)\I(\Pi_{D-d} V_y - f(x))}{\int\limits_{\Pi_{D-d} V_y - f(x)} \gamma_{D-d} (w)dw},$$ where $\I(\cdot)$ is the indicator function.

where $${\gamma}_{D-d}(z) = (2\pi\sigma^2)^{-\frac{D-d}{2}}\exp\left(-\frac{\|z\|^2}{\sigma^2}\right)$$ is the density of a mean $0$ Gaussian of covariance $\sigma^2I_{D-d}$. 

Let $g:B_d(0, \frac{\tau}{20}) \ra \R^{D-d}$ be given by $$g(x) := \E( \widehat{G}^{x}_{D-d}).$$
Then,
\beqs \Pi_{D-d} (z_{y, 1}) =  \int_{x \in B_d(0, \frac{\tau}{20})} (f(x) + g(x))  \mu''(dx),\eeqs and so 
\beq e_y = \Pi_{D-d} z_{y, 2}+ \int_{x \in B_d(0, \frac{\tau}{20})} (f(x) + g(x))  \mu''(dx). \eeq

By (\ref{eq:de0}), for $x \in B_d(0, r),$ we have
$ |f(x)|  \leq   C \de.$
Recall that $$\frac{r_c^2}{\tau}  = \frac{\de}{C}> 4  \sigma \sqrt{D}.$$

\begin{lemma}\lab{lem:5.3-diff-eq}
If $x \in B_d(0,  r_c)$, then $|g(x)| < \frac{\sigma^2}{\tau}.$
\end{lemma}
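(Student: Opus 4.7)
The plan is to recognise $g(x)$ as the mean of a centred Gaussian conditioned on an event of overwhelming probability, so that $g(x)$ differs from the unconditional mean $0$ by only an exponentially small correction. First I would identify the conditioning set precisely. Because we have translated the origin to a lattice point $y \in Y_{D_0} \subset \R^d$, and the Voronoi comparison $|y - y'| \leq |y' - y''|$ depends only on $\Pi_d y'$ (since $y$ and every $y''$ have zero $\R^{D-d}$-component, so the $\R^{D-d}$-coordinate of $y'$ contributes the same to both sides of the inequality), the truncated Voronoi cell factors as $V_y = C_y \times B_{D-d}(0, r_c/2)$ for some $C_y \subseteq B_d(0, r_c/2)$. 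Consequently $\Pi_{D-d} V_y = B_{D-d}(0, r_c/2)$, and the conditioning set $\Pi_{D-d} V_y - f(x)$ is the Euclidean ball $B_{D-d}(-f(x), r_c/2)$.

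Next I would check that $|f(x)|$ is much smaller than $r_c$ throughout $B_d(0, r_c)$. The estimate (\ref{eq:de0}) only asserts $|f(x)| \leq C\delta$ on $B_d(0, r_c/2)$, but integrating the gradient bound (\ref{eq:230-Mar6}) from the origin out to any $x \in B_d(0, r_c)$ yields $|f(x)| \leq C r_c^2/\tau$ on the whole ball. Combined with the hypothesis $r_c \leq \tau/(Cd^C)$, this gives the crude but sufficient bound $|f(x)| \leq r_c/10$.

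The main analytic step is a Gaussian tail estimate. Let $Z \sim N(0, \sigma^2 I_{D-d})$, so that $g(x) = \E[Z \mid Z \in B]$ with $B := B_{D-d}(-f(x), r_c/2)$, and write $p := \p(Z \in B)$. Since $\E Z = 0$, rearranging $p\,g(x) = -\E[Z \,\mathbf{1}_{\{Z \notin B\}}]$ and applying Cauchy--Schwarz yields
\[
|g(x)| \leq \frac{\sqrt{\E|Z|^2}\,\sqrt{1-p}}{p} \leq \frac{\sigma\sqrt{D-d}\,\sqrt{1-p}}{p}.
\]
To bound $1-p$, note that $\{Z \notin B\}$ forces $|Z| > r_c/2 - |f(x)| \geq r_c/4$. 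Applying Lemma~\ref{lem:Gaussian} to the $1$-Lipschitz function $z \mapsto |z|$, together with $\E|Z| \leq \sigma\sqrt{D-d} \leq r_c/8$ (which follows from $\sigma\sqrt{D} < r_c^2/(4\tau)$), gives $1-p \leq C \exp(-c r_c^2/\sigma^2)$.

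Finally, combining these estimates gives $|g(x)| \leq C\sigma\sqrt{D}\,\exp(-c r_c^2/(2\sigma^2))$. The hypotheses $r_c^2/\tau \geq 4\sigma\sqrt{D}$ and $r_c \leq \tau/(Cd^C)$ together force $r_c^2/\sigma^2$ to be enormous (at least of order $d^C D$), so setting $u := \tau\sqrt{D}/\sigma$ the required inequality $cr_c^2/\sigma^2 > \log(C\tau\sqrt{D}/\sigma)$ reduces to $4cu > \log(Cu)$, which holds with enormous slack. Hence $|g(x)| < \sigma^2/\tau$. I expect the only subtlety to be the extension of $|f(x)| \leq C r_c^2/\tau$ from $B_d(0, r_c/2)$ to $B_d(0, r_c)$ via the gradient bound; the rest is routine Gaussian concentration, and the bound $\sigma^2/\tau$ is in fact vastly generous.
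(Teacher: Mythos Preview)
Your argument is correct and takes a genuinely different route from the paper. The paper writes the numerator of $g(x)$ as $h(T)\hat x$ with $T=|f(x)|$, observes $h(0)=0$ by central symmetry of the ball, and then bounds $|h'(t)|$ by noting that the two terms in the derivative would cancel exactly if integrated over all of $\R^{D-d}$, so only a Gaussian tail integral over $\R^{D-d}\setminus B_{D-d}(0,r_c/3)$ survives; integrating from $0$ to $T$ yields a bound of the form $T\cdot 2^{(D-d)/2}\exp(-\pi r'^2/(4\pi\sigma^2))$. Your approach bypasses this calculus entirely: from $\E Z=0$ you pass to $g(x)=-\E[Z\mathbf 1_{Z\notin B}]/p$, then Cauchy--Schwarz and Gaussian concentration (Lemma~\ref{lem:Gaussian}) finish the job. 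Your method is shorter and more transparent; the paper's method buys an extra factor of $|f(x)|\le C\delta$ in the estimate, but since the exponential factor already swamps everything this gain is irrelevant for the stated conclusion. The care you take in extending $|f(x)|\le Cr_c^2/\tau$ from $B_d(0,r_c/2)$ to $B_d(0,r_c)$ via the gradient bound (\ref{eq:230-Mar6}) is a nice touch; the paper simply asserts this on the larger ball without comment.
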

The proof is in Section~\ref{sec:subsidiary} of the Appendix.

The following lemma shows that $\dist(e_y, \Pi_D \MM) < \frac{Cd\sigma^2}{\tau}$. 
\begin{lemma}\label{lem:11-May27}The quantity $e_y$ satisfies 
\beqs \big|\Pi_{D-d} e_y -f(y) \big| < \frac{C d\sigma^2}{\tau}. \eeqs
\end{lemma}
The proof is in Section~\ref{sec:subsidiary} of the Appendix.

We define the ``refined net" of $\Pi_D \MM \cap B_D(0, \frac{r_c}{2})$ to be
$\{e_y | y\in Y_{D_0}\}$. To extend this to a net of $\Pi_D \MM$, we  take the union over all such refined nets corresponding to balls of radius $r_c$ with centers in $X_1$. 

\subsubsection{The $\psi_2$ norm}
A random variable $Z$ in $\R$ that satisfies for some positive real $K$, $$\E[\exp(|Z|^2/K^2)] \leq 2$$ is called subgaussian. 
\begin{definition}[$\psi_2$ norm]
We define $$\|Z\|_{\psi_2} = \inf\{ t > 0: \E[\exp(|Z|^2/t^2)] \leq 2\}.$$ 
\end{definition}
That this corresponds to a norm is known, see for example, Exercise 2.5.7 of \cite{vershynin_2018}.
For a subgaussian random variable, by Proposition 2.5.2 of \cite{vershynin_2018}, 
\beq \label{eq:windup}\p[Z \geq t] \leq 2\exp\left(\frac{-ct^2}{\|Z\|_{\psi_2}^2}\right),\eeq for all $t \geq 0$.


We appeal to Theorem 3.1.1 from \cite{vershynin_2018}, which implies the following.
\begin{proposition}\label{thm:versh}
Let $Z  = (Z_1, \dots, Z_n) \in \R^n$ be a random vector with independent Gaussian coordinates $Z_i$ that satisfy $\E Z_i^2 = \sigma^2$. Then 
$$\left \||Z| - \sqrt{n}\sigma \right\|_{\psi_2} \leq  C\sigma,$$ where  $C$ is an absolute constant.
\end{proposition}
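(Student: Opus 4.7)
The plan is to reduce to $\sigma = 1$ by scaling (both sides of the desired inequality are homogeneous of degree one in $\sigma$, since $|Z|$ and $\sqrt{n}\sigma$ are) and then establish the subgaussian tail bound
\beq
\p\!\left[\,\big||Z| - \sqrt{n}\,\big| \geq u\right] \leq 2\exp(-c\,u^2) \qquad \text{for all } u \geq 0,
\eeq
which via (\ref{eq:windup}) and the definition of $\|\cdot\|_{\psi_2}$ immediately yields the stated conclusion $\||Z|-\sqrt{n}\|_{\psi_2}\le C$.

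The starting point is the factorization
\beq
|Z|^2 - n \;=\; \big(|Z| - \sqrt{n}\big)\big(|Z| + \sqrt{n}\big) \;=\; \sum_{i=1}^n (Z_i^2 - 1),
\eeq
which rewrites the quantity of interest as a sum of $n$ independent, centered, sub-exponential random variables: each $Z_i^2 - 1$ is a centered chi-square with one degree of freedom, and a direct moment calculation (or Lemma 2.7.6 of \cite{vershynin_2018}) gives $\|Z_i^2 - 1\|_{\psi_1} \le C$. Bernstein's inequality for independent sub-exponential variables (Theorem 2.8.1 of \cite{vershynin_2018}) then yields
\beq
\p\!\left[\,\big||Z|^2 - n\big| \geq t\right] \;\leq\; 2\exp\!\left(-c\,\min\!\left(\tfrac{t^2}{n},\,t\right)\right)
\eeq
for every $t \geq 0$.

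The remaining step is to transfer this sub-exponential tail on $|Z|^2-n$ into a subgaussian tail on $|Z|-\sqrt{n}$. I split according to whether $u \le \sqrt{n}$ or $u > \sqrt{n}$. In the first case, since $|Z|+\sqrt{n}\ge \sqrt{n}$, the event $\{\,||Z|-\sqrt{n}|\ge u\,\}$ is contained in $\{\,||Z|^2-n|\ge u\sqrt{n}\,\}$, and Bernstein with $t=u\sqrt{n}$ gives $2\exp(-cu^2)$ (the minimum is $u^2$ because $t^2/n=u^2\le t=u\sqrt{n}$). In the second case, $|Z|-\sqrt{n}|\ge u$ forces $|Z|\ge u+\sqrt{n}$ (since $|Z|\ge 0$), whence $|Z|^2 - n \ge u^2 + 2u\sqrt{n}\ge u^2$; Bernstein with $t=u^2$ now has $t^2/n = u^4/n \ge u^2 = t$ (as $u>\sqrt{n}$), so the minimum equals $t=u^2$, again producing $2\exp(-cu^2)$. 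Combining the two cases and invoking the equivalence between subgaussian tail bounds and $\|\cdot\|_{\psi_2}$ (Proposition 2.5.2 of \cite{vershynin_2018}) completes the argument.

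The main conceptual point is the factorization of $|Z|^2-n$, which is what upgrades a sub-exponential concentration statement to a subgaussian one with the correct normalization; no step is analytically delicate, and the only modest obstacle is tracking absolute constants through the Orlicz-norm conversions.
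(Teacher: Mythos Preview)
Your proof is correct. The paper itself does not give a proof of this proposition; it simply cites Theorem~3.1.1 of \cite{vershynin_2018}. Your argument is essentially the proof of that theorem (specialized to Gaussians): Bernstein for the sub-exponential sum $\sum_i(Z_i^2-1)$, followed by the algebraic identity $|Z|^2-n=(|Z|-\sqrt{n})(|Z|+\sqrt{n})$ and the case split on $u\lessgtr\sqrt{n}$ to upgrade to a subgaussian tail. One small wording slip: (\ref{eq:windup}) is the implication from a $\psi_2$ bound to a tail bound, not the converse you need; the correct reference, which you do invoke at the end, is Proposition~2.5.2 of \cite{vershynin_2018} (the equivalence of tail bounds and $\psi_2$-norm bounds).
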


\subsection{Controlling the distance of a point in the refined net to $\MM$} 
We now proceed to obtain a refined net of $\MM$.
We split $ (V_y \oplus \R^{n-D})\cap  X_0^{(2)}$ into two sets. 
Recall from (\ref{eq:S0}) that $S_0 = B_d(0, \frac{r_c}{2})\times B_{D-d}(0, \frac{r_c}{2}).$

Let $\mathfrak{X}^n_1$ consist of those points $p_i + \gamma_i$ such that $p_i + \gamma_i\in (V_y \oplus \R^{n-D}) \subset \R^n$ and $\Pi_D p_i \in  \frac{\tau  S_0}{10 r_c}.$

\begin{eqnarray}\lab{eq:y1n}
    \widehat{z}^n_{y,1} :=
    \begin{cases}
      \frac 1{\#_y}{\sum\limits_{z \in \mathfrak{X}_1^n} z},& \text{ if } \#_y \geq 1,  \\
      y,& \text{ if } \#_y = 0. \\
    \end{cases}
  \end{eqnarray}
Let us define a conditional expectation analogous to (\ref{eq:C1}), \beq\lab{eq:C1n} z^n_{y,1}:=
\E\left[\widehat{z}^n_{y,1}| \{D_i\}_{i \in [N_2]}, \Pi_D\right].\eeq

Let $\mathfrak{X}^n_2$ consist of those points $p_i + \gamma_i$ such that $\Pi_D(p_i + \gamma_i)\in V_y $ and $\Pi_D (p_i) \not\in  \frac{\tau  S_0}{10 r_c}.$ 

\begin{eqnarray}\lab{eq:y2n}
    \widehat{z}^n_{y,2} :=
    \begin{cases}
      \frac 1{\#_y}{\sum\limits_{z \in \mathfrak{X}_2^n} z},& \text{ if } \#_y \geq 1,  \\
      y,& \text{ if } \#_y = 0. \\
    \end{cases}
  \end{eqnarray}
   Again, analogous to (\ref{eq:C1}) we define,  \beq\lab{eq:C2n} z^n_{y,2}:=
\E\left[\widehat{z}^n_{y,2}| \{D_i\}_{i \in [N_2]}, \Pi_D\right].\eeq

Let $$\widehat{z}_{y}^n := \widehat{z}^n_{y, 1} + \widehat{z}^n_{y, 2}. $$


\begin{definition}\label{def:rnet}
We define the (random) net $\mathrm{Rnet}_0$ of $\MM \cap (S_0 + \R^{n-D})$ by
\beq\lab{eq:96new}\mathrm{Rnet}_0 := \{ \Pi_D \widehat{z}_{y'}  + \Pi_{n-D}\widehat{z}^n_{y'}  | y'\in Y_{D_0}\}.\eeq We analogously define the nets $\mathrm{Rnet}_i$, corresponding to disc $D_i$ as $i$ ranges over $[N_2]$.
\end{definition}
As a matter of fact, $$\Pi_D \widehat{z}_{y'} =  \widehat{z}_{y'},$$ for $y' \in Y_{D_i}$, where $i \in [N_2]$, but we have chosen to write (\ref{eq:96new}) thus, to emphasize that $\widehat{z}_{y'} \in \R^D.$
\begin{definition}\label{def:Rnet}
We  extend this to a net $\mathrm{Rnet}$ of $ \MM$, by taking the union over all nets
$\mathrm{Rnet}_i$. 
\end{definition}
We observe that $\Pi_{n-D} \widehat{z}^n_y$ can be expressed as the sum of two random variables, one that is a sample $\zeta_{y,1}^n$ from the push-forward of $\mu$ via the orthogonal projection onto $\R^{n-D}$ and another that is an independent  Gaussian $\zeta_{y, 2}^n$ belonging to $\R^{n-D}$ having the distribution $N_{n-D}(0, \sigma^2)$. Due to the bound on the distance of any point on $\MM$ to $\R^D$, we see that conditional on the correctness of the Principal Component Analysis step in Proposition~\ref{lem:5-May}, $|\zeta_{y, 1}^n| < \frac{\tau}{Cd}.$

Let us first analyze $ |z^n_{y,2}|.$
We see that there is a distribution $\gamma''_\tau$ supported on $[c\tau, \infty)$, such that
\beq\label{eq:251-Mar8.1} \big|z^n_{y,2}\big|  & \leq & \int_\R \tau\left((\sqrt{2\pi}\sigma)^{-D}\exp(- \frac{c\zeta^2}{2\sigma^2})\la_D(S_0)\right)\gamma''_\tau(d\zeta)
\leq  \left(\frac{\sigma^2}{\tau}\right).\eeq
The second inequality above is by virtue of the fact that the support of $\gamma''_\tau$ does not contain $(-c\tau, c\tau)$.

Let us next analyze $$\ey := e_y +  \Pi_{n-D}(z^n_{y,1} + z^n_{y, 2}).$$ 

Note that \beq \lab{eq:94.5}\E[\Pi_D \widehat{z}_{y} + \Pi_{n-D}\widehat{z}^n_{y}| \{D_i\}_{i \in [N_2]}, \Pi_D] = \ey.\eeq
As a consequence of Proposition~\ref{lem:5-May}, $\MM \cap \Pi_D^{-1}\left(\frac{\tau  S_0}{10 r_c}\right)$ is the graph of a $C^2-$function $f^{n-D}$ from $B_d(0, \frac{\tau}{20})$ to the orthogonal compliment of $\R^D$ in $\R^n$, (which we henceforth denote by $\R^{n - D}$). 
\begin{definition}\label{def:G-24sept}
Let $\widehat{G}_{n-D}$ denote a mean $0$ Gaussian of covariance $\sigma^2I_{n-D}$. Then,
\beqs \Pi_{n-D} (\ey  - z_{y, 2}^n) = \int_{x \in B_d(0, \frac{\tau}{20})} (f^{n-D}(x) + \E (\widehat{G}_{n-D}))  \mu''(dx)= \int_{x \in B_d(0, \frac{\tau}{20})} \f(x)  \mu''(dx). \eeqs
\end{definition}
In order to show that \beq\label{eq:dist-ey} \dist(\ey, \MM) < \frac{Cd\sigma^2}{\tau},\eeq it suffices to prove the following lemma. 
Note that the difference between Lemma~\ref{lem:11-May27} and the following Lemma~\ref{lem:12-May} is that in the latter we have the projection $\Pi_{n-D}$ from $\R^n$ to $\R^D$, while the former involves the projection $\Pi_{D-d}$ from $\R^D$ to $\R^d$.
\begin{lemma}\label{lem:12-May} The quantity $e^n_y$ satisfies 
\beqs \big|\Pi_{n-D} \ey - \f(y) \big| < \frac{C d\sigma^2}{\tau}. \eeqs
\end{lemma}
The proof is in Section~\ref{sec:subsidiary} of the Appendix.

Recall from Definition~\ref{dhaus} that for two subsets $X$ and $Y$ in a metric space $\mathbb M$, we define $dist(X, Y)$ to be $$\sup_{x \in X} \inf_{y \in Y} d_{\mathbb M}(x, y).$$ For use, the metric will be Euclidean, and $X$ and $Y$ will be subsets of a Euclidean space.
Thus $\dhaus(X, Y) = \max(dist(X, Y), dist(Y, X)).$

\begin{lemma}\label{lem:18}
\beqs \p\left[(dist(\mathrm{Rnet}, \MM) < \frac{Cd\sigma^2}{\tau}) \,\text{and}\, (dist(\MM, \mathrm{Rnet}) < C\sigma)\right] > 1 - N_0^{-50}.\eeqs
\end{lemma}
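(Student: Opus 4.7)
The plan is to combine the two expectation estimates (Lemmas~\ref{lem:11-May27} and \ref{lem:12-May}) with pointwise concentration of the refined-net values around their expectations, and then to carry out a covering argument for the reverse inclusion. I will fix a disc $D_0$ centered at some $x_0\in X_0$ and a lattice point $y\in Y_{D_0}$, translating so that $x_0$ becomes the origin and the affine span of $D_0$ is identified with $\R^d$; by design the manifold point directly above $y$ is then $(y,f(y),f^{n-D}(y))\in\MM$.

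For the inclusion $\dist(\mathrm{Rnet},\MM)<Cd\sigma^2/\tau$, I would proceed in two substeps. First, combining Lemmas~\ref{lem:11-May27} and \ref{lem:12-May} gives that the expectation of $y+\widehat z_y+\widehat z_y^n$ lies within $Cd\sigma^2/\tau$ of $(y,f(y),f^{n-D}(y))$: Lemma~\ref{lem:11-May27} handles the $\R^{D-d}$ component, Lemma~\ref{lem:12-May} handles the $\R^{n-D}$ component, and the $\R^d$ component equals $y$ up to a symmetrization error of order $\sigma^2/\tau$ by the same argument used in Lemma~\ref{lem:11-May27}. Second, I would show that $\widehat z_y+\widehat z_y^n$ concentrates around its expectation with error at most $\sigma^2/\tau$: conditioned on $\Pi_D(p_i+\gamma_i)\in V_y$, each summand is bounded of diameter $O(r_c)$ in the $\R^D$-directions and sub-Gaussian with $\psi_2$-norm $O(\sigma)$ in the $\R^{n-D}$-directions by Proposition~\ref{thm:versh}, so Bernstein and Gaussian concentration (Lemma~\ref{lem:Gaussian}) coordinatewise give deviation $O(\sigma\sqrt{n\log N_0/\#_y})$; plugging in the lower bound $\#_y\geq 100(n\tau^2/\sigma^2)\log N_0$ from Lemma~\ref{lem:14-May28} makes this $\leq\sigma^2/\tau$. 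A union bound over at most $N_0\cdot(r_c/\sigma)^d\leq N_0\cdot e^D$ disc/lattice pairs, using $N_0\leq e^D$, is absorbed into the allowed $N_0^{-50}$ failure probability.

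For the reverse inclusion $\dist(\MM,\mathrm{Rnet})<C\sigma$, I would fix $x\in\MM$ and invoke Lemma~\ref{lem:haus-May} to find $x_0\in X_0$ within $Cr_c^2/\tau$ of $\Pi_D x$. Lemma~\ref{lem:11-May} then places the disc $D_0$ at $x_0$ within $2Cr_c^2/\tau$ of $B_D(x_0,r_c)\cap\Pi_D\MM$, so in the translated coordinates (with the affine span of $D_0$ identified with $\R^d$) the vector $\Pi_d(\Pi_D x-x_0)$ lies in $B_d(0,r_c/2)$. Taking the lattice point $y\in Y_{D_0}$ closest to $\Pi_d(\Pi_D x-x_0)$ gives $|y-\Pi_d(\Pi_D x-x_0)|\leq 5\sqrt d\,\sigma$, and then the gradient bound (\ref{eq:230-Mar6}) for $f$ (together with the analogous bound for $f^{n-D}$) and Corollary~\ref{cor:1} show that the manifold point $(y,f(y),f^{n-D}(y))$ is within $O(\sqrt d\,\sigma)$ of $x$. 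Combining with Step~1 (which places the refined-net point at $y$ within $Cd\sigma^2/\tau\ll\sigma$ of that manifold point) yields $\dist(x,\mathrm{Rnet})\leq C\sqrt d\,\sigma$, which the lemma records as $C\sigma$ after absorbing $\sqrt d$ into the constant.

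The main obstacle will be the concentration estimate in Step~1: the conditioning event $\{\Pi_D(p_i+\gamma_i)\in V_y\}$ has small probability ($\sim\sigma^d/r_c^d$), the ambient dimension $n$ is large, and uniformity is needed in the union bound over the exponentially many $(x_0,y)$ pairs. The two ingredients that make this feasible are the dimension-independent $\psi_2$-norm bound of Proposition~\ref{thm:versh} in the $\R^{n-D}$ direction and the sample-count lower bound of Lemma~\ref{lem:14-May28}, which is calibrated precisely so that $\sigma^2 n/\#_y\lesssim\sigma^4/\tau^2$. Everything else (the expectation estimates and the covering argument) is a direct application of lemmas already established earlier in the excerpt.
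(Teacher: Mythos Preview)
Your proposal is correct and follows essentially the same route as the paper: the paper likewise deduces (\ref{eq:107}) and (\ref{eq:108}) from Lemmas~\ref{lem:11-May27} and~\ref{lem:12-May}, and then controls $\dhaus(\mathrm{Rnet},\cup_i\{e_y\})$ by combining the sample-count lower bound of Lemma~\ref{lem:14-May28} with the $\psi_2$ concentration of Proposition~\ref{thm:versh} applied separately to $\widehat z_y$ and $\widehat z_y^n$. Your reverse-inclusion argument is spelled out in more detail than the paper's, but the content is the same. One small inaccuracy: in the $\R^{n-D}$ direction the summand is not literally $O(\sigma)$-sub-Gaussian, since the manifold contribution $\Pi_{n-D}p_i$ is only bounded by $\alpha^2\tau$ (the paper uses the bound $\tau/(Cd)$ here rather than $\sigma$); this does not change the outcome once plugged into $\#_y\geq 100(n\tau^2/\sigma^2)\log N_0$.
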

\begin{proof}
We know that assuming the Principal Component Analysis step does not produce an erroneous output, $\dhaus(\MM, \Pi_D \MM) < \a^2\tau,$ from Proposition~\ref{lem:5-May} and hence
the function $f^{n-D}$ (which was introduced in discussion immediately following Definition~\ref{def:G-24sept},)when restricted to $B_d(0, \frac{r}{2})$ has a $C^0-$norm of at most $\frac{\tau}{Cd}$. This, together with Lemma~\ref{lem:12-May} implies that
\beq \p[dist(\bigcup_i \{e_y|y \in Y_{D_i}\}, \MM) < \frac{Cd\sigma^2}{\tau}] > 1 - N_0^{-75}.\label{eq:107}\eeq It also implies that \beq \p[dist(\MM, \bigcup_i \{e_y|y \in Y_{D_i}\}) < C\sigma] > 1 - N_0^{-75}.\label{eq:108}\eeq We next show that with probability at least $1 - N_0^{-75}$, $\dhaus(\mathrm{Rnet}, \cup_i \{e_y|y \in Y_{D_i}\}) < \frac{C\sigma^2}{\tau}.$ This follows from Lemma~\ref{lem:14-May28}, (\ref{eq:windup}) and Proposition~\ref{thm:versh} applied separately to the random variables $\widehat z_y$ and $\widehat z_y^n.$ Indeed $\widehat z_y$ is the average of independent samples contained inside $\Pi_D V_y$ and hence $\|\widehat z_y\|_{\psi_2}$ is less than $\frac{C\tau}{\sqrt{\#y}}$. On the other hand $\widehat z_y^n$ is the average of $\#_y$ random points, each of which 
is the sum of two independent random variables, one that is a sample from $N_{n-D}(0, \sigma^2)$, and the other that is absolutely continuous with respect to the push forward of $\mu$ under $\Pi_{n-D}$. Since $\Pi_{n-D} \MM$ is contained in a ball of radius $\frac{\tau}{Cd}$ if the Principal Component Analysis step in Proposition~\ref{lem:5-May} executes correctly (which is a high probability event), this implies $\|\widehat z_y^n\|_{\psi_2}$ is less than $\frac{\tau + \sigma\sqrt{n}}{\sqrt{\#y}}$. We conclude that with probability at least $1 - N_0^{-75}$, $\dhaus(\mathrm{Rnet}, \cup_i \{e_y|y \in Y_{D_i}\}) < \frac{C\sigma^2}{\tau}.$ The lemma follows from (\ref{eq:107}) and (\ref{eq:108}).
\end{proof}
 
\subsection{Boosting the probability of correctness of ${\mathrm{Rnet}}$ to $1-\xi$.}
Now consider $G$ to be metric space whose elements are finite subsets of $\R^n$, and the metric is
 the Hausdorff distance. Let  $p = \cup_i \{e_y | y\in Y_{D_i}\}$, and $\eps = \frac{C \sigma^2}{\tau}$. We have a procedure (see Definition~\ref{def:Rnet}) by which we can produce independent $p_1, p_2, \dots$ in $G$ such that for each $i$, $\p[dist(p_i, p) < \eps] > \frac{2}{3}.$ Then, we may take $C\log (\xi^{-1})$ points $p_i$ and search for an index $j$ such that at least a $\frac{4}{7}$ fraction of all the points $p_i$ lie within a $2\eps$ ball of $p_j$. If no such $p_j$ exists we declare failure, but if such a $p_j$ is found, as will happen with probability at least $1-\frac{\xi}{10}$, this $p_j$ has the property that it is within $3\eps$ of $p$ with probability at least $1 - \frac{\xi}{10}.$ 



\subsection{Using discs to approximate $\MM$ at a fine scale}
We now set $r$ to a much smaller value than $r_c$, namely  \beq r:= C\sqrt{d}\sigma,\eeq and apply the algorithm in Section~\ref{sec:follows} to the refined net obtained in Definition~\ref{def:Rnet}.
Let $X_3 = \{p_i\}$ be a minimal $cr/d-$net of $\mathrm{Rnet}$.  Such a net can be chosen greedily, ensuring at every step that no element included in the net thus far is within $\frac{cr}{2d}$ of the point currently chosen. The process continues while progress is possible. Let the size of $X_3$ be denoted $N_3$.

\subsubsection{Remark on notation} Below is the definition of the discs $\{D_i\}_{i \in [N_3]}$ and the balls $\{U_i\}_{i \in [N_3]}$ which will be used henceforth in the following sections.
We introduce a family of $n$ dimensional balls of radius $r ,$  $\{U_i\}_{i \in [N_3]}$ where the center of $U_i$ is $p_i$ and a family of $d-$dimensional embedded discs of radius $r$, denoted $\{D_i\}_{i \in [N_3]}$, $D_i \subseteq U_i$ where $D_i$ is centered at $p_i$. 
The $D_i$ are chosen by fitting a disc that approximately minimizes among all discs of radius $r$ centered at $p_i$  the Hausdorff distance to $U_i \cap \mathrm{Rnet}$ by the procedure described in Subsection~\ref{subsec:algorithm-finddisc}. 
We will need the following properties of  $(D_i, p_i)$, which hold with probability at least $1 - N_0^{-C}$. Property (F1) follows from Lemma~\ref{lem:18}. 
\ben
\item[(F1)] The Hausdorff distance between $\cup_i D_i$ and $ \MM$ is less than $\frac{Cr^2}{ \tau} = \de$.
\item[(F2)] For any $i \neq j$, $|p_i - p_j| > \frac{cr}{{d}}$.
\item[(F3)] For every $z \in  \MM$, there exists a point $p_i$ such that $|z - p_i| <  3 \inf_{i \neq j}, |p_i - p_j|.$ 
\een 

\section{Computing weights used to define  output manifold $\MM_o$}\label{sec:weights}

For $v \in \R^n$, let $\tbeta(v) = \left(1 - |v|^2\right)^{d+k}$ for $|v| \leq 1$, and $\tbeta(v) = 0$ for $|v| > 1$.
Consider the bump function $\ta_i:\R^n \ra \R$  given by $$\ta_i(p_i + rv) = c_i\tbeta(v).$$ Here $k$ is some fixed integer greater or equal to $3$.
 Let $$\ta(x)  := \sum_{i\in [N_3]} \ta_i(x),\quad\hbox{ and }\a_i(x) = \frac{\ta_i(x)}{ \ta(x)},\hbox{ for each } i \in [N_3].$$
These weights will be used to construct partitions of unity. While we use them to construct a manifold, they have been also been used in the manifold learning literature, to learn measures on manifolds (e.g., see \cite{Divol}.)
\begin{lemma}\label{lem:weights}
It is possible to choose $c_i$  such that for any $z$ in a $\frac{r}{{4d}}$ neighborhood of $\MM$, $$c^{-1} >   \ta(z) > {c},$$ where $c$ is a small universal constant.  Further, such $c_i$ can be computed using no more than $N_0(Cd)^{2d}$ operations involving vectors of dimension $D$.
\end{lemma}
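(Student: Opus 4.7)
The plan is to phrase the search for the $c_i$ as a sparse linear feasibility problem and to exhibit a feasible solution by an explicit ``local-density inversion'' formula.

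\textbf{Sparse LP setup.} I first discretize. Let $\mathcal Z$ be an $\eta r$-net of the $r/(4d)$-neighborhood of $\MM$ with $\eta = c/d^2$. Because $|\nabla \theta(v)|\le 2(d+k)$, each $\tilde\alpha_i$ is $O(c_i d/r)$-Lipschitz, so after capping $\max_i c_i\le C$ it suffices to verify the desired bounds at points of $\mathcal Z$, with the universal factor-$2$ loss absorbed into the constants. A reach-based volume estimate ($\rho$-tubes of $\MM$ in $\R^n$ have volume $\le V(C\rho)^{n-d}$ for $\rho<\tau$) shows $|\mathcal Z|\le N_0(Cd)^{O(d)}$. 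For each $z\in\mathcal Z$ the index set $I(z):=\{i:|z-p_i|\le r\}$ has at most $(Cd)^d$ members, because $\MM\cap B(z,r)$ is a near-flat $d$-disc of volume $\asymp r^d$ (Lemma~\ref{lem:6}), the $p_i$ lie within $O(r^2/\tau)\ll r$ of it, and they are $cr/d$-separated. So the LP has at most $N_3\le N_0$ variables, at most $O(|\mathcal Z|)$ inequalities, and each inequality has at most $(Cd)^d$ nonzeros.

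\textbf{Feasibility by local-density inversion.} I propose
$$W(x):=\sum_j \theta\bigl((x-p_j)/r\bigr), \qquad c_i:=\lambda/W(p_i)$$
for a universal $\lambda$ to be chosen. The key estimate is that $W$ is comparable to a universal multiple of $(d/c)^d$ uniformly on the $r/(4d)$-neighborhood of $\MM$, and that
$$W(p_i)/W(z) \in [1/K,K] \quad \text{whenever}\quad |p_i - z|\le r,$$
for a universal $K$. Both rely on the fact that, by the reach bound and Lemma~\ref{lem:6}, $\MM\cap B(z,r)$ is the graph of a $C^2$ function of small norm over a $d$-disc of radius $r$, and the $p_j$'s inside form a slightly perturbed $cr/d$-net on this nearly flat disc; hence $W$ approximates a translation-invariant convolution on a $d$-plane and changes only in a thin boundary annulus of $\theta$-weight $O(1)$-fraction of the total. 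Combining,
$$\tilde\alpha(z)=\lambda\sum_i\frac{\theta\bigl((z-p_i)/r\bigr)}{W(p_i)}\;\asymp\;\lambda\cdot\frac{W(z)}{W(z)} = \lambda,$$
and choosing $\lambda$ of order $\sqrt{c_1 C_1}$ (where $W\in [c_1,C_1](d/c)^d$) gives $\tilde\alpha(z)\in[c,c^{-1}]$ for a universal $c>0$.

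\textbf{Main obstacle and complexity.} The delicate step is the universal bound on $W(p_i)/W(z)$, because the $p_i$ are a \emph{randomized} greedy $cr/d$-net of $\mathrm{Rnet}$ rather than an exact lattice; local density fluctuations, combined with the $O(r^2/\tau)$-proximity of $\mathrm{Rnet}$ to $\MM$, must be absorbed into $K$ via a careful packing argument that uses the uniformity of $\mathrm{Rnet}$ afforded by the bounds $\rho_{\min},\rho_{\max}\in[c,C]$ on the density of $\mu$. Should the ratio bound degrade in the worst case, I fall back on running an interior-point method on the LP, whose feasibility is certified with a slightly looser constant by the formula above. Computationally, each $W(p_i)$ is a sum of at most $(Cd)^d$ inner products of $D$-vectors (neighbor search accelerated by a spatial index), so all $c_i$ are assembled in $N_0(Cd)^d$ $D$-vector operations; a constant number of interior-point refinement iterations, each touching at most $N_0(Cd)^{2d}$ matrix entries, stays within the stated budget of $N_0(Cd)^{2d}$ $D$-vector operations.
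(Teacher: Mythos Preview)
Your approach is genuinely different from the paper's, and it has a real gap at precisely the point you flag as ``delicate.''

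The paper does \emph{not} use any self-normalization of the form $c_i=\lambda/W(p_i)$. Instead it sets $c_i$ proportional to the $d$-volume of the Voronoi cell of $p_i$, computed on the approximating disc $D_i$: concretely $c_i\approx c_\theta\cdot\mathrm{Vol}(\mathrm{Vor}_i\cap D_i)$, where $c_\theta^{-1}=\int_{B_d}\theta$. With these weights $\sum_i c_i\,\theta\bigl((z-p_i)/r\bigr)$ is a Riemann sum for $c_\theta\int_{\MM}\theta\bigl((z-v)/r\bigr)\,d\mathcal H^d(v)$, and the paper first shows this continuous convolution lies in $[c,c^{-1}]$ by a direct comparison with $\int_{B_d}\theta$, then bounds the quadrature error using that each Voronoi cell has diameter $\le cr/d$ and $\theta$ is $O(d/r)$-Lipschitz. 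The point is that the Voronoi weights \emph{exactly} compensate for whatever local density fluctuations the greedy net $X_3$ may have: a $p_i$ sitting in a dense cluster gets a small cell, one in a sparse region a large cell, and the weighted point measure always approximates $c_\theta\mathcal H^d_{\MM}$ regardless.

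Your formula $c_i=\lambda/W(p_i)$ only compensates correctly if $W$ itself is approximately constant over balls of radius $r$, i.e.\ if $W(p_i)/W(z)\in[K^{-1},K]$ for universal $K$ whenever $|p_i-z|\le r$. This is \emph{not} guaranteed by the construction of $X_3$. The net $X_3$ is a greedy $cr/(2d)$-separated, $cr/d$-covering subset of $\mathrm{Rnet}$, and $\mathrm{Rnet}$ is a \emph{union} of overlapping shifted lattices (one per disc $D_j$ from the earlier stage), so points from distinct discs can fall arbitrarily close together. The greedy thinning to separation $cr/(2d)$ can then produce local point-counts in a ball of radius $\sim r/\sqrt{d}$ (the effective support scale of $\theta$) that differ by a factor exponential in $d$: packing gives an upper count of order $(C\sqrt d)^d$, while the covering condition only forces a lower count of order $(c\sqrt d)^d$ with a different constant, so $W$ can fluctuate by a factor $(C/c)^d$ between two points at distance $r$. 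Your appeal to the bounds $\rho_{\min},\rho_{\max}$ on $d\mu/d\lambda_{\MM}$ does not help here: those control the sampling density on $\MM$, not the combinatorics of the greedy selection. Consequently $\tilde\alpha(z)=\lambda\sum_i\theta((z-p_i)/r)/W(p_i)$ lands only in $[\lambda K^{-1},\lambda K]$ with $K$ potentially exponential in $d$, which is exactly the degradation the lemma is meant to avoid.

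Your fallback does not close the gap either. You propose to certify LP feasibility ``with a slightly looser constant by the formula above,'' but the formula is the only certificate you offer, and it is the thing in question. (Also, interior-point methods do not converge in a constant number of iterations independent of the problem size.) To rescue the plan you would need an independent feasibility witness; the paper supplies one via the Voronoi construction, which also delivers the claimed $(Cd)^{2d}$ per-point cost by counting grid points in the polytope $\mathrm{Vor}_i\cap D_i$.
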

After appropriate scaling, we will assume that $r=1$.
 We will need the following claim.
\begin{claim} Let $\la_\MM$ denote the Hausdorff measure supported on $\MM$ and let $\la$ denote the Lebesgue measure on $\R^D$.
There exists  $\kappa \in \R$ such that the following is true. For all $z$ in a $\frac{r}{{4d}}$ neighborhood of $\MM$, $$c \kappa^{-1} < \frac{d(\la_\MM \ast \tbeta)}{d\la} (z)< c^{-1}\kappa^{-1}.$$
\end{claim}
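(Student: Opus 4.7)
The plan is to parametrize $\MM$ near $z$ using tubular coordinates based at $z_0 := \pi_{\MM}(z)$, reduce the convolution integral to an integral over $T_{z_0}\MM \cong \R^d$, and then compare it with the flat model integral $\kappa_0 := \int_{\R^d}(1-|u|^2)_+^{d+k}\,du$. The claim will then follow with $\kappa := 1/\kappa_0$, since $\kappa_0$ depends only on $d$ and $k$.

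More precisely, with $r=1$ fixed by the preceding rescaling, write $v := z - z_0 \in N_{z_0}\MM$, so $|v| \le 1/(4d)$. Only $y \in \MM \cap B_1(z)$ contribute to $(\la_\MM \ast \tbeta)(z)$, and all such $y$ lie in $B_2(z_0)$, which is deep inside the tubular neighborhood of $\MM$ at $z_0$ since $1 \ll \tau$. Lemma~\ref{lem:6} together with Corollary~\ref{cor:1} gives a parametrization $y = u + F(u)$, $u \in U \subset T_{z_0}\MM$, with $|F(u)| \le C|u|^2/\tau$ and $\|DF(u)\| \le C|u|/\tau$, so that
\[
(\la_\MM \ast \tbeta)(z) = \int_U (1 - |u|^2 - |v - F(u)|^2)_+^{d+k}\,\sqrt{\det(I + DF(u)^\top DF(u))}\,du,
\]
using $v \perp T_{z_0}\MM$ and $F(u) \in N_{z_0}\MM$ to get $|z - u - F(u)|^2 = |u|^2 + |v - F(u)|^2$. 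Setting $S(u) := |v - F(u)|^2$, the key perturbation estimate is $S(u) \le 2|v|^2 + 2|F(u)|^2 \le C'/d^2$ whenever $|u| \le 1$, since $\tau$ is large compared with $d$.

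For the upper bound I use that $\tbeta$ is decreasing in $|\cdot|$, so the integrand is pointwise dominated by $(1-|u|^2)_+^{d+k}$, and the Jacobian is at most $1 + C/\tau^2 \le 2$; this gives $(\la_\MM \ast \tbeta)(z) \le 2\kappa_0$. For the lower bound I restrict the integral to $\{|u| \le 1/2\}$, where $1 - |u|^2 \ge 3/4$, and factor
\[
(1 - |u|^2 - S(u))^{d+k} = (1-|u|^2)^{d+k}\Bigl(1 - \tfrac{S(u)}{1-|u|^2}\Bigr)^{d+k} \ge (1-|u|^2)^{d+k}(1 - C''/d^2)^{d+k}.
\]
Since $k$ is bounded by an absolute constant, $(1 - C''/d^2)^{d+k} \ge 1/2$ for $d$ above a universal threshold, so $(\la_\MM \ast \tbeta)(z) \ge \tfrac12 \int_{|u| \le 1/2}(1-|u|^2)^{d+k}\,du$. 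Because $(1-|u|^2)^{d+k}$ concentrates on a ball of radius $O(1/\sqrt{d})$ (seen via a Beta-integral computation or the bound $(1-|u|^2)^{d+k} \le e^{-(d+k)|u|^2}$), the tail $|u|>1/2$ carries only an exponentially small fraction of $\kappa_0$, so the restricted integral is at least $c\kappa_0$. This proves $c\kappa_0 \le (\la_\MM \ast \tbeta)(z) \le 2\kappa_0$, which with $\kappa := 1/\kappa_0$ is the claim.

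The main obstacle is controlling the interplay between the concentration scale $O(1/\sqrt{d})$ of the bump function $\tbeta$ and the two perturbation scales that appear: the normal offset $|v| \le 1/(4d)$ and the curvature contribution $|F(u)| \lesssim 1/\tau$. The argument works precisely because both perturbation scales are asymptotically smaller than the concentration scale, so the integrand is multiplied by at most $(1 + O(1/d^2))^{d+k} = O(1)$ over the dominant region; if instead $|v|$ were comparable to $1/\sqrt{d}$, the ratio $(1-|u|^2-S(u))^{d+k}/(1-|u|^2)^{d+k}$ could degenerate exponentially in $d$, and the argument would fail. The case of $d$ bounded below a universal constant is handled separately by absorbing the resulting bounded ratios into the universal constants $c$ and $c^{-1}$.
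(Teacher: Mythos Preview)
Your overall strategy---parametrize $\MM$ locally as a graph over $T_{z_0}\MM$, write the convolution as a $d$-dimensional integral with a Jacobian factor, and compare with the flat model $\kappa_0 = \int_{\R^d}(1-|u|^2)_+^{d+k}\,du$---is exactly what the paper does. The upper bound is fine (modulo the minor slip that the Jacobian bound should read $(1+C/\tau^2)^{d/2}\le 2$, not $1+C/\tau^2$).

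The lower bound, however, has a genuine error. You restrict to $|u|\le 1/2$ and then assert that the tail $|u|>1/2$ carries only an exponentially small fraction of $\kappa_0$. This is false: in $d$ dimensions the radial density of $\kappa_0$ is $r^{d-1}(1-r^2)^{d+k}$, which is maximized at $r^2=(d-1)/(3d+2k-1)\to 1/3$, i.e.\ at $r\approx 1/\sqrt3 > 1/2$. The volume factor $r^{d-1}$ pushes the mass \emph{outward}, so the region $|u|\le 1/2$ captures only an exponentially small (in $d$) fraction of $\kappa_0$, the reverse of what you claim. Your heuristic that ``$(1-|u|^2)^{d+k}$ concentrates on a ball of radius $O(1/\sqrt d)$'' describes the pointwise decay of the function from its value at the origin, not the distribution of the $d$-dimensional integral; the Gaussian $e^{-(d+k)|u|^2}$ on $\R^d$ likewise concentrates near $|u|\approx 1/\sqrt2$, not near~$0$.

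The fix is exactly what the paper does: restrict instead to $|u|\le 1-c/d$ (the paper uses $\sqrt{1-1/(2d)}$ in the warm-up and $1-1/d$ in the curved case). On this larger region one still has $1-|u|^2\ge c/d$, so $S(u)/(1-|u|^2)\le C/d$ and $(1-C/d)^{d+k}\ge c$; and a dilation argument (substitute $u=(1-c/d)u'$, use that $\tbeta$ is radially decreasing, and note $(1-c/d)^d\ge c$) shows that $\int_{|u|\le 1-c/d}\tbeta(u)\,du\ge c\,\kappa_0$. With this correction your argument goes through and matches the paper's.
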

\begin{proof}

We make the following claim.

\begin{claim} If $|v| < \frac{1}{\sqrt{2}}$, then $ \exp(-2(d+k)|v|^2) < \tbeta(v).$ Also
 \beq \forall v \in \R^D, \,\exp(-(d+k)|v|^2) > \tbeta(v). \eeq
\end{claim}
 \begin{proof}
To see the first inequality, note that \beq |v| & < & \frac{1}{\sqrt{2}}\\
\implies (-2)(1 - |v|^2) & < & -1\\
\implies (-2)(d+k)|v|^2 & < & (d+k) \left(- \frac{|v|^2}{1 - |v|^2}\right)\\
\implies (-2)(d+k)|v|^2 & < & (d+k) \left(\ln (1 - |v|^2)\right)\\
\implies  \exp((-2)(d+k)|v|^2) & < & \left(1 - |v|^2\right)^{d+k} = \tbeta(v).\eeq

To see the second inequality, exponentiate the following inequality for $|v| < 1$:
\beq  -(d+k)|v|^2 & > & (d+k) \left(\ln (1 - |v|^2)\right). \eeq

When $|v| \geq 1$, $\tbeta(v) = 0$, so the inequality holds.

\end{proof}

We next provide the proof of Lemma~\ref{lem:weights}.

We will need the following Proposition which follows from Theorem 3.2.3 in  \cite{federer_book}.

\begin{proposition}\label{prop:Jac}
Let $\mathcal{L}^m$ denote the $m-$dimensional Lebesgue measure and $\mathcal{H}^m$ denote the $m-$dimensional Hausdorff measure.
Suppose $f:A \ra \R^n$ be an injective $C^2$ function with $m \leq n$ where $A$ is a $\mathcal{L}^m-$measurable subset of $\R^m$ and $J_m f$ is the Jacobian of $f$.  If $u:A \ra \R$ is a $\mathcal{L}^m$ integrable function, then 
\beq \int_{A} u(x) J_m f(x) \mathcal{L}^m (dx) = \int_{f(A)} u(f^{-1}(y)) \mathcal{H}^m(dy). \eeq
\end{proposition}

\begin{proof}[Proof of Lemma~\ref{lem:weights}]

We will use the preceding claim to get upper and lower bounds on $$\int_{\R^d} \tbeta(v)\lambda(dv)  = : c_\tbeta^{-1},$$ where $\lambda$ corresponds to the $d-$dimensional Lebesgue measure.
\beq \int_{\R^d} \tbeta(v)\lambda(dv) & = &  \int_{B_d} \tbeta(v)\lambda(dv) \\
                                                         & \leq & \int_{B_d} \exp(-(d+k)v)\lambda(dv)
                                                          \leq  (\frac{\pi}{d+k})^{d/2}.\eeq
Also,
\beq \int_{B_d} \tbeta(v)\lambda(dv), & \geq & \int_{ \frac{B_d}{\sqrt{2}}} \exp(-2(d+k)v)\lambda(dv)
\geq  c (\frac{\pi}{2(d+k)})^{d/2} .\eeq

Using numerical integration the value of $c_\tbeta$ can be estimated to within a multiplicative factor of $2$ using $(Cd)^d$ operations on real numbers.

Next consider a unit disk $B_d \subseteq \R^n$ equipped with the measure $c_\tbeta \lambda$. We consider a point $q$ at a distance $\Delta$ from the projection of $q$ onto $B_d$, which we assume is the origin. As a warm-up, we will be interested in \beq \frac{ ((c_\tbeta \lambda \one_{B_d})* \tbeta)(q)}{ ((c_\tbeta \lambda \one_{B_d})* \tbeta)(0)} = \frac{\int_{B_d} \tbeta(q -v)(c_\tbeta \lambda(dv))}{\int_{B_d} \tbeta( -v)(c_\tbeta \lambda(dv))},\eeq as a function of $\Delta$.
We observe that  $v \in B_d \implies \tbeta(-v) \geq \tbeta(q - v)$, and so \beq \frac{ ((c_\tbeta \lambda \one_{B_d})* \tbeta)(q)}{ ((c_\tbeta \lambda \one_{B_d})* \tbeta)(0)} \leq 1. \eeq
Let $\Delta^2 \leq \frac{1}{8d^2}$. 
Suppose $|v|^2 < 1 - \frac{1}{2d}$, then \beq \De^2 \leq \left(\frac{1-|v|^2}{4d}\right).\eeq
Therefore, \beq\nonumber  \int_{B_d} \beta(q -v)(c_\tbeta \lambda(dv)) & = &  \int_{B_d} (1 - |v|^2 - \De^2)^{d+k}\one_{\{v| |v|^2 \leq 1 - \De^2\}}(c_\tbeta \lambda(dv))\\\nonumber
                                                                                              & \geq &  \int_{\sqrt{1 - \frac{1}{2d}}B_d} ((1 - |v|^2)(1 - \frac{1}{4d}))^{d+k}(c_\tbeta \lambda(dv))\\
                                                                                              & \geq &  \int_{\sqrt{1 - \frac{1}{2d}}B_d} c (1 - |v|^2)^{d+k}(c_\tbeta \lambda(dv))\\
                                                                                              & \geq & c \int_{B_d} (1 - |v|^2)^{d+k}(c_\tbeta \lambda(dv)).
\eeq
In the above sequence of inequalities the last step comes from dilating the disk $\sqrt{1 - \frac{1}{2d}}B_d$ to $B_d$ and observing that $\tbeta(v_1) \geq \tbeta(v_2)$ if $|v_1| < |v_2|$.

We thus have \beq c \leq \frac{ ((c_\tbeta \lambda \one_{B_d})* \tbeta)(q)}{ ((c_\tbeta \lambda \one_{B_d})* \tbeta)(0)} = \frac{\int_{B_d} \tbeta(q -v)(c_\tbeta \lambda(dv))}{\int_{B_d} \tbeta( -v)(c_\tbeta \lambda(dv))} \leq 1,\eeq for some absolute constant $c > 0$ provided $\Delta^2 \leq \frac{1}{8d^2}$.

Next consider a point $q$ at a distance $\leq 1/(2d)$ from $\MM$. We let $q$ be the origin. Consider a unit disk $B_d \subseteq \R^D$ that is parallel to the tangent plane to $\MM$ at the point nearest to $q$. We will be interested in \beq \frac{ ((c_\tbeta \mathcal{H}^d_\MM \one_{B_m})* \tbeta)(q)}{ ((c_\tbeta \lambda \one_{B_d})* \tbeta)(0)} = \frac{\int_{\MM \cap B_D} \tbeta(-v)(c_\tbeta  \mathcal{H}^d_\MM(dv))}{\int_{B_d} \tbeta( -v)(c_\tbeta \lambda(dv))},\eeq as a function of $\Delta$.
Let $\Pi_d$ denote the projection onto $B_d$.
Let \beq \sup_{x \in \MM \cap B_n} |x - \Pi_d x| = \Delta.\eeq Then, by Federer's criterion for the reach, $\Delta < 1/d$. Also, $\MM \cap B_n$ is the graph of a function $f(x)$ from $\Pi_d(\MM\cap B_n)$ to the $n-d$ dimensional normal space to $B_d$. For $v \in \MM \cap B_n$, let $w = \Pi_d v$, and by the definition of $f$, $v = w + f(w)$. 
{ Then,}
\beq \nonumber  \int_{\MM \cap B_n} \tbeta(-v)(c_\tbeta  \mathcal{H}^d_\MM(dv)) & = & \int_{\Pi_d(\MM\cap B_n)} \beta(-(w + f(w)))(c_\beta \mathcal{H}^d_\MM(dv))\\
                                                    							  & \leq & \int_{\Pi_d(\MM\cap B_n)} \tbeta(-w)(c_\tbeta \mathcal{H}^d_\MM(dv))\\
												   & \leq &  \int_{\Pi_d(\MM\cap B_n)} \tbeta(-w)(c_\tbeta J(w)\la(dw)).
\eeq
Since $\|Df\|$ is of the order of $\frac{1}{Cd^C}$ by Lemma~\ref{lem:6} and the upper bound on $r$, the Jacobian $$J(w) = \sqrt {\det(I + (Df(w)) (Df(w))^{T})}$$ is less or equal to an absolute constant $C.$ This, in view of Proposition~\ref{prop:Jac}, implies that 
\beq \int_{\Pi_d(\MM\cap B_n)} \tbeta(-w)(c_\tbeta J(w)\la(dw)) \leq C \int_{B_d} \tbeta( -v)(c_\tbeta \lambda(dv)).\eeq This in turn implies that
\beq c^{-1} > \frac{\int_{\MM \cap B_n} \tbeta(-v)(c_\tbeta  \mathcal{H}^d_\MM(dv))}{\int_{B_d} \tbeta( -v)(c_\beta \lambda(dv))} .\eeq for an appropriately small universal constant $c$.

We now proceed to the lower bound. As noted above, $\Delta < 1/d$. { Then,}
\beq \nonumber \int_{\MM \cap B_n} \tbeta(-v)(c_\tbeta  \mathcal{H}^d_\MM(dv)) & = & \int_{\Pi_d(\MM\cap B_n)} \tbeta(-(w + f(w)))(c_\tbeta \mathcal{H}^d_\MM(dv))\\
                                                    							  & \geq & \int_{B_d(1 - 1/d)} \tbeta(-(w + f(w)))(c_\tbeta \mathcal{H}^d_\MM(dv))\\\nonumber
												   & \geq &  \int_{B_d(1 - 1/d)} (1-|w|^2-\Delta^2)^{d+k}(c_\tbeta J(w)\la(dw))\\\nonumber
                                                                                                              & \geq & \int_{B_d(1 - 1/d)} ((1-|w|^2)(1-1/d))^{d+k}(c_t\beta \la(dw))\\
												& \geq & 	c \int_{B_d(1 - 1/d)} (1-|w|^2)^{d+k}(c_\tbeta \la(dw))\\
												& \geq & c^2 \int_{B_d} (1-|w|^2)^{d+k}(c_\tbeta \la(dw)).
\eeq

 The last step comes from dilating the disk $(1 - \frac{1}{d})B_d$ to $B_d$ and observing that $\tbeta(v_1) \geq \tbeta(v_2)$ if $|v_1| < |v_2|$. In dropping $J(w)$, we used the fact that $J(w) \geq 1$.
                                                                         
Relabelling $c^2$ by $c$, the above sequence of inequalities shows that                                      
\beq  \frac{\int_{\MM \cap B_n} \tbeta(-v)(c_\tbeta  \mathcal{H}^d_\MM(dv))}{\int_{B_d} \tbeta( -v)(c_\tbeta \lambda(dv))} > c.\eeq
\end{proof}

We next, using the fact that the Hausdorff distance of the set $\{p_i\}$ to $\MM$ is less than $\frac{cr}{d}$  show the following. 

\begin{claim} There exists a measure $\mu_P$ supported on $\{p_i\}$ such that 
$$c  < \frac{d(\mu_P * \tbeta)}{d\la}(z) < c^{-1},$$ for all $z$ in a $\frac{r}{4d}$ neighborhood of $\MM$ in $\R^n$.
\end{claim}
\begin{proof}
For any $\eps \in (0,1)$, let $\tbeta_\eps(\eps rv) =  c_{\eps,\tbeta}(1 - \|v\|^2)^{d+k}$ if $|v| \leq 1$, and $\tbeta_\eps(\eps rv) = 0$ if $|v| > 1$. Here $c_{\eps,\tbeta}$ is chosen so that $\tbeta_\eps$ integrates to $1$ over $\R^n$.
\begin{definition} \label{def:vor} For $i \in [N_3]$, let $\hbox{Vor}_i$ denote the open set of all points $p \in \R^n$ such that for all $j \neq i$, $|p - p_i| < |p - p_j|.$
Let $$\mu_P(p_i) = (c_\beta \mathcal{H}_{\MM}^d* \beta_\eps)(\hbox{Vor}_i).$$
\end{definition}
We note 
 $\frac{d(c_\tbeta \mathcal{H}_\MM^d * \tbeta)}{d\la}(z) $ is a $\frac{d}{cr}-$Lipschitz function of $z$, and 
 { so is also the function} 
$$\frac{d(c_\tbeta \mathcal{H}_\MM^d* \tbeta*\tbeta_\eps)}{d\la}(z),$$ for any $\eps \in (0, 1)$. Further, there exists an $\eps_0 \in (0, 1)$ such that 
$$\forall \eps \in (0, \eps_0), \left\|\frac{d(c_\tbeta \mathcal{H}_\MM^d * \tbeta*\tbeta_\eps)}{d\la} - \frac{d(c_\tbeta \mathcal{H}_\MM^d * \tbeta)}{d\la}\right\|_{\mathcal{L}^\infty(\R^n)} < c(\eps),$$ where $\lim_{\eps \ra 0} \frac{c(\eps)}{\eps}$ exists and is finite.
It thus suffices to prove that for all $\eps \in (0, \eps_0)$, 
$$ \left\|\frac{d(c_\tbeta \mathcal{H}_\MM^d *\tbeta*\tbeta_\eps)}{d\la} - \frac{d(\mu_P * \tbeta)}{d\la} \right\|_{\mathcal{L}^\infty(\R^d)} < \frac{c}{2} - c(\eps)$$ for all $z$ in a $\frac{r}{4d}-$neighborhood of $\MM$. 
For any $i$,  $$\hbox{diam(supp}(c_\tbeta \mathcal{H}_\MM^d *\tbeta_\eps)\cap \hbox{Vor}_i)
< \frac{cr}{d}. $$ Let $\pi$ denote the map defined on $supp(c_\tbeta \mathcal{H}_\MM^d *\tbeta_\eps)$ from $\hbox{Vor}_i$ to $p_i$.
Then, \beqs \left| \frac{d(c_\tbeta\mathcal{H}_\MM^d *\tbeta*\tbeta_\eps)}{d\la}(z) - \frac{d(\mu_P * \tbeta)}{d\la}(z)\right| & = & 
\left| \frac{d(((c_\tbeta \mathcal{H}_\MM^d *\tbeta_\eps) - \mu_P) * \tbeta)}{d\la}(z)\right| .\eeqs
                  For any $w \in supp(c_\tbeta \mathcal{H}_\MM^d *\tbeta_\eps)\cap \hbox{Vor}_i$, $|\pi(w) - w| < \frac{cr}{d}$. 
Let $c_\tbeta\mathcal{H}_\MM^d *\tbeta*\tbeta_\eps$ be denoted $\nu$.
Then, 
\beqs  \frac{(\nu - \mu_P) * \tbeta}{d\la}(z)
& = & \int_{z + supp(\tbeta)} \nu(dx)\tbeta(z-x) - \int_{z + supp(\tbeta)} \mu_P(dy) \tbeta(z-y)\\
& = & \int_{z + supp(\tbeta)} \nu(dx)\tbeta(z-x) - \int_{z + supp(\tbeta)} \nu(dx) \tbeta(z-\pi(x)).
\eeqs
The Lemma follows noting that $\tbeta$ is $\frac{d}{cr}-$Lipschitz.

\end{proof}
Let $\la_d^i$ denote the $d-$dimensional Lebesgue measure restricted to the disc $D_i$. 

Recall that from Definition~\ref{def:vor} that $$\mu_P(p_i) = (c_\tbeta \mathcal{H}_{\MM}^d* \tbeta_\eps)(\hbox{Vor}_i),$$ where the $\hbox{Vor}_i\subset \R^n.$
Let $$\widetilde{\mu_P}(p_i) = (c_\tbeta  \la_d^i)(\hbox{Vor}_i \cap D_i).$$
By making $\frac{r}{\tau} < \frac{1}{Cd^C}$ for a sufficiently large universal constant $C$, and $\epsilon$ a sufficiently small quantity, we see that for each $i$, $$c \leq \frac{\widetilde{\mu_P}(p_i)}{{\mu_P}(p_i)} \leq c^{-1}.$$ for a suitable universal constant $c$. We see that $(c_\tbeta  \la_d^i)(\hbox{Vor}_i \cap D_i)$ is the volume of the polytope $\hbox{Vor}_i \cap D_i$ multiplied by $c_\tbeta$, and membership of a point in $\hbox{Vor}_i \cap D_i$ can be answered in time $(Cd)^d$. Thus by placing a sufficiently fine grid, and counting the lattice points in $\hbox{Vor}_i \cap D_i$, $\widetilde{\mu_P}(p_i)$ can be computed using $(Cd)^{2d}$ deterministic steps. Even faster randomized algorithms exist for the task, which we choose not to delve into here. This concludes the proof of Lemma~\ref{lem:weights}.
\end{proof}

\section{The output manifold}\label{sec:calc}
For the course of this section, we consider the \underline{scaled setting} where $r=1$. Thus, in the new Euclidean metric, $\tau \geq Cd^C$.

Let $\Pi^i$ be the orthogonal projection of $\R^n$ onto the $n-d-$dimensional subspace containing the origin that is orthogonal to the affine span of $D_i$.
Recall that the $p_i$ are the centers of the discs $D_i$ as $i$ ranges over $[N_3]$.
We define the function $F_i:U_i \ra \R^n$ by $F_i(x) =\Pi^i (x - p_i)$. Let $\cup_i U_i = U$.
We define $$F:U \ra \R^n$$ by \beq F(x) = \sum_{i \in [N_3]} \a_i(x) F_i(x).\eeq

 Given a symmetric matrix $A$ such that $A$ has $n-d$ eigenvalues in $(1/2 , 3/2)$ and $d$ eigenvalues in $(-1/2, 1/2)$, let $\Pi_{hi}(A)$ denote the projection onto the span of the eigenvectors of $A$, corresponding to the largest $n-d$ eigenvalues.

\begin{definition} \label{def:6.9} For $x \in \cup_i U_i$, we define $\Pi_x = \Pi_{hi}(A_x)$ where $A_x = \sum_i \a_i(x) \Pi^i$. 
\end{definition}

Let $\widetilde{U}_i$ be defined as the $\frac{cr}{d}-$Eucidean neighborhood of $D_i$ intersected with $U_i$. Given a matrix $X$, its Frobenius norm $\|X\|_F$ is defined as the square root of the sum of the squares of all the entries of $X$. This norm is unchanged when $X$ is premultiplied  or postmultiplied by orthogonal matrices (of the appropriate order). Note that $\Pi_x$ is $\C^2$ when restricted to $\bigcup_i \widetilde U_i$, because the $\a_i(x)$ are $C^2$ and when $x$ is in this set, $c < \sum_i \widetilde\alpha_i(x) < c^{-1}$, and for any $i,j$ such that $\a_i(x) \neq 0 \neq \a_j(x)$, we have $\|\Pi^i - \Pi^j\|_F < Cd\de$.
\begin{definition}\label{def:7}
The output manifold $\MM_o$ is the set of all points $x \in \bigcup_i \widetilde{U}_i$  such that $\Pi_x F(x) = 0$.
\end{definition}


As stated above,  $\MM_o$ is the set of points $x \in \bigcup_i \widetilde{U_i}$ such that 
\beq \Pi_{hi}(\sum_{i\in [N_3]} \a_i(x)\Pi^i)(\sum_{i \in [N_3]} \a_i(x)\Pi^i(x - p_i)) = 0. \eeq
We see that \beqs  \Pi_{hi}(\sum_i \a_i(x)\Pi^i) = \frac{1}{2\pi i} \left[\oint_\gamma (zI - (\sum_i \a_i(x)\Pi^i))^{-1}dz\right]\eeqs
using diagonalization and Cauchy's integral formula, and so

\beq\label{MMo} \frac{1}{2\pi i} \left[\oint_\gamma (zI - (\sum_i \a_i(x)\Pi^i))^{-1}dz\right] \left(\sum_i \a_i(x)\Pi^i(x - p_i)\right) = 0\eeq
where $\gamma$ is the circle of  radius $1/2$ centered at $ 1$.

Let \beq \sum_{i \in [N_3]} \a_i(x) \Pi^i = M(x),\label{eq:M(x)}\eeq and as stated earlier, $\Pi^i(x - p_i) = F_i(x)$. Let $\Pi_{hi}(M(x))$ be denoted $\Pi_x$.

Then the left hand side of (\ref{MMo}) can be written as 
\beq \oint_\gamma \frac{dz}{2\pi i}\left(\sum_i \a_i(x) (zI - M(x))^{-1}F_i(x)\right). \eeq
for any $v \in \R^{\widehat n}$ and and $f:\R^{\widehat n} \ra \R^{\widetilde n}$ where $\widehat n, \widetilde n \in \N_+$ let $$\partial_v f(x) := \lim_{\a \ra 0} \frac{f(x + \a v) - f(x)}{\a}.$$

Then,
\beq \partial_v  \oint_\gamma \frac{dz}{2\pi i}\left(\sum_i \a_i(x) (zI - M(x))^{-1}F_i(x)\right) & = & \sum_i \a_i(x) \Pi_x (\partial_v F_i(x)) \label{eq:31}\\
& + &  \sum_i \a_i(x) (\partial_v \Pi_x)  F_i(x) \label{eq:32}\\
& + &   \sum_i (\partial_v \a_i(x)) \Pi_x  F_i(x). \label{eq:33}\eeq


 Let $v \in \R^n$ be such that $\|v\| = 1$. Let $\MM_\Pi^d$ denote the set of all projection matrices of rank $d$. This is an analytic submanifold of the space of $n\times n$ matrices.

\begin{claim}\label{cl:grassmann} The reach of $\MM_\Pi^d \subset \R^{n \times n}$ is greater or equal to $1/2.$
\end{claim}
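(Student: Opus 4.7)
The plan is to invoke Federer's reach criterion (Proposition~\ref{thm:federer}): it suffices to show that for any two distinct $P, Q \in \MM_\Pi^d$, one has $\dist(Q, Tan(P, \MM_\Pi^d)) \leq |P-Q|_F^2$. First I identify the tangent and normal spaces at $P$. Differentiating the idempotency relation $P^2 = P$ along a smooth curve of rank-$d$ projectors shows that a tangent vector $H$ at $P$ is a symmetric matrix satisfying $PH + HP = H$. Fixing an orthonormal basis of $\R^n$ in which $P$ has the block form $\mathrm{diag}(I_d, 0)$, the relation $PH + HP = H$ forces the diagonal blocks of $H$ to vanish; so $T_P\MM_\Pi^d$ consists of the block off-diagonal symmetric matrices, and its Frobenius-orthogonal complement is the space of block-diagonal symmetric matrices.

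Next I reduce $P-Q$ to a principal-angle normal form. A standard SVD construction applied to $(I-P)Q$ yields an orthonormal basis in which $P = \mathrm{diag}(I_d, 0)$ as above and $Q = \sum_{i=1}^d u_i u_i^T$ with $u_i = \cos\theta_i\, e_i + \sin\theta_i\, e_{d+i}$, for principal angles $\theta_i \in [0, \pi/2]$ (setting $\theta_i = 0$ in directions along which the two ranges agree, or when $n < 2d$). A direct block-by-block computation then gives $|P-Q|_F^2 = 2\sum_i \sin^2\theta_i$, and shows that the block-diagonal (normal) component of $P-Q$ has squared Frobenius norm $2\sum_i \sin^4\theta_i$.

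Since $Tan(P, \MM_\Pi^d)$ is the affine translate of the tangent space through $P$, this gives $\dist(Q, Tan(P, \MM_\Pi^d))^2 = 2\sum_i \sin^4\theta_i$. The elementary inequality $\sum_i s_i^2 \leq \bigl(\sum_i s_i\bigr)^2$ applied to $s_i = \sin^2\theta_i \geq 0$ then yields $\dist(Q, Tan(P, \MM_\Pi^d)) \leq \tfrac{1}{\sqrt{2}}\, |P-Q|_F^2$. By Federer's criterion, $\reach(\MM_\Pi^d)^{-1} \leq \sqrt{2}$, and hence $\reach(\MM_\Pi^d) \geq 1/\sqrt{2} > 1/2$, as claimed.

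I do not anticipate any serious obstacle here: the characterization of $T_P\MM_\Pi^d$ and the principal-angle normal form are classical, and the final inequality $\sum s_i^2 \leq (\sum s_i)^2$ is immediate for nonnegative reals. The only mild bookkeeping is the normal-form reduction, which can alternatively be phrased intrinsically by decomposing $P-Q$ into its $P$-block-diagonal and $P$-block-off-diagonal parts with respect to the orthogonal splitting $\R^n = \mathrm{range}(P) \oplus \mathrm{range}(I-P)$.
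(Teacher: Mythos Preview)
Your proof is correct and in fact yields the sharp bound $\reach(\MM_\Pi^d)\geq 1/\sqrt{2}$, strictly better than the stated $1/2$. Your route is genuinely different from the paper's: you apply Federer's criterion directly, computing the normal component of $P-Q$ via the principal-angle (CS) decomposition and bounding it by the elementary inequality $\sum s_i^2\leq(\sum s_i)^2$. The paper instead works with the full union $\MM_\Pi=\bigcup_{\hat d}\MM_\Pi^{\hat d}$ and argues that normal discs of radius $<1/2$ are disjoint because the spectral projection $\Pi_{hi}$ (defined via the Cauchy integral used throughout Section~\ref{sec:calc}) retracts any such normal perturbation back to its basepoint; the bound $1/2$ then comes from the distance between components of different rank. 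Your argument is more self-contained and gives the sharper constant; the paper's argument has the advantage of reusing the $\Pi_{hi}$ machinery that is central elsewhere in the section, and of making explicit why the contour radius $1/2$ in \eqref{MMo} is the right scale.
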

\begin{proof}
Let $$\MM_\Pi := \bigcup_{\widehat d=0}^n \MM_\Pi^{\widehat d}.$$ The various connected components 
of $\MM_\Pi$ are the different $\MM_\Pi^d$ (whose dimensions are respectively $(n-d)d$), and by evaluating Frobenius norms, we see that the distance between any two points on distinct connected components is at least $1$. Since it suffices to show that a normal disc bundle of radius less than $1/2$ injectively embeds into the ambient space (which is $\R^{n(n-1)/2}$,)  it suffices to show that
$$reach(\MM_\Pi) = 1/2.$$ Let $x \in \MM_\Pi^d$. Let $z$ belong to the normal fiber at $x$ and let $\|x-z\|_F < 1/2$. Without loss of generality we may (after diagonalization if necessary) take $x = diag(1, \dots, 1, 0, \dots, 0)$ where the number of $1s$ is $d$ and the number of $0s$ is $n-d$. Further, (using block diagonalization if necessary), we may assume that $z$ is a diagonal matrix as well. All the eigenvalues of $z$ lie in $(1/2, 3/2)$ and further the span of the corresponding eigenvectors is the space of eigenvectors of $x$ corresponding to the eigenvalue $1$. Therefore $\Pi_{hi}(z)$ is well defined through Cauchy's integral formula and equals $x$. Thus the normal discs of radius $< 1/2$ do not intersect, and so $reach(\MM_\Pi^d) \geq 1/2.$ Conversely, $\MM_\Pi^0$ is the origin and $\MM_\Pi^1$ contains the point $diag(1, 0, \dots, 0)$. We see that $diag(1/2, 0, \dots, 0)$ is equidistant from $\MM_\Pi^0$ and $\MM_\Pi^1$ and the distance is $1/2$. Therefore $reach(\MM_\Pi^d) \leq 1/2.$ Therefore, 
$$reach(\MM_\Pi^d) \geq reach(\MM_\Pi) = 1/2.$$
\end{proof}

In what follows, we will make repeated use of H\"older's inequality for $\ell_p$ norms and $\ell_q$ norms: Let $p, q \in \R$ and $\frac{1}{p} + \frac{1}{q} = 1$, then,
$$\forall x, y \in \R^n, \langle x, y \rangle \leq \|x\|_p \|y\|_q.$$ 

Secondly, we will use the fact that for any ball $U_i$, the number of $j$ such that $U_i \cap U_j$ is nonempty is bounded above by $(Cd)^d$ because of the lower bound of $\frac{cr}{d}$ on the spacing between the $p_i$ and $p_j$ for any two distinct $i$ and $j$. A consequence of this is that any vector $w \in \R^{N_3}$ that is supported on the set of all $j$ such that $U_i \cap U_j \neq \emptyset$ will satisfy 
\beqs 
\|w\|_{d+k} \leq Cd \|w\|_\infty,\quad
\|w\|_{\frac{d+k}{2}} \leq Cd^2 \|w\|_\infty,\quad
 \|w\|_{\frac{d+k}{3}} \leq Cd^3 \|w\|_\infty.\eeqs
Thirdly, we will use bounds on the derivatives of the bump functions at points $x$ that are within a distance of $cr/d$ of $\MM$.
Recall that $\sum_i \ta_i(x)$ is denoted $\ta(x)$. Then we know that $c < \ta(x) < C$ if the distance of $x$ from $\MM$ is less than $cr/d$. Recall that $N_3$ is the total number of balls $U_i$.
Recall from Lemma~\ref{lem:weights} that  for any $z$ in a $\frac{r}{{4d}}$ neighborhood of $\MM$, $$c^{-1} >   \ta(z) > {c},$$ where $c$ is a small universal constant. Note that  For any $v \in \R^D$ such that $|v| = 1$, and any $x \in \R^D$ such that $dist(x, \MM) \leq \frac{cr}{d}$, $   \|(\partial_v \a_i(x))_{i \in[N_3]}$ is a vector in $\R^{N_3}$. For a vector $w \in \R^{N_3}$ we denote the $\ell_p$ norm of $w$ by $\|w\|_p.$
\begin{lemma} \label{lem:ta2} For any $v \in \R^D$ such that $|v| = 1$, and any $x \in \R^D$ such that $dist(x, \MM) \leq \frac{cr}{d}$, \beq   \|(\partial_v \a_i(x))_{i \in[N_3]}\|_{\frac{d+k}{d+ k -1}} \leq Cd^2. \eeq \end{lemma}
\begin{proof}
{ We have}
\beqs  \nonumber  \|(\partial_v \a_i(x))_{i \in[N_3]}\|_{\frac{d+k}{d+k-1}} & = &  \|(\partial_v \frac{\ta_i(x)}{\ta(x)})_{i \in[N_3]}\|_{\frac{d+k}{d+k-1}}\\ & \leq &
 \frac{ \|(\partial_v \ta_i(x))_{i \in[N_3]}\|_{\frac{d+k}{d+k-1}}}{\ta} +   \frac{\|( (\partial_v \ta(x)) \ta_{i}(x))_{i\in[N_3]}\|_{\frac{d+k}{d+k-1}}}{\ta^2}\\
& \leq & (c^{-1}) \|Cd(\ta_i(x))_{i \in[N_3]}\|^{\frac{d+k-1}{d+k}}_1 + (c^{-2}) \| (\ta_i(x))_{i \in[N_3]}\|_{\frac{d+k}{d+k-1}} |\partial_v \ta|\\
& \leq & Cd + C|\partial_v \ta|\\
& \leq & Cd + C\|(\partial_v \ta_i)_{i  \in [N_3]}\|_{\frac{d+2}{d+1}}\|(1)_{i \in [N_3]}\|_{d+2}\\
& \leq & C d^2 .\eeqs
\end{proof}
Recall that as the $F_i$ are affine maps, $\partial F_i(x) = \Pi^i.$
We first look at the right hand side of (\ref{eq:31}).  This can be rewritten as
	\beq \sum_{i \in [N_3]} \a_i(x)\Pi_x \Pi_i v = \Pi_x v + \Pi_x\left( {M(x)} - \Pi_x\right)v.\eeq
                    It follows from properties of the Frobenius norm that 
$$\Pi_{hi}(A_x) = \arg\min_{\Pi \in \MM^s_\Pi} \|A_x - \Pi\|_F.$$                         Thus,         recalling from (\ref{eq:M(x)}) that $$\sum_{i \in [N_3]} \a_i(x) \Pi^i = M(x),$$
\beqs  \| {M(x)} - \Pi_x\|_F & = & dist(M(x), Tan(\Pi_x, \MM_\Pi^d))\\
& \leq & \sup_i dist(\Pi_i, Tan(\Pi_x, \MM_\Pi^d))\\
& \leq & \sup_i \|\Pi_i - \Pi_x\|_F^2/(2 \, reach(\MM_\Pi^d))\\
& \leq & 4 \sup_{i,j} \|\Pi_i - \Pi_j\|_F^2\\
& \leq & 8d\de^2.\eeqs

In the above array of equations, $i, j$ are such that $\a_i(x)$ and $\a_j(x)$ are nonzero.
                                                                     
We look at (\ref{eq:32}) next. Observe that
\beq\nonumber  \left \| \oint_\gamma \frac{dz}{2\pi i} \left(\sum_{i \in [N_3]} \a_i(x) \left(\partial_v ((zI - M(x))^{-1})\right) F_i(x)\right)\right\| & \leq &  \left\| \partial_v \Pi_x \right\|\left\|\sum_{i \in [N_3]} \a_i(x)  F_i(x)\right\|.\eeq

\begin{lemma}\label{lem:ta} We have for any $v \in \R^n$ such that $|v| = 1$, and any $x \in \R^n$ such that $dist(x, \MM) \leq \frac{cr}{d}$, \beq   \|(\partial^2_v \a_i(x))_{i \in[N_3]}\|_{\frac{d+k}{d+k-2}} \leq Cd^4. \eeq \end{lemma}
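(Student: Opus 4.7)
The plan is to follow the scheme of Lemma~\ref{lem:ta2}, but applying the quotient rule twice. Writing $\alpha_i = \tilde\alpha_i/\tilde\alpha$ and differentiating gives
\beq \partial_v^2 \alpha_i = \frac{\partial_v^2 \tilde\alpha_i}{\tilde\alpha} - \frac{2(\partial_v \tilde\alpha_i)(\partial_v \tilde\alpha)}{\tilde\alpha^2} - \frac{\tilde\alpha_i \partial_v^2 \tilde\alpha}{\tilde\alpha^2} + \frac{2 \tilde\alpha_i (\partial_v \tilde\alpha)^2}{\tilde\alpha^3}, \eeq
which I would bound term by term in $\ell^p$-norm with $p = (d+k)/(d+k-2)$, then combine using $c < \tilde\alpha < C$.

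First I would derive the pointwise estimates $|\partial_v \tilde\alpha_i(x)| \leq Cd\, \tilde\alpha_i(x)^{(d+k-1)/(d+k)}$ and $|\partial_v^2 \tilde\alpha_i(x)| \leq Cd^2\, \tilde\alpha_i(x)^{(d+k-2)/(d+k)}$ by differentiating the profile $\theta(v) = (1-|v|^2)^{d+k}$. Raising to the $p$-th power and summing against $\sum_i \tilde\alpha_i \leq C$ converts these into $\|(\partial_v \tilde\alpha_i)_i\|_p \leq Cd$ and $\|(\partial_v^2 \tilde\alpha_i)_i\|_p \leq Cd^2$. The scalar quantities $\partial_v \tilde\alpha = \sum_i \partial_v \tilde\alpha_i$ and $\partial_v^2 \tilde\alpha = \sum_i \partial_v^2 \tilde\alpha_i$ are then estimated via H\"older's inequality against the indicator of the at most $(Cd)^d$ bumps nonzero at $x$; the overlap count enters only through the conjugate exponent, and since $((Cd)^d)^{j/(d+k)} \leq (Cd)^j$, one obtains $|\partial_v \tilde\alpha| \leq Cd^2$ (recovering the bound implicit in Lemma~\ref{lem:ta2}) and $|\partial_v^2 \tilde\alpha| \leq Cd^4$.

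Using also $\|(\tilde\alpha_i)_i\|_p \leq C$ (from $\tilde\alpha_i \leq C$ and $\sum_i \tilde\alpha_i \leq C$), substituting into the four terms of the quotient-rule expansion produces contributions bounded by $Cd^2$, $Cd^3$, $Cd^4$, and $Cd^4$ respectively, whose sum is $\leq Cd^4$, as claimed.

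The only delicate point, and where the choice $k \geq 3$ together with the exponent $d+k$ in $\theta$ is essential, is arranging the H\"older exponents so that the $(Cd)^d$ overlap count enters only through the small exponent $(p-1)/p = 2/(d+k)$; then $((Cd)^d)^{2/(d+k)} \leq (Cd)^2$ remains polynomial in $d$ and the potentially exponential factor collapses. Everything else is routine quotient-rule calculus.
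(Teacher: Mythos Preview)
Your proposal is correct and follows essentially the same approach as the paper: the paper also expands $\partial_v^2(\ta_i/\ta)$ via the quotient rule (grouping your last two terms into one), proves the bounds $\|(\partial_v^2\ta_i)_i\|_{\frac{d+k}{d+k-2}}\le Cd^2$, $|\partial_v\ta|\le Cd^2$, $|\partial_v^2\ta|\le Cd^4$ by the same H\"older/overlap mechanism you describe, and then reads off the term-by-term contributions $Cd^2$, $Cd^3$, $Cd^4$. One small remark: for \emph{this} lemma only second derivatives appear, so the relevant constraint is that $((Cd)^d)^{2/(d+k)}$ stay polynomial, which holds for any $k\ge 2$; the hypothesis $k\ge 3$ is needed only later for the third-derivative estimate.
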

\begin{proof} We have
\beqs   \|(\partial^2_v \a_i(x))_{i \in[N_3]}\|_{\frac{d+k}{d+k-2}} & = &  \|(\partial^2_v \frac{\ta_i(x)}{\ta(x)})_{i \in[N_3]}\|_{\frac{d+k}{d+k-2}}\eeqs
\beqs =   \|( \frac{\partial^2_v \ta_i(x)}{\ta(x)} + \frac{(-2)(\partial_v \ta_i(x))(\partial_v\ta(x))}{\ta(x)^2} +  \frac{\ta_i(x)}{\ta(x)^3}(2(\partial_v \ta)^2 - \partial_v^2 \ta(x) (\ta(x)) )_{i \in[N_3]}\|_{\frac{d+k}{d+k-2}}\nonumber. \eeqs

We use the triangle inequality on the above expression, and reduce the task of obtaining an upper bound to that of separately obtaining the following bounds.

\begin{claim} We have \beq \|( \frac{\partial^2_v \ta_i(x)}{\ta(x)})_{i \in[N_3]}\|_{\frac{d+k}{d+k-2}} \leq Cd^2, \eeq \end{claim}

\begin{proof} This follows from $c < \ta < C$, and the discussion below. Suppose $x$ belongs to the unit ball in $\R^D$. Then,
\beq \partial_v^2 ( 1 - \|x\|^2)^{k+d}  & = & \partial_v((k+d)(1 - \|x\|^2)^{k+d-1}(2 \langle x, v \rangle))\\
                                                             & = & (k+d)(k+d-1) (1 - \|x\|^2)^{k+d-2}(4 \langle x, v \rangle)^2)\\
						       & + &   (k+d)(1 - \|x\|^2)^{k+d-1}(2 \langle v, v \rangle)).
\eeq
Therefore, 
 \beq \|( \partial^2_v \ta_i(x))_{i \in[N_3]}\|_{\frac{d+k}{d+k-2}} & \leq & C\left(d^2\ta + d\|( \partial_v \ta_i(x))_{i \in[N_3]}\|_{\frac{d+k}{d+k-2}}\right)\\
& \leq & C\left(d^2 + d \|( \partial_v \ta_i(x))_{i \in[N_3]}\|_{\frac{d+k}{d+k-1}}\right)
 \leq  Cd^2.\eeq 
\end{proof}

\begin{claim} We have \beq \|(  \frac{(-2)(\partial_v \ta_i(x))(\partial_v \ta(x))}{\ta(x)^2}  )_{i \in[N_3]}\|_{\frac{d+k}{d+k-2}} \leq Cd^3. \eeq \end{claim}

\begin{proof}
We have seen that  $|\partial_v \ta(x)| < Cd^2$. Therefore, 
\beq \nonumber  \|(  \frac{(-2)(\partial_v \ta_i(x))(\partial_v \ta(x))}{\ta(x)^2}  )_{i \in[N_3]}\|_{\frac{d+k}{d+k-2}} & < & Cd^2 \|(  (\partial_v \ta_i(x))  )_{i \in[N_3]}\|_{\frac{d+k}{d+k-2}}\\
& \leq  & Cd^2 \|(  (\partial_v \ta_i(x))  )_{i \in[N_3]}\|_{\frac{d+k}{d+k-1}}\\
& \leq & Cd^3. \eeq
\end{proof}

\begin{claim} { We have} \beq \|(   \frac{\ta_i(x)}{\ta(x)^3}(2(\partial_v \ta)^2 - \partial_v^2 \ta(x) (\ta(x)) )_{i \in[N_3]}\|_{\frac{d+k}{d+k-2}} \leq Cd^4. \eeq
\end{claim}
\begin{proof}
The only term that we have not already bounded is $|\partial_v^2 \ta(x)|$. To bound this, we observe that
\beq  C|\partial^2_v \ta| & \leq & C\|(\partial^2_v \ta_i)_{i \in [N_3]}\|_{\frac{d+k}{d+k-2}}\|(1)_{i \in [N_3]}\|_{(d+k)/2}\\
& \leq & C d^4 .\eeq Therefore, the entire expression gets bounded by $Cd^4$ as well.
\end{proof}

{ This proves Lemma \ref{lem:ta}.}\end{proof}

Recall that $ F(x) = \sum\a_i(x) F_i(x).$
\subsection{A bound on the first derivative of $\Pi_xF(x)$}
 We proceed to obtain an upper bound on $ \|\partial_v \Pi_x \|$, for $ x \in \bigcup_i \tilde{U}_i.$ Recall that this implies that $c < \tilde{\a}(x) < C$.
Recall that the radius of the circle $\gamma$ is $\frac{1}{2}.$ Thus, 
 \beq \|\partial_v \Pi_x \| & \leq & \left(\frac{1}{2}\right) \left\|\partial_v ((zI - M(x))^{-1})\right\|\label{eq:41}\\
 & = & \left(\frac{1}{2}\right) \|(zI - M(x))^{-1}\partial_v M(x) (z I - M)^{-1}\|\\
                                                            & \leq & \left(\frac{1}{2}\right) \|(zI - M(x))\|^{-2}\|\partial_v M(x)\|\\
                                                             & \leq & 8 \|\partial_v M(x)\|\\
                                                             & = & 8 \|\sum_{i \in [N_3]} \partial_v \a_i(x) (\Pi^i-\Pi^1) + \partial_v \sum_i \a_i(x) \Pi_1\|\\
                                                              & \leq & 8 \sum_{i \in [N_3]} |\partial_v \a_i(x)| \de + 0\\
                                                                & \leq &    8   \|(\partial_v \a_i(x))_{i \in[N_3]}\|_{\frac{d+k}{d+k-1}}\|(\de)_{i\in[N_3]}\|_{{d+k}}\\
                                                                  & \leq & C d^3\de,  \label{eq:48}\eeq
 where $C$ is an absolute constant.

Therefore, 
\beq \left \| \oint_\gamma \frac{dz}{2\pi i} \left(\sum_{i \in [N_3]} \a_i(x) \left(\partial_v ((zI - M(x))^{-1})\right) F_i(x)\right)\right\| \leq  Cd^3 \de.\eeq
 Finally, we  bound  (\ref{eq:33}) from above, 
  \beqs
  \nonumber & & \hspace{-1cm}\left\| \oint_\gamma \frac{dz}{2\pi i} \left(\sum_i  \left(\partial_v \a_i(x)\right)  (zI - M(x))^{-1} F_i(x)\right)\right\| \\
 &\leq & \nonumber \left\|\Pi_x \left(\sum_i (\partial_v \a_i(x)) (F_i(x) - F_1(x))\right) \right\|    +  \left\| \left(\sum_i \partial_v \a_i(x)\right)F_1(x)\right\|\\
& \leq &  \|\Pi_x\| \sum_i |\partial_v \a_i(x)| \| F_i(x) -F_1(x)\| + 0\\
\nonumber & \leq &    \|(\partial_v \a_i(x))_{i\in[N_3]}\|_{\frac{d+k}{d+k-1}}\|(F_i(x) -F_1(x))_{i\in[N_3]}\|_{{d+k}}\\
& \leq & Cd^3 \de.
\eeqs 
Therefore, \beq\label{eq:201-Mar-10} \left\| \partial_v \left(  \Pi_x F(x) \right) -  \Pi_x v\right\| \leq C d^3 \de.\eeq
Note also by (\ref{eq:41})-(\ref{eq:48}) that \beq\|\partial_v \Pi_x\| \leq Cd^3\de .\label{eq:3-176}\eeq

\subsection{A bound on the second derivative of $\Pi_x F(x)$}

We now proceed to obtain an upper bound on $\|\partial_v^2 \left(\Pi_x F(x)\right)\|.$
{ To this end, we use that} \beq \|\partial_v^2 \left(\Pi_x F(x)\right)\| & \leq & \| (\partial_v^2 \Pi_x) F(x)\|\label{eq:57}\\
& + & \|2 (\partial_v \Pi_x) \partial_v F(x)\|\label{eq:58} \\ 
&+ & \|\Pi_x \partial_v^2 F(x)\|.\label{eq:59}\eeq 

We first bound from above the right side of (\ref{eq:57}). To this end, we observe that
\beqs  (\partial_v^2 \Pi_x)  & = & \partial_v^2\left[\frac{1}{2\pi i}\oint_\gamma [zI - M(x)]^{-1}dz\right]\\
                                                       & = & \partial_v\left[\frac{1}{2\pi i} \oint_\gamma (zI - M(x))^{-1}\partial_v M(x) (zI - M(x))^{-1}dz\right]\\
                                                       \nonumber & = & \frac{1}{2\pi i} \oint_\gamma 2 (zI - M(x))^{-1}\partial_v M(x) (zI - M(x))^{-1} \partial_v M(x) (zI - M(x))^{-1}dz\\ 
& + &  \oint_\gamma (zI - M(x))^{-1}\partial^2_v M(x) (zI - M(x))^{-1}dz.\eeqs
Therefore, \beqs  \nonumber \| (\partial_v^2 \Pi_x) F(x)\| & \leq & \sup_{z \in \gamma} \left(C \|zI - M(x)\|^{-3}\|(\partial_v M(x))^2\| + C \|zI - M(x)\|^{-2}\|(\partial^2_v M(x))\|\right)\\
                                                         & \leq & C(\|\partial_v M(x) \|^2 + \|\partial_v^2 M(x)\|)\\
                                                        & \leq & Cd^6\de^2 +  C\|\partial_v^2 M(x)\| \\
                                                        & = & Cd^6\de^2 + C\|\partial_v^2 \sum_{i \in [N_3]} \a_i(x) \Pi_i\|\\
                                                        & \leq & Cd^6\de^2 + C\sum_{i \in [N_3]} |\partial_v^2 \a_i(x)(\Pi_i - \Pi_1)|\\
                                                         & \leq & Cd^6\de^2 +  C\|(\partial^2_v \a_i(x))_i\|_{\frac{d+k}{d+k-2}}\|(\de)_i\|_{\frac{d+k}{2}}\\
                                                          & \leq & Cd^6\de^2 + Cd^6\de.\eeqs

Next, we bound  (\ref{eq:58}) from above.
Note that \beq \|\partial_v F(x)\| & \leq & \|(\sum_i (\partial_v \a_i(x)(F_i(x) - F_1(x)))+\Pi_xv\|\\
& \leq & 1 + C d^3\de\eeq
and
\beq \|(\partial_v \Pi_x) \partial_v F(x)\| & \leq &  \|(\partial_v \Pi_x)\| \|\partial_v F(x)\|\\
                                                                  & \leq &   (C d^3 \de ) (1 + d^3 \de)\\
                                                                    & = & C d^3\de + C d^6 \de^2.\eeq

Finally, we bound (\ref{eq:59}) from above by observing that
\beq  \|\Pi_x \partial_v^2 F(x)\| & \leq &  \|\partial_v^2 F(x)\|\\
& \leq & \|\partial_v^2 (F(x) - F_1(x))\|\\
                                                      & \leq & \sum_i |\partial_v^2 \a_i(x)|\| F_i(x) - F_1(x)\|\label{eq:75}\\
                                                        & + & \sum_i 2| \partial_v \a_i(x)| \|\partial_v F_i(x)- \partial_v F_1(x)\label{eq:76}\|\\
                                                        & + & \sum_i |\a_i(x)| \|\partial_v^2 F_i(x)\|.\label{eq:77}
\eeq

We first bound (\ref{eq:75}) from above.
\beq \nonumber &&\hspace{-1.5cm} \sum_i |\partial_v^2 \a_i(x)|\| F_i(x)- F_1(x)\|\\
 & \leq & \| (|\partial_v^2 \a_i(x)|)_{i\in[N_3]}\|_{\frac{d+k}{d+k-2}}\|(\| F_i(x)- F_1(x)\|)_{i \in [N_3]}\|_{\frac{d+k}{2}}\\
                                                                        & \leq & (Cd^4)(d^2 \de)\\
                                                                          & = & Cd^6 \de.   
\eeq

Next we bound (\ref{eq:76}) from above.
\beqs \nonumber
& &\hspace{-1cm}\sum_i | \partial_v \a_i(x)| \|\partial_v F_i(x) - \partial_v F_1(x)\| \\
& \leq & \|(|\partial_v \a_i(x)|)_{i\in [N_3]}\|_{\frac{d+k}{d+k-1}} \|(\|\partial_v F_i(x)- \partial_v F_1(x)\|)_{i\in[N_3]}\|_{d+k}\\
                                                                             & \leq & C d^2 (d\de)\\
                                                                                & = & C d^3\de. \eeqs

{ Observe, that the term (\ref{eq:77}) is  equal to} $0$.
  Therefore, \beq\label{eq:207} \left\| \partial^2_v \left(  \Pi_x F(x) \right) \right\| \leq C d^6 \de.\eeq

Recall that $\MM_o$ is the set of points $x \in \cup_i  \widetilde{U}_i$ such that 
\beq \Pi_{hi}(\sum_i \a_i(x)\Pi^i)(\sum_i \a_i(x)\Pi^i(x - p_i)) = 0. \eeq
In particular, $x \in \MM_o \cap U_i$ if and only if $ h(z) = \Pi_i \Pi_{hi}(\sum_i \a_i(z)\Pi^i)(\sum_i \a_i(z)\Pi^i(z - p_i)) = 0,$ where $\Pi^i$ is the orthogonal projection onto the subspace orthogonal to $D_i$, containing the center of $D_i$. We take $U_i$ to be the unit ball and the center of $U_i$ to be the origin and take the linear span of $D_i$ to be $\R^d$.  We split $z$ into its $x$ component (projection onto $\R^d$) and $y$ component (projection orthogonal to $\R^d$), thus $z = (x, y) \in \R^d \times \R^{n-d}.$ We define $g(x, y) = (x, h(x, y))$. This function is then substituted into the quantitative inverse function theorem of Subsection~\ref{sec:quant}.

\subsection{Hausdorff distance of $\MM_o$ to $\MM$ and the reach of $\MM_o$.}
For this subsection, we choose a new length scale so that $\tau = 1.$
 Let $r = C\sqrt{d}\sigma.$ Suppose that $dist(X_3, \MM) < \de:= Cr^2$, and $dist(\MM, X_3) < cr$. These are the parameters for the refined net.
\begin{theorem}
The Hausdorff distance between $\MM_o$ and $ \MM$ is less than $\frac{Cd\sigma^2}{\tau}$. 
\end{theorem}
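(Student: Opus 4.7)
I would prove the two inclusions separately: (a) every $p\in\MM$ has a point of $\MM_o$ within $C\delta$, and (b) every $q\in\MM_o$ has a point of $\MM$ within $C\delta$, where $\delta=Cr^2=Cd\sigma^2$. Both directions rely on the three differential estimates already established: the near-identity derivative (\ref{eq:201-Mar-10}) saying $\|\partial_v(\Pi_x F(x))-\Pi_x v\|\le Cd^3\delta$, the second derivative bound (\ref{eq:207}) of size $Cd^6\delta$, and Lemma~\ref{lem:19-May27} of size $Cd^9\delta$. Combined with a Newton-type quantitative inverse function argument, the conclusion will then reduce to a single preliminary estimate: $\|F(x)\|\le C\delta$ uniformly for $x$ in a $cr/d$-neighborhood of $\MM$.

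The first task is that preliminary estimate. Each summand $\a_i(x)\Pi^i(x-p_i)$ only contributes when $|x-p_i|\le Cr$; the disc $D_i$ lies within Hausdorff distance $\delta$ of $\MM$, so its affine span is close (as a $d$-plane) to the tangent space of $\MM$ at the nearest point $p'_i\in\MM$, and Corollary~\ref{cor:1} then gives $|\Pi^i(x-p_i)|\le C|x-p_i|^2/\tau+C\delta\le C\delta$. Since $\sum_i\a_i(x)=1$, the triangle inequality yields $\|F(x)\|\le C\delta$.

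For direction (a), fix $p\in\MM$ and look for $v\in\mathrm{Range}(\Pi_p)$ with $|v|\le C\delta$ such that $\Phi(v):=\Pi_{p+v}F(p+v)=0$. By (\ref{eq:201-Mar-10}) the derivative $D\Phi(0)$ restricted to $\mathrm{Range}(\Pi_p)$ differs from the identity by at most $Cd^3\delta$, so it is invertible with inverse of norm $\le 2$; by (\ref{eq:207}) it is Lipschitz with constant $Cd^6\delta$; and $\|\Phi(0)\|\le\|F(p)\|\le C\delta$ by the previous paragraph. Newton iteration starting at $0$ converges to a root $v^*$ with $|v^*|\le 2\|\Phi(0)\|\le C\delta$, and $q:=p+v^*$ lies in $\MM_o$. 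For direction (b), let $q\in\MM_o$ and $p\in\MM$ realize the distance, so that $w:=q-p$ is normal to $\MM$ at $p$; the goal is $|w|\le C\delta$. Writing $\Pi^j(q-p_j)=\Pi^j w+\Pi^j(p-p_j)$, the second term is $O(\delta)$ by disc--manifold closeness, while the first equals $w+O\bigl((d^3\delta+r^2/\tau)|w|\bigr)$ because the normal space of $D_j$ agrees with $\mathrm{Nor}(\MM,p)$ up to an angle $O(\sqrt{d}\delta/r+r/\tau)$. Applying $\Pi_q$, using $\Pi_q F(q)=0$, and noting that $\Pi_q$ restricted to $\mathrm{Nor}(\MM,p)$ is within $1/2$ of the identity (again by disc/tangent closeness), we obtain $|w|/2\le C\delta+Cd^3\delta|w|$, so $|w|\le C\delta$.

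The main obstacle will be direction (b): the partition of unity can involve up to $(Cd)^d$ overlapping balls $U_j$, so a naive triangle inequality on the aggregated error $\sum_j\a_j(q)\bigl(\Pi^j w-w\bigr)$ blows up exponentially in $d$. The fix is to pair, via H\"older's inequality, $(\a_j(q))_j\in\ell^{(d+k)/(d+k-1)}$ with $(\|\Pi^j-\Pi^{j_0}\|)_j\in\ell^{d+k}$ for a reference index $j_0$, exactly as in Lemma~\ref{lem:ta2} and the derivation of (\ref{eq:201-Mar-10}); this recovers a $\mathrm{poly}(d)\cdot\delta$ bound rather than an $\exp(d)$ one. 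Once this bookkeeping is carried out, both inclusions give $\dhaus(\MM_o,\MM)\le C\delta=Cd\sigma^2/\tau$ in the original scale.
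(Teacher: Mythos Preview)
Your proposal is correct in outline, but it takes a different route from the paper's proof. The paper does not argue the two inclusions $\MM\to\MM_o$ and $\MM_o\to\MM$ directly; instead it inserts the union of discs $\bigcup_i D_i$ as an intermediate object and uses the triangle inequality. Step one is that $\dhaus\bigl(\bigcup_i D_i,\MM\bigr)<\de$ from the disc construction (Subsection~\ref{ssec:fine}). Step two is that $\dhaus\bigl(\bigcup_i D_i,\MM_o\bigr)<C\de$: in each chart $U_i$ the map $g(x,y)=(x,\Pi^i\Pi_{(x,y)}F(x,y))$ satisfies the hypotheses of the quantitative implicit function theorem (Section~\ref{sec:quant}), and (\ref{eq:119a}) together with the zero-order bound $|F|\le C\de$ (which you also isolate) shows that $\MM_o\cap U_i$ is the graph over $D_i$ of a function with $C^0$ norm $\le C\de$. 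This single graph statement already gives \emph{both} directions of the Hausdorff distance between $D_i$ and $\MM_o$, so no separate direction-(b) argument is needed.

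What the two routes buy: the paper's is shorter, because flatness of the $D_i$ makes the implicit-function-theorem bookkeeping trivial and the graph representation handles both inclusions at once. Your direct argument for direction~(b) --- projecting $q-p$ onto $\Pi^j$ and using $\Pi_qF(q)=0$ --- is a legitimate alternative and perhaps more transparent, but your stated error terms are slightly off (the angle between $\mathrm{Range}(\Pi^j)$ and $\mathrm{Nor}(\MM,p)$ is $O(\de/r+r/\tau)$, not $O(d^3\de+r^2/\tau)$), and the ``main obstacle'' you flag is a red herring: at zero order the sum $\sum_j\a_j(q)(\Pi^j w-w)$ is controlled simply by $\sum_j\a_j(q)=1$ and a uniform bound on each summand, with no H\"older pairing required. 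The H\"older trick is essential for the derivative bounds feeding into the reach estimate, not for this Hausdorff-distance step.
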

\begin{proof}
Since $dist(X_3, \MM) < \de:= Cr^2$, and $dist(\MM, X_3) < cr$   the Hausdorff distance between $\bigcup_i D_i$ and $\MM$ is less than $\de = Cr^2$ by Subsection~\ref{ssec:fine}.
The Hausdorff distance between $\bigcup_i D_i$ and $\MM_o$ is less than $C\de$ by the quantitative implicit function theorem (Subsection~\ref{sec:quant}), applying Taylor's theorem together with (\ref{eq:119a}) and (\ref{eq:122}). Thus, by the triangle inequality, 
the Hausdorff distance between $\MM_o$ and $\MM$ is less than $Cr^2.$
\end{proof}
Next, we address the reach of $\MM_o$.
By Federer's criterion (Proposition~\ref{thm:federer}) we know that 
\beqs reach(\MM_o) = \inf\left\{|b-a|^{2}(2 dist(b, Tan(a)))^{-1}\big| \, a, b \in \MM_o, a \neq b\right\}.\eeqs
Let $ a, b \in \MM_o, a \neq b$.

 If $|a - b| > \frac{1}{Cd^6}$, then $|b-a|^{2}(2 dist(b, Tan(a)))^{-1} > \frac{1}{Cd^6},$ because 
\beqs |b-a| \geq dist(b, Tan(a)). \eeqs

Therefore, we may suppose that $|a-b| \leq \frac{1}{Cd^6}$.
By the bound on the Hausdorff distance between $\MM$ and $\MM_o$, the distances of $a$ and $b$ to their projections onto $\MM$, which we denote $a'$ and $b'$ respectively, are less than $C\de$. By the quantitative implicit function theorem (Subsection~\ref{sec:quant}) and the covering property of $\{U_i\}$, $\MM_o$ is a $C^2-$submanifold of $\R^n$. Therefore $Tan(a)$ is a $d-$dimensional affine subspace. By  (\ref{eq:201-Mar-10}), (\ref{eq:119a}) and (\ref{eq:122}) the Hausdorff distance between the two unit discs $Tan(a) \cap B(a, r)$ and $(Tan_\MM(a') \cap B(a',  r)) + (a - a')$ which are centered at $a$, is bounded above by $ Cd^3\de.$ Therefore,
the Hausdorff distance between the two unit discs $Tan(a) \cap B(a, 1)$ and $(Tan_\MM(a') \cap B(a',  1)) + (a - a')$ which are centered at $a$, is bounded above by $\frac{Cd^3\de}{r}.$

Then, we have the following.
\begin{obs}\lab{obs:7.1} $\MM_o$ and $\MM$ are $\de$ close in Hausdorff distance and 
$(\MM_o \cap B(a, 2|a-b|))$ and $(\MM \cap B(a',  2|a-b|))$ are $C(d^3\de/r)$ close in $C^1$ as graphs of functions over $(Tan(a) \cap B(a, 3|a-b|/2)).$ 
\end{obs}
Let these functions  be respectively $\widehat{f}_o$ and $\widehat{f}$.  Note that the range is $Nor(a)$, the fiber of the normal bundle at $a$; please see Lemma~\ref{lem:6} in  Section~\ref{sec:GeomPrelim}. 
We know that the  $C^1$ norm of $\widehat{f}$ on $(Tan(a) \cap B(a, 3|a-b|/2))$ is at most $\frac{Cd^3\de}{r} + C|a-b|$. 
Therefore, the  $C^1$ norm of $\widehat{f}_o$ on $(Tan(a) \cap B(a, 3|a-b|/2))$ is at most $\frac{Cd^3\de}{r} + C|a-b|.$
But using this and the Hessian bound of $C d^6$ from (\ref{eq:207}), we also know that the Hessian of $\widehat{f}_o$ is bounded above by $C d^6$. But now, by Taylor's theorem,  $dist(b, Tan(a)) \leq \sup \|Hess \widehat{f}_o\||a-b|^2/2,$ where the supremum is taken over 
$(Tan(a) \cap B(a, 3/2|a-b|))$. This, we know is bounded above by $Cd^6|a-b|^2$. Substituting this into Federer's criterion for the reach, we see that $reach(\MM_o) \geq \frac{1}{Cd^6}.$
Thus, we have just proved the following.
\begin{theorem}
The reach of $\MM_o$ is at least $\frac{1}{Cd^{6}}$.
\end{theorem}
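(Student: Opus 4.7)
The plan is to apply Federer's reach criterion (Proposition~\ref{thm:federer}) to $\MM_o$: write
\[
\reach(\MM_o)^{-1} = \sup\bigl\{2|b-a|^{-2}\,\dist(b,\mathrm{Tan}(a)) : a,b\in\MM_o,\ a\neq b\bigr\},
\]
and show this supremum is at most $Cd^6$. I would dispose of the ``far'' case first: if $|a-b|\ge 1/(Cd^6)$, then because $\dist(b,\mathrm{Tan}(a))\le|b-a|$, the quotient $|b-a|^2/(2\dist(b,\mathrm{Tan}(a)))$ is already bounded below by $|b-a|/2\ge 1/(Cd^6)$, so nothing more is needed. The real content is therefore concentrated in the ``near'' regime $|a-b|\le 1/(Cd^6)$.

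For the near regime, the idea is to represent $\MM_o$ locally as the graph of a $C^2$ function $\widehat f_o$ over $\mathrm{Tan}(a)$ and to control its Hessian. First, the construction of $\MM_o$ via the vanishing locus of $\Pi_x F(x)$ is handled by the quantitative implicit function theorem (Subsection~\ref{sec:quant}): since the derivative estimate (\ref{eq:201-Mar-10}) gives $\partial_v(\Pi_x F(x))=\Pi_x v + O(d^3\de)$, the implicit function theorem applies with uniform constants on each $U_i$, yielding a $C^2$ graph parametrization. Second, using that $\MM_o$ is within Hausdorff distance $C\de$ of $\MM$ (already established), I would compare $\widehat f_o$ with the analogous graph $\widehat f$ for $\MM$ over the same $d$-plane $\mathrm{Tan}(a)$, and use (\ref{eq:201-Mar-10}) plus the tangent-space perturbation bound to deduce that $\|\nabla\widehat f_o\|_{C^0}\le Cd^3\de/r + C|a-b|$, which is small on the relevant ball.

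The crucial step is the Hessian bound: differentiating the defining equation $\Pi_x F(x)=0$ implicitly and using the estimate $\|\partial_v^2(\Pi_x F(x))\|\le Cd^6\de$ from (\ref{eq:207}), together with the quantitative inverse function theorem to convert estimates on the implicit equation into estimates on the graph, yields $\|\mathrm{Hess}\,\widehat f_o\|\le Cd^6$ uniformly on $\mathrm{Tan}(a)\cap B(a,3|a-b|/2)$. Then Taylor's theorem gives
\[
\dist(b,\mathrm{Tan}(a)) \le \tfrac12\sup\|\mathrm{Hess}\,\widehat f_o\|\,|a-b|^2 \le Cd^6|a-b|^2,
\]
so $|b-a|^2/(2\dist(b,\mathrm{Tan}(a)))\ge 1/(Cd^6)$, and Federer's criterion closes the argument.

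The main obstacle is the conversion of the Hessian estimate on the defining map $\Pi_x F(x)$ into a Hessian estimate on the implicit graph $\widehat f_o$: the derivative of the defining equation is only close to $\Pi_x$ (not exactly a projection), and one must check that the inverse of the ``tangential'' block of $\partial(\Pi_x F)$ has operator norm $O(1)$ while the normal block has operator norm $O(1)$ as well, so that the Hessian bound transfers without amplification beyond the $d^6$ already present. This is precisely what the quantitative inverse/implicit function theorem machinery of Subsection~\ref{sec:quant} is designed to do, and the estimates (\ref{eq:201-Mar-10}) and (\ref{eq:207}) are calibrated to supply the requisite inputs.
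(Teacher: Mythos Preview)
Your proposal is correct and follows essentially the same approach as the paper: Federer's criterion, the trivial far case $|a-b|\ge 1/(Cd^6)$, and in the near case the comparison of the graph $\widehat f_o$ with the corresponding graph for $\MM$ (via the Hausdorff bound and the tangent-plane perturbation from (\ref{eq:201-Mar-10}), (\ref{eq:119a}), (\ref{eq:122})), followed by the Hessian bound $Cd^6$ coming from (\ref{eq:207}) through the quantitative implicit function theorem, and Taylor's theorem to conclude. Your identification of the main technical point---that the block of $\partial(\Pi_xF)$ has inverse of norm $O(1)$ so that the $Cd^6\de$ bound on $\partial_v^2(\Pi_xF)$ transfers to a $Cd^6$ bound on $\mathrm{Hess}\,\widehat f_o$ without further amplification---is exactly what the paper invokes from Subsection~\ref{sec:quant}.
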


Finally, we provide an estimate on the third derivatives of $\MM_o$. Since our guarantee about the true manifold is only that it is $\C^2$, it is inevitable that as the Hausdorff distance between the output manifold and the true manifold tends to zero, the guarantees on the third derivatives of $\MM_o$ viewed as the graph of a function tend to infinity. The inverse dependence on $\sigma$ in the following lemma reflects that fact.

\begin{proposition}\label{thm:C3}
Let $a \in \MM_o$ and $\widehat{f}_o$ be a function from $(Tan(a) - a)$ to $Nor(a)$ such that $(\MM_o - a)$ agrees with the graph of $\widehat{f}_o$ in a $\frac{1}{20}-$neighborhood $U$ of $a$. Then,  for any  unit vector $v$ in the domain, and any unit vector $w$ in the range, the third derivative $\langle\partial^3_v \widehat{f}_o, w \rangle$ at $x \in U$ satisfies 
\beqs \big|\langle\partial^3_v \widehat{f}_o(x), w \rangle\big| \leq \frac{Cd^9}{r} \leq \frac{Cd^{8.5}}{\sigma} . \eeqs 
\end{proposition}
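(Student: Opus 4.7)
My plan is to apply implicit differentiation to the defining equation of $\MM_o$. Working in the scaled setting of Subsection 7.1 with $\tau = 1$, I will set up local coordinates $(x, y) \in (Tan(a) - a) \times Nor(a)$ centered at $a$, define $\Phi(x, y) := \Pi_{(x, y)} F(x, y)$, and use that $\MM_o$ is locally the zero set of $\Phi$, so $\widehat{f}_o$ is implicitly determined by $\Phi(x, \widehat{f}_o(x)) \equiv 0$. The estimate (\ref{eq:201-Mar-10}) shows that $\partial \Phi$ differs from the projection onto $Nor(a)$ by $O(d^3 \de)$, so $\partial_y \Phi$ is a small perturbation of the identity on $Nor(a)$, hence invertible with $\|(\partial_y \Phi)^{-1}\| \leq C$. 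This feeds into the quantitative implicit function theorem of Subsection~\ref{sec:quant}, which guarantees that $\widehat{f}_o$ is well-defined and $\C^k$ on the $\frac{1}{20}$-neighborhood $U$.

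I will then differentiate the identity $\Phi(x, \widehat{f}_o(x)) = 0$ three times along a fixed unit direction $v$. The first derivative gives $\partial_v \widehat{f}_o = -(\partial_y \Phi)^{-1} (\partial_v \Phi)_{|y\text{ frozen}}$, of norm $O(1)$ by (\ref{eq:201-Mar-10}). The second gives $\partial_v^2 \widehat{f}_o$ as $(\partial_y \Phi)^{-1}$ applied to a sum involving $\partial_v^2 \Phi$ and products of $\partial_v \widehat{f}_o$ with mixed derivatives of $\Phi$; combining this with the Hessian bound (\ref{eq:207}) yields $\|\partial_v^2 \widehat{f}_o\| \leq Cd^6 \de$. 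The third derivative isolates $\partial_v^3 \widehat{f}_o$ as $-(\partial_y \Phi)^{-1}$ applied either to the pure $\partial_v^3 \Phi$ term — bounded by Lemma~\ref{lem:19-May27} by $Cd^9 \de$ — or to cross terms of the schematic form $(\partial^a_v \partial^b_y \Phi)(\partial^{c_1}_v \widehat{f}_o)\cdots(\partial^{c_s}_v \widehat{f}_o)$ with $a + b + \sum c_i = 3$. Term by term, each such product is at most $Cd^9 \de$, by combining the bounds on $\partial^j \Phi$ for $j \leq 3$ with the previously derived bounds on $\partial^{\leq 2}_v \widehat{f}_o$.

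Finally, I will undo the rescaling. Since a third derivative has units of (length)$^{-2}$, dividing physical lengths by the factor $\tau$ that was used to set $\tau = 1$ pulls out a factor of $\tau^{-2}$ from the estimate, while in the relevant rescaling $\de$ is a constant multiple of $r/\tau$; tracking these dimensions converts the scaled bound $Cd^9 \de$ into $Cd^9/r$ in the original units, and $r = C\sqrt{d}\sigma$ gives the second form $Cd^{8.5}/\sigma$.

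The main obstacle I anticipate is the bookkeeping in the third-order chain rule: since $\partial_v \widehat{f}_o$ is only $O(1)$ rather than small, cubic products of it could in principle dominate, but each such product is multiplied by $\partial_v^3 \Phi$ or a mixed higher derivative of $\Phi$, and these carry the decisive factor $\de$ from (\ref{eq:207}) and Lemma~\ref{lem:19-May27}. A secondary point is that the quantitative implicit function theorem from Subsection~\ref{sec:quant} must be checked to deliver $\C^k$ regularity on the whole $\frac{1}{20}$-neighborhood, not merely at the base point $a$; here the uniform invertibility of $\partial_y \Phi$ throughout $U$, which follows from (\ref{eq:201-Mar-10}) plus the $\C^2$ dependence of $\Pi_x$ on $x$ established earlier in Section~\ref{sec:calc}, is precisely what is needed.
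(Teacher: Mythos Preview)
Your proposal is correct and mirrors the paper's own proof, which simply cites Lemma~\ref{lem:19-May27} together with the quantitative implicit function theorem of Subsection~\ref{sec:quant_imp_func}; your explicit threefold differentiation of $\Phi(x,\widehat f_o(x))=0$ is exactly what that theorem packages. The only wrinkle is that Lemma~\ref{lem:19-May27} is proved in the $r=1$ scaling of Section~\ref{sec:calc} rather than the $\tau=1$ scaling in which the proposition is stated, but your dimensional bookkeeping still lands on the correct bound $Cd^9/r$.
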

\begin{proof}
This follows from Lemma~\ref{lem:19-May27} and the quantitative implicit function theorem from Subsection~\ref{sec:quant_imp_func}.
\end{proof}

\begin{proposition}\lab{prop:final-diff}
When $\dhaus(\MM, \MM_o) <  c(\min({reach}(\MM), {reach}(\MM_o)))$, the output manifold $\MM_o$ is $C^1-$diffeomorphic to $\MM$.
\end{proposition}
\begin{proof} 
Consider the projection map $\pi_o$ from a tubular neighborhood of $\MM_o$ of thickness $\frac{\mathrm{reach}(\MM_o)}{2}$  on to $\MM_o$.  This tubular neighborhood contains $\MM.$  It suffices to show that $\pi_o|_\MM:\MM \ra \MM_o$ is bijective and is $C^1$ and has a $C^1$ inverse that maps $\MM_o$ to $\MM.$ Let  ${reach}(\MM_o)$ be denoted $\tau_o$.
Due to the fact that $\MM_o$ is $C^2$, $\pi_o$ is $C^1$, and further by Theorem 4.8, part (8) of \cite{federer_paper} is $2\tau_o^{-1}$ Lipschitz. Considering Lemma~\ref{lem:6} and Proposition~\ref{thm:federer}, a short argument shows that $\pi_o$ restricted to $\MM$ is bijective and its differential at a point $a' \in \MM$ is a linear map from $Tan(a', \MM)$ to $Tan(\pi_0(a'), \MM_o)$ whose singular values lie in the interval $(c, 1]$. This completes the proof of this Proposition.
\end{proof}
\section{Concluding remarks}

We have studied the problem of reconstructing a compact embedded $d$ dimensional $C^2$ submanifold $\MM$ of $\R^n$ from random samples. These random samples are obtained from sampling the manifold independently and identically at random from some density $\mu$ and adding Gaussian noise having a spherically symmetric distribution where the standard deviation of any component is $\sigma.$ If that the noise is smaller than a parameter specified in (\ref{eq:sigma-May}), we developed an algorithm that uses $O(\sigma^{-d-4})$ samples and produces a manifold $\MM_o$ whose reach is no more than $Cd^6$ times the reach of $\MM$ and whose Hausdorff  distance to $\MM$ is at most $\frac{Cd\sigma^2}{\tau}.$ If $n\sigma^2 > c\tau^2$, our bounds imply that the number of samples used is $O(\sigma^{-d - 2})$, which is optimal in the case when $d= 0,$ and the manifold is a point.

\section*{Acknowledgements}
We are deeply grateful to the anonymous reviewers for many helpful comments, including Remark~\ref{rem:1.3}, regarding the $C^1-$diffeomorphism equivalence of the output manifold to the unknown manifold.

{Ch.F. was partly supported by the  
US-Israel Binational Science Foundation, grant number 2014055, AFOSR, grant DMS-1265524, and NSF, grant FA9550-
12-1-0425. S.I. was partly supported RFBR, grant 20-01-00070, 
M.L. was supported by Academy
of Finland, grants 273979 and 284715, and H.N. was partly supported by NSF
grant DMS-1620102 and a Ramanujan Fellowship and a Swarna Jayanti fellowship.}

 \bibliographystyle{ACM}

\begin{thebibliography}{10}



\bibitem{aamari2019}
{ Aamari, E., and Levrard, C.}
\newblock Nonasymptotic rates for manifold, tangent space and curvature
  estimation.
\newblock {\em Ann. Statist. 47}, 1 (02 2019), 177--204.


\bibitem{Anderson}
M. Anderson,
{\em Convergence and rigidity of manifolds under Ricci curvature bounds},
Invent. Math. {\bf 102}  (1990), 429--445. 

\bibitem{AKKLT}  M. Anderson, A. Katsuda, Y. Kurylev, M. Lassas, M. Taylor,
\emph{Boundary regularity for the Ricci equation, geometric convergence, and
Gel'fand's inverse boundary problem}, Invent. Math. {\bf 158} (2004),  261--321.

\bibitem{AizenbudSober} Aizenbud, Y., \& Sober, B. (2021). Non-Parametric Estimation of Manifolds from Noisy Data. ArXiv, {\em abs/2105.04754.}

\bibitem{Boissonnat}
{ Boissonnat, J., Guibas, L.~J., and Oudot, S.}
\newblock Manifold reconstruction in arbitrary dimensions using witness
  complexes.
\newblock {\em Discrete {\&} Computational Geometry 42}, 1 (2009), 37--70.



\bibitem{Boucheron2005}
{ Boucheron, S., Bousquet, O. and Lugosi, G.},
 {Theory of classification : a survey of some recent advances},
 {\em ESAIM: Probability and Statistics}, 9 (2005),  {323--375}.




\bibitem{BN} M. Belkin, P. Niyogi, \emph{Laplacian eigenmaps and spectral techniques
for embedding and clustering}, Adv. in Neural Inform. Process. Systems, {\bf 14}   (2001), 586--691.

\bibitem{BN2} 
M. Belkin, P. Niyogi, {\it Semi-Supervised Learning on Riemannian
Manifolds}, Machine Learning, {\bf 56} (2004), 209--239.


\bibitem{BN1} M. Belkin, P. Niyogi, \emph{Convergence of Laplacian eigenmaps},  Adv.
in Neural Inform. Process. Systems {\bf 19}   (2007), 129--136.

\bibitem{BerNik}
V. Berestovskij, I. Nikolaev, 
{\em Multidimensional generalized Riemannian spaces},
In: Geometry IV, Encyclopaedia Math. Sci. {\bf 70}, Springer, 1993, pp. 165--243.

\bibitem{Beretta}
E.  Beretta, M. de Hoop, L. Qiu,
{\it Lipschitz Stability of an Inverse Boundary Value Problem for a Schr\"odinger-Type Equation}
SIAM J. Math. Anal. {\bf 45} (2012), 679-699.

\bibitem{BMP} E. Bierstone, P. Milman, W. Paulucki, \emph{Differentiable
functions defined on closed sets. A problem of Whitney}, Invent. Math., {\bf 151}
(2003), 329--352.

\bibitem{witness}
{J.\ Boissonnat, L.\, Guibas,  S.\ Oudot,}
{\it  Manifold reconstruction in arbitrary dimensions using witness
  complexes},
\newblock { Discrete $\&$ Computational Geometry 42\/} (2009), 37--70.

\bibitem{Borcea1}
L. Borcea, V. Druskin,
 F. Guevara Vasquez, {\it Electrical impedance tomography with resistor networks}
Inverse Problems {\bf 24} (2008), 035013. 

\bibitem{Borcea2}
L. Borcea, V. Druskin, L. Knizhnerman, 
{\it On the continuum limit of a discrete inverse spectral problem on optimal finite difference grids},
Comm.  Pure Appl. Math. {\bf 58} (2005), 1231-1279.

%

\bibitem{mc}
{M. Brand,}
\newblock {\it Charting a manifold,}
\newblock {\em NIPS 15} (2002), 985--992.

\bibitem{BrHa}
M. Bridson, A. Haefliger, 
{\em Metric spaces of non-positive curvature},
Springer-Verlag, 1999.

\bibitem{Brom} S. Bromberg, {\it An extension in the class $C^1$,} Bol. Soc. Mat. Mex. II, Ser. {\bf 27}, (1982),
35--44.

\bibitem{Br1} Y. Brudnyi, \emph{On an extension theorem,} Funk. Anal. i Prilzhen. 4 (1970), 97--98; English
transl. in Func. Anal. Appl. {\bf 4} (1970), 252--253.
\bibitem{Br2}
 Y. Brudnyi, P. Shvartsman, \emph{The traces of differentiable functions to closed subsets of
$\R^n$,}  in Function Spaces (1989), Teubner-Texte Math. {\bf 120}, 206--210.
\bibitem{Br3}
 Y. Brudnyi, P. Shvartsman, {\it A linear extension operator for a space of smooth functions
defined on closed subsets of $\R^n$,} Dokl. Akad. Nauk SSSR {\bf 280} (1985), 268--270.
English transl. in Soviet Math. Dokl. {\bf 31}, No. 1 (1985), 48--51.
\bibitem{Br4}
 Y. Brudnyi, P. Shvartsman, {\it Generalizations of Whitney's extension theorem,} Int.
Math. Research Notices {\bf 3} (1994), 129--139.


\bibitem{BS1} Y. Brudnyi, P. Shvartsman, \emph{The traces of differentiable
functions to closed subsets of $\R^n$}, Dokl. Akad. Nauk SSSR {\bf 289} (1985),
268--270.

\bibitem{BS2} Y. Brudnyi, P. Shvartsman, \emph{The Whitney problem of
existence of a linear extension operator}, J. Geom. Anal. {\bf 7}(1997), 515--574.


\bibitem{Br5}
 Y. Brudnyi, P. Shvartsman, {\it Whitney's extension problem for multivariate $C^{1,\omega}$ functions,}
Trans. Amer. Math. Soc. {\bf 353} No. 6 (2001), 2487--2512.






\bibitem{Bernstien} 
M. Bernstien, V. de Silva, J. Langford, J. Tenenbaum, {\it Graph approximations to geodesics on embedded manifolds}. Technical Report,
 Stanford University,  2000. 


\bibitem{BIK} D. Burago, S. Ivanov, Y. Kurylev, \emph{A graph discretisation of the
Laplace-Beltrami operator}, 
J. Spectr. Theory {\bf 4} (2014), 675--714.



\bibitem{chen2015}
{ Chen, Y.-C., Genovese, C.~R., and Wasserman, L.}
\newblock Asymptotic theory for density ridges.
\newblock {\em Ann. Statist. 43}, 5 (10 2015), 1896--1928.

\bibitem{Cheng}
{ Cheng, S., Dey, T.~K., and Ramos, E.~A.}
\newblock Manifold reconstruction from point samples.
\newblock In {\em Proceedings of the Sixteenth Annual {ACM-SIAM} Symposium on
  Discrete Algorithms, {SODA} 2005, Vancouver, British Columbia, Canada,
  January 23-25, 2005\/} (2005), pp.~1018--1027.


\bibitem{CB} D. Chigirev and W. Bialek, \emph{Optimal Manifold Representation of Data:
An Information Theoretic Approach}. In: Advances in Neural Information Processing Systems {\bf 16,}
 Ed. S.\ Thrun et al, The MIT press, 2004, pp. 164--168.


\bibitem{diffusion1}
{R.\ Coifman, et al.}
\newblock {\it Geometric diffusions as a tool for harmonic analysis and structure
  definition of data } {P}art {II}: Multiscale methods.
 { Proc. of Nat. Acad. Sci.} {\bf 102} (2005), 7432--7438.


  \bibitem{diffusion2}
{R.\ Coifman, et al.}
R. R. Coifman, S. Lafon, A. B. Lee, M. Maggioni, B. Nadler, F. Warner, and S. W. Zucker
\newblock {Geometric diffusions as a tool for harmonic analysis and structure
  definition of data } {P}art {II}: Multiscale methods.
 {\it Proc. of Nat. Acad. Sci.} {102} (2005), 7432--7438.
 
  \bibitem{CoifmanLafon}
R. Coifman,  S. Lafon, Diffusion maps. {\it Appl.  Comp. Harm. Anal.} 21  (2006), 5-30.
 

\bibitem{mds}
{T. Cox, M.\ and Cox}
\newblock {\em Multidimensional Scaling.}
\newblock Chapman $\&$ Hall, London, (1994).

\bibitem{Dasgupta2}
{S.\ Dasgupta, Y.\ Freund,}
\newblock {\it Random projection trees and low dimensional manifolds.}
\newblock In { Proc. the 40th ACM symposium on Theory of
  computing} (2008), STOC '08~537--546.

\bibitem{Divol} {V. Divol}
\newblock Reconstructing measures on manifolds: an optimal transport approach. {\it Arxiv eprints} page arXiv:2102.07595

\bibitem{donoho}
{D.\ Donoho, D.\ Grimes,}
\newblock {\it Hessian eigenmaps: Locally linear embedding techniques for
  high-dimensional data,}
\newblock { Proceedings of the National Academy of Sciences, {\bf 100}}, 
   5591--5596.


\bibitem{Donoho1}
D. Donoho, C. Grimes,
When does geodesic distance recover the true hidden parametrization of families of articulated images?
{\it Proceedings of ESANN 2002}, Bruges, Belgium,  2002.



   
\bibitem{Donoho}
D. Donoho, C. Grimes,  Image Manifolds which are Isometric to Euclidean Space. {\it J.\ Math.\ Im.\ Vis} 23 (2005), 5.



\bibitem{Fan-Truong}
{ Fan, J., and Truong, Y.~K.}
\newblock Nonparametric regression with errors in variables.
\newblock {\em Ann. Statist. 21}, 4 (12 1993), 1900--1925.

\bibitem{federer_paper}
{ Federer, H.}
\newblock Curvature measures.
\newblock {\em Transactions of the American Mathematical Society 93\/} (1959).

\bibitem{federer_book}
{ Federer, H.}
\newblock {\em Geometric measure theory}.
\newblock Springer, 2014.


\bibitem{F1} Ch. Fefferman, \emph{A sharp form of Whitney's extension theorem},
Ann. of Math. {\bf 161} (2005), 509--577.

\bibitem{F2} Ch. Fefferman, \emph{Whitney's extension problem for $C^m$},
Ann. of Math. {\bf 164} (2006), 313--359.

\bibitem{F3} Ch. Fefferman, \emph{$C^m$-extension by linear operators},
Ann. of Math. {\bf 166} (2007), 779--835.



\bibitem{F5}
Ch. Fefferman, {\it A generalized sharp Whitney theorem for jets, Rev. Mat. Iberoam.} {\bf 21,} No. 2 (2005), 577--688.

\bibitem{F6}
Ch. Fefferman, Extension of $ C^{m,\omega}$  {\it smooth functions by linear operators,} Rev. Mat. Iberoam. {\bf 25,} No. 1 (2009), 1--48.


\bibitem{FK1} Ch. Fefferman, B. Klartag, \emph{Fitting $C^m$-smooth function to
data I}, Ann. of Math, {\bf 169} (2009), 315--346.


\bibitem{FK2} Ch. Fefferman, B. Klartag, \emph{Fitting $C^m$-smooth function to
data II}, Rev. Mat. Iberoam. {\bf 25} (2009), 49--273.


\bibitem{putative}
{ Fefferman, C., Ivanov, S., Kurylev, Y., Lassas, M., and Narayanan, H.}
\newblock Fitting a putative manifold to noisy data.
\newblock In {\em Proceedings of the 31st Conference On Learning Theory\/}
  (06--09 Jul 2018), S.~Bubeck, V.~Perchet, and P.~Rigollet, Eds., vol.~75 of
  {\em Proceedings of Machine Learning Research}, PMLR, pp.~688--720.

\bibitem{FIKLN}
{ Fefferman, C., Ivanov, S., Kurylev, Y., Lassas, M., and Narayanan, H.}
\newblock Reconstruction and interpolation of manifolds I: The geometric
  whitney problem.
\newblock {\em Foundations of
  Computational Mathematics\/}. Preprint arXiv:1508.00674.

\bibitem{FIKLN2}
{ Fefferman,  C., Ivanov, S., Lassas, M.,  and Narayanan, H.}
\newblock Reconstruction of a Riemannian Manifold from Noisy Intrinsic Distances
\newblock SIAM Journal on Mathematics of Data Science 2020 2:3, 770-808 

\bibitem{FMN}
{ Fefferman, C., Mitter, S., and Narayanan, H.}
\newblock Testing the manifold hypothesis.
\newblock {\em Journal of the American Mathematical Society 29}, 4 (2016),
  983--1049.

\bibitem{Fu88}
K. Fukaya,
{\em A boundary of the set of the Riemannian manifolds with bounded curvatures and diameters}
J. Differential Geom. {\bf 28} (1988), No. 1, 1--21. 

\bibitem{Wasserman}
{ Genovese, C.~R., Perone-Pacifico, M., Verdinelli, I., and Wasserman, L.}
\newblock Manifold estimation and singular deconvolution under hausdorff loss.
\newblock {\em Annals of Statistics 40}, 2 (2012).

\bibitem{Genovese:2012:MME:2188385.2343687}
{ Genovese, C.~R., Perone-Pacifico, M., Verdinelli, I., and Wasserman, L.}
\newblock Minimax manifold estimation.
\newblock {\em J. Mach. Learn. Res. 13\/} (May 2012), 1263--1291.

\bibitem{ridge}
{ Genovese, C.~R., Perone-Pacifico, M., Verdinelli, I., and Wasserman, L.}
\newblock Nonparametric ridge estimation.
\newblock {\em Ann. Statist. 42}, 4 (2014), 1511--1545.

\bibitem{G} G. Glaeser, \emph{Etudes de quelques algebres Tayloriennes},
J. d'Analyse {\bf 6} (1958), 1--124.

\bibitem{Gr} M. Gromov with appendices by M. Katz, P. Pansu, and S. Semmes,
 {\it Metric Structures for Riemannian and Non-Riemanian
Spaces.} Birkhauser (1999).


\bibitem{Hein}
{ Hein, M.,  Maier, M. }
\newblock Manifold denoising. 
\newblock {\em In Advances in neural information processing systems} (pp. 561-568).


\bibitem{PCA2}
{H.\ Hotelling,}
\newblock Analysis of a complex of statistical variables into principal
  components.
\newblock {\em Journal of Educational Psychology}  {\bf 24} (1933), 417--441,
  498--520.

\bibitem{Iversen}
E. Iversen, M. Tygel, B. Ursin, and M. V. de Hoop, {\it Kinematic
time migration and demigration of re
ections in pre-stack seismic data},
Geophys. J. Int. {\bf 189} (2012), 1635--1666.

\bibitem{heat}
{P.\ Jones, M., Maggioni, R.\ Schul,}
\newblock {\it Universal local parametrizations via heat kernels and eigenfunctions
  of the laplacian,}
\newblock { Ann. Acad. Scient. Fen. {\bf 35}} (2010), 1--44.

%




\bibitem{KKL} 
A. Katchalov, Y. Kurylev, M. Lassas:
{\it Inverse Boundary Spectral Problems}, Monographs and Surveys
in Pure and Applied Mathematics {\bf 123}, CRC-press, 2001,
xi+290 pp.


\bibitem{KatKL} A. Katsuda, Y. Kurylev, M. Lassas, \emph{Stability and Reconstruction in
Gel'fand Inverse Boundary Spectral Problem}, in: New analytic and geometric
methods in inverse problems. (Ed. K. Bingham, Y. Kurylev, and E.
Somersalo), 309--320, Springer-Verlag, 2003.



\bibitem{kim2015}
{ Kim, A. K.~H., and Zhou, H.~H.}
\newblock Tight minimax rates for manifold estimation under hausdorff loss.
\newblock {\em Electron. J. Statist. 9}, 1 (2015), 1562--1582.


\bibitem{Meila}  D. Perraul-Joncas, M. Meila,
Non-linear dimensionality reduction: Riemannian metric estimation and the problem of geometric discovery, arXiv:1305-7255, 2013.





\bibitem{Kress}
R. Kress,
\emph{Numerical analysis}.  Springer-Verlag, 1998. xii+326 pp.



\bibitem{LU} M.\ Lassas,  G.\
Uhlmann, {\it Determining Riemannian manifold
from boundary measurements,}
{Ann. Sci.
\'Ecole Norm. Sup.} {\bf 34} (2001),
771--787.


\bibitem{LeU} J.\ Lee, G.\ Uhlmann, {\it Determining
anisotropic real-analytic conductivities by boundary measurements,}
{Comm. Pure Appl. Math.} {\bf 42} (1989), 1097--1112.



\bibitem{MCT} L. Ma, M.\ Crawford, J.\ W.\ Tian,  \emph{Generalised supervised local
tangent space alignment for hyperspectral image classification}, Electronics Letters {\bf 46} (2010), 497.

\bibitem
{MS}
J. Mueller, S. Siltanen, {\it Linear and nonlinear inverse problems with practical applications}. SIAM, Philadelphia, 2012. xiv+351 pp.





\bibitem{Nash1}
J. Nash, {\it $C^1$-isometric imbeddings}, Ann. of Math. {\bf 60} (1954), 383--396.

\bibitem{Nash2}
J. Nash,  {\it The imbedding problem for Riemannian manifolds}, Ann. of Math. {\bf 63} (1956), 20--63.



\bibitem{Narasimhan}
{ Narasimhan, R.}
\newblock {\em Lectures on Topics in Analysis}.
\newblock Tata Institute of Fundamental Research, Bombay, 1965.

%



\bibitem{Ozertem11}
{ Ozertem, U., and Erdogmus, D.}
\newblock Locally defined principal curves and surfaces.
\newblock {\em Journal of Machine Learning Research 12\/} (2011), 1249--1286.


\bibitem{PSU}
G.\ Paternain, M.\ Salo, G.\ Uhlmann 
{\it Tensor Tomography on Simple Surfaces,}
Inventiones Math. {\bf 193} (2013), 229--247. 

\bibitem{PU}
L.\ Pestov, G.\ Uhlmann,
{\it Two Dimensional Compact Simple Riemannian manifolds are Boundary Distance Rigid,}
Ann. of Math. {\bf 161} (2005), 1089--1106.

\bibitem{PCA1}
{K.\ Pearson,}
\newblock {\it On lines and planes of closest fit to systems of points in space,}
\newblock {Philosophical Magazine {\bf 2}} (1901), 559--572.


\bibitem{Peters}
S. Peters,
{\it Cheeger's finiteness theorem for diffeomorphism classes of Riemannian manifolds},
J. Reine Angew. Math. {\bf 349} (1984), 77--82. 

\bibitem{Pe}
P. Petersen, {\it Riemannian geometry}. 2nd Ed. 
Springer, (2006), xvi+401 pp.



\bibitem{RBBK} G. Rosman, M. M. Bronstein, A. M. Bronstein, R. Kimmel,
\emph{Nonlinear Dimensionality Reduction by Topologically Constrained Isometric Embedding}, International Journal of Computer Vision, {\bf  89} (2010), 56--68.


\bibitem{RS} S. Roweis, L. Saul, \emph{Nonlinear dimensionality reduction by locally
linear embedding}, Science, {\bf 290} (2000), 2323--326.

\bibitem{llc}
{S.\ Roweis, L.\ Saul, G.\ Hinton,}
\newblock {\it Global coordination of local linear models,}
\newblock { Advances in Neural Information Processing Systems {\bf 14}} (2001)
  889--896.


\bibitem{Newtons}
V. Ryaben'kii, S. Tsynkov, {\em A Theoretical Introduction to Numerical Analysis}, CRC Press,
2006, 537 pp. 

\bibitem{Sakai}
T. Sakai, {\it Riemannian geometry.} AMS, (1996), xiv+358 pp.


\bibitem{kernel}
J.\ Shawe-Taylor, N.\ Christianini,
\newblock {\em Kernel Methods for Pattern Analysis,}
\newblock Cambridge University Press, (2004).




\bibitem{Shv1}
P. Shvartsman, {\it Lipschitz selections of multivalued mappings and traces of the Zygmund
class of functions to an arbitrary compact,} Dokl. Acad. Nauk SSSR {\bf 276} (1984), 559--562; English transl. in Soviet Math. Dokl. {\bf 29} (1984), 565--568.


\bibitem{Shv2}
P. Shvartsman, {\it On traces of functions of Zygmund classes,} Sibirskyi Mathem. J. {\bf 28} (1987), 203--215; English transl. in Siberian Math. J. {\bf 28} (1987), 853--863.


\bibitem{Shv3}
P. Shvartsman, {\it Lipschitz selections of set-valued functions and Helly's theorem,} J. Geom. Anal. {\bf 12}  (2002), 289--324.


%

\bibitem{Sober}
{ Sober B. and Levin, D.},
	{Manifold Approximation by Moving Least-Squares Projection (MMLS)},
{\em Constructive Approximation},
	{2019}





\bibitem{Tao2011AnEO}
{ Tao, T.}
\newblock An epsilon of room, ii: pages from year three of a mathematical blog.


\bibitem{TSL} J.\ Tenenbaum, V. de Silva,  J.\ Langford, \emph{A global geometric framework for nonlinear dimensionality reduction}, Science,  {\bf 290}  5500 (2000),  2319--2323.



 \bibitem{Verma1}	 N. Verma,
Distance preserving embeddings for general n-dimensional manifolds.
(aka An algorithmic realization of Nash's embedding theorem),
\emph{Journal of Machine Learning Research} 23 (2012) 32.1--32.28.
 
\bibitem{Vaidya}
{ Vaidya, P.~M.}
\newblock A new algorithm for minimizing convex functions over convex sets.
\newblock {\em Mathematical Programming 73}, 3 (1996), 291--341.

\bibitem{vershynin_2018}
{ Vershynin, R.}
\newblock {\em High-Dimensional Probability: An Introduction with Applications
  in Data Science}.
\newblock Cambridge Series in Statistical and Probabilistic Mathematics.
  Cambridge University Press, 2018.


\bibitem{MaximumVariance}
{K.\ Weinberger, L.\ Saul,}
\newblock {\it Unsupervised learning of image manifolds by semidefinite programming,}
\newblock { Int. J. Comput. Vision {\bf 70}}, 1 (2006), 77--90.

\bibitem{W1} H. Whitney, {\it Analytic extensions of differentiable
functions defined on closed  sets}, Trans. Amer. Math. Soc., {\bf 36} (1934), 63--89.

\bibitem{W11} H. Whitney, {\it Differentiable functions defined in closed sets I,} Trans. Amer. Math. Soc.
{\bf 36} (1934), 369--389.

\bibitem{W12} H. Whitney, { \it Functions differentiable on the boundaries of regions,} Ann. of Math. {\bf 35}
(1934), 482--485.


\bibitem{W2}
H. Whitney, {\it Differentiable manifolds}, Ann. of Math. {\bf 37} (1936), 645--680.


\bibitem{W3}
H. Whitney, J. Eells, D. Toledo, eds., \emph{The collected papers of Hassler Whitney}. Volumes I-II., Contemporary Mathematicians,  Birkh\"auser, (1992).

\bibitem{zhigang}
Zhigang Yao and Yuqing Xia, 
{\it Manifold Fitting under Unbounded Noise,}
https://arxiv.org/abs/1909.10228


   \bibitem{Zha}
H. Zha and Z. Zhang,
Continuum Isomap for manifold learnings.
{Comp. Stat. Data Anal.} 52 (2007), 184-200.


\bibitem{ZZ} Z. Zhang and H. Zha, \emph{Principal manifolds and nonlinear dimension
reduction via local tangent space alignment}, SIAM J. Sci. Computing,
{\bf 26} (2005), 313--338.

\bibitem{ZO1}
N. Zobin, {\it Whitney's problem on extendability of functions and an intrinsic metric}, Advances in Math. {\bf 133} (1998), 96--132.

\bibitem{ZO2}
N. Zobin, {\it Extension of smooth functions from finitely connected planar domains}, J.
Geom. Anal. {\bf 9}  (1999), 489--509.


\end{thebibliography}

\appendix
\section{Some basic lemmas}

\begin{lemma} \label{cl:g1sept}
Suppose that $\MM \in \G(d, m, V, \tau)$. Let \beqs U:= \{y\in \R^m\big||y-\Pi_xy| \leq \tau/4\} \cap  \{ y\in \R^m\big||x-\Pi_xy| \leq \tau/4\}.\eeqs 
Then, $$\Pi_x(U \cap \MM) = \Pi_x(U).$$
\end{lemma}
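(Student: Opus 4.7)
The inclusion $\Pi_x(U\cap\MM)\subseteq\Pi_x(U)=\overline{B}_{Tan(x)}(x,\tau/4)$ is immediate, so the content is the reverse: for each $u\in Tan(x)$ with $|u-x|\le\tau/4$, I must produce $y\in\MM$ with $\Pi_xy=u$ and $|y-u|\le\tau/4$. My plan is a connectedness argument applied to the open analogue $\Omega:=\{y\in\MM:|y-\Pi_xy|<\tau/4,\ |x-\Pi_xy|<\tau/4\}$: I will show $\Pi_x(\Omega)$ is nonempty, open in $Tan(x)$, and relatively closed in the open ball $B_{Tan(x)}(x,\tau/4)$; connectedness then forces $\Pi_x(\Omega)=B_{Tan(x)}(x,\tau/4)$, and a short compactness/limit step recovers the boundary sphere $\{u:|u-x|=\tau/4\}$.

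Nonemptyness is immediate since $x\in\Omega$ and $\Pi_xx=x$. For openness at $y\in\Omega$, I will use that $|y-x|\le|y-\Pi_xy|+|\Pi_xy-x|<\tau/2$ combined with the classical tangent-space variation estimate $\sin\angle(T_y\MM,T_x\MM)\le|y-x|/\tau<1/2$ that follows from $\reach(\MM)\ge\tau$ (Federer~\cite{federer_paper}; also derivable from Proposition~\ref{thm:federer}). This makes the orthogonal projection $T_y\MM\to T_x\MM$ a linear bijection, so the inverse function theorem realizes $\Pi_x|_\MM$ as a local diffeomorphism at $y$; since $\Omega$ is cut out by strict inequalities on continuous quantities, the domain of this local diffeomorphism can be shrunk to lie inside $\Omega$, placing a neighborhood of $\Pi_xy$ into $\Pi_x(\Omega)$.

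For relative closedness, I will take a sequence $u_n=\Pi_xy_n\to u^*\in B_{Tan(x)}(x,\tau/4)$ with $y_n\in\Omega$. The $y_n$ lie in a bounded subset of $\MM$, so compactness yields a subsequential limit $y^*\in\MM$ with $\Pi_xy^*=u^*$. Because $|y^*-x|\le\tau/2<\tau$, Corollary~\ref{cor:1} applies and gives $|y^*-\Pi_xy^*|\le|u^*-x|^2/\tau<\tau/16<\tau/4$; combined with $|x-\Pi_xy^*|=|u^*-x|<\tau/4$, this shows $y^*\in\Omega$ and hence $u^*\in\Pi_x(\Omega)$. Boundary points of $\Pi_x(U)$ are then handled by the same limit argument but with weak inequalities surviving in the limit, producing a point of $U\cap\MM$ that projects to the given boundary point.

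The main obstacle, and the only input I am drawing from outside the excerpt, is the angle bound $\sin\angle(T_a\MM,T_b\MM)\le|a-b|/\tau$ for $a,b\in\MM$ with $|a-b|<\tau$; this is standard for sets of positive reach and can also be derived inside the present framework by fixing a direction $v\in T_b\MM$, moving $b$ along a short tangential curve in $\MM$, and applying Corollary~\ref{cor:1}. Once this bound is in hand, every remaining step is a routine use of the inverse function theorem, compactness of $\MM$, and the quadratic flatness estimate from Corollary~\ref{cor:1}.
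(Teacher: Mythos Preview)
Your proof is correct and takes a genuinely different route from the paper. You run the standard clopen argument: $\Pi_x(\Omega)$ is nonempty, open (via the tangent-angle bound plus the inverse function theorem), and relatively closed in the open $\tau/4$-disc of $Tan(x)$ (via compactness and Corollary~\ref{cor:1}), hence equals that disc by connectedness; the boundary sphere then follows by a limit. The paper instead argues by contradiction: it takes the closest point $x_1$ in the closure of the uncovered set, picks a preimage $y_1\in\MM$, observes that the failure of a local graph over a neighborhood of $x_1$ forces $Tan(y_1)$ to be orthogonal to some direction $v$ parallel to $Tan(x)$, and then applies Federer's criterion at $y_1$ to a second preimage $y_2$ (projecting to $\pm|x_1|v$) to obtain a numerical contradiction on $|x_1|$. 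Your route is more conceptual and reusable; the paper's is self-contained in that it does not import the angle bound $\sin\angle(T_y\MM,T_x\MM)\le|y-x|/\tau$ but effectively proves the single instance it needs inside the contradiction. One caution: your sketch for deriving that angle bound ``by moving $b$ along a short tangential curve and applying Corollary~\ref{cor:1}'' is optimistic---a quadratic bound on positions does not by itself yield a first-order bound on tangent directions without an additional integration or Gronwall-type step---but since the bound is standard for sets of positive reach, this does not affect the validity of your argument.
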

\begin{proof}
Without loss of generality, we will assume $\tau/2 = 1$, and $x = 0$, and $Tan(x) = \R^d$.
Let $\NN = U \cap \MM$. We will first show that $\Pi_0(\NN) = B_d$, where $B_d$ is the closed unit ball in $\R^d.$ Suppose otherwise, then let $\emptyset \neq Y := B_d \setminus \Pi_0(\NN)$. Note that $\NN$ is closed and bounded and is therefore compact. The image of a compact set under a continuous map is compact, therefore $\Pi_0(\NN)$ is compact. Therefore $\R^d \setminus \Pi_0(\NN)$ is open. Let $x_1$ be a point of minimal distance from $0 = \Pi_0(0) \subseteq \Pi_0(\NN)$ among all points in the closure $Z$ of $B_d \setminus \Pi_0(\NN)$. In order to prove this lemma, it suffices to show  \beq\label{eq:38-11am-May24-2019} |x_1|\geq \frac{1}{2}.\eeq Since $Y \neq \emptyset$ and  $B_d \setminus \Pi_0(\NN)$ is open relative to $B_d$, \beq \label{eq:38-May24-2019} |x_1| < 1.\eeq Since $Tan(0)= \RR^d$ and $\MM$ is a closed  imbedded $C^2-$submanifold, $0$ does not belong to $Z$. Therefore $x_1 \neq 0$. By Federer's criterion for the reach, (\ie Corollary~\ref{cor:1}) $\forall y_1 \in \Pi_0^{-1}(x_1) \cap \NN$, \beq dist(y_1, Tan(0)) \leq \frac{\|y_1\|^2}{4}.\eeq
Therefore, $\forall y_1 \in \Pi_0^{-1}(x_1) \cap \NN$, \beq dist(y_1, \R^d) \leq \frac{dist(y_1, \R^d)^2 + |x_1|^2}{4} .\eeq
Noting that $2 \geq 1 \geq dist(y_1, \R^d)$ and solving the above quadratic inequality, we see that \beq |y_1 - x_1|/2 & \leq & 1 - \sqrt{ 1 - \left(\frac{|x_1|}{2}\right)^2} \leq   \left(\frac{|x_1|}{2}\right)^2.\eeq

This implies that \beq\label{eq:54} |y_1 - x_1| \leq \frac{1}{8} < \frac{\tau}{8}. \eeq
Again by Federer's criterion, for any $z \in \Pi_0^{-1} (|x_1| B_d) \cap \NN$, 
\beq |z -\Pi_{y_1}(z)|/2 & \leq & 1 - \sqrt{1 - \left(\frac{|y_1 - \Pi_{y_1} z|}{2}\right)^2} \leq  \left(\frac{|y_1 - \Pi_{y_1} z|}{2}\right)^2. \eeq

By (\ref{eq:38-May24-2019}),
there exists no neighborhood $ V \subseteq \R^m$ of $y_1$, such that there exists an open set $U_0 \subseteq \R^d$ containing $x_1$ and a $\C^2$ function $F: U_0 \ra \R^{m-d}$ with $DF(u)$ of rank $d$ for all $u \in U_0$ such that 
\beq\label{eq:ast2.new} \NN \cap V = \{(u, F(u))\big| u \in U \cap \R^d\}. \eeq

 Therefore, we have the following.

\begin{claim}\label{cl:1} Let $y_1 \in \Pi_0^{-1}(x_1) \cap \NN$. Then there exists $v \in \partial B_d$ such that  if $y_1' \in Tan(y_1)$ then $\langle y_1'-y_1, v\rangle = 0$.
\end{claim}
The cause behind the preceding claim is that the only way $\NN$ is locally not the graph of a function over a neighborhood contained in the interior of $B_d$, is if such a vector $v$ exists.

Let $\ell = \{\la v| \la \in \R\}$ and let $\Pi_\ell$ denote the orthogonal projection on to $\ell$. Then, $$\Pi_\ell(|x_1| B_d) = \{\la v | \la \in [-|x_1|, |x_1|]\}.$$ By Claim~\ref{cl:1}, $\Pi_\ell(Tan(y_1))$ is the single point $\Pi_\ell(y_1)$. Let $\Pi_\ell(y_1) = \la_0 v$. Let $x_2 = |x_1| v$ if $\la_0 \leq 0$ and $x_2 = -|x_1|v$ if $\la_0 > 0$. Let $y_2 \in \Pi_0^{-1}(x_2) \cap \NN$. Note that $(y_2-x_2)$ and   $(x_1 - y_1)$ are both orthogonal to $(x_2 - x_1)$, which will be used to obtain (\ref{eq:51-May24-2019}) below.
Then,
\beq  |x_1| & \leq & |\Pi_\ell(y_1) - x_2|\\
 & \leq &  dist(y_2, Tan(y_1))\\
                            & \leq &  \frac{|y_2 - y_1|^2}{4}\\
                             & \leq & \frac{2|y_2 - x_2|^2 + |x_1 - x_2|^2 +2 |y_1 - x_1|^2}{4} \label{eq:51-May24-2019}\\
                            & \leq & \frac{2\left(\frac{|x_2|^2}{2}\right)^2 + 4 |x_1|^2 + 2\left(\frac{|x_1|^2}{2}\right)^2}{4}.\eeq

Therefore, $$\a := |x_1| \leq |x_1|^4/4 + |x_1|^2.$$
Therefore, $1 \leq  \a^3/4 + \a$. This implies that $|x_1|  >  \frac{1}{2},$ which proves Lemma~\ref{cl:g1sept} (see (\ref{eq:38-11am-May24-2019})).

\end{proof}

\begin{figure}\label{fig:H}
\centering
\includegraphics[scale=0.60]{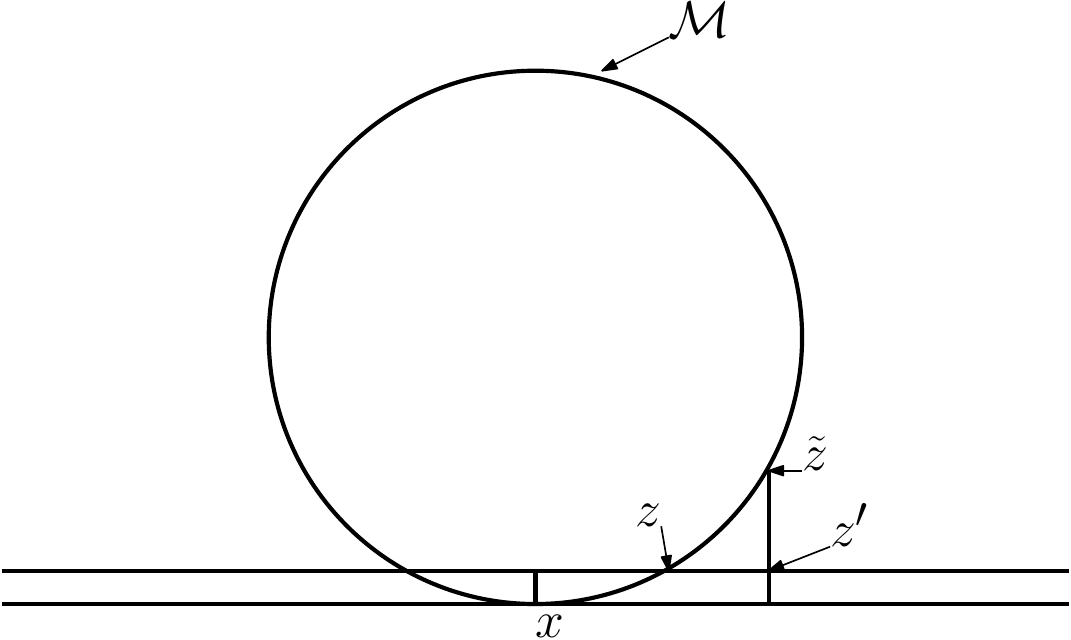}
\end{figure}
\begin{lemma}\label{lem:6}
Suppose that $\MM \in \G(d, m, V, \tau)$. Let $x \in \mathcal \MM$ and \beqs \widehat U:= \{y\in \R^m\big||y-\Pi_xy| \leq \tau/8\} \cap  \{y \in \R^m\big||x-\Pi_xy| \leq \tau/8\}.\eeqs 
There exists a $C^{2}$ function $F_{x, \widehat U}$ from $\Pi_x( \widehat U)$ to $\Pi_x^{-1}(\Pi_x(0))$ such that
\beqs  \{ y + F_{x, \widehat U}(y) \big | y \in \Pi_x(\widehat U)\} = \MM \cap \widehat U.\eeqs 
Secondly,  for $ \de \leq \tau/8$,
 let $z \in \MM \cap \widehat U$ satisfy $|\Pi_x(z) - x| = \de.$ Let $z$ be taken to be the origin and let the span of the first $d$ canonical basis vectors be denoted $\R^d$ and let $\R^d$ be a translate of $Tan(x)$. Let the span of the last $m-d$ canonical basis vectors be denoted $\R^{m-d}$. In this coordinate frame, let a point $z' \in\R^m$ be represented as  $(z'_1, z'_2)$, where $z'_1 \in \R^d$ and $z'_2 \in \R^{m-d}$. 
 By Lemma~\ref{cl:g1sept}, there exists an $(m-d) \times d$ matrix $A_z$ such that  \beq \label{eq:36.1} Tan(z) = \{(z'_1, z'_2)| A_z z'_1 - I z'_2 = 0\}\eeq where the identity matrix is $(m-d) \times (m-d)$. Let $z\in \MM \cap \{z\big||z-\Pi_xz| \leq \de\} \cap \{z\big||x-\Pi_xz| \leq \de\}$. Then $\|A_z\|_2 \leq 15 \de/\tau.$ Lastly, the following upper bound  on the second derivative of $F_{x, \widehat U}$ holds for  $y \in \Pi_x( \widehat U)$. $$\forall_{v \in \R^d} \forall_{w \in \R^{m-d}}\ \ \langle \partial_v^2 F_{x, \widehat U}(y), w\rangle \leq 
\frac{C|v|^2|w|}{\tau}.$$
\end{lemma}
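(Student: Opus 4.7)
The plan is to deduce the three parts from Federer's reach criterion (Corollary~\ref{cor:1}) together with the surjectivity statement of the preceding lemma (Lemma~\ref{cl:g1sept}).

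For part 1, the representation of $\MM\cap \widehat U$ as a graph reduces to showing that $\Pi_x|_{\MM\cap\widehat U}$ is a bijection onto $\Pi_x(\widehat U)$; the $C^2$ regularity of the resulting $F_{x,\widehat U}$ then follows from the $C^2$ regularity of $\MM$ via the implicit function theorem applied to the projection, whose differential has rank $d$ by the uniform bound on $A_z$ from part 2. For surjectivity, given $u\in \Pi_x(\widehat U)$ (so $|u-x|\le\tau/8$), Lemma~\ref{cl:g1sept} applied with the larger $\tau/4$-neighborhood produces some $y\in\MM$ with $\Pi_x y=u$, and Corollary~\ref{cor:1} at $x$ gives $|y-\Pi_x y|\le |x-u|^2/\tau\le\tau/64$, placing $y$ in $\widehat U$. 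For injectivity, two distinct points $y_1,y_2\in\MM\cap\widehat U$ with a common projection would have $y_1-y_2\in Tan(x)^\perp$ and $|y_1-y_2|\le\tau/32$; combining the a priori bound $\|A_{y_1}\|_2\le 15/8$ from part 2 with the fact that $y_1-y_2$ is orthogonal to $Tan(x)$, a direct angle computation yields $dist(y_2,Tan(y_1))\ge c|y_1-y_2|$, which contradicts the Corollary~\ref{cor:1} bound $dist(y_2,Tan(y_1))\le |y_1-y_2|^2/\tau$.

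For part 2, set up the coordinates as in the statement so that $z=0$, $Tan(x)=x+\R^d$, and $x=(x_1,x_2)$ with $|x_1|=\delta$; Corollary~\ref{cor:1} at $x$ applied to $z$ gives $|x_2|\le\delta^2/\tau$. Let $a=\|A_z\|_2$, attained on a unit $v_0\in\R^d$, so that $u_0=(v_0,A_z v_0)/\sqrt{1+a^2}$ is a unit tangent vector to $\MM$ at $z$. Pick a $C^2$ curve $y_t\in\MM$ with $y_0=z$ and $y_0'=u_0$; a second application of Corollary~\ref{cor:1} at $z$ gives the quadratic-correction bound $|y_t-tu_0|\le Ct^2/\tau$. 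Applying Corollary~\ref{cor:1} at $x$ to $y_t$ now yields
\[
|y_{t,2}-x_2| \le |y_{t,1}-x_1|^2/\tau,
\]
and expanding both sides using $y_{t,1}=tv_0/\sqrt{1+a^2}+O(t^2/\tau)$ and $y_{t,2}=tA_z v_0/\sqrt{1+a^2}+O(t^2/\tau)$, choosing $t=\delta$, and combining with $|x_2|\le\delta^2/\tau$, a short one-line optimization delivers $a\le 15\delta/\tau$.

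For part 3, differentiating the graph relation $y+F_{x,\widehat U}(y)\in\MM$ identifies $DF_{x,\widehat U}(y)$ with the matrix $A_z$ of part 2 at $z=y+F_{x,\widehat U}(y)$; in particular $\|DF_{x,\widehat U}(y)\|\le 15|y-\Pi_x(0)|/\tau<2$ uniformly on $\Pi_x(\widehat U)$. For $y,y'\in\Pi_x(\widehat U)$ put $z=y+F_{x,\widehat U}(y)$, $z'=y'+F_{x,\widehat U}(y')$; Corollary~\ref{cor:1} at $z$ applied to $z'$ yields $dist(z',Tan(z))\le |z-z'|^2/\tau$. Resolving the distance to the graph $Tan(z)=\{(w,A_z w):w\in\R^d\}$ and using the first-derivative bound to control cross terms gives
\[
|F_{x,\widehat U}(y')-F_{x,\widehat U}(y)-DF_{x,\widehat U}(y)(y'-y)| \le C|y'-y|^2/\tau.
\]
This is precisely the quadratic Taylor remainder estimate needed: taking $y'=y+sv$ and extracting the $s^2$-coefficient as $s\to 0$ produces the bound $|\langle\partial_v^2 F_{x,\widehat U}(y),w\rangle|\le C|v|^2|w|/\tau$.

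The main obstacle is the self-referential character of part 2: the curvature correction $|y_t-tu_0|=O(t^2/\tau)$ depends on some a priori control of the second-order behavior of $\MM$ near $z$, which is essentially what $\|A_z\|$ on infinitesimal scales measures. We resolve this by first establishing a crude a priori estimate $\|A_z\|\le 2$ throughout $\widehat U$, which follows from the general Grassmannian comparison $\|A_z\|\lesssim |z-x|/\tau$ between tangent spaces at nearby points of a set of positive reach (itself a consequence of Corollary~\ref{cor:1} applied to a single point pair), and then running the refined optimization to extract the sharp linear-in-$\delta$ constant. With this in hand parts 1 and 3 are essentially mechanical consequences of Corollary~\ref{cor:1} and the implicit function theorem.
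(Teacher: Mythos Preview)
Your overall architecture matches the paper's: Federer's criterion (Corollary~\ref{cor:1}) plus the surjectivity of Lemma~\ref{cl:g1sept} drive all three parts, and your treatments of surjectivity, injectivity, and the second-derivative bound are essentially the paper's. The one substantive divergence is in how you obtain the bound $\|A_z\|_2\le 15\de/\tau$, and there your argument has a genuine gap.

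You write ``pick a $C^2$ curve $y_t\in\MM$ with $y_0=z$, $y_0'=u_0$; a second application of Corollary~\ref{cor:1} at $z$ gives $|y_t-tu_0|\le Ct^2/\tau$.'' But Corollary~\ref{cor:1} only controls $dist(y_t,Tan(z))$, i.e.\ the component of $y_t-tu_0$ \emph{normal} to $Tan(z)$; the tangential drift $|\Pi_z y_t - tu_0|$ depends on the parametrization of the curve and is not bounded by the reach for an arbitrary $C^2$ curve. The claim is salvageable if you take $y_t$ to be the point of $\MM$ whose $\Pi_z$-projection is exactly $tu_0$ (so the tangential drift vanishes by construction), but you do not say this, and once you do, the ``self-referential obstacle'' you flag disappears: the bound $|y_t-tu_0|\le t^2/\tau$ comes purely from reach, not from any a priori control of $\|A_z\|$. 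Relatedly, your a priori estimate $\|A_z\|\le 2$ does not follow from ``Corollary~\ref{cor:1} applied to a single point pair'' --- the tangent-plane comparison at positive reach is a separate (standard) fact requiring more than one point --- but in fact you do not need it: the inequality $a/\sqrt{1+a^2}\le C\de/\tau$ can be solved for $a$ directly.

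The paper sidesteps all of this. Instead of a curve, it takes for each $z'\in S^{d-1}_\de\subset\R^d$ the point $\tilde z\in\MM$ with $\Pi_x\tilde z=\Pi_x z'$ (supplied by Lemma~\ref{cl:g1sept}), bounds $|\tilde z-z'|$ via Corollary~\ref{cor:1} at $x$, bounds $dist(\tilde z,Tan(z))$ via Corollary~\ref{cor:1} at $z$, and combines these to get $\|(I+A_zA_z^T)^{-1/2}A_z\|\le \de'$ with an explicit $\de'\le 7\de/\tau$, then solves algebraically for $\|A_z\|$. This avoids curves, avoids any bootstrap, and uses only the already-established surjectivity over $Tan(x)$ rather than a local graph over $Tan(z)$.
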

\begin{proof}
We will first show that there exists a function $F_{x, \widehat U}$ that satisfies the given conditions and then show that it is $C^{2}$. Let $z \in \MM \cap \widehat U$ satisfy $|\Pi_x(z) - x| = \de.$ Let $z$ be taken to be the origin and let the span of the first $d$ canonical basis vectors be denoted $\R^d$ and let $\R^d$ be a translate of $Tan(x)$. Let the span of the last $m-d$ canonical basis vectors be denoted $\R^{m-d}$. In this coordinate frame, let a point $z' \in\R^m$ be represented as  $(z'_1, z'_2)$, where $z'_1 \in \R^d$ and $z'_2 \in \R^{m-d}$. By Lemma~\ref{cl:g1sept}, there exists a matrix $A$ such that  \beq \label{eq:36} Tan(z) = \{(z'_1, z'_2)| Az'_1 - I z'_2 = 0\}.\eeq
Further, a linear algebraic calculation shows that \beq \label{eq:57-May25} dist(z', Tan(z)) = \bigg|(I + A A^T)^{-1/2}(Az_1' - I z_2')\bigg|.\eeq Let $S^{d-1}_\de$ denote the $(d-1)$ dimensional sphere of radius $\de$ centered at the origin contained in $\R^d$. By Lemma~\ref{cl:g1sept},  for every $z' \in S^{d-1}_\de$ there is a point $\widetilde z \in \MM$, such that $\widetilde z \in U$, $\Pi_x \widetilde z = \Pi_x z'$ and 
\beq \left|\widetilde z - \Pi_x \widetilde z \right| \leq \frac{\left|x - \Pi_x \widetilde z \right|^2}{\tau} \leq \frac{4 \de ^2}{\tau}.\eeq The last inequality holds because $$\left|x - \Pi_x \widetilde z \right| \leq \left|x - \Pi_x z \right| + \left|\Pi_x z - \Pi_x \widetilde z\right| = 2\de.$$
Therefore, denoting $x$ by $(x_1, x_2)$, where $x_1 \in \R^d$ and $x_2 \in \R^{m-d}$, we have $|x_2| \leq \frac{|x_1|^2}{\tau} = \frac{\de^2}{\tau}$, and so \beq \left|\widetilde z - z'\right| = \left| \widetilde z - ((\Pi_x \widetilde z) -x_2)\right| \leq  \frac{4 \de ^2}{\tau} +  \frac{ \de ^2}{\tau} =  \frac{5 \de ^2}{\tau}. \eeq
Therefore, 
\beq dist(z', Tan(z)) & \leq & dist(\widetilde z, Tan(z)) +  \left|\widetilde z - z'\right| \\ & \leq & \frac{|z - \widetilde z|^2}{\tau} + \frac{5\de^2}{\tau}\\
& = & \frac{|z-z'|^2 + |z' - \widetilde z|^2}{\tau} + \frac{5\de^2}{\tau}\\ & \leq & \frac{\de^2 + (5\de^2/\tau)^2}{\tau} + \frac{5\de^2}{\tau}. \eeq
Therefore, for any $z_1' \in S^{d-1}_\de$, 
\beq \left|(I + A A^T)^{-1/2}(Az_1')\right| \leq \frac{\de^2}{\tau}\left(6 + \frac{25 \de^2}{\tau^2}\right)\leq \frac{\de^2}{\tau}\left(6 + \frac{64 \de^2}{\tau^2}\right). \eeq
Thus,
\beq\label{eq:65-May24} \left\|(I + A A^T)^{-1/2}A\right\|_2 \leq  \frac{\de}{\tau}\left(6 + \frac{64 \de^2}{\tau^2}\right) =:  \de'. \eeq
Therefore,  we see that
\beq  \left\|(I + A A^T)^{-1/2}AA^T (I + A A^T)^{-1/2}\right\|_2 \leq \de'^2.\eeq
Let $\|A\|_2 = \la$. We then see that $\la^2$ is an eigenvalue of $AA^T$. Therefore, 
$ \frac{\la^2}{1+\la^2} \leq \de'^2.$
This gives us $\la^2 \leq \frac{\de'^2}{1 - \de'^2 }$, which implies that
\beq\label{eq:la} \la \leq \frac{\de'}{\sqrt{1 - \de'^2}}. \eeq

We will use this to show that  $\Pi_x^{-1}(\Pi_x z) \cap \MM \cap U$ contains the single point $z$.
Suppose to the contrary, there is a point $\widehat z \neq z$ that also belongs to  $ \Pi_x^{-1}(\Pi_x z) \cap \MM \cap U$. Then, \beq dist(\widehat z, Tan(z)) \leq |\widehat z - z| \leq \frac{|\widehat z_2|^2}{\tau},\eeq where $|\widehat z_2| \leq |\Pi_x \widehat z| + |\Pi_x z| \leq 2 \de^2/\tau$. Thus, $$\frac{|\widehat z_2|}{\|I+ A A^T\|_2^{1/2}} \leq \frac{|\widehat z_2|^2 }{\tau}.$$ Therefore,
\beq 1 \leq |\widehat z_2|(1/\sqrt{(1 - \de'^2)})/\tau \leq \frac{2 \de^2}{\tau^2\sqrt{1 - \de'^2}}. \eeq
Hence, \beq 1-\de'^2 \leq \frac{2\de^2}{\tau^2},\eeq and so 
$ \de' \geq 1 - 2\de^2/\tau^2. $
Assuming $\de \leq \frac{\tau}{8}$ we infer from (\ref{eq:65-May24}) that 
\beq \label{eq:73-May24} \de' \leq \frac{7\de}{\tau}. \eeq

Therefore $$2 \de^2/\tau^2 + 7\de/\tau \geq 1.$$ This implies that $\de /\tau > 1/8$. This is a contradiction. This proves that $\Pi_x^{-1}(\Pi_x z) \cap \MM \cap U$ contains the single point $z$.
Further, (\ref{eq:la}) and (\ref{eq:73-May24}) together imply that $\|A_z\|_2 \leq \frac{56\de}{\sqrt{15}\tau} < \frac{15\de}{\tau}.$
Since $\MM$ is a $C^2$ submanifold of $\R^n$, by (\ref{eq:36}), $F_{x, \widehat U}$ is $C^2$. For $y\in \Pi_x (\widehat{U})$, we shall obtain the following upper bound  on the second derivative $$\forall_{v \in \R^d} \forall_{w \in \R^{n-d}} \ \ \langle \partial_v^2 F_{x, \widehat U}(y), w\rangle \leq 
\frac{C|v|^2|w|}{\tau}$$ below. Given $v \in \R^d$, let $z = y + F_{x, \widehat{U}}(y)$.

Let $\widetilde z = (y + \eps v, F_{x, \widehat{U}}(y+ \eps v))$. { Then,}
\beqs  & &\hspace{-2cm}\left|{F_{x, \widehat{U}}(y+ \eps v) - F_{x, \widehat{U}}(y) - \eps A_z v}\right|\eps^{-2} \\
& = & \left|(y + \eps v, F_{x, \widehat{U}}(y+ \eps v)) -  (y + \eps v, F_{x, \widehat{U}}(y) +  \eps A_z v)\right|\eps^{-2}\\
& = &  \|I+ A_z A_z^T\|_2^{1/2}|dist(\widetilde z, Tan(z))|\eps^{-2}\\
& \leq & \|I+ A_z A_z^T\|_2^{1/2}\left(\frac{|z - \widetilde z|^2}{2\tau}\right)\eps^{-2}\\
& \leq & \|I+ A_z A_z^T\|_2^{1/2}\left(\frac{\|A_z\|^2 |v|^2}{2\tau}\right) \leq  \frac{C |v|^2}{\tau}.
\eeqs
This yields
\beq \partial_v^2 F_{x, \widehat U}(y) = \lim_{\eps \ra 0} \left(\frac{F_{x, \widehat{U}}(y+ \eps v) - F_{x, \widehat{U}}(y) - \eps A_z v}{\eps^2}\right).\eeq
Therefore, 
$$\forall_{v \in \R^d} \left| \partial_v^2 F_{x, \widehat U}(y)\right| \leq 
\frac{C|v|^2}{\tau},$$ implying by Cauchy-Schwartz that
$$\forall_{v \in \R^d} \forall_{w \in \R^{m-d}}\ \  \langle \partial_v^2 F_{x, \widehat U}(y), w\rangle \leq 
\frac{C|v|^2|w|}{\tau}.$$
\end{proof}

\section{Proofs of Lemmas \ref{lem:find-disc}, \ref{lem:5.3-diff-eq}, \ref{lem:11-May27} and \ref{lem:12-May}}\lab{sec:subsidiary}

\begin{proof}[Proof of Lemma~\ref{lem:find-disc}]
Without loss of generality, let $x$ be the origin. Let $d(x, y)$ be used to denote $|x-y|$.
We will first show that for all $m \leq d-1$,
$$\max\left(|1-d(x, x_{m+1})|, \left | \left\langle \frac{x_1}{|x_1|}, x_{m+1}\right\rangle \right |, \dots, \left | \left\langle \frac{x_m}{|x_m|}, x_{m+1}\right\rangle \right | \right) < \delta.$$

To this end, we observe that the minimum over  $D_1(x)$ of
\beq\max\left(|1-d(x, y)|, \left | \left\langle \frac{(x_1)}{|x_1|}, y\right\rangle \right |, \dots, \left | \left\langle \frac{(x_m)}{|x_m|}, y\right\rangle \right | \right)\label{eq:Hari1}\eeq
is $0$, because the dimension of $D_1(x)$ is $d$ and there are only $m \leq d-1$ linear equality constraints. Also, the radius of $D_1(x)$ is $1$, so $|1 - d(x, z_{m+1})|$ has a value of $0$ where a minimum of (\ref{eq:Hari1}) occurs at $y = z_{m+1}$.
Since the Hausdorff distance between $D_1(x)$ and $X_0$ is less than $ \delta$ there exists a point $y_{m+1} \in X_0$ whose distance from $z_{m+1}$ is less than $\delta$. For this point $y_{m+1}$, we have
\beq \label{eq:prec}
\max\left(|1-d(x, y_{m+1})|, \left | \left\langle \frac{(x_1)}{|x_1|}, y_{m+1}\right\rangle \right |, \dots, \left | \left\langle \frac{(x_m)}{|x_m|}, y_{m+1}\right\rangle \right | \right)\leq \de.\eeq Since
$$\max\left(|1-d(x, x_{m+1})|, \left | \left\langle \frac{(x_1)}{|x_1|}, x_{m+1}\right\rangle \right |, \dots, \left | \left\langle \frac{(x_m)}{|x_m|}, x_{m+1}\right\rangle \right | \right)
$$ is no more than the corresponding quantity in (\ref{eq:prec}), we see that for each $m+1 \leq n$,
$$\max\left(|1-d(x, x_{m+1})|, \left | \left\langle \frac{(x_1)}{|x_1|}, x_{m+1}\right\rangle \right |, \dots, \left | \left\langle \frac{(x_m)}{|x_m|}, x_{m+1}\right\rangle \right | \right) < \delta.$$
Let $\widetilde V$ be an $D \times d$ matrix whose $i^{th}$ column is the column $x_i$. Let the operator $2$-norm of a matrix $Z$ be denoted $\|Z\|$.  For any distinct $i,j$ we have  $|\langle x_i , x_j \rangle|<\delta$, and for any $i$, $|\langle x_i, x_i\rangle - 1|<2\delta$, because $0 < 1-\de < |x_i| < 1$. Therefore,  $$\|{\widetilde V}^t {\widetilde V} - I\| \leq C_1 d\delta.$$
Therefore, the singular values of $\widetilde V$ lie in the interval $$I_C=(\exp(- C{d}\delta), \exp(C{d}\delta)) \supseteq (1 - C_1d\de, 1 + C_1d\de).$$
For each $i \leq n$, let $x'_i$ be the nearest point on $D_1(x)$ to the point $x_i$. Since the Hausdorff distance of $X_0$ to $D_1(x)$ is less than $\delta$, this implies that $|x'_i - x_i| < \delta$ for all $i \leq n$.
Let $\widehat{V}$ be an $D \times d$ matrix whose $i^{th}$ column is $ x'_i$. Since for any distinct $i,j$ we have $|\langle x'_i, x'_j\rangle|<3\delta +\de^2$, and for any $i$, $|\langle x'_i , x'_i \rangle - 1|<4\delta$,  $$\|\widehat{V}^t \widehat{V} - I\| \leq C{d}\delta.$$ This means that the singular values of $\widehat{V}$ lie in the interval $I_C$.

We shall now proceed to obtain an upper bound of $Cd\delta$ on the Hausdorff distance between $X_0$ and $\widetilde{D}_1(x)$. Recall that the unit $d$-disc $\widetilde{D}_1(x)$ is $\widetilde{A}_x \cap B_1(x)$.  By the triangle inequality, since the Hausdorff distance of $X_0$ to $D_1(x)$ is less than $\delta$, it suffices to show that the Hausdorff distance between $D_1(x)$ and $\widetilde{D}_1(x)$ is less than $Cd\delta.$



Let $x'$ denote a point on $D_1(x)$.  We will show that there exists a point $z' \in \widetilde{D}_1(x)$ such that 
$|x' - z'| < C d \delta.$ 

Let $\a \in \R^d$ be such that $\widehat{V}\alpha = x'$. By the bound on the singular values of $\widehat{V}$, we have $|\alpha| < \exp( C{d}\delta).$
Let $y' = \widetilde{V}\alpha$. Then, by the bound on the singular values of $\widetilde{V}$, we have $|y'| \leq \exp(C{d}\delta)$.
Let $z' = z' = \min(1-\delta,|y'|) |y'|^{-1}y' $. By the preceding two lines,
$  z'$ belongs to $\widetilde{D}_1(x).$ We next obtain an upper bound on $|x' -  z'|$
\begin{eqnarray} |x' - z'| & \leq & |x' - y'|\label{eq:1stterm}\\ &  & +|y' - z'|\label{eq:2ndterm}.\end{eqnarray}
We examine the  term in (\ref{eq:1stterm})
\begin{eqnarray*}  |x' -  y'|  & = &   |\widehat{V}\alpha - \widetilde{V}\alpha|\\
                    & \leq &   \sup_i |x_i - x'_i|(\sum_{j = 1}^d |\alpha_j|)\\
                                                               & \leq &   \delta {d}\exp(Cd\delta).
\end{eqnarray*}
We next bound the term in (\ref{eq:2ndterm}).
\begin{eqnarray*} 
|y' - z'| & \leq &   |y'|(1 - \exp(-C{d}\delta))\\
          & \leq & C{d}\delta. 
\end{eqnarray*}
Together, these calculations show that
$|x' - z'| < C {d}\delta.$
A similar argument shows that if $ z''$ belongs to $\widetilde{D}_1(x)$ then there is a point $p' \in D_1(x)$ such that 
$|p'-z''| < C {d}\delta$; the details follow.
Let $\widehat{V}\beta = z''$. From the bound on the singular values of $\widehat{V}$, $|\beta|<\exp(C{d}\delta).$
Let $q' :=  \widetilde{V}\beta$. Let $p' := \min(1-\delta,|q'|) |q'|^{-1}q'.$ Then,
\begin{eqnarray*} |p' - z''| & \leq & |q' - z''| + |p' - q'|\\
                                   & \leq & |\widetilde{V}\beta - V\beta| + |1 - \widetilde{V}\beta|\\
                                   & \leq &  \sup_i |x_i - x'_i|(\sum_{j = 1}^d|\beta_j|) + C\delta d\\
                                   & \leq & \delta{d}\exp(C{d}\delta) + C \delta{d}
                                   \leq   C \delta{d}.
\end{eqnarray*}
This proves that the Hausdorff distance between $X_0$ and $\widetilde{D}_1(x)$ is bounded above by $Cd\delta$ where $C$ is a universal constant.
\end{proof}

\begin{proof}[Proof of Lemma~\ref{lem:5.3-diff-eq}]
Let all lengths be rescaled so that $\sqrt{2\pi} \sigma = 1.$ Let $\widehat{x} = \frac{f(x)}{|f(x)|}$, and denote $|f(x)|$ by $T$. By (\ref{eq:de0}), for $x \in B_d(0, r_c),$
$ T  \leq   C \de. $
Then, $$g(x) = \frac{\left(\int\limits_{B_{D-d}(\frac{r_c}{2})} \langle z - T\widehat{x}, \widehat x \rangle \exp\left(- \pi\|z - T\widehat{x}\|^2 \right)\la_{D-d}(dz)\right)\widehat{x}}{\int\limits_{B_{D-d}(\frac{r_c}{2})}\exp(- \pi \|z - T\widehat{x}\|^2 )\la_{D-d}(dz)}.$$

Fot $t \in \R $, let 
$$h(t) := \int\limits_{B_{D-d}(\frac{r_c}{2})} \langle z - t\widehat{x}, \widehat x \rangle \exp\left(- \pi\|z - t\widehat{x}\|^2 \right)\la_{D-d}(dz).$$
Then,
\beqs \partial_t h(t) & = &  2\pi\int\limits_{B_{D-d}(\frac{r_c}{2})} \langle z - t\widehat{x}, \widehat x \rangle \langle z - t\widehat{x}, \widehat{x} \rangle \exp\left(- \pi\|z - t\widehat{x}\|^2 \right)\la_{D-d}(dz)\\
& & -    \int\limits_{B_{D-d}(\frac{r_c}{2})} \exp\left(- \pi\|z - t\widehat{x}\|^2 \right)\la_{D-d}(dz).\eeqs

Let $$J_r := 2\pi\int\limits_{\R^{D-d} \setminus B_{D-d}(r_c)} \langle z - t\widehat{x}, \widehat x \rangle \langle z - t\widehat{x}, \widehat{x} \rangle \exp\left(- \pi\|z - t\widehat{x}\|^2 \right)\la_{D-d}(dz)$$
Let $r' = \frac{r_c}{3} > \left(\frac{2}{3}\right)\sqrt{\sigma \tau D^{\frac{1}{2}}}.$ Note that $$B_{D-d}(0, \frac{r_c}{3}) \subseteq B_{D-d}(T\widehat{x}, \frac{r_c}{2}).$$ Then,
\beqs J_{r'} \exp(\pi r'^2/2)&  \leq & 2\pi\int_{\R^{D-d}} \frac{|x|^2}{D-d} \exp(-\pi |x|^2) \exp(\pi|x|^2/2)\la_{D-d}(dx) =  2^{ \frac{D-d}2}.\eeqs
Therefore $$J_{r'} \leq 2^{ \frac{D-d}{2}}\exp( - \frac{\pi r'^2}{2}).$$

 Let \beq I_{r'} := \int_{|x|>r'} \exp(-\pi|x|^2)\la_{D-d}(dx).\eeq
The left hand side $I_{r'}$ can be bounded above as follows. 
\beqs I_{r'} \exp(\pi r'^2/2) & \leq &  \int_{\R^{D-d}} \exp(-\pi |x|^2) \exp(\pi|x|^2/2)\la_{D-d}(dx)\\
& = & 2^{\frac{D-d}{2}}.\eeqs
Therefore $$I_{r'} \leq 2^{\frac{D-d}{2}} \exp(-\pi r'^2/2) \leq 2^{ \frac{D-d}{2}}\exp( - \frac{\pi r'^2}{2}).$$
We see that $J_{r'}$ and $I_{r'}$ are trivially non-negative. 
It follows that for $t \in [0, T]$, \beq |\partial_t h(t)| \leq 2^{ \frac{D-d}{2}}\exp( - \frac{\pi r'^2}{2}).
\eeq
Since $h(0) =0$, we see that $|h(T)| \leq 2^{ \frac{D-d}{2}}\exp( - \frac{\pi r'^2}{2})T.$
Therefore, $\forall x \in B_d(0, r)$,  \beq |g(x)| & \leq & \frac{2^{ \frac{D-d}{2}}\exp( - \frac{\pi r'^2}{2})T}{ 1 - 2^{ \frac{D-d}{2}}\exp( - \frac{\pi r'^2}{2})}\\
& \leq & 2^{ 2 + \frac{D-d}{2}}\exp( - \frac{\pi r'^2}{4\pi\sigma^2})T
 \leq \frac{\sigma^2}{\tau}.\eeq

\end{proof}

\begin{proof}[Proof of Lemma~\ref{lem:11-May27}]
\beq \nonumber \big|\Pi_{D-d} (e_y - z_{y, 2}) -f(y) \big|& = &  \bigg|\int_{x \in B_d(0, \frac{\tau}{20})} \E(f(x) - f(y)+ \widehat{G}^x_{D-d})  \mu''(dx)\bigg|\\
& \leq & \label{eq:254.2-Mar6}\bigg|\int_{x \in B_d(y, \frac{r_c}{2})} \E(f(x) - f(y) + \widehat{G}^x_{D-d})  \mu''(dx) \bigg| \\\label{eq:253.2-Mar5} & + & \bigg|\int_{x \in B_d(0, \frac{\tau}{20})\setminus B_d(y, \frac{r}{2})} \E(f(x) - f(y) + \widehat{G}^x_{D-d})  \mu''(dx)\bigg|. \eeq
Observe that the signed version of (\ref{eq:254.2-Mar6}) can be rewritten as follows.
 \beq & & \int_{x \in B_d(y, \frac{r_c}{2})} \E(f(x) - f(y) + \widehat{G}^x_{D-d})  \mu''(dx) \\& & =  \label{eq:253-Mar5} \left(\frac{1}{2}\right) \left(\int_{x \in B_d(y, \frac{r_c}{2})} \E(f(x) - f(y) - \partial_{x - y} f(y) + \widehat{G}^x_{D-d})\mu''(dx)\right) \\ & & +\left(\frac{1}{2}\right) \left( \int_{x \in B_d(y, \frac{r_c}{2})} \E(f(2y - x) - f(y) - \partial_{y-x} f(y) + \widehat{G}^{2y-x}_{D-d})  \mu''(d(2y-x))\right)\hspace{-1cm}\\
 &  &+ \label{eq:254-Mar5} 
\left(\frac{1}{2}\right)\left( \int_{x \in B_d(y, \frac{r}{2})}  \partial_{x-y} f(y)  (\mu''(dx) - \mu''(d(2y-x)) \right).
\eeq

We observe that twice the magnitude of  (\ref{eq:253-Mar5}) satisfies,
\beqs \bigg|\int_{x \in B_d(y, \frac{r_c}{2})} \E(f(x) - f(y) - \partial_{x - y} f(y) + \widehat{G}^x_{D-d})  \mu''(dx)\bigg|  \leq  \eeqs 
\beq \bigg|\int_{x \in B_d(y, \frac{r_c}{2})} (f(x) - f(y) - \partial_{x - y} f(y)) \mu''(dx)\bigg|  + \frac{\sigma^2}{\tau}  \leq \eeq 

\beqs \bigg|\int_{x \in B_d(y, \frac{r_c}{2})} \left(\frac{C |x - y|^2}{\tau}\right) \Gamma^{-1} \gamma_d(\Pi_d V_y - x) \gamma_{D-d}(\Pi_{D-d}V_y - f(x))\mu'(dx)\bigg|  + \frac{\sigma^2}{\tau}\leq \frac{Cd\sigma^2}{\tau}.\eeqs
By symmetry, The same bound applies to twice the next term. Now we need to bound (\ref{eq:254-Mar5}). To do so, observe that 
\beqs
& &\bigg|\int_{x \in B_d(y, \frac{r_c}{2})}  \partial_{x-y} f(y)  (\mu''(dx) - \mu''(d(2y-x)) \bigg|\\
& { \le}&  
\int_{x \in B_d(y, \frac{r_c}{2})}  |\partial_{x-y} f(y)| \cdot |(\mu''(dx) - \mu''(d(2y-x))| \\ 
&{ \le}&  
 \int_{x \in B_d(y, \frac{r_c}{2})} \left(\frac{C\de|x-y|}{r_c} + \frac{C |x - y|^2}{\tau}\right)\Gamma^{-1} \gamma_d(\Pi_d V_y - x) \left(\frac{|x - y|}{\tau}\right)\mu'(dx)\\
 &{ \le}& \left(\frac{Cd\sigma^2r_c}{\tau^2} + \frac{C\sigma^3d^{\frac{3}{2}}}{\tau^2}\right) < \frac{C\sigma^2}{\tau}.
\eeqs
In the last step, we used the fact that $r_c < \frac{\tau}{C d}.$
Lastly, from the tail decay of $\frac{d\mu''}{d\mu'}$ as expressed by (\ref{eq:113-oct3-2020}) and (\ref{eq:114-oct3-2020}) and the fact that $\frac{r_c}{2} \geq \big|\E(f(x) - f(y) + \widehat{G}^x_{D-d})\big|$, it follows that the term (\ref{eq:253.2-Mar5}) is bounded above by $\frac{\sigma^2}{\tau}.$
\end{proof}

\begin{proof}[Proof of Lemma~\ref{lem:12-May}] { We see that}
\beq \nonumber \big|\Pi_{n-D} (\ey - z^n_{y,2}) -\f(y) \big|& = &  \bigg|\int_{x \in B_d(0, \frac{\tau}{20})} \E(\f(x) - \f(y))  \mu''(dx)\bigg|\\
 &&\hspace{-2.4cm} \leq  \label{eq:254.2-Mar8}\bigg|\int_{x \in B_d(y, \frac{r_c}{2})} (\f(x) - \f(y))  \mu''(dx) \bigg| \\\label{eq:253.2-Mar8} &  & \hspace{-2cm}+\bigg|\int_{x \in B_d(0, \frac{\tau}{20})\setminus B_d(y, \frac{r_c}{2})} \E(\f(x) - \f(y))  \mu''(dx)\bigg|. \eeq
Observe that the signed version of (\ref{eq:254.2-Mar8}) can be rewritten as  \beq \int_{x \in B_d(y, \frac{r_c}{2})} (\f(x) - \f(y))  \mu''(dx)  = \eeq  
\beq \label{eq:253-Mar8} \left(\frac{1}{2}\right) \left(\int\limits_{x \in B_d(y, \frac{r_c}{2})} (\f(x) - \f(y) - \partial_{x - y} \f(y))\mu''(dx)\right)  +\eeq  \beqs \left(\frac{1}{2}\right) \left( \int_{x \in B_d(y, \frac{r_c}{2})} (\f(2y - x) - \f(y) - \partial_{y-x} \f(y))  \mu''(d(2y-x))\right) + \eeqs \beq\label{eq:254-Mar8} 
\left(\frac{1}{2}\right)\left( \int_{x \in B_d(y, \frac{r_c}{2})}  \partial_{x-y} \f(y)  (\mu''(dx) - \mu''(d(2y-x)) \right).
\eeq

We observe that twice the magnitude of  (\ref{eq:253-Mar8}) satisfies,
\beqs \bigg|\int_{x \in B_d(y, \frac{r_c}{2})} \E(\f(x) - \f(y) - \partial_{x - y} \f(y) + \widehat{G}_{n-D})  \mu''(dx)\bigg|  & = & \\
 \bigg|\int_{x \in B_d(y, \frac{r_c}{2})} (\f(x) - \f(y) - \partial_{x - y} \f(y)) \mu''(dx)\bigg| & \leq & \\
 \bigg|\int_{x \in B_d(y, \frac{r_c}{2})} \left(\frac{C |x - y|^2}{\tau}\right) \Gamma^{-1} \gamma_d(\Pi_d V_y - x) \gamma_{D-d}(\Pi_{D-d}V_y - \f(x))\mu'(dx)\bigg| & \leq&  \frac{Cd\sigma^2}{\tau}.\eeqs
By symmetry, the same bound applies to twice the next term. Now we need to bound (\ref{eq:254-Mar8}). To do so, observe that 
\beqs
& &\bigg|\int_{x \in B_d(y, \frac{r_c}{2})}  \partial_{x-y} \f(y)  (\mu''(dx) - \mu''(d(2y-x)) \bigg| \\
&{ \le}  &
\int_{x \in B_d(y, \frac{r_c}{2})}  |\partial_{x-y} \f(y)| \cdot |(\mu''(dx) - \mu''(d(2y-x))|\\
&  { \le}&  
 \int_{x \in B_d(y, \frac{r_c}{2})} \left(C |x-y| + \frac{C |x - y|^2}{\tau}\right)\Gamma^{-1} \gamma_d(\Pi_d V_y - x) \left(\frac{|x - y|}{\tau}\right)\mu'(dx)
 \\ &{ \le}& 
 \left(\frac{Cd\sigma^2}{\tau} + \frac{C\sigma^3d^{\frac{3}{2}}}{\tau^2}\right) < \frac{C\sigma^2}{\tau}.
\eeqs
Lastly, as we show below, from the tail decay of $\frac{d\mu''}{d\mu'}$, proceeding in a way that is analogous to 
(\ref{eq:251-Mar8.1}), it follows that the term (\ref{eq:253.2-Mar8}) is bounded above by $\frac{\sigma^2}{\tau}.$ Let us denote $2y-x$ by $z$.
{ Thus,}
\beqs
& & \bigg|\int\limits_{x \in B_d(y, \frac{\tau}{20}) \setminus B_d(y, \frac{r_c}{2})} (\f(x) - \f(y))  \mu''(dx)\bigg|  \\ 
&=& \left(\frac{1}{2}\right) \bigg|\int\limits_{x \in B_d(y, \frac{\tau}{20}) \setminus B_d(y, \frac{r_c}{2})} (\f(x) - \f(y) - \partial_{x - y} \f(y))\mu''(dx)\bigg| \\& & +\left(\frac{1}{2}\right) \bigg| \int\limits_{x \in B_d(y, \frac{\tau}{20}) \setminus B_d(y, \frac{r_c}{2})} (\f(z) - \f(y) - \partial_{y-x} \f(y))  \mu''(dz)\bigg|\\
& & + \
\left(\frac{1}{2}\right)\bigg| \int\limits_{x \in B_d(y, \frac{\tau}{20}) \setminus B_d(y, \frac{r_c}{2})}  \partial_{x-y} \f(y)  (\mu''(dx) - \mu''(dz) \bigg| \\
&{ \le}&
 \int\limits_{x \in B_d(y, \frac{\tau}{20}) \setminus B_d(y, \frac{r_c}{2})} \left(\frac{C|x - y|^2}{\tau}\right)\mu''(dx)\\
 & & +  \int\limits_{x \in B_d(y, \frac{\tau}{20}) \setminus B_d(y, \frac{r_c}{2})} \left(\frac{C|x - y|^2}{\tau}\right)\mu''(dz) \\
 &&+  \int\limits_{x \in B_d(y, \frac{\tau}{20}) \setminus B_d(y, \frac{r_c}{2})} \left(C|x-y| + \frac{C|x - y|^2}{\tau}\right)\mu''(dx) \leq \frac{\sigma^2}{\tau} .\eeqs

\end{proof}
\section{ A bound on the third derivative of $\Pi_x F(x)$}

We now proceed to obtain an upper bound on $\|\partial_v^3 \left(\Pi_x F(x)\right)\|.$
\begin{claim}
 \beq \|( \partial^3_v \ta_i(x))_{i \in[N_3]}\|_{\frac{d+k}{d+k-3}}  \leq  Cd^3.\eeq 
\end{claim}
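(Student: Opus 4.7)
The plan is to mirror the strategy of the two preceding claims: differentiate $\tbeta(x)=(1-|x|^2)^{d+k}$ three times explicitly, deduce a pointwise bound on $\partial_v^3 \ta_i$ in terms of $c_i$ and $(1-|x-p_i|^2)^{d+k-3}$, and then sum using the partition-of-unity estimate $\ta(x)=\sum_i \ta_i(x)\le c^{-1}$ from Lemma~\ref{lem:weights}.

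To carry this out, I would first set $t=1-|x|^2$, note $\partial_v t=-2\langle x,v\rangle$, $\partial_v^2 t=-2|v|^2$, $\partial_v^3 t=0$, and apply the chain rule to $t^{d+k}$ three times. A direct computation gives
\beqs
\partial_v^3 \tbeta(x) & = & -8(d+k)(d+k-1)(d+k-2)(1-|x|^2)^{d+k-3}\langle x,v\rangle^3 \\
& & + 12(d+k)(d+k-1)(1-|x|^2)^{d+k-2}\langle x,v\rangle|v|^2.
\eeqs
For $|v|=1$ and $|x|\le 1$ we have $|\langle x,v\rangle|\le 1$ and $(1-|x|^2)^{d+k-2}\le (1-|x|^2)^{d+k-3}$, so both summands are bounded in absolute value by $Cd^3(1-|x|^2)^{d+k-3}$. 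Since (in the rescaled setting $r=1$) $\ta_i(x)=c_i\tbeta(x-p_i)$, this yields the pointwise estimate $|\partial_v^3\ta_i(x)|\le C c_i d^3(1-|x-p_i|^2)^{d+k-3}$ when $|x-p_i|\le 1$, and $0$ otherwise.

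Next I would raise to the power $\frac{d+k}{d+k-3}$ and sum over $i$, obtaining
\beqs
\sum_i |\partial_v^3\ta_i(x)|^{\frac{d+k}{d+k-3}}\le C d^{\frac{3(d+k)}{d+k-3}}\sum_i c_i^{\frac{d+k}{d+k-3}}(1-|x-p_i|^2)^{d+k}.
\eeqs
Since $c_i=\ta_i(p_i)\le\ta(p_i)\le c^{-1}$, the coefficients $c_i$ are bounded by a universal constant, hence $c_i^{\frac{d+k}{d+k-3}}\le C c_i$ and the right-hand sum is at most $C\sum_i c_i(1-|x-p_i|^2)^{d+k}=C\ta(x)\le C$. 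Combined with $d^{3(d+k)/(d+k-3)}=d^3\cdot d^{9/(d+k-3)}\le Cd^3$ (uniformly in $d\ge 1$ since $k\ge 3$, so that $d^{9/(d+k-3)}$ attains a uniformly bounded maximum and tends to $1$ as $d\to\infty$), taking the $\frac{d+k-3}{d+k}$-th power yields the desired bound $\|(\partial_v^3\ta_i)_{i\in[N_3]}\|_{\frac{d+k}{d+k-3}}\le Cd^3$.

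The only mildly delicate point is ensuring that the coefficients $c_i$ produced by Lemma~\ref{lem:weights} are uniformly bounded above by a universal constant, so that raising them to an exponent close to $1$ is harmless; this follows immediately from $c_i=\ta_i(p_i)\le \ta(p_i)\le c^{-1}$. No new analytic ideas beyond those already used for $\partial_v\ta_i$ and $\partial_v^2\ta_i$ are needed.
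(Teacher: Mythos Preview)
Your proof is correct and rests on exactly the same mechanism as the paper's: the pointwise estimate $|\partial_v^s\tbeta(x)|\le Cd^s(1-|x|^2)^{d+k-s}$ followed by raising to the power $\frac{d+k}{d+k-s}$ so that the sum collapses back to $\ta(x)\le c^{-1}$. Your observation that $c_i=\ta_i(p_i)\le\ta(p_i)\le c^{-1}$ is the right way to handle the weights, and your check that $d^{9/(d+k-3)}$ is uniformly bounded (since $k\ge 3$) is also correct.

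The only difference from the paper is organizational. You fully expand $\partial_v^3\tbeta$ and apply the power trick in a single step. The paper instead writes $\partial_v^3\tbeta$ so that the pieces resemble $\partial_v$ of lower-degree powers, and then bounds the resulting $\frac{d+k}{d+k-3}$-norm by $C(d^3\ta + d^2\|(\partial_v\ta_i)\|_{\frac{d+k}{d+k-1}} + d\|(\partial_v^2\ta_i)\|_{\frac{d+k}{d+k-2}})$, feeding in the already-proved bounds $Cd$ and $Cd^2$ for those lower-order norms. Both routes use the same analytic content; your direct version is arguably cleaner since it avoids the slightly informal identification the paper makes between terms like $(d+k)(1-|x|^2)^{d+k-1}$ and $|\partial_v\ta_i|$.
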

\begin{proof} This follows from $c < \ta < C$, and the discussion below. Suppose $x$ belongs to the unit ball in $\R^D$. Then,
\beqs 
\partial_v^3 ( 1 - \|x\|^2)^{k+d}  & = & \partial^2_v((k+d)(1 - \|x\|^2)^{k+d-1}(2 \langle x, v \rangle))\\
& = & (k+d) \partial_v\bigg((k+d-1) (1 - \|x\|^2)^{k+d-2}(4 \langle x, v \rangle)^2\\
& &\hspace{2cm}+    (1 - \|x\|^2)^{k+d-1}(2 \langle v, v \rangle)\bigg)\\
& = & (k+d)((k+d-1)(\partial_v((1 - \|x\|^2)^{k+d-2}) (4 \langle x, v\rangle^2)) \\
& &+ (k+d)(\partial_v((1 - \|x\|^2)^{k+d-1}))(2 \langle v, v \rangle).
\eeqs
Therefore, 
 \beqs 
 & &\hspace{-1cm}\|( \partial^3_v \ta_i(x))_{i \in[N_3]}\|_{\frac{d+k}{d+k-3}} \\
 & \leq & C\left(d^3\ta + d^2\|( \partial_v \ta_i(x))_{i \in[N_3]}\|_{\frac{d+k}{d+k-3}} + d\|( \partial^2_v \ta_i(x))_{i \in[N_3]}\|_{\frac{d+k}{d+k-3}}\right)\\ & \leq &
C\left(d^3\ta + d^2\|( \partial_v \ta_i(x))_{i \in[N_3]}\|_{\frac{d+k}{d+k-1}} + d\|( \partial^2_v \ta_i(x))_{i \in[N_3]}\|_{\frac{d+k}{d+k-2}}\right)\\
& \leq & C\left(d^3\right).\eeqs
\end{proof}
As a consequence, we see the following.
\beq|( \partial^3_v \ta(x))| & \leq &   \|( \partial^3_v \ta_i(x))_{i \in[N_3]}\|_{\frac{d+k}{d+k-3}} \|(1)_{i \in [N_3]}\|_{\frac{d+k}{3}}\\
& \leq & Cd^6.\eeq

\begin{lemma} \label{lem:3-16}We have for any $v \in \R^n$ such that $|v| = 1$, and any $x \in \R^n$ such that $dist(x, \MM) \leq \frac{cr}{d}$, \beq   \|(\partial^3_v \a_i(x))_{i \in[N_3]}\|_{\frac{d+k}{d+ k -3}} \leq Cd^6. \eeq \end{lemma}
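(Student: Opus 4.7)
The plan is to mimic the proofs of Lemmas~\ref{lem:ta2} and~\ref{lem:ta} one derivative higher. Applying the quotient rule to $\a_i = \ta_i/\ta$ gives
\begin{align*}
\partial_v^3 \a_i &= \frac{\partial_v^3 \ta_i}{\ta} - \frac{3(\partial_v^2 \ta_i)(\partial_v \ta)}{\ta^2} - \frac{3(\partial_v \ta_i)(\partial_v^2 \ta)}{\ta^2} + \frac{6(\partial_v \ta_i)(\partial_v \ta)^2}{\ta^3}\\
&\quad - \frac{\ta_i (\partial_v^3 \ta)}{\ta^2} + \frac{6\ta_i (\partial_v \ta)(\partial_v^2 \ta)}{\ta^3} - \frac{6\ta_i (\partial_v \ta)^3}{\ta^4}.
\end{align*}
I would estimate the $\ell^{(d+k)/(d+k-3)}$-norm of each of these seven summands separately via the triangle inequality, after factoring each term as a scalar (involving only $\ta$ and its derivatives) times a vector factor $(\partial_v^{j_1}\ta_i)_i$ with $j_1\in\{0,1,2,3\}$.

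For the scalar factor I would use the pointwise bounds $c\le\ta\le C$, $|\partial_v\ta|\le Cd^2$, $|\partial_v^2\ta|\le Cd^4$, and $|\partial_v^3\ta|\le Cd^6$; the last of these is exactly the claim established immediately before this lemma, while the first three are pointwise bounds recorded in (or implicit in) the proofs of the two preceding lemmas. A direct check term by term shows that the scalar in the summand with vector factor $(\partial_v^{j_1}\ta_i)_i$ is of size $Cd^{2(3-j_1)}$. For the vector factor I would invoke the ``natural'' $\ell^p$-estimates $\|(\partial_v^{j_1}\ta_i)_i\|_{(d+k)/(d+k-j_1)} \le Cd^{j_1}$, valid for $j_1\in\{0,1,2,3\}$: the case $j_1=0$ follows from $\sum_i \ta_i(x) = \ta(x) \le C$, and the cases $j_1=1,2,3$ are exactly the content of Lemma~\ref{lem:ta2}, the proof of Lemma~\ref{lem:ta}, and the preceding claim, respectively. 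Because the nonzero $\ta_i(x)$ at any given $x$ are supported on at most $(Cd)^d$ indices, counting-measure $\ell^p$-norms are monotone decreasing in $p$, so each of these estimates persists with the exponent $(d+k)/(d+k-j_1)$ replaced by the larger $(d+k)/(d+k-3)$ without loss.

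Multiplying the scalar and vector bounds term by term gives $Cd^{2(3-j_1)}\cdot Cd^{j_1} = Cd^{6-j_1}$ for the summand indexed by $j_1$, the worst of which is $Cd^6$ (attained by the three summands with $j_1=0$, namely the terms containing $\partial_v^3\ta$, $(\partial_v\ta)(\partial_v^2\ta)$, and $(\partial_v\ta)^3$). Summing by the triangle inequality yields $\|(\partial_v^3 \a_i)_i\|_{(d+k)/(d+k-3)} \le Cd^6$, as required. This is a routine extension of the preceding two lemmas; there is no substantive obstacle, since the only genuinely new analytic input needed -- the $|\partial_v^3 \ta|\le Cd^6$ bound -- was just established in the preceding claim.
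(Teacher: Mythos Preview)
Your proof is correct. The paper takes a slightly different but equivalent route: instead of fully expanding the quotient rule for $\a_i=\ta_i/\ta$ into seven terms depending only on derivatives of $\ta_i$ and $\ta$, it applies the Leibniz rule to $\ta_i=\a_i\ta$, solves for $(\partial_v^3\a_i)\ta$, and thereby expresses $\partial_v^3\a_i$ in terms of $\partial_v^3\ta_i$ and the \emph{lower-order} $\partial_v^j\a_i$ (for $j=0,1,2$), which are already bounded by Lemmas~\ref{lem:ta2} and~\ref{lem:ta}. This yields only four summands and recycles the $\a_i$-bounds directly, whereas your approach relies instead on the $\ta_i$-bounds $\|(\partial_v^{j}\ta_i)_i\|_{(d+k)/(d+k-j)}\le Cd^{j}$ extracted from inside those proofs. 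One small inaccuracy: those $\ta_i$-bounds are not literally ``the content of'' Lemmas~\ref{lem:ta2} and~\ref{lem:ta} (whose statements concern $\a_i$), but they do appear within the respective proofs, so the argument goes through. Also, your remark about sparse support is unnecessary---the monotonicity $\|\cdot\|_q\le\|\cdot\|_p$ for $p\le q$ holds unconditionally on counting measure.
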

\begin{proof}
We see that 
\beq \partial^3_v (\a_i \ta) & = & (\partial^3_v \a_i) \ta + (3 \partial^2_v \a_i) (\partial_v \ta) + (3 \partial_v \a_i) (\partial^2_v \ta) + ( \a_i) \partial^3_v \ta. \eeq

Therefore,
\beqs  & & \ta(x) \|(\partial^3_v \a_i(x))_{i \in[N_3]}\|_{\frac{d+k}{d+ k -3}} \\
& = & \|\left(- \partial^3_v (\ta_i)+(3 \partial^2_v \a_i) (\partial_v \ta) + (3 \partial_v \a_i) (\partial^2_v \ta) + ( \a_i) \partial^3_v \ta\right)_{i \in [N_3]}\|_{\frac{d+k}{d+ k -3}}\eeqs

The right hand side above can be bounded above by 
\beqs  Cd^3 + \|((3 \partial^2_v \a_i) \partial_v \ta)_i\|_{\frac{d+k}{d+ k -3}} + \|((3 \partial_v \a_i) \partial^2_v \ta)_i\|_{\frac{d+k}{d+ k -3}} + \|(( \a_i) \partial^3_v \ta)_{i \in [N_3]}\|_{\frac{d+k}{d+ k -3}}.\eeqs
This is bounded above by \beqs Cd^3 + |\partial_v \ta|\|((3 \partial^2_v \a_i) )_i\|_{\frac{d+k}{d+ k -2}} + 
 |\partial^2_v \ta|\|((3 \partial_v \a_i) )_i\|_{\frac{d+k}{d+ k -1}} +  |\partial^3_v \ta|\|(( \a_i) )_{i \in [N_3]}\|_{1},
 \eeqs
which is in turn bounded above by 
\beqs Cd^3 + (Cd^2)(Cd^2) + (Cd^4)(Cd) + (Cd^6), \eeqs
in which the dominant term is $Cd^6$.

\end{proof}
\begin{lemma}\label{lem:19-May27}
\beq  \|\partial_v^3 \left(\Pi_x F(x)\right)\| \leq C d^9 \de. \eeq

\end{lemma}

\begin{proof}
{ We have }\beq \|\partial_v^3 \left(\Pi_x F(x)\right)\| & \leq & \| (\partial_v^3 \Pi_x) F(x)\|\label{eq:3-57}\\
& + & \|3 (\partial^2_v \Pi_x) \partial_v F(x)\|\label{eq:3-58} \\ 
& + & \|3 (\partial_v \Pi_x) \partial^2_v F(x)\|\label{eq:3-58.5} \\ 
&+ & \|\Pi_x \partial_v^3 F(x)\|.\label{eq:3-59}\eeq 

We first bound from above the right side of (\ref{eq:3-57}).
Let $A := zI - M(x)$ and $B := (zI - M(x))^{-1}.$ Then,
\beqs 0 & = & \partial^3_v (AB) \\
              & = & (\partial_v^3 A) B + 3 (\partial_v^2 A)(\partial_v B) + 3 (\partial_v A)(\partial_v^2 B) + A(\partial_v^3 B).\eeqs 
Thus, 
\beqs
-A (\partial_v^3 B) = (\partial_v^3 A) B + 3 (\partial_v^2 A)(\partial_v B) + 3 (\partial_v A)(\partial_v^2 B), \eeqs
and so,
\beqs
 \partial_v^3 B = -B(\partial_v^3 A) B - 3 B(\partial_v^2 A)(\partial_v B) - 3 B(\partial_v A)(\partial_v^2 B), \eeqs
Thus,
\beqs
 \|\partial_v^3 B \| \leq  C\left[\|\partial_v^3 A\| +  \|\partial_v^2 A\|\|\partial_v B\| + \|\partial_v A\|\|\partial_v^2 B\|\right]\eeqs
and
\beqs \|\partial_v^3 A\| & = & \|\partial_v^3 \sum_i \a_i \Pi_i\|\\
& = & \sum_i |\partial_v^3\a_i(\Pi_i - \Pi_1)|\\
& \leq & \|(\partial_v^3\a_i)_{i \in [N_3]}\|_{\frac{d+k}{d+k-3}}\|(\de)_{i \in [N_3]}\|_{\frac{d+k}{3}}\\
& \leq & Cd^9 \de.\eeqs


We already know by (\ref{eq:3-176}) that  \beqs\|\partial_v \Pi_x\| \leq Cd^3\de \eeqs and \beqs   \| \partial_v^2 \Pi_x \| 
                                                       & \leq & Cd^6\de.\eeqs
We have shown that  \beqs \|\partial_v F(x)\| 
& \leq & 1 + C d^3\de.\eeqs

We have also already shown in (\ref{eq:77}) that \beqs  \|\partial_v^2 F(x)\|
& \leq & Cd^6 \de.   
\eeqs
 
We proceed to get an upper bound on $\|\Pi_x \partial_v^3 F(x)\|$,

\beq  \|\Pi_x \partial_v^3 F(x)\| & \leq &  \|\partial_v^3 F(x)\|\\
& \leq & \|\partial_v^3 (F(x) - F_1(x))\|\\
                                                      & \leq & \sum_i |\partial_v^3 \a_i(x)|\| F_i(x) - F_1(x)\|\label{eq:3.1-75}\\
                                                        & & +  \sum_i 3| \partial_v^2 \a_i(x)| \|\partial_v F_i(x)- \partial_v F_1(x)\label{eq:3.1-76}\|\\
                                                        &&  +  \sum_i 3|\partial_v \a_i(x)| \|\partial_v^2 F_i(x)\|\label{eq:3.1-77}\\
&  &+   \sum_i | \a_i(x)| \|\partial_v^3 F_i(x)\|.\label{eq:3.1-78}
\eeq

For each $i$, $\partial_v^2 F_i$ is $0$, and so,  the above expression reduces to 
\beqs  \sum_i |\partial_v^3 \a_i(x)|\| F_i(x) - F_1(x)\| +  \sum_i 3| \partial_v^2 \a_i(x)| \|\partial_v F_i(x)- \partial_v F_1(x)\|.\eeqs

Here, we have
\beq \nonumber  \sum_i |\partial_v^3 \a_i(x)|\| F_i(x)- F_1(x)\| & \leq & \| (|\partial_v^3 \a_i(x)|)_{i\in[N_3]}\|_{\frac{d+k}{d+k-3}}\|(\| F_i(x)- F_1(x)\|)_{i \in [N_3]}\|_{\frac{d+k}{3}}\hspace{-15mm}\\
                                                                        & \leq & (Cd^6)(d^3 \de)
                                                                           =  Cd^9 \de, 
\eeq
and
\beqs \nonumber
& &\hspace{-1cm}\sum_i | \partial^2_v \a_i(x)| \|\partial_v F_i(x) - \partial_v F_1(x)\|\\
 & \leq & \|(|\partial^2_v \a_i(x)|)_{i\in [N_3]}\|_{\frac{d+k}{d+k-2}} \|(\|\partial_v F_i(x)- \partial_v F_1(x)\|)_{i\in[N_3]}\|_{\frac{d+k}{2}}\\
                                                                             & \leq & C d^4 (d^2\de)
                                                                                 =  C d^6\de. \eeqs

\end{proof}
\section{Quantitative implicit and inverse function theorems}\label{sec:quant}
 
In this subsection, we provide for the reader's convenience,  versions of the implicit and inverse function theorems with quantitative bounds on the derivatives that do not depend on the dimensions involved. We think it is very likely that such theorems exist in the literature, but are not aware of a specific reference. 

We begin with the inverse function theorem.

Let $g: \R^p \ra \R^p$ be a $\C^2$ function on whose derivatives the following bounds hold.

At any point $x \in B_p(0, 1)$, denoting by $Jac_g$ the Jacobian matrix of $g$, we have \beq\label{eq:11}\|Jac_g - I\| \leq \eps_1/4\eeq for some $\eps_1 \in [0, 1].$

For any non-zero vector $v$ and $x$ as before, \beq\label{eq:12}\left\|\frac{\partial^2 g(x)}{\partial v^2}\right\| \leq \left(\frac{\eps_2}{4}\right) |v|^2.\eeq

By (\ref{eq:11}), for any $x \neq x'$, both belonging to $B_p(0, 1)$, $$|g(x) - g(x') - (x - x')| \leq |x-x'| (1/4),$$ which implies that $g(x) \neq g(x')$. Applying the Inverse Function Theorem (\cite{Narasimhan}), 
there exists a function $f: g(B_p(0, 1)) \ra B_p(0, 1)$ such that 
$f(g(x)) = x$, for all $x \in B(0, 1)$. Let $\tf = w\cdot f$ for some fixed non-zero vector $w$. Let $g = (g_1, \dots, g_p)$, where each $g_i$ is a real-valued function. The Jacobian of the identity function is $I$. Therefore, by the chain rule,
\beq \label{eq:12.01} \left(\left(\frac{df_i}{dg_j}\right)_{i, j \in [p]}\right) Jac_g = I, \eeq implying by (\ref{eq:11}) that

\beq \label{eq:12.02} \left\| \left(\left(\frac{df_i}{dg_j}\right)_{i, j \in [p]}\right)\right\| \leq (1 - \eps_1/4)^{-1}.\eeq

The second derivative of a linear function is $0$ and so \beq 0 = \frac{\partial^2 \tf (g) }{\partial v^2} (x) =  \sum_{i, j} \frac{d^2\tf}{{dg_i}{dg_j}}\left(\frac{dg_i}{dv}\right)\left(\frac{dg_j}{dv}\right) + \sum_j \frac{d\tf}{dg_j}\left( \frac{d^2g_j}{dv^2}\right). \eeq
Therefore, 

 \beq  \sum_{i, j} \frac{d^2\tf}{{dg_i}{dg_j}}\left(\frac{dg_i}{dv}\right)\left(\frac{dg_j}{dv}\right) = (-1) \sum_j \frac{d\tf}{dg_j}\left( \frac{d^2g_j}{dv^2}\right), \eeq
and so by Cauchy-Schwartz { inequality}, 
\beq\label{eq:12.1}  \left|\sum_{i, j} \frac{d^2\tf}{{dg_i}{dg_j}}\left(\frac{dg_i}{dv}\right)\left(\frac{dg_j}{dv}\right) \right| \leq \left \| \left(\left(\frac{d\tf}{dg_j}\right)_{j \in [p]}\right)\right\| \left\| \left(\left( \frac{d^2g_j}{dv^2}\right)_{j \in [p]}\right) \right\|. \eeq

By (\ref{eq:11}) there exists a unit vector $\widetilde{v}$ such that 
\beq\label{eq:12.2} \left|\sum_{i, j} \frac{d^2\tf}{{dg_i}{dg_j}}\left(\frac{dg_i}{d\widetilde v}\right)\left(\frac{dg_j}{d \widetilde v}\right) \right| & = &  \left\|Hess\, \widehat{F}\right\|  \left\|\frac{dg}{d\widetilde v}\right\|^2\\ & \geq &  \left\|Hess\, \widehat{F}\right\|  \inf \limits_{\|v\| = 1}\left\|\frac{dg}{dv}\right\|^2. \eeq 

Together (\ref{eq:12}), (\ref{eq:12.02}), (\ref{eq:12.1}) and (\ref{eq:12.2}) imply that 

\beqs \left\|Hess\, \widehat{F}\right\|  \inf \limits_{\|v\| = 1}\left\|\frac{dg}{dv}\right\|^2 \leq \left\|\left(\left(\frac{df_i}{dg_j}\right)_{i, j \in [p]}\right)w\right\| \sup\limits_{\|v\|=1}\left(\frac{\eps_2}{4}\right) \|v\|^2 \leq \left(\frac{\eps_2}{4 - \eps_1}\right)\|w\|. \eeqs
It follows that 
\beq \left\|Hess\, \widehat{F}\right\|  \leq \left(\frac{\eps_2}{4 - \eps_1}\right)\|w\| \sup \limits_{\|v\| = 1}\left\|\frac{dg}{dv}\right\|^{-2} \leq \left(\frac{16\eps_2}{(4-\eps_1)^3}\right) \|w\|. \eeq

Next, consider the setting of the Implicit Function Theorem. Let $h: \R^{m+n} \ra \R^n$ be a $\C^2-$function,
$$ h:(x, y) \mapsto h(x, y).$$
Let $g:B_{m+n} \ra \R^{m+n}$ be defined by $$g:(x, y) \mapsto (x, h(x, y)).$$

 Suppose the Jacobian of  $g$, $Jac_g$ satisfies $$\left\|Jac_g - I\right\| < \eps_1/4$$ on $B_{m+n}$ and that for any vector $v \in \R^{m+n}$, $$\left\|\frac{\partial^2 g(x)}{\partial v ^2}\right\| \leq \left(\frac{\eps_1}{4}\right)\|v\|^2$$ where $\eps_0, \eps_1, \eps_2 \in [0, 1]$.
Suppose also that $\|g(0)\| < \frac{\eps_0}{20}$.

Let $p = m+n$. Then, applying the inverse function theorem, we see that defining $f$ and $\widehat F$ as before, and choosing $\|w\|= 1$, 
\beq\label{eq:15} \left\|Hess\, \widehat{F}\right\|  \leq \frac{16\eps_2}{(4-\eps_1)^3}. \eeq

\begin{lemma}\label{lem:4}
On the domain of definition of $f$, \ie $g(B_{m+n})$ 
$$f ((x, y)) = (x, e(x, y))$$ for an appropriate $e$ and in particular, for $\|x\| \leq \frac{\eta}{2}$, where $\eta \in [0, 1]$,
$$f((x, 0)) = (x, e(x, 0))$$ and $$\|(x, e(x, 0))\| \leq \frac{8}{5}\left( \frac{\eps_0}{20} + \frac{\eta}{2}\right).$$
 Finally, for any $w \in \R^n$ such that $\|w\| = 1$, 
\beq \|Hess (e\cdot w)\| \leq \frac{16\eps_2}{(4-\eps_1)^3}.\eeq
\end{lemma}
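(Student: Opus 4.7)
My approach would break the lemma into three parts, each of which is essentially a bookkeeping exercise once the product structure of $g$ is exploited.

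First, I would observe that because $g(x,y) = (x, h(x,y))$ acts as the identity on the first $m$ coordinates, any local inverse of $g$ must do so as well. Writing $f = (f_1, f_2)$ with $f_1 \in \R^m$ and $f_2 \in \R^n$, the identity $f \circ g = \mathrm{id}$ on $B_{m+n}$ forces $f_1(x, h(x,y)) = x$ for every $(x,y) \in B_{m+n}$, which says exactly that $f_1(u,v) = u$ on the image $g(B_{m+n})$. Setting $e := f_2$ yields the claimed splitting $f(u,v) = (u, e(u,v))$.

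Next, for the location and size of $f(x,0)$ when $\|x\| \leq \eta/2$, my plan is to produce the preimage $y_0 = e(x,0)$ by a Banach contraction argument applied inside $B_{m+n}$. The equation $h(x, y_0) = 0$ rearranges to $y_0 = y_0 - h(x, y_0) =: T_x(y_0)$. The bound $\|\mathrm{Jac}_g - I\| \leq \eps_1/4$ controls each block of $\mathrm{Jac}_g - I$, so in particular $\|I - \partial_y h\| \leq \eps_1/4$ and $\|\partial_x h\| \leq \eps_1/4$. Hence $T_x$ is $(\eps_1/4)$-Lipschitz in $y$, and $\|h(x,0)\| \leq \|h(0,0)\| + (\eps_1/4)\|x\| \leq \eps_0/20 + \|x\|/4$ (using $\|h(0,0)\| \leq \|g(0)\| \leq \eps_0/20$). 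The unique fixed point $y_0$ then satisfies $\|y_0\| \leq \|T_x(0)\|/(1 - \eps_1/4)$, and combining with $\|x\| \leq \eta/2$ gives $\|(x, e(x,0))\| \leq \|x\| + \|y_0\| \leq (8/5)(\eta/2 + \eps_0/20)$ after using $\eps_1 \leq 1$ to clear the denominators. I would also verify along the way that $y_0$ remains in $B_{m+n}$, so that the contraction really takes place inside the domain of $g$; the norm estimate above already provides this as long as $\eps_0$ and $\eta$ are within the ranges specified.

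Finally, the Hessian bound is immediate from the general inverse-function estimate (\ref{eq:15}) already derived in the excerpt. Applied to the unit vector $w = (0, w') \in \R^{m+n}$ with any unit $w' \in \R^n$, one has $\widehat F(u,v) = w \cdot f(u,v) = w' \cdot e(u,v)$, so the bound $\|\mathrm{Hess}\,\widehat F\| \leq 16\eps_2/(4-\eps_1)^3$ transfers directly to $\|\mathrm{Hess}(w' \cdot e)\| \leq 16\eps_2/(4-\eps_1)^3$.

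The only genuine obstacle I anticipate is the arithmetic in the second part: the contraction argument naturally produces constants of the form $1/(1-\eps_1/4)$, and some care is needed combining $\|x\|$ and $\|y_0\|$ so that the final bound lands cleanly on the stated $8/5$ coefficient rather than a weaker one. This is fiddly but not deep; everything else is a direct consequence of the splitting and the prior estimates.
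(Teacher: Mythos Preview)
Your proposal is correct and in the first and third parts coincides with the paper's argument. The second part, however, takes a genuinely different route: the paper locates the preimage of $(x,0)$ by running Newton's iteration $\widehat z_{i+1} = \widehat z_i - J_g^{-1}(\widehat z_i)(g(\widehat z_i) - z)$ in the full space $\R^{m+n}$, using the second-derivative bound $\eps_2$ to establish quadratic convergence and then summing the geometric series to get the factor $\tfrac{4}{3}\cdot\tfrac{6}{5}=\tfrac{8}{5}$. Your contraction argument fixes $x$ and iterates only in the $y$-variable via $T_x(y)=y-h(x,y)$; it needs only the first-derivative hypothesis $\|I-\partial_y h\|\le\eps_1/4$, and a short computation gives $\|x\|+\|y_0\|\le \tfrac{4}{3}(\eps_0/20+\eta/2)$, which is in fact stronger than the stated $\tfrac{8}{5}$ bound. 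So your approach is both simpler (no Hessian estimate is used for the existence/location claim) and yields a slightly better constant; the paper's Newton scheme buys quadratic convergence, which is irrelevant for the statement being proved. The only point to be careful about---which you flagged---is checking that the iterates stay inside $B_{m+n}$ so that the Jacobian bound applies throughout; with $\|x\|\le 1/2$ and the contraction radius bounded as you indicate, this is immediate.
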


\begin{proof}
It suffices to prove that if $z = (x, y) \in \R^p$ and $\|z\| \leq \eta/2$, where $\eta \in [0, 1]$,
then there exists a point $\widehat z$, where $\|\widehat z\| \leq \frac{8}{5}\left( \frac{\eps_0}{20} + \frac{\eta}{2}\right), $ such that $g(\widehat z) = z.$
We will achieve this by analysing  Newton's method for finding a sequence $\widehat z_0, \dots, \widehat z_k, \dots $ converging to a point $\widehat z $ that satisfies  $g(\widehat z) = z$. We will start with $\widehat z_0 = 0.$ 

The iterations of Newton's method proceed as follows.

For $i \geq 0$, 
\beq \widehat z_{i+1} = \widehat z_i - J_g^{-1}(\widehat z_i)(g(\widehat z_i) - z). \eeq
\begin{claim}
{ For} any $i \geq 0$, $\|\widehat z_i\| \leq \frac{8}{5}\left( \frac{\eps_0}{20} + \frac{\eta}{2}\right).$ 
\end{claim}
\begin{proof}
Observe that \beq\|\widehat z_{i+1} - \widehat z_i\| = \|J_g^{-1}(\widehat z_i)(g(\widehat z_i) - z) \|.\eeq

For $i=0$, \beq \|g(\widehat z_i) - z\| \leq \frac{\eps_0}{20} + \frac{\eta}{2}.\eeq and since $\|J_g^{-1}(\widehat z_i)\| \leq \frac{1}{1-\eps_1/4} \leq 4/3$, therefore 
\beq\label{eq:19} \|\widehat z_{i+1} - \widehat z_i\| \leq \left(\frac{4}{3}\right)\left( \frac{\eps_0}{20} + \frac{\eta}{2}\right) .\eeq

Suppose $i \geq 1$. 
\beq \label{eq:20} g(\widehat z_i) - z =   g\left(\widehat z_{i-1} - J_g^{-1}(\widehat z_{i-1})(g(\widehat z_{i-1}) - z)\right) - z. \eeq
Using the integral form of the remainder in Taylor's theorem, the right hand side of (\ref{eq:20}) equals
$$g(\widehat z_{i-1}) + J_g(\widehat z_{i-1})\left( - J_g^{-1}(\widehat z_{i-1})(g(\widehat z_{i-1}) - z) \right) + \Lambda  - z,$$ which simplifies to $\Lambda$,
where $$\Lambda = \int_0^1 {(1-t)} (\widehat z_{i} - \widehat z_{i-1})^T Hess_g(\widehat z_{i-1} + t(\widehat z_{i} - \widehat z_{i-1}))(\widehat z_{i} - \widehat z_{i-1})dt.$$
The norm of $\Lambda$ is bounded above as follows. Note that by the induction hypothesis, $\|\widehat z_i\| \leq \frac{8}{5}\left( \frac{\eps_0}{20} + \frac{\eta}{2}\right),$ and $\|\widehat z_{i-1}\| \leq \frac{8}{5}\left( \frac{\eps_0}{20} + \frac{\eta}{2}\right)$, which places both $\widehat z_i$ and $\widehat z_{i-1}$ within the unit ball. Therefore $\|(\widehat z_{i} - \widehat z_{i-1})^T Hess_g(\widehat z_{i-1} + t(\widehat z_{i} - \widehat z_{i-1}))(\widehat z_{i} - \widehat z_{i-1})\| \leq (\eps_2/4)\|\widehat z_i - \widehat z_{i-1}\|^2$ for any $t \in [0, 1]$.
{ Moreover,}
$$\|\Lambda\| \leq  \int_0^1 {(1-t)} \|(\widehat z_{i} - \widehat z_{i-1})\|^2 (\eps_2/4) dt =   \left(\frac{\eps_2}{8}\right) \|(\widehat z_{i} - \widehat z_{i-1})\|^2.$$

Therefore  \beqs\|\widehat z_{i+1} - \widehat z_i\| = \|J_g^{-1}(\widehat z_i)(g(\widehat z_i) - z) \| \leq \eeqs \beqs \left(\frac{4}{3}\right)\left(\frac{\eps_2}{8}\right) \|(\widehat z_{i} - \widehat z_{i-1})\|^2  = \left(\frac{\eps_2}{6}\right) \|(\widehat z_{i} - \widehat z_{i-1})\|^2.\eeqs

By recursion,
\beq\label{eq:22} \|\widehat z_{i+1} - \widehat z_i\| \leq \left(\frac{\eps_2^{2i}}{6^i}\right)\|\widehat z_1 - \widehat z_0\|^{2^i}. \eeq
Therefore, \beqs  \|\widehat z_{i+1}\| = \|\widehat z_{i+1} - \widehat z_0\| \leq \sum_{j = 1}^i \|\widehat z_{j+1} - \widehat z_j\| \leq  \eeqs
\beqs \frac{\|\widehat z_1 - \widehat z_0\|}{1 - \frac{\eps_2}{6}} \leq \left(\frac{4}{3}\left( \frac{\eps_0}{20} + \frac{\eta}{2}\right)\right)\left(\frac{6}{5}\right) = \frac{8}{5}\left( \frac{\eps_0}{20} + \frac{\eta}{2}\right). \eeqs
\end{proof}
 Recall that 
$g:B_{m+n} \ra \R^{m+n}$ is given by $$g:(x, y) \mapsto (x, h(x, y)).$$ Since $g$ is injective, it  follows that on the domain of definition of $f$, \ie $g(B_{m+n})$ 
$$f ((x, y)) = (x, e(x, y))$$ for an appropriate $e$.  
By (\ref{eq:19}) and (\ref{eq:22}) $(\widehat z_0, \dots, \widehat z_i, \dots)$ is a Cauchy sequence, and therefore has a unique limit point. By the preceding Claim, this limit $\widehat z$ satisfies $\|\widehat z\| \leq \frac{22}{25} < 1$. Therefore any point in $B_m\times B_n$ of the form $(x, 0)$ where $\|x\| = \frac{\eta}{2} \leq \frac{1}{2}$ belongs to $g(B_{m+n})$. Further, 
$$\|f((x, 0))\| \leq \frac{8}{5}\left( \frac{\eps_0}{20} + \frac{\eta}{2}\right).$$ In particular, setting $\eta = 0$, we have \beq \|f((0, 0))\| \leq \frac{2\eps_0}{25}. \label{eq:119a} \eeq

By (\ref{eq:12.02}) the function $e$ satisfies, for $\|x\| \leq 1/2$, 
\beq \label{eq:De} \|D_x e\|^2 & = & \|D_x f\|^2 - 1\\
& \leq & (1 - \eps_1/4)^{-2} - 1\\
& \leq & \eps_1. \label{eq:122}\eeq

By (\ref{eq:15}) the function $e$  satisfies, for any $w \in \R^n$ such that $\|w\| = 1$, 
\beq \|Hess (e\cdot w)\| \leq \frac{16\eps_2}{(4-\eps_1)^3}.\eeq
\end{proof}
We next obtain bounds for the $m^{th}-$order derivatives. Our focus will be in the case of $m \geq 2$. 


 In the remainder of this section, all norms on Euclidean spaces $\R^N, \R^M, \R^D, \dots$ are Euclidean norms.
$$|(v_1, \dots, v_N)| = \left(\sum_{1}^N v_i^2\right)^\frac{1}{2}.$$

Note that the usual implicit function theorem gives the function $\psi(x, z)$ that solves the equation $F(x, \psi(x, z)) = z$ at the end of this section. The purpose of this section is to derive bounds for the derivatives of $\psi$ in terms of the derivatives of $F$. 

\subsection{Differentiating composed maps}

Let $y = (y_1, \dots, y_M) = \Phi(x_1, \dots, x_N)$.

$$ z = G(y) = G\circ \Phi(x).$$

Let $v_1, \dots, v_m$ be vectors in $\R^N$. Let $\partial_v$ denote the directional derivative in the direction $v$.
Then, 
$\partial_{v_1}\dots\partial_{v_m}(G\circ\Phi)(x)$ is a sum of terms 
$$\sum_{p_1, \dots, p_{\nu_{max}}}\prod_{\nu=1}^{\nu_{max}} (\partial_{w_{1, \nu}}\dots\partial_{w_{{s_\nu}, \nu}}y_{p_\nu})(x) \cdot (\partial_{y_{p_1}}\dots \partial_{y_{p_{\nu_{max}}}} G(y))\big|_{y = \Phi(x)},$$
where each $s_\nu \geq 1$, and the list $$w_{1, 1}, \dots w_{s_1, 1}, w_{1, 2}, \dots, w_{s_2, 2}, \dots, w_{1, \nu_{max}}, \dots, w_{s_{\nu_{max}}, \nu_{max}},$$ may be permuted into the list $v_1, \dots, v_m.$ This follows by induction on $m$. 

So
$$\partial_{v_1}\dots\partial_{v_m}(G \circ \Phi),$$ is a sum of terms 

\ben 

\item[(A)] $\partial_{\zeta_1}\dots\partial_{\zeta_{\nu_{max}}} G(y)\big|_{y = \Phi(x)},$ where $\zeta_\nu \in \R^M$ is the vector whose $p^{th}-$coordinate is $\zeta_{\nu, p} = \partial_{w_{1, \nu}} \dots \partial_{{w_{{s_\nu}, \nu}}} y_p.$ That is $\zeta_\nu = \partial_{w_{1, \nu}} \dots \partial_{w_{s_\nu, \nu}} y.$
\een

where, as before, the concatenated list of all the $w's$ may be permuted into the list of $v's$. 

The only term of the form $(A)$ in which $y$ is differentiated $m$ times is 
$$\partial_{[\partial_{v_1}\dots\partial_{v_m}y]}G(y) = \sum_p (\partial_{v_1} \dots \partial_{v_m} y_p)(\frac{\partial G}{\partial y_p}(y))\big|_{y = \Phi(x)}.$$

Suppose we know that 

\ben 
\item[(*1)] $|\partial_{w_1}\dots\partial_{w_s} y| \leq C_s$ for $s < m$, whenever $|w_1|, \dots, |w_s| \leq 1,$
\item[(*2)] $|\partial_{\zeta_1}\dots\partial_{\zeta_{\nu_{max}}}G| \leq C_m^*$ for $\nu_{max} \leq m$, all $|\zeta_{\nu}| \leq 1.$
\een
Then, $$|\partial_{\zeta_1}\dots\partial_{\zeta_{\nu_{max}}} G| \leq C_m^* |\zeta_1|\dots|\zeta_{\nu_{max}}|$$ for any $\zeta's$ provided $\nu_{max} \leq m$.

Then $(*1)$, $(*2)$ and our discussion of $\partial_{v_1}\dots\partial_{v_m}(G \circ \Phi)$ (see (A)) together imply that (for $ |v_1|, \dots, |v_m| \leq 1$) together imply that (for $|v_1|, \dots, |v_m| \leq 1$)
$$\partial_{v_1} \dots \partial_{v_m} (G \circ \Phi)(x) = \sum_p(\partial_{v_1}\dots\partial_{v_m} y_p)\left(\frac{\partial G}{\partial y_p}\right)\bigg|_{y=\Phi(x)} + \de_0,$$
where
$|\de_0|$ is less or equal to a constant determined by the $C_s$ and $C_m^*.$ We write $\overline{C}_m$ to denote any such constant. Write $$G(x) = (G_1(x), \dots, G_M(x)).$$ Then,
$$\partial_{v_1}\dots\partial_{v_m}(G_q \circ \Phi)(x) = \sum_p (\partial_{v_1}\dots \partial_{v_m} y_p) \left(\frac{\partial G_q}{\partial {y_p}}\right)\bigg|_{y = \Phi(x)} + |\overline{\de}_q|,$$ where 
$(\sum_q |\de_q|^2)^{\frac{1}{2}} \leq \overline{C}_m.$ 

Let $\Omega_q^r$ be the $M \times M$ matrix that inverts the matrix $\frac{\partial G_q}{\partial y_p}\big|_{y = \Phi(x)}.$ Assume that $(\Omega_q^r)$ has norm $\leq 10$ (say) as a linear map from $\R^M$ to $\R^M$. Then, we find that 
\ben
\item[(*3)] $\sum_q \Omega_q^r \partial_{v_1}\dots \partial{v_m} (G_q \circ \Phi)(x) = \sum_{p, q} (\partial_{v_1} \dots \partial_{v_m} y_p) \cdot \Omega_q^r \left(\frac{\partial G_q}{\partial y_p}\right)\bigg|_{y = \Phi(x)} + \widetilde{\de}_r,$
\een
which is equal to $\partial_{v_1} \dots \partial_{v_m} y_r(x) + \widetilde{\de}_r,$
where $(\sum_r |\widetilde{\de_r}|^2 )^{\frac{1}{2}} \leq \overline{C}_m$. Here is what that means:

Let $\Phi:\R^N \ra \R^N$, and let $G$ be the inverse function of $\Phi$ in some neighborhood.

Suppose we have bounds on 

\ben
\item[(!1)] $|\partial_{v_1} \dots \partial_{v_k} G| $ for $k \leq m$ and $|v_1|, \dots, |v_k| \leq 1.$

\een
\ben
\item[(!2)] $|\partial_{v_1} \dots \partial_{v_k} \Phi| $ for $k \leq m-1$ and $|v_1|, \dots, |v_k| \leq 1.$

\een

Suppose the inverse of the Jacobian $\nabla_x G$ has norm $\leq 10$ as a matrix (\ie as a bounded linear operator on $\R^N$).

Then, we obtain bounds on 
\ben
\item[(!3)] $|\partial_{v_1} \dots \partial_{v_k} \Phi| $ for $k = m$ and $|v_1|, \dots, |v_k| \leq 1.$

\een

This holds for $m \geq 2$. Our bounds for $(!3)$ depend only on $m$, and our bounds for $(!1), (!2)$.

Starting from $m=2$, we may now use induction on $m$ to obtain the following result.

\subsection{Quantitative Inverse Function Theorem.}

Let $m \geq 2$, and let $G, \Phi$ be inverse images of each other in a neighborhood of a point in $\R^N$.

Suppose 
\begin{itemize}
\item $|(\nabla_x G)^{-1}(v)| \leq 10|v|$ for all values $v \in \R^N$.

\item $|\partial_{v_1} \dots \partial_{v_k} G| \leq C$ for $k \leq m$, $|v_1|, \dots, |v_k| \leq 1.$

\end{itemize}

Then, $|\partial_{v_1}\dots \partial_{v_k} \Phi| \leq C'$ for $k \leq m$, $|v_1|, \dots, |v_k| \leq 1,$ where $C'$ depends only on $C$ and $m$. In particular, $C'$ does not depend on $N$ (unless $C$ does).

Now for $x \in \R^N, y \in \R^D$, let $G(x, y)$ take values in $\R^D$. 

We want to solve the equation $$G(x, y) = z,$$ for the unknown $y$. Say the solution is $y = \Psi(x, z)$.
So $$G(x, \Psi(x, z)) = z.$$

Then, the following maps from $\R^N \times \R^D$ to itself are inverses of each other.

$$(x, y) \mapsto (x, G(x, y)),$$

$$(x, z) \mapsto (x, \Psi(x, z)).$$

Applying the quantitative inverse function theorem to these two maps, we obtain the following.

\subsection{Quantitative Implicit Function Theorem}\label{sec:quant_imp_func}

Let $m \geq 2$. Let $$G(x, y) = (G_1(x, y), \dots, G_D(x, y)),$$ for $x = (x_1, \dots, x_N), y = (y_1, \dots, y_D)$.

Suppose $y = \Psi(x, z)$ solves the equation $G(x, y) = z.$ Assume that 

$$|\partial_{v_1} \dots \partial_{v_k} G| \leq C$$ for $k \leq m$, $v_1, \dots, v_k \in \R^{N+D}$ of length $\leq 1$. Assume that the inverse of the matrix $$\left(\frac{\partial G_p}{\partial y_q}\right),$$ has norm 
at most 10 as a linear map from $\R^D$ to itself. Then also
$$|\partial_{v_1} \dots \partial_{v_k} \Psi| \leq \overline{C},$$ for $k \leq m$, and $v_1, \dots, v_k \in \R^{N+D},$ of length $\leq 1$, where $\overline C$ is determined by $C$ and $m$.

\end{document}